\pdfoutput=1
\documentclass[11pt]{amsart}
\usepackage{amsmath,amssymb,amsthm,enumitem,mathtools}
\usepackage[margin=1in]{geometry}
\usepackage[colorlinks=true,linkcolor=blue,citecolor=blue,urlcolor=blue]{hyperref}

\newtheorem{theorem}{Theorem}[section]
\newtheorem{lemma}[theorem]{Lemma}
\newtheorem{proposition}[theorem]{Proposition}
\newtheorem{corollary}[theorem]{Corollary}

\theoremstyle{definition}
\newtheorem{definition}[theorem]{Definition}
\theoremstyle{remark}
\newtheorem{remark}[theorem]{Remark}
\newtheorem{example}[theorem]{Example}

\newcommand{\R}{\mathbb R}
\newcommand{\Z}{\mathbb Z}
\newcommand{\T}{\mathbb T}
\newcommand{\RCD}{\mathrm{RCD}}
\newcommand{\Isom}{\mathrm{Isom}}
\newcommand{\Stab}{\mathrm{Stab}}
\newcommand{\Tor}{\operatorname{Tor}}
\newcommand{\pr}{\mathrm{pr}}
\newcommand{\rank}{\operatorname{rank}}
\newcommand{\diam}{\operatorname{diam}}
\newcommand{\Homeo}{\mathrm{Homeo}}
\newcommand{\Aff}{\mathrm{Aff}}

\def\Xint#1{\mathchoice{\XXint\displaystyle\textstyle{#1}}{\XXint\textstyle\scriptstyle{#1}}{\XXint\scriptstyle\scriptscriptstyle{#1}}{\XXint\scriptscriptstyle\scriptscriptstyle{#1}}\!\int}
\def\XXint#1#2#3{{\setbox0=\hbox{$#1{#2#3}{\int}$}\vcenter{\hbox{$#2#3$}}\kern-.5\wd0}}
\def\fint{\Xint-}
\title[Maximal first Betti number drop]{Maximal first Betti number drop and collapsing RCD spaces}
\author{Shaosai Huang}
\address{Kspectra Research Inc., 4789 Yonge Street, Toronto, Ontario, M2N 0G3, Canada}
\email{arthur.foxie.huang@kspectra.ai}

\author{Xin Peng}
\address{University of Science and Technology of China, No.~96 Jinzhai Road, Hefei, Anhui Province, 230026, China}
\email{px2333@mail.ustc.edu.cn}

\date{\today}

\subjclass[2020]{Primary 53C23; Secondary 53C21, 53C24, 55R55, 57R18}
\keywords{RCD spaces, collapsing, first Betti number, Seifert fibrations, torus bundles, orbifolds, Gromov--Hausdorff convergence}

\begin{document}

\begin{abstract}
Let $(X_i,d_i,\mathfrak m_i)$ be compact $\RCD(K,N)$ spaces converging to a space $X$ of rectifiable dimension $m$. Their first Betti numbers can drop by at most $N-m$. When equality holds, the maximal abelian covers have a non-collapsed limit $Y$ carrying a free isometric $\R^{N-m}$-action. We turn this limiting symmetry into fibrations of the approximating spaces. More precisely, after passing to a subsequence, there is an open full-measure set $G\subset X$, containing every regular point, on which $X$ is a topological orbifold and the $X_i$ admit local Seifert fibrations with $(N-m)$-torus fibres. Each finite local group acts on the fibre by translations. If the $X_i$ have no boundary, then $X\setminus G$ has Hausdorff codimension at least two. If $Y$ has no bubbling and $X$ is a smooth closed Riemannian orbifold, the local fibrations may be chosen as restrictions of a single global Seifert fibration. An affine replacement on the smooth manifold cover shows, in addition, that a finite cover of $X_i$ is homeomorphic to a product with $\T^{N-m}$; here we use the classification of affine torus bundles at the Betti number equality \cite{PWW26}. When the base is a closed Riemannian manifold, the maps are torus bundles, confirming \cite[Conj.~2.9]{ZZ26} for possibly singular $\RCD$ total spaces. The new ingredients are a pointwise linearization of the collapsing action at regular orbits, an invariant harmonic transverse coordinate compatible with finite isotropy, and an exactly equivariant orbit coordinate built from an elementary rounding-and-doubling argument in the collapsing deck group.
\end{abstract}

\maketitle

\section{Introduction}

The interplay between the first Betti number and the topology of a closed Riemannian manifold with Ricci curvature bounded below has a rich history. The number is bounded above by the curvature and the dimension: Bochner's theorem gives $b_1\le n$ when $\mathrm{Ric}\ge0$ \cite{Boc46}, and Gromov's short-generator argument gives the same bound under the pinching condition $\diam(M)^2\,\mathrm{Ric}_M\ge-\delta(n)$ \cite{Gro81}. Attaining the bound brings rigidity. Equality in Bochner's theorem forces a flat torus \cite{Boc46}, and Gromov's conjecture that $b_1=n$ under the pinching condition forces a manifold diffeomorphic to $\T^n$ was proved by Colding \cite{Col97} up to homeomorphism and by Cheeger--Colding \cite[App.~A]{CC97} up to diffeomorphism. See \cite{HRW20} for a survey of these themes.

For collapsing sequences the same pattern appears, with the topology of the limit subtracted. Huang and Wang proved that if closed Riemannian $N$-manifolds $\{X_i\}$ with $\mathrm{Ric}\ge-(N-1)$ converge in the Gromov--Hausdorff sense to a closed $m$-manifold $X$ of bounded geometry, $m<N$, then
\begin{equation}\label{eq:b1ineq}
b_1(X_i)-b_1(X)\ \le\ N-m\qquad\text{for large }i,
\end{equation}
and that at equality the $X_i$ are torus bundles over $X$ \cite[Thm.~1.1]{HW26}, an analogue under lower Ricci bounds of Fukaya's fibration theorem for bounded sectional curvature \cite{Fuk87}. The bundles arising this way are themselves rigid: Peng--Wang--Wang showed that at the same Betti number equality a smooth torus bundle with affine structure group over a closed manifold is principal, and that finite covers of base and total space turn it into a product \cite[Thm.~1.1]{PWW26}.

It is natural to ask whether the above interplay between the first Betti number and lower Ricci curvature bounds generalizes to rougher spaces. The canonical candidates are $\RCD(K,N)$ spaces: metric measure spaces with Ricci curvature bounded below by $K$ and dimension bounded above by $N$ in a synthetic sense. This class is stable under measured Gromov--Hausdorff convergence and therefore contains all Ricci limit spaces. Every $\RCD(K,N)$ space $(Z,d,\mathfrak m)$ has a \emph{rectifiable} (or essential) \emph{dimension} $n\le N$: at $\mathfrak m$-almost every point, the tangent cone is uniquely $\R^n$ \cite{BS20}. We call
\[
\mathcal R(Z):=\{z\in Z:\ \text{every tangent cone of $Z$ at $z$ is $\R^n$}\}
\]
the \emph{regular set} of $Z$, and its points \emph{regular}. This set is dense and has full measure, although it need not be open.

Some of the results above already generalize to $\RCD$ spaces. Under the pinching condition $\diam(Z)^2\,K\ge-\delta(N)$, a compact $\RCD(K,N)$ space $Z$ has $b_1(Z)\le N$, and at equality it is bi-H\"older homeomorphic to a flat $N$-torus \cite{MMP22,ZZ26}. The collapsing bound \eqref{eq:b1ineq} also persists: it holds whenever compact $\RCD(K,N)$ spaces $\{(X_i,d_i,\mathfrak m_i)\}$ converge in the measured Gromov--Hausdorff sense to a space $(X,d,\mathfrak m)$ of rectifiable dimension $m$. This is \cite[Thm.~5]{SRZ23}. The argument of \cite[Thm.~6.12]{HHWZ26} derives the same bound; its rank step is repaired in Remark~\ref{rem:rank}.

This paper studies the equality case for $\RCD$ spaces. Parts~\textup{(i)}--\textup{(iv)} below require no regularity assumption on $X$ and produce the limit-cover symmetry, the orbifold good set, and local Seifert fibrations. The global-fibration and virtual-product conclusions in part~\textup{(v)}, which extend the corresponding results of \cite{HW26,PWW26}, require no bubbling and a smooth closed Riemannian orbifold base. As a special case of our main result, we confirm Conjecture~2.9 of \cite{ZZ26} that at equality of \eqref{eq:b1ineq} the $X_i$ are torus bundles over $X$ whenever $X$ is a closed Riemannian manifold; see Corollary~\ref{cor:conj}.

\subsection{Setting and main result}
Our main object is the limit of the maximal abelian covers of the $X_i$; in the equality case, these covers do not collapse. Throughout the paper we assume:
\begin{itemize}
\item[$(\ast)$] $(X_i,d_i,\mathfrak m_i)$ are compact $\RCD(K,N)$ spaces with $N\ge2$ and $\diam X_i\le D$, converging in the measured Gromov--Hausdorff sense to $(X,d,\mathfrak m)$ of rectifiable dimension $m$, and
\[
b_1(X_i)-b_1(X)=N-m=:k\ge1\quad\text{for all }i .
\]
\end{itemize}
The case $N<2$ is elementary, while $k=0$ is the non-collapsed case; Corollary~\ref{cor:conj} treats both for manifold limits. Let $\hat X_i\to X_i$ be the maximal abelian covers, with deck groups $H_i=H_1(X_i;\Z)$. After passing to a subsequence, let $(Y,H)$ be the equivariant pointed measured Gromov--Hausdorff limit of $(\hat X_i,H_i)$. We write $P\colon Y\to X$ for the projection and $f_i\colon X_i\to X$ for the Gromov--Hausdorff approximations. By part~(i) below, $Y$ has rectifiable dimension $N$. Thus $\mathcal R(Y)$ consists of the points at which every tangent cone is $\R^N$. It is $H$-invariant but need not be open (Remark~\ref{rem:sharp}). We say that $Y$ has \emph{no bubbling}, or that (R) holds, when $\mathcal R(Y)=Y$.

Finite isotropy in the limit is unavoidable, already for surfaces collapsing to an interval (Example~\ref{ex:klein}); in the smooth setting, Ricci-flow smoothing likewise produces local infranil fibrations over controlled Riemannian orbifold bases \cite[Thm.~1.4]{HW22}. The fibrations we produce are therefore Seifert fibrations in the following sense, with the orbifold group of the base point acting on the torus fibre by translations.

\begin{definition}\label{def:seifert}
Let $U\subset X$ be open. A \emph{Seifert $\T^k$-fibration over $U$} is a continuous map $\sigma\colon E\to U$, defined on an open subset $E\subset X_i$, such that every $u\in U$ has a \emph{model neighbourhood} $U_1\subset U$ for which $\sigma^{-1}(U_1)\to U_1$ is equivalent to $(D\times\T^k)/C_1\to D/C_1$, with $u$ corresponding to the centre of the ball $D\subset\R^m$. The finite abelian \emph{local group} $C_1$ acts diagonally on $D\times\T^k$: linearly and faithfully on $D$, and freely by translations on $\T^k$. Thus the product action is free and the total-space model is a manifold.
\end{definition}

\begin{theorem}[Main Theorem]\label{thm:master}
Assume $(\ast)$.
\begin{enumerate}[label=\textup{(\roman*)}]
\item \emph{(Limit cover and symmetry)} For large $i$, $\mathfrak m_i=c_i\mathcal H^N$, the covers $\hat X_i$ are uniformly non-collapsed, and $Y$ is a non-collapsed $\RCD(K,N)$ space. Moreover,
$H\cong\R^k\times\Z^b\times C$, where $b=b_1(X)$ and $C$ is finite abelian, and the identity component $H_0\cong\R^k$ acts freely on $Y$. Finally, $X=W/\Gamma$, where $W:=Y/H_0$ and $\Gamma:=H/H_0\cong\Z^b\times C$. Fix a free complement $\Gamma_{\mathrm f}\cong\Z^b$ to the torsion subgroup $C$ in $\Gamma$. Every stabilizer $\Gamma_w$ is finite and contained in $C$. For sufficiently small $r=r(w)>0$, a neighbourhood of the image of $w$ in $X$ is isometric to $B_W(w,r)/\Gamma_w$. If this image lies in $\mathcal R(X)$, then $\Gamma_w$ is trivial.
\end{enumerate}
There is an open set $G\subset X$ containing $P(\mathcal R(Y))$ with the following properties.
\begin{enumerate}[label=\textup{(\roman*)},resume]
\item \emph{(Orbifold base)} $G$ is a topological orbifold whose local groups $C_x\le C$ are finite abelian and act linearly and faithfully. $C_x=1$ for $x\in\mathcal R(X)$.
\item \emph{(Size of the good set)} $G\supset\mathcal R(X)$; hence $G$ is dense and of full measure. Moreover $\dim_{\mathcal H}X=m$, and if the $X_i$ have no boundary in the sense of \cite{DPG18}, then $\dim_{\mathcal H}(X\setminus G)\le m-2$.
\item \emph{(Local Seifert fibrations)} Every $x\in G$ has neighbourhoods $U_x^0\subset U_x\subset G$ such that, for large $i$, there are Seifert $\T^k$-fibrations $\sigma_i\colon E_i\to U_x$. The point $x$ has a model neighbourhood containing $U_x^0$, with local group $C_x$. Moreover, $\sup_{E_i}d(\sigma_i,f_i)\to0$, and $f_i^{-1}(L)\subset E_i$ for every compact $L\subset U_x$ and all sufficiently large $i$. Near every point with $C_x=1$---in particular, near every point of $\mathcal R(X)$---the maps $\sigma_i$ are $\T^k$-bundles.
\item \emph{(Globalization and virtual product)} If (R) holds, then $G=X$. If, in addition, $X$ is a smooth closed Riemannian orbifold, the local Seifert fibrations in part~\textup{(iv)} may be taken to be the restrictions of a single Seifert $\T^k$-fibration $X_i\to X$ that is an $o(1)$-GH approximation, with the local group at $x$ conjugate to the Riemannian orbifold group $\Gamma_x$. For the complement fixed in part~\textup{(i)}, set $W':=W/\Gamma_{\mathrm f}$ and let $X_i'\to X_i$ be the associated finite $C$-covers supplied by Corollary~\ref{cor:Ccover}; under these hypotheses, $W'$ is a smooth closed Riemannian manifold and $W'\to X=W'/C$ is a Riemannian orbifold covering. The spaces $X_i'$ also admit, possibly through a different projection, affine $\T^k$-bundles $X_i'\to W'$ that are $o(1)$-GH approximations. Consequently, for large $i$ there are finite covers $\widetilde X_i\to X_i$ and $\widetilde W_i\to W'$ such that
\[
\widetilde X_i\cong\widetilde W_i\times\T^k
\]
homeomorphically. If $X$ is a closed Riemannian manifold, then (R) holds and every $C_x$ is trivial; the global fibration is a $\T^k$-bundle, and in the affine and product conclusions one may work directly over $X_i\to X$.
\end{enumerate}
\end{theorem}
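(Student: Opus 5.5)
The proof proceeds part by part, and each part feeds the next.

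For part (i), I would start from the deck-group analysis behind \eqref{eq:b1ineq}, following \cite[Thm.~5]{SRZ23} with the rank step as repaired in Remark~\ref{rem:rank}. The goal is a short-generator basis of $H_i$ in which $k$ generators have displacement tending to $0$ and $b$ generators survive to give $H_1(X)$ modulo torsion. The limit group $H$ is a closed abelian subgroup of $\Isom(Y)$, and it acts cocompactly because $Y/H=X$ is compact. Structure theory of abelian Lie groups then gives $H\cong\R^a\times\T^c\times\Z^{b'}\times C$. Equality in the Betti drop forces $a=k$, $b'=b$ and $c=0$. The dimension count is the key input here: the $H_0$-orbits contribute $k$ dimensions, so $\dim Y\ge m+k=N$. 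Hence the rectifiable dimension of $Y$ is $N$. By De Philippis--Gigli this forces $\mathfrak m_Y=c\mathcal H^N$, and volume convergence transfers this to the $X_i$ and gives uniform noncollapsing of $\hat X_i$. An $\R^k$ acting isometrically on a noncollapsed space has compact stabilizers, and compact subgroups of $\R^k$ are trivial, so the action of $H_0$ is free. The quotient $\Gamma=H/H_0$ is discrete. Its torsion-free part acts freely because it comes from the free deck translations surviving in the limit. Consequently any stabilizer $\Gamma_w$ is finite, lies in $C$, and is realized isometrically on a small ball by properness. At a point of $\mathcal R(X)$ the tangent cone $\R^m/\Gamma_w$ must equal $\R^m$, which forces $\Gamma_w=1$.

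For parts (ii)--(iii), I would take $G$ to be the set of $x=P(y)$ for which $W$ is a topological $m$-manifold near $w$ and $\Gamma_w$ acts linearly in a chart. The construction at a regular $y$ runs as follows. Choose $N$ harmonic splitting functions on a ball in $Y$. Average them over the compact-modulo-$H_0$ symmetry to obtain $k$ coordinates that are affine along the $\R^k$-orbits, which is the pointwise linearization of the orbit action. Also obtain $m$ transverse coordinates that are invariant under $H_0$; averaging them over the finite group $\Gamma_w$ makes the representation linear. Reifenberg-type results for noncollapsed $\RCD$ spaces (Cheeger--Colding, Kapovitch--Mondino) then give a bi-Hölder manifold chart near regular points, and an open neighbourhood of $\mathcal R(Y)$ consists of manifold points. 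The chart descends to an orbifold chart on $X$ with abelian groups $C_x\le C$. Since $G$ is open and contains $P(\mathcal R(Y))\supset\mathcal R(X)$, it is dense and of full measure. For the codimension bound, note that noncollapsed spaces without boundary have a singular set of codimension at least two, so $\dim_{\mathcal H}(Y\setminus\mathcal R)\le N-2$. The complement of the orbifold set in $X$ is contained in the $P$-image of that singular set, and $P$ quotients by an $\R^k$ action that is free and locally Lipschitz-trivial. Hence $\dim_{\mathcal H}(X\setminus G)\le N-2-k=m-2$.

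For part (iv), I would pull the transverse harmonic coordinates back to $\hat X_i$ through equivariant approximation. These are $H_i$-invariant almost-splitting maps, and they descend to $X_i$ to give the base map $\sigma_i$. To build the fibre, I would produce a $k$-tuple of orbit coordinates on $\hat X_i$ that is exactly equivariant under the small deck subgroup $\Lambda_i\cong\Z^k$, which $\rho_i$-approximates $\R^k$. This is the main obstacle, because averaging only gives approximate equivariance. My first attempt is rounding-and-doubling. Pick a basis of $\Lambda_i$ close to a lattice basis of $\epsilon_i\Z^k\subset\R^k$. Define the coordinate to be the limit affine coordinate corrected by averaging over a fundamental domain of $\Lambda_i$. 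Then round the translation cocycle to integers, doubling the scale at each step so that the errors stay geometric and sum to less than one lattice step, and make the cocycle exactly the identity on $\Lambda_i$. The resulting map $\hat X_i\to\R^m\times\R^k$ is $\Lambda_i$-equivariant and injective on fibres by the almost-splitting property. It is therefore a local homeomorphism by invariance of domain, since $\hat X_i$ is a topological manifold near these points by the Reifenberg step. It descends to $(D\times\T^k)/C_x$, and the finite part $C_x$ acts on $\T^k$ by translations because $H$ is abelian. Finally, $d(\sigma_i,f_i)\to0$ holds by construction.

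For part (v), (R) together with the previous step gives $G=X$. When $X$ is a smooth orbifold, I would glue the local fibrations by the centre-of-mass technique of Fukaya and Cheeger--Fukaya--Gromov on the orbifold frame bundle, and transfer to $W'$ via Corollary~\ref{cor:Ccover}. Replacing the transition maps by their affine parts gives affine $\T^k$-bundles, and \cite[Thm.~1.1]{PWW26} applied at Betti equality yields the virtual product. When $X$ is a manifold, every $C_x$ is trivial, and the result follows directly.
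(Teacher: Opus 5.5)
Your outline has the right overall shape, but several load-bearing steps are unjustified or would fail.

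In (i), equality in the Betti chain only fixes ranks. Since $\rank(\R^{k_1}\times\Z^{k_2}\times C')=k_1+k_2$ ignores the compact factor, equality cannot exclude a torus $\T^c\le H_0$. Nor is ``the $H_0$-orbits contribute $k$ dimensions'' an argument about the \emph{rectifiable} dimension of $Y$, since orbits may be non-rectifiable. The paper proves $\dim Y\ge m+k_1$ by blowing up at a lift of a regular point of $X$. There the tangent cone splits as $\R^m\times Z$, the limit group acts simply transitively on $Z\cong\R^p\times\T^q$, and the rank inequality for the shrinking lattices $\delta_j\Z^{k_1}$ gives $k_1\le p\le\dim Z$ (Proposition~\ref{prop:chain}). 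Only after equality is known is the blow-up $\R^N$ with a translational $\R^k$ (Lemma~\ref{lem:blowupreg}). A box-dimension bound on orbits then removes $\T^c$ (Lemma~\ref{lem:orbitdim}).

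In (ii), averaging cannot produce coordinates affine along orbits. To make the averaged transverse map non-degenerate, with level sets exactly the orbits, you must first show the following: near every regular orbit, including orbits over singular points of $X$ where Lemma~\ref{lem:blowupreg} says nothing, the rescaled action is close to translations with isotropy $(\mathrm{id},\rho_y)$. That is Section~\ref{sec:norot}: Lipschitz control of the harmonic cocycles $u\circ\varphi_v-u$ by a Cheng--Yau iteration, a dyadic pointwise linearization, and the transformation theorem.

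Your $G$ also contains images of singular orbits of $Y$, e.g.\ cone points of $W$ that are topological manifold points. Over these, $\hat X_i$ is not almost Euclidean, so your own argument for (iv) does not reach them. The paper instead takes $G$ to be the union of images of chart balls centred at regular orbits.

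In (iii), $Y\to W$ is not known to be locally Lipschitz-trivial near singular orbits. The paper uses Eilenberg's inequality together with $\mathcal H^k(\text{orbit})>0$, which follows from the orbit's topological dimension by Szpilrajn's theorem (Lemma~\ref{lem:saturated}). That argument also gives $\dim_{\mathcal H}X=m$, which you never address and which fails without maximal drop.

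The decisive gap is the orbit coordinate in (iv). You assume $\Lambda_i\cong\Z^k$ with a basis close to a lattice basis of $\epsilon_i\Z^k$, but that is essentially what must be proved. A priori, $\Lambda_i=\Pi_i^{-1}(C_{w'})$ is only a finite extension of the fibre group $H_i'$, and the paper obtains torsion-freeness only after the chart is built. Even with a basis of short elements, writing a bounded-displacement element in it takes unboundedly many steps, so the $o(1)$ additivity defects of the limit displacement map can accumulate. ``Rounding the translation cocycle to integers'' also does not make $z\mapsto U(\lambda z)-U(z)$ constant.

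The missing idea is to first build an \emph{exact} homomorphism $\ell_i\colon\Lambda_i\to\R^k$, uniformly close to the limit displacement on bounded sets. The paper takes long coordinate strings in $H_i'$, rounds short elements into their span, and shows the span has finite index $I_i$. It then sets $\ell_i(g)=m_i(g)/(I_iN_i)$ and controls short elements by the doubling estimate \eqref{eq:doubling} (Proposition~\ref{prop:lattice}). Only then are the local charts $\xi_i'\circ\lambda^{-1}+\ell_i(\lambda)$ glued with a $\Lambda_i$-equivariant partition of unity. Exact equivariance is then formal, and the splitting estimates survive because overlapping charts differ by small harmonic functions (Proposition~\ref{prop:orbit}). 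Even so, local injectivity only makes the descended map into $\T_i^k\times D$ a finite covering. A $\pi_1$-surjectivity argument is needed for degree one, and it simultaneously proves $H_i'\cong\Z^k$.

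In (v), Cheeger--Fukaya--Gromov frame-bundle gluing needs smooth total spaces, which $\RCD$ spaces are not. ``Replacing transition maps by their affine parts'' also fails: the transition maps are only homeomorphisms of $\T^k$, their affine parts do not satisfy the cocycle condition, and in high dimensions $\Homeo(\T^k)$ is not homotopy equivalent to $\Aff(\T^k)$.

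The Peng--Wang--Wang classification further needs a smooth closed base and the Betti equality $b_1(X_i')-b_1(W')=k$ on the cover. You must prove that $W'$ is a smooth manifold (Proposition~\ref{prop:unfold}, which needs a lifting lemma for isotropy-preserving homeomorphisms of abelian quotients of $\R^m$). You must also derive the equality from the fact that the fibre's $\pi_1$ maps isomorphically onto $H_i'$ (Lemma~\ref{lem:fibreid}); it is not inherited from $(\ast)$. The paper instead gets the Seifert fibration from equivariant regularization into $W'$. Separately, it gets an affine infranil bundle from Wang's bounded-covering-geometry theorem, whose fibre the Betti equality forces to be a torus.
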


Part~(i) is proved in \S\ref{sec:limit}, parts~(ii) and~(iii) in \S\ref{ssec:setG}, part~(iv) in \S\ref{ssec:proofiv}, and part~(v) in \S\ref{sec:global}. All limit-dependent objects---including $Y,H,\Gamma,C,W,W',G,\Pi_i$, and $X_i'$---and every ``for large $i$'' in the construction refer to the chosen subsequence. The covers $W'$ and $X_i'$ also depend on the chosen complement $\Gamma_{\mathrm f}$. Conclusions involving only $X_i$ and $X$ pass to the full sequence by the usual sub-subsequence argument.

Both qualifications in the theorem are genuine. Orbifold points can support exceptional fibres, as in the Klein-bottle collapse of Example~\ref{ex:klein}, while bubbling can make $G$ strictly smaller than $X$ (Remark~\ref{rem:sharp}).

\subsection{Contribution and proof spine}
We first say what is imported and what each import supplies, then what is new, and then how the four stages fit together.

\emph{Known inputs.} The following results are used as black boxes. The bound \eqref{eq:b1ineq} for $\RCD$ spaces and the rank inequality that keeps the maximal abelian covers from collapsing come from \cite[Thm.~5]{SRZ23} and \cite[Thm.~1.9]{HHWZ26}; Remark~\ref{rem:rank} records a gap in the posted proof of the latter and repairs it. Equivariant regularization over a smooth Riemannian target \cite[Thm.~1.11]{HHWZ26} supplies the global $C$-equivariant map used in part~(v); this paper proves that it is a Seifert fibration. The affine infranil fibration theorem under local bounded covering geometry \cite[Thm.~B]{Wan24a} supplies the separate second projection; the Betti equality is used here to show that its fibre is a torus. Finally, part~(v) imports the classification of \cite[Thm.~1.1]{PWW26} quoted above, in which the covers are normal, of the same index, and have abelian deck groups, together with \cite[App.~A]{PWW26}, which matches topological principal torus bundles over a closed smooth manifold with smooth ones. These two inputs need an affine torus bundle over a closed smooth manifold, with the Betti number equality between total space and base; that is exactly what \S\ref{sec:global} has to produce.

\emph{New mechanism.} For a singular limit, none of the results above simultaneously provides a slice for the non-compact residual action and a fibration compatible with finite isotropy. The central theme of this paper is to convert an \emph{asymptotic symmetry on the limit cover} into \emph{exact product coordinates on the approximating spaces}. We construct two complementary coordinates. The transverse coordinate is an $H_0$-invariant harmonic map, equivariant under finite isotropy, whose local level sets are exactly the $H_0$-orbits. The orbit coordinate is exactly equivariant under the discrete collapsing group. Together they form
\[
\mathcal Z_i=(U_i,\widehat\Phi_i)
   \colon \widetilde T_i\longrightarrow \R^k\times\R^m
\]
the local product chart from which we read off the Seifert fibration.

\emph{Proof spine.} The argument has four stages.
\begin{enumerate}[label=\textup{\arabic*.},leftmargin=2.2em]
\item \emph{Symmetry from equality} (\S\ref{sec:limit}). Equality produces a non-collapsed limit $Y$ of the maximal abelian covers and a free action $H_0\cong\R^k$; a rank-and-homogeneity argument at a regular-point blow-up supplies the dimension chain.
\item \emph{The transverse coordinate} (\S\S\ref{sec:norot}--\ref{sec:eqmaps}). Harmonic cocycles $u\circ\varphi_v-u$, a Cheng--Yau estimate, and a dyadic pointwise linearization produce the infinitesimal orbit map; the transformation theorem normalizes it at the metric scale. Harmonic replacement on a fine lattice quotient, followed by averaging, produces the invariant slice and the orbifold charts on $G$.
\item \emph{The orbit coordinate} (\S\S\ref{sec:groups}--\ref{sec:local}). Equivariant transverse maps first yield local bundles. Compatible quotient maps $\Pi_i\colon H_i\to\Gamma$ split the deck groups. Coordinatewise rounding and a doubling estimate realize $\ker\Pi_i$, modulo torsion, as a lattice in $\R^k$ without passing to a further subsequence; a partition-of-unity construction then produces $U_i$ with exact deck equivariance. Pairing the two coordinates gives the local Seifert chart, which also shows that $\ker\Pi_i\cong\Z^k$.
\item \emph{Globalization} (\S\ref{sec:global}). Under no bubbling the charts cover $X$. For a smooth orbifold limit, unfolding makes $W'$ a smooth manifold. The local charts identify the fibres of the equivariantly regularized map and yield the Betti equality on the cover. Bounded covering geometry supplies a separate affine infranil bundle; the equality forces a torus fibre, and affine reduction gives the finite product cover.
\end{enumerate}

Section~\ref{sec:prelim} fixes notation and records the analytic packages used throughout; a reader familiar with splitting maps may begin at \S\ref{sec:limit} and return to \S\ref{ssec:packages} whenever a package is invoked.

\section{Background and reusable tools}\label{sec:prelim}

This section fixes notation and collects the facts about $\RCD$ spaces, isometric actions, and equivariant convergence that are used repeatedly, together with the local analytic packages on which the later constructions rest.

\subsection{Setup}\label{ssec:setup}
We assume $(\ast)$ throughout. In particular, $N=m+k$ is an integer. The maximal abelian cover $\hat X_i\to X_i$ exists because $X_i$ is semi-locally simply connected \cite{Wan24b}. We equip $\hat X_i$ with the lifted measure, normalized to give mass $1$ to the unit ball about a base point $\hat p_i$. After passing to a subsequence, $(\hat X_i,\hat p_i,H_i)\to(Y,\hat p,H)$ in the equivariant pointed measured Gromov--Hausdorff (pmGH) sense. The notation $\dim Y$ always refers to the rectifiable dimension associated with the limit measure.

\emph{The objects to keep apart.} The argument moves between the covers, the limit, and the approximating spaces. The following names are fixed once and for all.
\begin{itemize}
\item \emph{Upstairs:} the maximal abelian covers $\hat X_i\to X_i$ with deck groups $H_i$, and their equivariant limit $(Y,H)$, which does not collapse.
\item \emph{Limit quotients:} the identity component $H_0\cong\R^k$ of $H$; the transverse quotient $W=Y/H_0$; the intermediate quotient $W'=W/\Gamma_{\mathrm f}$; and $X=W'/C$ with $C$ finite abelian.
\item \emph{Downstairs:} the rank-$k$ fibre groups $H_i'\le H_i$, the finite covers $X_i'=\hat X_i/K_i$ on which $C$ acts freely, and the subgroups $\Lambda_i\le H_i$ stabilizing a small tube over a point of $X$ (\S\ref{sec:groups}).
\item \emph{Coordinates:} the $H_0$-invariant harmonic transverse map $\hat\psi$ and its approximants $\Phi_i$ (\S\S\ref{sec:eqmaps} and~\ref{ssec:eqsplit}), the exactly $\Lambda_i$-equivariant orbit map $U_i$, and the product chart $\mathcal Z_i=(U_i,\hat\Phi_i)$ (\S\ref{ssec:chart}).
\item \emph{The good set:} the open, full-measure set $G\subset X$ over which the local Seifert fibrations are constructed (\S\ref{ssec:setG}).
\end{itemize}

The covers and their limits fit into
\[
\begin{array}{ccccc}
\hat X_i & \longrightarrow & X_i'=\hat X_i/K_i & \longrightarrow & X_i=X_i'/C\\[4pt]
\downarrow & & \downarrow & & \downarrow\\[4pt]
Y & \longrightarrow & W'=Y/K & \longrightarrow & X=W'/C
\end{array}
\]
where the horizontal arrows are quotient maps, the vertical ones denote equivariant measured Gromov--Hausdorff convergence as $i\to\infty$, and the transverse quotient $W=Y/H_0$ sits between $Y$ and $W'=W/\Gamma_{\mathrm f}$. The groups $K_i$ and $K$, and the covers $X_i'$, are constructed in \S\ref{sec:groups}.

\emph{Conventions.} The symbol $\Psi(\epsilon_1,\dots,\epsilon_l\,|\,c_1,\dots,c_j)$ denotes a non-negative function tending to $0$ with $\epsilon_1,\dots,\epsilon_l$, while $c_1,\dots,c_j$ remain fixed. Its value may change from line to line, and arguments are suppressed when clear. Unless stated otherwise, scalar occurrences of $C,c>0$ denote constants depending only on $N$; when used as a group, $C$ always denotes the finite factor in Theorem~\ref{thm:master}(i). An $\epsilon$-GH approximation has distortion at most $\epsilon$ and $\epsilon$-dense image. A ball $B(z,s)$ is \emph{$\epsilon s$-close to $B^n(s)$} when $(B(z,s),s^{-1}d)$ is $\epsilon$-GH close to $B^n(1)$; \emph{measured} closeness also compares the measures after normalizing both balls to mass $1$. A point is \emph{$n$-regular} if all its tangent cones are $\R^n$. Following \cite[Def.~2.4]{HHWZ26}, rescaled to radius $r$, a map $u\colon B(x,r)\to\R^j$ in the domain of the local Laplacian is a \emph{$(j,\delta)$-splitting map} if
\[
\operatorname{Lip}u\le C(N),\qquad|\Delta u|\le\delta/r,\qquad\fint_{B(x,r)}|\langle\nabla u_a,\nabla u_b\rangle-\delta_{ab}|\le\delta\quad(1\le a,b\le j).
\]
We omit $j$ when it is clear. We call $u$ \emph{harmonic} when $\Delta u=0$. Our order of the parameters $j$ and $\delta$ is the reverse of that in \cite{HHWZ26}.

\subsection{RCD spaces}
We use the following facts about an $\RCD(K,N)$ space $(Z,d,\mathfrak m)$ with $N\in(1,\infty)$.
\begin{enumerate}[label=(P\arabic*)]
\item \emph{Rectifiable dimension.} We write $\dim Z$ for the rectifiable dimension $n$ of $Z$ (see the Introduction), which exists by \cite{BS20}. If $n=N$, then $N\in\mathbb N$ and $\mathfrak m=c\,\mathcal H^N$ \cite{BGHZ23}. Under pmGH convergence, $\dim$ is lower semicontinuous \cite[Thm.~1.5]{Kit19}.
\item \emph{Rescaling and non-collapsed spaces.} For $\lambda,c>0$, $(Z,\lambda d,c\,\mathfrak m)$ is $\RCD(\lambda^{-2}K,N)$, because Ricci curvature scales as the inverse square of distance. The $\RCD$ condition is stable under pmGH convergence \cite{GMS15}, and being $\RCD(-\varepsilon,N)$ for every $\varepsilon>0$ implies $\RCD(0,N)$. Consequently, pmGH limits of normalized rescalings $(Z,s_j^{-1}d,z_j)$ with $s_j\to0$ are $\RCD(0,N)$ spaces, since the lower bounds $s_j^2K$ tend to $0$. In particular, every tangent cone is $\RCD(0,N)$. An $\RCD(K,N)$ space of the form $(Z,d,\mathcal H^N)$ is called non-collapsed \cite{DPG18}. Its tangent cones are metric cones with measure $\mathcal H^N$, and $\mathcal H^N$ is continuous under GH convergence within this class (volume convergence).
\item \emph{Isometries and covers.} $\Isom(Z)$ is a Lie group \cite{Sos18,GSR19}. $Z$ is semi-locally simply connected, in the local form: for every $z$ and $R>0$ there is $r>0$ such that every loop in $B_r(z)$ is contractible in $B_R(z)$ \cite{Wan24b}. So every open ball has a universal cover. Covering spaces of $Z$, and quotients of $Z$ by free, properly discontinuous groups of measure-preserving isometries, are $\RCD(K,N)$, with the lifted or quotient measure. Indeed, the projection is a local isomorphism of metric measure spaces; $\RCD(K,N)$ spaces are non-branching \cite{Den25}, so the curvature-dimension condition globalizes \cite{CM21,Li24}.
\item \emph{No small subgroups} \cite[Cor.~3.10]{ZZ26}. Let $(X_i,d_i,\mathfrak m_i,p_i)$ be pointed $\RCD(K,N)$ spaces converging in the pmGH sense to a space of rectifiable dimension $N$. Then every sequence of subgroups $W_i\le\Isom(X_i)$ with $\sup_{g\in W_i,\,x\in B_R(p_i)}d(gx,x)\to0$ for every $R$ is eventually trivial.
\end{enumerate}

\begin{lemma}[Proper actions]\label{lem:proper}
Let $Z$ be a proper metric space and $G\le\Isom(Z)$ a closed subgroup.
\begin{enumerate}[label=\textup{(\alph*)}]
\item The action is proper: $\{g:gL\cap L\ne\emptyset\}$ is compact for every compact $L\subset Z$. Stabilizers are compact, orbit maps $g\mapsto gz$ are proper and closed, and orbits are closed.
\item $d_{Z/G}(Gx,Gy):=\inf_{g\in G}d(x,gy)$ is a metric on $Z/G$ inducing the quotient topology, the infimum is attained, $Z/G$ is proper, and the projection $\pi$ is $1$-Lipschitz with $\pi(\bar B(x,r))=\bar B(\pi x,r)$. If $Z$ is geodesic, so is $Z/G$.
\item If $G'\lhd G$ is closed, then $G/G'$ acts on $Z/G'$ by isometries, continuously and properly; if $G/G'$ is discrete, the action is properly discontinuous.
\end{enumerate}
\end{lemma}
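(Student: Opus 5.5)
The plan is to prove (a) first, since (b) and (c) follow from it by routine point-set arguments. The tools are Arzel\`a--Ascoli for isometries of a proper space and the fact that closed bounded sets in $Z$ are compact.

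\textbf{(a)} Fix a base point $z_0$ and give $\Isom(Z)$ the compact-open topology. On a proper metric space this agrees with the topology of uniform convergence on compacta, and it is metrizable.

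1. Let $L$ be compact. If $gL\cap L\ne\emptyset$, then $d(gz_0,z_0)\le 2\diam L+2d(z_0,L)=:R$.
2. The family $\{g\in\Isom(Z): d(gz_0,z_0)\le R\}$ is equicontinuous, since each map is $1$-Lipschitz.
3. For each $z$, the orbit $\{gz\}$ over this family lies in the compact ball $\bar B(z_0,R+d(z,z_0))$.
4. By Arzel\`a--Ascoli, every sequence in the family has a subsequence converging uniformly on compacta to a distance-preserving map $f$.
5. Apply the same argument to the inverses $g_j^{-1}$, which satisfy $d(g_j^{-1}z_0,z_0)=d(g_jz_0,z_0)\le R$. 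A further subsequence converges to some $f'$ with $ff'=f'f=\mathrm{id}$, so $f$ is surjective and hence an isometry.

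The condition $gL\cap L\neq\emptyset$ is closed under this convergence, and $G$ is closed. So $\{g\in G: gL\cap L\ne\emptyset\}$ is sequentially compact, hence compact.

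Taking $L=\{z\}$ shows stabilizers are compact. For a compact $L'\subset Z$, the preimage of $L'$ under the orbit map $g\mapsto gz$ is a closed subset of $\{g: g(L'\cup\{z\})\cap(L'\cup\{z\})\ne\emptyset\}$, which is compact, so the orbit map is proper. A proper continuous map into a proper, hence compactly generated, metric space is closed. In particular orbits are closed.

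\textbf{(b)} First, the infimum is attained. Choose $g_j$ with $d(x,g_jy)\to d_{Z/G}(Gx,Gy)$. Then all $g_jy$ lie in a fixed compact ball, so properness of the orbit map gives a convergent subsequence $g_j\to g$, and $g$ realizes the infimum.

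The quantity $d_{Z/G}$ is well defined and symmetric because $G$ acts by isometries. The triangle inequality follows by composing minimizing elements. Positivity holds because orbits are closed: if $d_{Z/G}(Gx,Gy)=0$, the infimum is attained and equals $0$, so $x\in Gy$.

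Clearly $\pi$ is $1$-Lipschitz and $\pi(B(x,r))=B(\pi x,r)$ for open balls. Hence $\pi$ is open, and the metric topology coincides with the quotient topology. Attainment of the infimum gives $\pi(\bar B(x,r))=\bar B(\pi x,r)$ for closed balls, so closed balls in $Z/G$ are continuous images of compact sets. Thus $Z/G$ is proper.

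If $Z$ is geodesic, a geodesic from $x$ to a minimizing point $gy$ projects to a curve of length at most $d(x,gy)=d_{Z/G}$. Since $\pi$ is $1$-Lipschitz, this curve is a geodesic in $Z/G$.

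\textbf{(c)} The action $(gG')\cdot G'z:=G'gz$ is well defined because $G'$ is normal. It is isometric by the formula for the quotient metric, and continuous because the action map factors through the open map $G\times Z\to (G/G')\times(Z/G')$.

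For properness, let $L\subset Z/G'$ be compact. By (b), $L$ is covered by images of finitely many closed balls, so $L=\pi(\tilde L)$ for some compact $\tilde L\subset Z$. If $gG'\cdot L\cap L\ne\emptyset$, then some $g'g$ with $g'\in G'$ moves $\tilde L$ onto a set meeting $\tilde L$. Therefore $\{gG': gG'\cdot L\cap L\ne\emptyset\}$ is the image of a compact subset of $G$ under $G\to G/G'$, and so it is compact. When $G/G'$ is discrete, compact sets are finite, which is proper discontinuity.

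I expect the only delicate step to be the compactness in (a), specifically checking that Arzel\`a--Ascoli limits are surjective isometries. The argument through the inverses in step 5 handles this.
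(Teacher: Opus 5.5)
Your proof is correct and follows the same route as the paper, which dismisses the lemma as ``standard consequences of the Arzel\`a--Ascoli theorem.'' You supply exactly those details: compactness via equicontinuity and the inverse-sequence argument for surjectivity of the limit, then attainment, quotient metric and properness in (b), and descent of properness in (c).
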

These are standard consequences of the Arzel\`a--Ascoli theorem.

Metric closeness to $\R^m$ on a sufficiently large ball forces measured closeness on smaller balls. By contrast, closeness on only one fixed ball is insufficient: $(x_1+100)^{N-m}\mathcal L^m$ is an $\RCD(0,N)$ measure on $B^m(40)$.

\begin{lemma}[Measured closeness]\label{lem:measured}
For every $\psi>0$ and $R\ge1$ there is $\delta=\delta(\psi,R,N)>0$ with the following property. Let $(Z,d,\mathfrak m,z)$ be a pointed $\RCD(-\delta,N)$ space such that $B(z,1/\delta)$ is $\delta$-GH close to $B^m(1/\delta)$. Then $(Z,d,\mathfrak m/\mathfrak m(B(z,1)),z)$ is $\psi$-close to $(\R^m,|\cdot|,\omega_m^{-1}\mathcal L^m,0)$ on balls of radius $R$, in the pointed measured GH sense.
\end{lemma}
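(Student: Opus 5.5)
The plan is a contradiction-and-compactness argument. Suppose the lemma fails for some $\psi>0$ and $R\ge1$. Then there are pointed $\RCD(-\delta_j,N)$ spaces $(Z_j,d_j,\mathfrak m_j,z_j)$ with $\delta_j\to0$, each with $B(z_j,1/\delta_j)$ $\delta_j$-GH close to $B^m(1/\delta_j)$, whose normalized versions $(Z_j,d_j,\mathfrak m_j/\mathfrak m_j(B(z_j,1)),z_j)$ stay $\psi$-far from $(\R^m,|\cdot|,\omega_m^{-1}\mathcal L^m,0)$ on balls of radius $R$.

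The normalization gives unit mass on the unit ball. By Bishop--Gromov with lower bound $-\delta_j\ge-1$, the family is uniformly doubling on bounded balls. Gromov precompactness for pointed $\RCD$ spaces then lets us pass to a subsequence converging in the pmGH sense to some $(Z_\infty,d_\infty,\mathfrak m_\infty,z_\infty)$. By (P2) and the stability of the $\RCD$ condition \cite{GMS15}, the limit is $\RCD(-\varepsilon,N)$ for every $\varepsilon>0$, hence $\RCD(0,N)$. Since the radii $1/\delta_j$ tend to infinity and the GH errors $\delta_j$ tend to $0$, the metric limit is $(\R^m,|\cdot|,0)$. The limit measure satisfies $\mathfrak m_\infty(B(0,1))=1$, because the boundary of a ball carries no mass in the limit (again by Bishop--Gromov).

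The key step is to identify $\mathfrak m_\infty$. We have an $\RCD(0,N)$ structure $(\R^m,|\cdot|,\mathfrak m_\infty)$ on the \emph{whole} of $\R^m$. This is where the large balls are essential: the counterexample $(x_1+100)^{N-m}\mathcal L^m$ lives only on a bounded ball and cannot extend to all of $\R^m$ as a nonnegative $\RCD(0,N)$ density. To carry this out:
\begin{itemize}
\item Write $\mathfrak m_\infty=e^{-V}\mathcal L^m$. This is possible because the support is all of $\R^m$ and $\RCD$ measures on a Euclidean space are absolutely continuous with locally Lipschitz, semiconcave potential.
\item When $m<N$, the $\mathrm{CD}(0,N)$ condition says that $h:=e^{-V/(N-m)}$ is concave on every line.
\item A positive concave function on the entire line $\R$ is constant. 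Hence $h$ is constant along every line, so $V$ is constant.
\item When $m=N$, the measure is already $c\,\mathcal H^N$ by (P1) \cite{BGHZ23}.
\end{itemize}
In either case the normalization forces $\mathfrak m_\infty=\omega_m^{-1}\mathcal L^m$.

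This contradicts the assumption that the sequence stays $\psi$-far on radius-$R$ balls, since pmGH convergence to this limit gives closeness on $B(0,R)$. I expect the main obstacle to be the rigidity of the weight. The cleanest route is to invoke the splitting theorem for $\RCD(0,N)$ spaces: every line in $\R^m$ splits off isometrically, including the measure, so $\mathfrak m_\infty$ is translation invariant in each coordinate direction and therefore a multiple of $\mathcal L^m$. I would use this splitting argument first and fall back on the one-dimensional concavity argument above only if needed.
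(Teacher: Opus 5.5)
Your proposal is correct and follows the paper's proof: argue by contradiction, use Bishop--Gromov doubling and pmGH precompactness, and identify the limit as an $\RCD(0,N)$ structure on $(\R^m,|\cdot|,0)$. Then apply the splitting theorem in $m$ independent directions to get $\mathfrak m_\infty=c\,\mathcal L^m$, and use the unit-ball normalization to force $c=\omega_m^{-1}$; the one-dimensional concavity fallback is not needed.
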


\begin{proof}
Suppose not. Then there are counterexamples $(Z_j,d_j,\mathfrak m_j,z_j)$ with $\delta_j\to0$ and normalized measures. Bishop--Gromov makes the sequence uniformly doubling on bounded sets, so a subsequence converges in the pmGH sense \cite{GMS15} to a space $(Z,d,\mathfrak m,z)$. By (P2), this limit is $\RCD(0,N)$, while its pointed metric space is $(\R^m,|\cdot|,0)$. Applying the splitting theorem \cite{Gig26} in $m$ independent directions gives $\mathfrak m=c\,\mathcal L^m$. The normalization forces $c=\omega_m^{-1}$, contradicting the choice of the sequence.
\end{proof}

\subsection{Reusable local analytic tools}\label{ssec:packages}
Several local analytic mechanisms recur throughout the proof. We record them here so that later arguments can focus on equivariance and topology rather than repeat the same estimates.

Throughout this subsection, the displayed numerical radii may be replaced by any fixed nested radii; the constants and final inner radius then change only with that choice.

\emph{Volume to Reifenberg.} Suppose a non-collapsed $\RCD(K,N)$ ball satisfies $|K|R^2=o(1)$ and has Bishop--Gromov ratio $1-o(1)$ at radius $R$. Monotonicity propagates this lower bound to every smaller radius. Volume almost rigidity \cite[Thm.~1.6]{DPG18}, followed by volume convergence, then shows that the concentric balls of radius at most $R/2$ are measured-GH close to Euclidean balls. If the same control holds at every centre and scale in a region, the metric Reifenberg theorem \cite[Thm.~A.1.1]{CC97} supplies the local topological-manifold structure, while canonical Reifenberg \cite[Thm.~2.10]{HHWZ26} gives the bi-H\"older estimates for the canonical coordinate maps. We refer to this chain of implications as the \emph{volume--Reifenberg package}. In later applications, the only nonstandard step is to obtain the volume-ratio bound uniformly over the required family of centres.

The next lemma combines harmonic replacement with the comparison estimate used after averaging.

\begin{lemma}[Harmonic replacement and comparison]\label{lem:harmonicpackage}
Let $(Z,d,\mathfrak m)$ be $\RCD(K,N)$, and let $q\in Z$ and $r>0$ satisfy $|K|r^2\le1$. Assume that
\[
 Z\setminus B(q,11r)\ne\varnothing.
\]
Let
\[
B(q,8r)\subset\Omega\subset B(q,9r)
\]
be open. Suppose $v\colon\Omega\to\R^j$ satisfies
\[
\operatorname{Lip}v\le L,\qquad |\Delta v|\le\delta/r,
\]
and let $h$ be its harmonic replacement on $\Omega$, characterized by
$h-v\in W^{1,2}_0(\Omega;\R^j)$. Then
\[
\|h-v\|_{L^\infty(\Omega)}\le C\delta r,\qquad
\fint_{B(q,3r)}|\nabla(h-v)|^2\le C\delta^2,\qquad
\operatorname{Lip}h\le C
\quad\hbox{on }B(q,6r),
\]
where $C=C(N,L,j)$. Thus harmonic replacement preserves a splitting estimate on a smaller concentric ball, at the cost of replacing $\delta$ by $C\delta$. If a compact group of measure-preserving isometries preserves $\Omega$ and $v$ is equivariant for an orthogonal target action, then $h$ remains equivariant.

More generally, if $v_1,v_2\colon B(q,9r)\to\R^j$ have Lipschitz constants at most $L$, Laplacians bounded by $\delta/r$, and $|v_1-v_2|\le\delta r$, then
\[
\fint_{B(q,5r)}|\nabla(v_1-v_2)|^2\le C\delta^2.
\]
\end{lemma}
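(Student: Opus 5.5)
The plan is to compare $w:=h-v$, which vanishes on $\partial\Omega$ in the Sobolev sense and satisfies $\Delta w=-\Delta v$ with $|\Delta w|\le\delta/r$, against the Green-type barrier of $\Omega$. The $L^\infty$ bound comes first. Let $g$ solve $\Delta g=-1$ on $\Omega$ with $g\in W^{1,2}_0(\Omega)$. By the maximum principle on $\RCD$ spaces, applied to $\pm w_a-(\delta/r)g$, we get $|w_a|\le(\delta/r)g$ for each component. It therefore suffices to show $\sup_\Omega g\le Cr^2$. The hypothesis $Z\setminus B(q,11r)\ne\varnothing$ together with $|K|r^2\le1$ gives a uniform Poincar\'e/Sobolev inequality and a lower bound on the first Dirichlet eigenvalue of $\Omega$, of order $r^{-2}$. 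This is the role of the outside point: it rules out $\Omega$ being essentially the whole compact space, where Dirichlet data would be degenerate. Moser iteration for $g$, using Bishop--Gromov doubling at scale $r$, then gives $g\le Cr^2$. Hence $\|h-v\|_\infty\le C\delta r$.

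The energy bound comes next. Testing $\Delta w=-\Delta v$ against $w\phi^2$, with $\phi$ a Lipschitz cutoff equal to $1$ on $B(q,3r)$ and supported in $B(q,4r)$, and with $|\nabla\phi|\le C/r$, gives
\[
\int\phi^2|\nabla w|^2\le \int \phi^2|w||\Delta v|+2\int\phi|w||\nabla\phi||\nabla w|.
\]
Using $|w|\le C\delta r$ and Cauchy--Schwarz, this yields $\fint_{B(q,3r)}|\nabla w|^2\le C\delta^2$. The same computation proves the final comparison statement. Set $w=v_1-v_2$, with $|w|\le\delta r$ and $|\Delta w|\le2\delta/r$, and use a cutoff supported in $B(q,6r)$ equal to $1$ on $B(q,5r)$. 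Boundary values are not needed there, because the cutoff absorbs them.

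For the gradient bound on $h$, I will use the Cheng--Yau gradient estimate for harmonic functions on $\RCD(K,N)$ spaces, valid since $|K|r^2\le1$. It applies to $h-h(x)$ on $B(x,2r)\subset B(q,8r)\subset\Omega$ for each $x\in B(q,6r)$. It gives $|\nabla h|\le Cr^{-1}\sup_{B(x,2r)}|h-h(x)|$. That supremum is at most $\sup|v-v(x)|+2C\delta r\le 2Lr+C\delta r$, so $\operatorname{Lip}h\le C$ on $B(q,6r)$, with $\delta\le1$ assumed. Combined with the energy estimate, this shows that $h$ is a $C\delta$-splitting map on the smaller ball. For equivariance, if a compact group of measure-preserving isometries preserves $\Omega$ and $v\circ g=\rho(g)v$ with $\rho$ orthogonal, then $\rho(g)^{-1}h\circ g$ is harmonic with the same boundary data $v$. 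Uniqueness of the Dirichlet problem, i.e. of the $W^{1,2}_0$ energy minimizer, gives equality. Orthogonality of $\rho$ preserves the energy.

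The main obstacle is the uniform $L^\infty$ bound on the torsion function $g$ with constants depending only on $N$, for an arbitrary open $\Omega$ between $B(q,8r)$ and $B(q,9r)$. I would handle it through the exterior point. A segment from $q$ to a point outside $B(q,11r)$ produces a ball $B(y,r)\subset B(q,11r)\setminus B(q,9r)$ disjoint from $\Omega$, of volume comparable to $B(q,9r)$ by Bishop--Gromov. A function in $W^{1,2}_0(\Omega)$, extended by zero, vanishes on a definite fraction of $B(q,10r)$. Poincar\'e then gives the eigenvalue bound, and De Giorgi iteration converts it into the sup bound.
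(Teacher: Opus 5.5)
Your proposal is correct and follows the paper's proof essentially step for step. The common steps are: a torsion-function barrier, bounded above by $Cr^2$ via a Sobolev inequality coming from the exterior ball together with De Giorgi--Stampacchia iteration; the energy estimate obtained by testing the equation; Jiang's Cheng--Yau estimate for the Lipschitz bound; uniqueness of the coercive Dirichlet problem for equivariance; and a Caccioppoli argument for the comparison clause. The differences are cosmetic: you take the torsion function on $\Omega$ rather than on $B(q,10r)$, and you test with a cutoff where the paper tests directly with $h-v\in W^{1,2}_0(\Omega)$.
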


\begin{proof}
The hypothesis $Z\setminus B(q,11r)\ne\varnothing$ is the one in \cite[(4.11)]{AH18}, which makes the zero-boundary Dirichlet form on $B(q,10r)$ coercive. Thus \cite[Lem.~4.7]{AH18} gives a unique $\omega\in W^{1,2}_0(B(q,10r))$ solving $\Delta\omega=-1$, and testing with $\omega_-:=\max\{-\omega,0\}$ gives $\omega\ge0$. For the upper bound, rescale so that $r=1$ and $\mathfrak m(B(q,10))=1$; the equation is preserved, with $\omega/r^2$ in place of $\omega$. Since $Z$ is geodesic, there is $z$ with $d(q,z)=11$, and $B(z,1)\subset B(q,20)\setminus B(q,10)$ has measure at least $c(N)$ by doubling. $\RCD(K,N)$ spaces are doubling and support a $(1,2)$-Poincar\'e inequality \cite{Raj12}, so the Sobolev--Poincar\'e inequality \cite[Thm.~5.1]{HK00} on $B(q,20)$ gives $\|f\|_{L^{2^*}}\le C(N)\|\nabla f\|_{L^2}$ for every $f\in W^{1,2}_0(B(q,10))$, extended by zero, where $2^*:=2N/(N-2)$ if $N>2$ and $2^*$ is any exponent larger than $2$ otherwise; the mean of $f$ over $B(q,20)$ is controlled by $\|f-f_{B(q,20)}\|_{L^1}$ because $f$ vanishes on $B(z,1)$. Put $A_k:=\{\omega>k\}$. Testing with $(\omega-k)_+$ gives $\int|\nabla(\omega-k)_+|^2=\int(\omega-k)_+\le\mathfrak m(A_k)^{1-1/2^*}\|(\omega-k)_+\|_{L^{2^*}}$, hence $\|(\omega-k)_+\|_{L^{2^*}}\le C\,\mathfrak m(A_k)^{1-1/2^*}$, and for $h>k\ge0$
\[
 \mathfrak m(A_h)\le\Bigl(\frac{C}{h-k}\Bigr)^{2^*}\mathfrak m(A_k)^{2^*-1}.
\]
As $2^*>2$, Stampacchia's lemma \cite[Lemma~B.1]{KS00} gives $\mathfrak m(A_{C(N)})=0$. Undoing the rescaling, $0\le\omega\le C(N)r^2$. Now $\Delta(\pm(h-v)-(\delta/r)\omega)\ge0$, and testing the positive parts gives $|h-v|\le(\delta/r)\omega$, hence the $L^\infty$ estimate. Testing $\Delta(h-v)=-\Delta v$ against $h-v$ and using doubling gives the energy estimate. Jiang's gradient estimate \cite[Thm.~1.1]{Jia14}, applied to $h-v(z)$ on balls $B(z,2r)\subset\Omega$, gives the Lipschitz bound. The exterior ball also makes the Dirichlet problem on $\Omega$ coercive, so uniqueness gives equivariance. Finally, \cite[Lem.~3.3]{Jia14}, applied to $v_1-v_2$ with a cutoff in $B(q,9r)$, gives the comparison estimate without the non-exhaustion hypothesis.
\end{proof}

The first consequence makes approximate equivariance exact without losing the splitting estimates.

\begin{corollary}[Equivariantization by averaging]\label{lem:averaging}
Let a compact group $\mathsf G$ act by measure-preserving isometries on an $\RCD(K,N)$ space $(Z,d,\mathfrak m)$, let $q\in Z$ and $r>0$ satisfy $|K|r^2\le1$, and let $\rho\colon\mathsf G\to O(j)$. Let $u$ satisfy the Lipschitz and Laplacian bounds of Lemma~\ref{lem:harmonicpackage} on a $\mathsf G$-invariant open set containing $B(q,9r)$. Assume that $u$ is $(j,\delta)$-splitting on $B(q,5r)$ and
\[
 \sup_{g\in\mathsf G}\|\rho(g)^{-1}u\circ g-u\|_{L^\infty(B(q,9r))}\le\delta r.
\]
With normalized Haar measure, put $u_{\mathsf G}:=\int_{\mathsf G}\rho(g)^{-1}u\circ g\,dg$.
Then $u_{\mathsf G}$ is exactly $\rho$-equivariant and $(j,C\delta)$-splitting on $B(q,5r)$, with the Lipschitz constant $C(N,L,j)$ in place of $C(N)$ in the definition, and with
\[
 \operatorname{Lip}u_{\mathsf G}\le L,\qquad |\Delta u_{\mathsf G}|\le\delta/r,\qquad
 \|u_{\mathsf G}-u\|_{L^\infty(B(q,9r))}\le\delta r.
\]
\end{corollary}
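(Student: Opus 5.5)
The plan is to treat $u_{\mathsf G}$ as a Bochner-type average of the maps $u_g:=\rho(g)^{-1}u\circ g$. Each $u_g$ satisfies the same hypotheses as $u$, and every estimate in the conclusion is either linear or convex in the map, so it survives averaging.

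\emph{Exact equivariance.} Fix $h\in\mathsf G$. Since $\rho(h)$ is linear, it commutes with the integral, and
\[
u_{\mathsf G}\circ h=\int_{\mathsf G}\rho(g)^{-1}u\circ(gh)\,dg=\rho(h)\int_{\mathsf G}\rho(gh)^{-1}u\circ(gh)\,dg=\rho(h)\,u_{\mathsf G}
\]
by right invariance of Haar measure. Haar measure is bi-invariant because $\mathsf G$ is compact.

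\emph{Lipschitz, Laplacian and $L^\infty$ bounds.} The domain is $\mathsf G$-invariant and each $g$ is a measure-preserving isometry, so $\rho(g)^{-1}u\circ g$ has Lipschitz constant at most $L$. The Laplacian commutes with measure-preserving isometries, so $\Delta(\rho(g)^{-1}u\circ g)=\rho(g)^{-1}(\Delta u)\circ g$, and $\rho(g)$ is orthogonal. Averaging then gives $\operatorname{Lip}u_{\mathsf G}\le L$ and $|\Delta u_{\mathsf G}|\le\delta/r$. To pass $\Delta$ under the integral, I would test against a Lipschitz cutoff $\phi$ and use Fubini, since $g\mapsto\rho(g)^{-1}u\circ g$ is continuous into uniformly Lipschitz maps. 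The $L^\infty$ bound is the average of the hypothesis:
\[
|u_{\mathsf G}-u|\le\int_{\mathsf G}|\rho(g)^{-1}u\circ g-u|\,dg\le\delta r\quad\text{on }B(q,9r).
\]

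\emph{Splitting estimate.} This is the only non-formal step, because the Gram condition is not convex in $u$; transferring it from $u$ to $u_{\mathsf G}$ is the main obstacle. Comparison handles it. On $B(q,9r)$ the maps $v_1:=u_{\mathsf G}$ and $v_2:=u$ both have Lipschitz constant at most $L$, Laplacians bounded by $\delta/r$, and $|v_1-v_2|\le\delta r$. The final statement of Lemma~\ref{lem:harmonicpackage} then gives
\[
\fint_{B(q,5r)}|\nabla(u_{\mathsf G}-u)|^2\le C\delta^2.
\]
Writing $\nabla u_{\mathsf G}=\nabla u+e$ componentwise, we have
\[
|\langle\nabla u_{\mathsf G,a},\nabla u_{\mathsf G,b}\rangle-\langle\nabla u_a,\nabla u_b\rangle|\le 2L|e|+|e|^2.
\]
By Cauchy--Schwarz, the averages over $B(q,5r)$ of these terms are at most $C\delta+C\delta^2\le C\delta$; we may assume $\delta\le1$, since otherwise the conclusion is trivial after enlarging $C$, as the Gram deviation is bounded by $L^2+1$. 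Adding the $\delta$ from the splitting hypothesis on $u$ gives the $(j,C\delta)$-splitting estimate, with Lipschitz constant $L\le C(N,L,j)$. Neither the non-exhaustion hypothesis nor the harmonic replacement part of Lemma~\ref{lem:harmonicpackage} is needed, since its comparison statement is stated without it.
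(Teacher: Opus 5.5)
Your proposal is correct and follows essentially the same route as the paper. Haar invariance gives exact equivariance, and the Lipschitz, Laplacian and $L^\infty$ bounds pass through the average. The comparison clause of Lemma~\ref{lem:harmonicpackage}, applied to $u_{\mathsf G}$ and $u$ and followed by Cauchy--Schwarz, then transfers the Gram estimate.

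You supply details the paper leaves implicit: the perturbation bound $2L|e|+|e|^2$, and the observation that non-exhaustion is not needed for the comparison clause.
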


\begin{proof}
Haar invariance gives equivariance, while Lipschitz and Laplacian bounds commute with averaging. The comparison clause of Lemma~\ref{lem:harmonicpackage} preserves the Gram estimates.
\end{proof}

The second consequence allows prescribed coordinates to replace the corresponding coordinates of a full splitting chart.

\begin{lemma}[Stability under coordinate replacement]\label{lem:replacement}
Let $(Z,d,\mathfrak m)$ be $\RCD(K,N)$ with $|K|r^2\le1$, and let
\[
 \xi,F\colon B(q,2r)\longrightarrow\R^N
\]
be in the domain of the local Laplacian. Suppose
\[
 \operatorname{Lip}\xi+\operatorname{Lip}F\le L,\qquad
 |\Delta\xi|+|\Delta F|\le\delta/r,\qquad
 \|F-\xi\|_{L^\infty(B(q,2r))}\le\delta r,
\]
and $\xi$ is an $(N,\delta)$-splitting map on $B(q,r)$. Then
\[
 \fint_{B(q,r)}|\nabla(F-\xi)|^2\le C\delta^2,
\]
and $F$ is an $(N,C\delta)$-splitting map on $B(q,r)$, where $C=C(N,L)$. This includes the case $F=(\xi',v)$, in which only a prescribed block of coordinates is replaced.

Suppose additionally that $Z$ is non-collapsed, $|K|r^2\le\delta$, and every ball $B(a,s)$ with $a\in B(q,r)$ and $s\le r$ is $\delta s$-close to $B^N(s)$. If $\delta=\delta(N,L)$ is sufficiently small, then $F$ is a bi-H\"older embedding on $B(q,r/2)$. More precisely, for $x,y\in B(q,r/2)$,
\[
(1-\Phi)r^{-\Phi}d(x,y)^{1+\Phi}
 \le |F(x)-F(y)|\le(1+\Phi)d(x,y),
\qquad \Phi=\Phi(C\delta\mid N,L)\longrightarrow0.
\]
If $B(q,r)$ is a topological $N$-manifold, this embedding has open image.
\end{lemma}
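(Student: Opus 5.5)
The first assertion is a direct consequence of the comparison clause of Lemma~\ref{lem:harmonicpackage}, and the bi-H\"older clause will follow from the canonical Reifenberg estimate applied to $F$.

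\textbf{Energy comparison.} Apply the comparison clause of Lemma~\ref{lem:harmonicpackage} with $v_1=F$ and $v_2=\xi$, after rescaling radii: the fixed radii $5r\subset 9r$ become $r\subset 2r$, as permitted by the remark opening \S\ref{ssec:packages}. The hypotheses $\operatorname{Lip}\le L$, $|\Delta|\le\delta/r$ and $|F-\xi|\le\delta r$ on $B(q,2r)$ are exactly what that clause needs. This gives
\[
\fint_{B(q,r)}|\nabla(F-\xi)|^2\le C(N,L)\delta^2 .
\]

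\textbf{Splitting property of $F$.} The Lipschitz and Laplacian bounds for $F$ are assumed. For the Gram estimate, write $\nabla F_a=\nabla\xi_a+\nabla e_a$ with $e=F-\xi$. Then
\[
|\langle\nabla F_a,\nabla F_b\rangle-\langle\nabla\xi_a,\nabla\xi_b\rangle|\le L(|\nabla e_a|+|\nabla e_b|)+|\nabla e_a||\nabla e_b| .
\]
Averaging over $B(q,r)$ and using Cauchy--Schwarz with the energy bound bounds the right side by $C\delta$. Adding the $\delta$ from the splitting estimate for $\xi$ gives the $(N,C\delta)$ estimate. The case $F=(\xi',v)$ is the special case in which the first block of $e$ vanishes.

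\textbf{Bi-H\"older embedding.} Under the non-collapsed Reifenberg hypothesis, I would invoke canonical Reifenberg \cite[Thm.~2.10]{HHWZ26}. An $(N,\eta)$-splitting map on $B(q,r)$ of a space that is $\delta$-Reifenberg at all centres and scales is a $\Phi(\eta,\delta)$-bi-H\"older map onto its image on $B(q,r/2)$, with the stated two-sided estimate. Here $\eta=C\delta$.

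The main obstacle is that this theorem needs the splitting estimate at \emph{every} smaller scale and centre, not only on $B(q,r)$. I would handle it in two steps.

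\begin{enumerate}[label=\textup{(\alph*)}]
\item At scale $r$, the Reifenberg hypothesis gives closeness to $\R^N$, so $F$ is $C\delta$-splitting on all balls $B(a,s)$ with $a\in B(q,r/2)$ and $s$ comparable to $r$. This uses the energy estimate on those balls and doubling.
\item Propagating to smaller scales is the standard transformation/non-degeneration step inside \cite[Thm.~2.10]{HHWZ26}. Its proof only uses that the map is $\eta$-splitting at the top scale and the Reifenberg condition at all scales. So I would apply that theorem directly to $F$ rather than to $\xi$.
\end{enumerate}

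The upper bound $|F(x)-F(y)|\le(1+\Phi)d(x,y)$ comes from the sharp gradient bound $|\nabla F|\le 1+\Phi$ given by the same theorem. Injectivity follows from the lower H\"older bound.

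\textbf{Openness of the image.} If $B(q,r)$ is a topological $N$-manifold, then $F|_{B(q,r/2)}$ is a continuous injective map between $N$-manifolds. Brouwer's invariance of domain therefore gives that the image is open.
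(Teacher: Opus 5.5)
Your proposal is correct and follows the paper's proof: the fixed-radii comparison clause of Lemma~\ref{lem:harmonicpackage} gives the energy estimate, Cauchy--Schwarz with the Lipschitz bounds gives the Gram estimate, and canonical Reifenberg \cite[Thm.~2.10]{HHWZ26}, applied directly to $F$ and followed by invariance of domain, gives the bi-H\"older embedding and the open image. Your additional remarks on propagating the splitting estimate to smaller scales are correctly left to the cited theorem, which the paper also uses as a black box needing only the top-scale splitting estimate and the all-scale Reifenberg condition.
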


\begin{proof}
The fixed-radii comparison clause of Lemma~\ref{lem:harmonicpackage}, applied to $F$ and $\xi$, gives the energy estimate. The Gram estimate follows by Cauchy--Schwarz and the Lipschitz bounds. The last assertion is canonical Reifenberg \cite[Thm.~2.10]{HHWZ26}, followed by invariance of domain.
\end{proof}

The next elementary criterion packages the open-and-closed argument that
turns a full Reifenberg chart with invariant transverse coordinates into an
exact local orbit slice.

\begin{lemma}[Orbit--slice criterion]\label{lem:orbitslice}
Let a group $A\cong\R^k$ act freely and properly on a proper metric space
$Z$.  Suppose that $b$ is $A$-invariant and that
\[
 \Upsilon=(a,b)\colon B(z,r)\longrightarrow\R^k\times\R^m
\]
is a homeomorphism onto an open image.  Assume that, for
$x,y\in B(z,r)$,
\[
 (1-\Phi)r^{-\Phi}d(x,y)^{1+\Phi}
 \le |\Upsilon(x)-\Upsilon(y)|
 \le(1+\Phi)d(x,y),
\]
where $(1+\Phi)/4<(1-\Phi)2^{-1-\Phi}$.  Then
\[
 b^{-1}(b(z))\cap B(z,r/4)=Az\cap B(z,r/4).
\]
\end{lemma}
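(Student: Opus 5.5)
The inclusion $Az\cap B(z,r/4)\subset b^{-1}(b(z))\cap B(z,r/4)$ is immediate from the $A$-invariance of $b$. The content is the reverse inclusion. I plan to prove it by continuation along the $\R^k$-coordinate: follow the straight segment in $\R^k$ from $a(z)$ to $a(y)$, and lift it to the orbit $Az$ using that $a$ is a local homeomorphism along the orbit. The quantitative hypothesis on $\Phi$ is what keeps the lifted path from leaving the chart.

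First, the orbit is a copy of $\R^k$. Since $A\cong\R^k$ acts freely and properly, Lemma~\ref{lem:proper}(a) shows that the orbit map $t\mapsto tz$ is a closed injective continuous map. Hence it is a homeomorphism of $A$ onto the closed orbit $Az$. On the open set $V:=\{t\in A: tz\in B(z,r)\}$ consider $\alpha(t):=a(tz)$. Because $b(tz)=b(z)$ there, $\Upsilon(tz)=(\alpha(t),b(z))$, and injectivity of $\Upsilon$ makes $\alpha$ a continuous injective map from an open subset of $\R^k$ to $\R^k$. By invariance of domain, $\alpha$ is open.

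Second, I isolate the distance estimate where the numerical hypothesis enters. If $x\in B(z,r)$ satisfies $|\Upsilon(x)-\Upsilon(z)|\le(1+\Phi)r/4$, then the lower H\"older bound gives
\[
(1-\Phi)r^{-\Phi}d(x,z)^{1+\Phi}\le(1+\Phi)r/4<(1-\Phi)2^{-1-\Phi}r=(1-\Phi)r^{-\Phi}(r/2)^{1+\Phi}.
\]
Hence $d(x,z)<r/2$. This applies in particular to any $y\in b^{-1}(b(z))\cap B(z,r/4)$, for which $|\Upsilon(y)-\Upsilon(z)|\le(1+\Phi)r/4$ by the upper bound.

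Third, the continuation argument. Fix such a $y$, put $p_s:=a(z)+s(a(y)-a(z))$ for $s\in[0,1]$, and let
\[
S:=\{s\in[0,1]:\ p_{s'}\in\alpha(V)\text{ for all }s'\le s\}.
\]
Clearly $0\in S$. Every point $(p_s,b(z))$ lies within $(1+\Phi)r/4$ of $\Upsilon(z)$ by convexity. So any orbit point $tz\in B(z,r)$ with $\alpha(t)=p_s$ satisfies $d(tz,z)<r/2$ by the second step.

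Openness of $S$ follows from openness of $\alpha$. For closedness, take $s_n\uparrow s$ and $t_n$ with $\alpha(t_n)=p_{s_n}$. The lower H\"older bound applied to $t_nz,t_mz$ shows that $(t_nz)$ is Cauchy. Its limit lies in $\bar B(z,r/2)\subset B(z,r)$ and in the closed orbit, so it equals $tz$ for some $t\in V$. By continuity $\alpha(t)=p_s$, so $s\in S$.

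Hence $1\in S$, so there is $t$ with $\Upsilon(tz)=(a(y),b(z))=\Upsilon(y)$. Injectivity of $\Upsilon$ then gives $y=tz\in Az$.

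I expect the main care to be needed in the closedness step. There one must ensure that limits of lifted points neither escape $B(z,r)$, which the strict inequality in the hypothesis prevents, nor leave the orbit, which properness and closedness of orbits prevent. The remaining steps are formal.
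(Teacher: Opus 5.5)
Your proof is correct, but it takes a different route from the paper's. The paper first uses openness of the image and properness to get $\partial\Upsilon(B(z,r/2))\subset\Upsilon(\partial B(z,r/2))$, and hence $B(\Upsilon(z),c'r)\subset\Upsilon(B(z,r/2))$ with $c'=(1-\Phi)2^{-1-\Phi}$. It then takes the whole slice $L=\Upsilon^{-1}\bigl(B^k(a(z),c'r)\times\{b(z)\}\bigr)$, which is a connected $k$-disk on which $a$ is a homeomorphism. On $L$ it runs the open--closed argument: $Az\cap L$ is closed because orbits are closed, and open by invariance of domain applied to $t\mapsto a(tz)$. This gives $L\subset Az$.

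You instead run the open--closed argument on a single segment from $a(z)$ to $a(y)$ in parameter space, lifting it through $\alpha$. Your closedness step replaces the ready-made connectedness of $L$ with a Cauchy-sequence argument, using the lower H\"older bound, completeness of the proper space $Z$, and closedness of the orbit.

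What your route buys is economy of hypotheses. You never use that $\Upsilon$ has open image, and you avoid the boundary-crossing step; you need only injectivity and continuity of $\Upsilon$ together with the two-sided estimate. The paper's route buys a slightly stronger structural statement: the entire slice over the $k$-ball of radius $c'r$ lies in $Az$, so near $z$ the level set of $b$ is exactly an embedded $k$-disk of the orbit.

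One small point: Lemma~\ref{lem:proper}(a) is stated for closed groups of isometries, whereas here the action is only assumed proper. This does no harm, since all you actually use is continuity and freeness of the action and closedness of orbits, and these follow directly from properness.
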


\begin{proof}
Put $c':=(1-\Phi)2^{-1-\Phi}$.  Properness and openness of the embedding
give
\[
 \partial\bigl(\Upsilon(B(z,r/2))\bigr)
 \subset\Upsilon(\partial B(z,r/2)).
\]
The lower estimate and radial boundary crossing therefore imply
\[
 B(\Upsilon(z),c'r)\subset\Upsilon(B(z,r/2)).
\]
Let $L$ be the inverse image of
$B^k(a(z),c'r)\times\{b(z)\}$.  Then $L\subset B(z,r/2)$ and
$a|_L$ is a homeomorphism onto $B^k(a(z),c'r)$, so $L$ is connected.  If
$p\in B(z,r/4)$ and $b(p)=b(z)$, the upper estimate gives
$|a(p)-a(z)|<(1+\Phi)r/4<c'r$, hence $p\in L$.

The set $Az\cap L$ is non-empty and closed in $L$, because proper orbits
are closed.  It is also open: near each of its points, the map from the
$A$-parameter to the first coordinate $a$ is a continuous injection
between open subsets of $\R^k$, so invariance of domain applies.  Thus
$L\subset Az$.  This proves one inclusion; the other follows from the
$A$-invariance of $b$.
\end{proof}

The following topological promotion will be used for both the local and the
global transverse maps.

\begin{lemma}[From submersions to bundles]\label{lem:bundlepromotion}
Let $M$ be a topological $N$-manifold without boundary, let $B$ be a
connected topological $m$-manifold without boundary, and let
$p\colon M\to B$ be a non-empty proper topological submersion. Then $p$ is
surjective and is a fibre bundle whose fibre is a closed topological
$(N-m)$-manifold.
\end{lemma}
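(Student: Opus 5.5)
The plan is to prove Lemma~\ref{lem:bundlepromotion} by a standard Ehresmann-type argument, with Siebenmann's (or Kirby--Siebenmann's) local triviality for proper topological submersions as the key ingredient. \emph{Topological submersion} means that each point $x\in M$ has a chart neighbourhood $V\cong V'\times D^m$ on which $p$ becomes the projection to $D^m$, where $V'$ is an open subset of $\R^{N-m}$.

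\textbf{Step 1: surjectivity.} A submersion is an open map, since projections are open. So $p(M)$ is open in $B$. A proper map into a locally compact Hausdorff space is closed, so $p(M)$ is also closed. Since $p(M)$ is non-empty and $B$ is connected, $p(M)=B$.

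\textbf{Step 2: the fibres.} Fix $b\in B$ and set $F_b:=p^{-1}(b)$. The local product charts show that $F_b$ is a topological $(N-m)$-manifold without boundary. Properness makes $F_b$ compact, and it is non-empty by Step 1. Hence $F_b$ is a closed $(N-m)$-manifold.

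\textbf{Step 3: local triviality (the main obstacle).} We must show that $p$ is trivial over a neighbourhood of each point of $B$. In the smooth category this is Ehresmann's theorem, obtained by integrating a lifted vector field. In the topological category the horizontal lifts are not available, so I would cite Siebenmann's theorem (\emph{Deformation of homeomorphisms on stratified sets}, Cor.~6.14): a proper topological submersion between topological manifolds without boundary is a locally trivial fibre bundle.

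If a more self-contained route is preferred, I would argue as follows. Cover the compact fibre $F_b$ by finitely many product charts. Each chart gives a local ``flow'' in the $D^m$-directions. These local trivializations must then be glued using the local contractibility of the homeomorphism group of the compact manifold $F_b$, which is Edwards--Kirby. The gluing step is the genuinely hard part, and I would simply invoke Siebenmann's result rather than reprove it.

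\textbf{Step 4: constant fibre type.} Since $B$ is connected and local triviality makes the fibre type locally constant, all fibres are homeomorphic. Hence $p$ is a fibre bundle with fibre a closed $(N-m)$-manifold, as claimed.
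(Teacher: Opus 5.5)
Your proof is correct and follows the paper's argument. Surjectivity comes from the image being open and closed, local triviality from the standard theorem that proper topological submersions are fibre bundles, and the fibres are compact closed $(N-m)$-manifolds by properness and the local product charts. The only difference is the reference: the paper cites Kirby--Siebenmann, Essay~II, \S1, and you cite Siebenmann's Corollary~6.14, which is the same result.
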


\begin{proof}
The image of $p$ is open by the local product definition of a topological
submersion and closed by properness, hence equals $B$. A proper topological
submersion is a fibre bundle \cite[Essay~II, \S1]{KS77}. Its fibres are
compact by properness and are topological $(N-m)$-manifolds without boundary
in the local product charts.
\end{proof}

\subsection{Equivariant convergence}
We use equivariant Gromov--Hausdorff convergence in the sense of Fukaya--Yamaguchi \cite{FY92}, written $(X_i,p_i,G_i)\to(X,p,G)$, with $\epsilon_i$-approximations $(f_i,\phi_i,\psi_i)$ as in \cite[\S2.1]{HHWZ26}. Limits of closed subgroups exist after passing to subsequences. If $G_i'\lhd G_i$ converge to $G'$, then $(X_i/G_i',G_i/G_i')\to(X/G',G/G')$ \cite[Lemma~3.1]{Wan23}, where $G/G'$ may act non-effectively and the limit group is its image in $\Isom(X/G')$. The approximations of the quotients are the ones induced by $(f_i,\phi_i,\psi_i)$, so that $f_i([z])=[f_i(z)]$ up to $o(1)$; all approximations of quotients below, including $f_i\colon X_i\to X$ and $f_i'\colon X_i'\to W'$, are of this kind.

\section{The non-collapsed limit of the abelian covers}\label{sec:limit}

Under $(\ast)$, every inequality used to prove \eqref{eq:b1ineq} is an equality. We extract two consequences that drive the rest of the paper. First, the cover limit $Y$ is non-collapsed (Proposition~\ref{prop:chain}). Second, over regular points of $X$, every blow-up of the action is translational on an $\R^k$-factor (Lemma~\ref{lem:blowupreg}). The second fact excludes a torus factor in $H_0$ and implies that the action is free.

The limit group $H$ is a closed abelian subgroup of the Lie group $\Isom(Y)$, and $X=Y/H$ \cite{FY92}. Because $Y$ is connected and $Y/H$ is compact, $H$ is compactly generated. Thus $H=\R^{k_1}\times\Z^{k_2}\times C'$ for some compact group $C'$.

\begin{proposition}[Dimension chain]\label{prop:chain}
$k_1=k$, $k_2=b_1(X)=:b$, and $\dim Y=N$. Moreover $\mathfrak m_i=c_i\mathcal H^N$ for large $i$, the covers $\hat X_i$ are uniformly non-collapsed, and $Y$ is a non-collapsed $\RCD(K,N)$ space.
\end{proposition}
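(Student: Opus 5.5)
We chain three inequalities, each of which the hypothesis $(\ast)$ forces to be an equality. The argument is essentially the proof of \eqref{eq:b1ineq} read backwards.

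\textbf{Rank bound.} The first step is an upper bound for the rank of $H_i$ in terms of $b$ and $k_1$. By the rank inequality of \cite[Thm.~1.9]{HHWZ26}, in the form repaired in Remark~\ref{rem:rank}, the free rank of the discrete abelian groups $H_i=H_1(X_i;\Z)$ is eventually at most $\rank$ of the limit $H$, counted as $k_1+k_2$. The heuristic is as follows. A generating set of $H_i$ converges to a generating set of $H$. Short elements converge into $H_0$, and no more than $k_1$ independent such elements survive, by no small subgroups (P4) applied after the standard rescaling. The remaining long generators project to $\Gamma=H/H_0$. Next, $X=Y/H$ and $H_0$ acts with connected orbits, so $X=(Y/H_0)/\Gamma$. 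Covering-space theory for the quotient map $Y/H_0\to X$ then gives a surjection from $\pi_1(X)$ (through $H_1$) onto $\Gamma/\text{torsion}$. This uses semi-local simple connectivity (P3) and the fact that $Y/H_0$ is connected and the $\Gamma$-action is properly discontinuous with quotient $X$, as in Lemma~\ref{lem:proper}. Hence $k_2\le b$, and
\[
b_1(X_i)\le k_1+k_2\le k_1+b .
\]

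\textbf{Dimension bound.} Next I bound $k_1$ by dimensions, aiming for $k_1\le \dim Y-m\le N-m$. Pick a regular point $x\in\mathcal R(X)$ and a lift $y$, and blow up at $y$. Tangent cones of $Y$ carry the limit of the rescaled $H$-action, in which the $\R^{k_1}$-orbits become $k_1$-dimensional and the quotient is $\R^m$. Lower semicontinuity of dimension (P1) and $\RCD(0,N)$ tangent cones (P2) are used here. The rank-and-homogeneity argument then shows that some tangent cone splits off $\R^{k_1}\times\R^m$. I expect this to be the main obstacle: one must show the blow-up orbits are genuinely Euclidean factors rather than, say, compact orbits or orbits folding. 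I would handle it by taking further blow-ups at regular points and using the splitting theorem \cite{Gig26} iteratively. Since $\dim Y\le N$ by \cite{BS20}, this gives $k_1+m\le\dim Y\le N$.

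\textbf{Equality and non-collapsing.} Combining the two bounds,
\[
b+k=b_1(X_i)\le k_1+k_2\le (N-m)+b=b+k .
\]
So every inequality is an equality, which gives $k_1=k$, $k_2=b$ and $\dim Y=N$. By (P1), $\dim Y=N$ forces $N\in\mathbb N$ and the limit measure to be $c\,\mathcal H^N$. Lower semicontinuity of $\dim$ under pmGH convergence, together with the fact that $\dim\hat X_i\le N$, gives $\dim\hat X_i=N$ for large $i$. Since $\hat X_i\to X_i$ is a local isomorphism, $\dim X_i=N$, and by \cite{BGHZ23} we get $\mathfrak m_i=c_i\mathcal H^N$. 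Uniform non-collapsing of the $\hat X_i$, meaning a uniform lower bound on $\mathcal H^N$ of unit balls, follows by contradiction. Otherwise a subsequence would have $\mathcal H^N$ of unit balls tending to $0$, and volume convergence in the non-collapsed class \cite{DPG18} would give a limit of dimension less than $N$, contradicting $\dim Y=N$. Finally, $Y$ is $\RCD(K,N)$ by stability (P2), and it is non-collapsed because its measure is a multiple of $\mathcal H^N$.
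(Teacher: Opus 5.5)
Your skeleton matches the paper's. You use the chain $b_1(X)\ge k_2\ge b_1(X_i)-k_1\ge b_1(X_i)+m-\dim Y\ge b_1(X_i)+m-N$, with the first two links from covering theory and the repaired rank inequality, and you read off the non-collapsing once every link is an equality. Your proof of $k_2\le b$ via $Y/H_0$ works, but $\Gamma$ need not act freely there, so you must first divide by the finite-order elements with fixed points. The paper avoids this by letting $\Z^{k_2}$ act freely on $Y/(\R^{k_1}\times C')$.

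The genuine gap is the third link, $\dim Y\ge m+k_1$. This is the only new step in the proof, and you assert it rather than prove it. In the blow-up $(s_j^{-1}Y,y,H)\to(T,o,\mathcal A)$, the claim that ``the $\R^{k_1}$-orbits become $k_1$-dimensional'' is exactly what is in question. Nothing you say rules out that the limit of the rescaled $\R^{k_1}$-action is compact or has rank $<k_1$, since nothing controls how the orbit map $v\mapsto\varphi_vy$ distorts at small scales. Iterated blow-ups and the splitting theorem do not address this. The lines you can lift come from the base $\R^m$ and give $T=\R^m\times Z$, with $\mathcal A$ acting trivially on $\R^m$ and transitively on $Z$. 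No line is produced in the group directions. Further blow-ups of $Z$ only show that $Z$ is $d$-regular with $d=\dim Z$, with no link between $d$ and $k_1$. The argument also does not need a metric splitting off $\R^{k_1}$.

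The missing ingredient is a rank lower bound for a limit group, combined with a topological identification of $Z$.
\begin{enumerate}
\item Choose lattices $\Lambda_j=\delta_j\Z^{k_1}\subset\R^{k_1}$ with $d((\delta_je_\ell)y,y)\le s_j$. In the rescaled spaces they act freely and properly discontinuously, and they are generated by elements of displacement at most $1$.
\item The bounded-generation form of the rank inequality (Remark~\ref{rem:rank}) then gives $\rank L\ge k_1$ for their limit $L\le\mathcal A$.
\item Since $\mathcal A$ is abelian and acts effectively, it acts simply transitively on $Z$, so $Z\cong\mathcal A\cong\R^p\times\T^q$.
\item Homogeneity and metric Reifenberg make $Z$ a topological $d$-manifold, so $d=p+q$ by invariance of dimension.
\item The projection $\R^p\times\T^q\to\R^p$ is proper, which gives $\rank L\le p$.
\end{enumerate}
Hence $k_1\le p\le d$, and lower semicontinuity (P1) yields $\dim Y\ge\dim T=m+d\ge m+k_1$. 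Without this or an equivalent mechanism, your chain of equalities does not close.

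A minor point: your contradiction argument gives non-collapsing at the base point. For uniform non-collapsing at all centres you also need the diameter bound, deck translations, and Bishop--Gromov, or else a re-basing of the pointed limit.
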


\begin{proof}
The proof of \cite[Thm.~6.12]{HHWZ26} uses the chain
\[
b_1(X)\ \ge\ k_2\ \ge\ b_1(X_i)-k_1\ \ge\ b_1(X_i)+m-\dim Y\ \ge\ b_1(X_i)+m-N .
\]
The first inequality holds because $\Z^{k_2}$ acts freely and properly discontinuously on $Y/(\R^{k_1}\times C')$, with quotient $X$; hence $\pi_1(X)$ surjects onto $\Z^{k_2}$. The second is the rank inequality $k_1+k_2\ge b_1(X_i)$ \cite[Thm.~1.9]{HHWZ26}. Remark~\ref{rem:rank} explains a gap in its posted proof and gives a repair.

It remains to justify the third inequality, $\dim Y\ge m+k_1$. Choose $x\in\mathcal R(X)$ and a lift $y\in Y$, and let $(s_j^{-1}Y,y,H)\to(T,o,\mathcal A)$ be a blow-up. The first three steps in the proof of Lemma~\ref{lem:blowupreg} below give a metric-measure splitting
\[
T=\R^m\times Z,
\]
where $Z$ is an $\RCD(0,N-m)$ space and $\mathcal A$ acts trivially on the $\R^m$-factor and transitively on $Z$. Since $\mathcal A$ is abelian, a stabilizer in $Z$ fixes every point of $Z$; it also fixes the $\R^m$-factor, and hence is trivial because $\mathcal A\le\Isom(T)$ acts effectively. Thus the action on $Z$ is simply transitive. The orbit map is a homeomorphism by Lemma~\ref{lem:proper}(a), and $Z$ is connected, so
\[
\mathcal A\cong Z\cong\R^p\times\T^q
\]
for some $p,q\ge0$.

Put $d:=\dim Z$. Choose a $d$-regular point of $Z$. Transitivity makes every point $d$-regular and makes the Euclidean tangent convergence uniform in the centre. The metric Reifenberg theorem \cite[Thm.~A.1.1]{CC97} therefore makes $Z$ locally a topological $d$-manifold. Since $Z$ is also homeomorphic to $\R^p\times\T^q$, invariance of dimension gives
\begin{equation}\label{eq:homogeneous-dimension}
d=p+q.
\end{equation}

We next compare $p$ with $k_1$. The closed factor $\R^{k_1}\le H$ acts freely and properly on $Y$. Fix a basis $e_1,\dots,e_{k_1}$ and choose $\delta_j\downarrow0$ so that
\[
d_Y((\delta_je_\ell)y,y)\le s_j
\qquad(1\le\ell\le k_1).
\]
The lattice $\Lambda_j:=\delta_j\Z^{k_1}$ acts freely, properly discontinuously, and measure-preservingly on $(Y,s_j^{-1}d,y)$ and is generated by elements moving $y$ by at most one. For large $j$ these rescaled spaces are $\RCD(-1,N)$. After passing to a further subsequence, let the equivariant limit of the $\Lambda_j$ be $L\le\mathcal A$. The group $L$ is closed in the abelian Lie group $\mathcal A$, hence compactly generated, and the bounded-generation form of Remark~\ref{rem:rank} gives
\[
\rank L\ge k_1.
\]
On the other hand, the projection $\R^p\times\T^q\to\R^p$ is proper. Its restriction to $L$ has compact kernel and closed image, so
\[
\rank L=\rank\operatorname{pr}_{\R^p}(L)\le p.
\]
Together with \eqref{eq:homogeneous-dimension}, this gives $k_1\le p\le d$. The metric-measure splitting and (P1) now yield
\[
\dim Y\ge\dim T=m+d\ge m+k_1,
\]
as required.

Under $(\ast)$, the two ends of the chain agree. Hence equality holds at every step: $\dim Y=N$, $k_1=k$, and $k_2=b$.
\begin{itemize}
\item By (P1), $\mathfrak m_Y=c\,\mathcal H^N$, so $Y$ is non-collapsed. Since $\dim\hat X_i=\dim X_i\ge\dim Y=N$ for large $i$, another application of (P1) gives $\mathfrak m_i=c_i\mathcal H^N$.
\item The covers are uniformly non-collapsed: $\inf_{i,\hat x}\mathcal H^N(B_1(\hat x))>0$ for large $i$. Indeed, by the volume convergence dichotomy of \cite{DPG18}, either $\mathcal H^N(B_1(\hat p_i))\to0$ and $\dim_{\mathcal H}Y\le N-1$, or $\mathcal H^N(B_1(\hat p_i))\to\mathcal H^N(B_1(\hat p))>0$. The first alternative contradicts $\dim Y=N$. For $\hat x\in\hat X_i$ there is $h\in H_i$ with $d(h\hat x,\hat p_i)\le D$, and Bishop--Gromov bounds $\mathcal H^N(B_1(h\hat x))$ below by $c(K,N,D)\,\mathcal H^N(B_1(\hat p_i))$.\qedhere
\end{itemize}
\end{proof}

\begin{remark}[The rank inequality]\label{rem:rank}
The rank inequality is a statement about equivariant limits. Let finitely generated abelian groups $A_i$ of rank $q$ act freely, properly discontinuously, and by measure-preserving isometries on $\RCD(-1,N)$ spaces $(X_i,p_i)$. Assume that, for some fixed $R$, each $A_i$ is generated by
\[
A_i(R):=\{a\in A_i:d(ap_i,p_i)\le R\},
\]
that $(X_i,p_i,A_i)\to(X,p,A)$, and that the limit Lie group $A$ is compactly generated. Then
\[
\rank A\ge q,
\qquad
\rank(\R^a\times\Z^b\times K):=a+b
\]
for compact $K$. A bound $\diam(X_i/A_i)\le D$ implies these hypotheses, with $R=3D$, and therefore recovers the form used after rescaling for the action of $H_i$ on $\hat X_i$. The bounded-generation form is also the one used above for the shrinking lattices $\Lambda_j$. The version stated in \cite[Thm.~1.9]{HHWZ26} assumes in addition that $q\le N$. That assumption may fail under $(\ast)$ when $b_1(X)>m$, but it is not used in the proof.

The proof of \cite[Thm.~1.9]{HHWZ26} invokes \cite[Lemma~6.1]{HHWZ26}. In the proof of that lemma, the limits of the least powers $g_i^{l_i}$ having displacement at least $R/2$ are asserted to have infinite order and unbounded powers. This proof step fails in arXiv:2605.24380v1 of 23 May 2026, the version cited here; the statement of Lemma~6.1 is nevertheless proved by the self-contained argument below. For a cocompact example satisfying the hypotheses of the rank statement, let $X_i=S^1\times\R$ with the flat product metric, $p_i=(1,0)$, and let $A_i=\langle a_i\rangle\cong\Z$ act by
\[
a_i(z,t)=(e^{2\pi\sqrt{-1}/3}z,t+1/i).
\]
The action is free, proper and cocompact, with $\diam(X_i/A_i)$ uniformly bounded. For $R=3$, the least power with displacement at least $R/2$ is $l_i=1$, whereas $a_i\to(z,t)\mapsto(e^{2\pi\sqrt{-1}/3}z,t)$, an element of order $3$. One repairs the proof by retaining the whole cyclic group rather than a single power. The rank inequality itself remains valid, as the self-contained argument below shows by induction on $q$; that argument also covers $q>N$. If a later version of \cite{HHWZ26} repairs this proof step, this remark should be read as referring only to the version just specified.
\begin{itemize}
\item Choose an infinite-order $a_i\in A_i(R)$; such elements exist because $A_i(R)$ generates $A_i$. The orbit $\{a_i^np_i\}$ is unbounded and has steps at most $R$. Thus its limit group $Z_\infty\le A$ has orbit points at arbitrarily large distance (take the first power reaching distance $L$), so it is non-compact and $\rank Z_\infty\ge1$.
\item The quotient $X_i/\langle a_i\rangle$ is $\RCD(-1,N)$, and $A_i/\langle a_i\rangle$ acts on it with rank $q-1$ and is generated by the images of $A_i(R)$. Its limit is a quotient of $A/Z_\infty$ \cite[Lemma~3.1]{Wan23} and is compactly generated.
\item Rank is additive over closed subgroups of $A\cong\R^a\times\Z^b\times K$, $K$ compact. Indeed, $\rank A=\dim_\R\mathrm{Hom}_c(A,\R)$, and restriction to a closed subgroup $A'\le A$ is onto: a homomorphism $A'\to\R$ vanishes on $A'\cap K$ and extends linearly from the closed subgroup $A'/(A'\cap K)\le\R^{a+b}$ \cite[Thm.~9.11]{HR63}.
\item Induction gives rank at least $q-1$ for the quotient limit. Since that group is a quotient of $A/Z_\infty$, we have $\rank(A/Z_\infty)\ge q-1$. Additivity then yields $\rank A\ge1+(q-1)=q$.
\end{itemize}
\end{remark}

We now prove the second consequence of equality. Over a regular point of $X$ the tangent cone is Euclidean of dimension $m$, which leaves the remaining $k$ directions no room to do anything but translate.

\begin{lemma}[Blow-ups at regular points]\label{lem:blowupreg}
Let $x\in\mathcal R(X)$, and let $y\in Y$ lie over $x$. Then every blow-up $(s_j^{-1}Y,y,H)\to(T,o,\mathcal A)$, $s_j\to0$, has $T=\R^N$, and $\mathcal A$ is the translation group of an $\R^k$-factor. In particular $y\in\mathcal R(Y)$. Moreover $C_y:=\Stab_H(y)$ is trivial.
\end{lemma}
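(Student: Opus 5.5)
The plan is to reproduce, in order, the steps already referred to in the proof of Proposition~\ref{prop:chain} (the ``first three steps''), and then use the equality $\dim Y=N$ from that proposition to finish.

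\emph{Step 1: the quotient blow-up.} Pass to the equivariant limit of the quotients. Since $(s_j^{-1}Y,y,H)\to(T,o,\mathcal A)$, \cite[Lemma~3.1]{Wan23} gives $(s_j^{-1}X,x)=(s_j^{-1}Y/H,[y])\to(T/\mathcal A,[o])$. Because $x\in\mathcal R(X)$, this quotient is $\R^m$, and $T$ is $\RCD(0,N)$ by (P2).

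\emph{Step 2: splitting off $\R^m$.} Each line of $T/\mathcal A=\R^m$ through $[o]$ lifts to a line in $T$. Lift by choosing points of $T$ over the points of the line at distance equal to the quotient distance (Lemma~\ref{lem:proper}(b)), then pass to limits of lifted rays. The distance along a lifted curve is at least the quotient distance, and it is at most the quotient distance because the projection is $1$-Lipschitz. Applying the splitting theorem \cite{Gig26} $m$ times gives $T=\R^m\times Z$ metric-measure. Since $\mathcal A$ is abelian, it commutes with the Busemann functions, so it preserves each line direction up to translation. The quotient being exactly $\R^m$ then forces $\mathcal A$ to act trivially on the $\R^m$-factor and transitively on $Z$, which is $\RCD(0,N-m)$.

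\emph{Step 3: homogeneity.} As in Proposition~\ref{prop:chain}, the action on $Z$ is simply transitive. Hence $Z\cong\mathcal A\cong\R^p\times\T^q$, with $d=\dim Z=p+q$ and $k\le p$ (this uses $k_1=k$).

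\emph{Step 4: using equality.} Now $\dim Y=N=m+k$. By (P1), $\dim T\le\dim Y$, so $m+d\le m+k$, and therefore $d=p=k$ and $q=0$. Thus $Z$ is homeomorphic to $\R^k$ and is a homogeneous $\RCD(0,k)$ space of full dimension, i.e.\ non-collapsed. The main obstacle is to upgrade this to $Z=\R^k$ isometrically. I would use non-collapsing as follows. $Y$ is non-collapsed, so $T$ is a metric cone with $\mathcal H^N$ (P2). Hence $Z$ is a metric cone with vertex $o$; transitivity makes every point a vertex. A cone in which every point is a vertex contains a line through every point, and repeated splitting gives $Z=\R^k$ with $\mathcal A$ acting by translations (a simply transitive abelian isometry group of $\R^k$ isomorphic to $\R^k$). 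Therefore $T=\R^N$. Since the blow-up was arbitrary, $y\in\mathcal R(Y)$.

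\emph{Step 5: the stabilizer.} For $C_y$, note that $C_y$ is compact by Lemma~\ref{lem:proper}(a). Its elements fix $y$, so they pass to every blow-up as elements of $\mathcal A$ fixing $o$, hence to the identity. So $\sup_{g\in C_y}d(gz,z)/s_j\to0$ on $B(y,Rs_j)$ for each $R$. Apply (P4) to the sequence $(s_j^{-1}Y,y)$, which converges to $\R^N$ of rectifiable dimension $N$, with constant subgroups $W_j=C_y$. The hypothesis of (P4) holds because $C_y$ acts with displacement $o(s_j)$ on $B(y,Rs_j)$. This displacement bound should follow from compactness of $C_y$ together with the limit fixing the ball; it needs care, and I would argue by contradiction along a subsequence. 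Then (P4) gives $C_y=1$.
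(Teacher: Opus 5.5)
\textbf{The gap is in Step~2.} There you conclude that $\mathcal A$ acts trivially on the $\R^m$-factor, and hence transitively on $Z$. The reasons you give do not prove this.

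Abelianness is irrelevant. For any isometry $g$ one has $b_{\tilde\gamma}\circ g^{-1}=b_{g\tilde\gamma}$, and commutativity says nothing about how $b_{g\tilde\gamma}$ compares with $b_{\tilde\gamma}$. Moreover, ``preserves each line direction up to translation'' still allows exactly the translations along $\R^m$ that must be excluded.

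``The quotient is $\R^m$'' does not exclude them either. Diagonal translations of $\R^2=\R\times\R$ have quotient $\R$, yet they act non-trivially on the first coordinate factor. So the conclusion has to come from the particular lifted lines. What you need is horizontality in the form
\[
 b^\pm_{\tilde\gamma}=b^\pm_\gamma\circ P .
\]
The inequality $b^\pm_{\tilde\gamma}\le b^\pm_\gamma\circ P$ holds because $P$ is $1$-Lipschitz and $P\tilde\gamma=\gamma$. Equality then follows because $b^+ + b^- = 0$ both on $\R^m$ and on $T$, by the splitting theory.

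Since $P$ is $\mathcal A$-invariant, the $\R^m$-coordinates are $\mathcal A$-invariant. Also $P$ is the projection to the first factor, so the orbits are exactly the slices $\{a\}\times Z$. This gives both triviality on $\R^m$ and transitivity on $Z$. Abelianness is then used only in Step~3, for simple transitivity.

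A smaller point in the same step: your line-lifting sentence swaps the roles. The $1$-Lipschitz property gives the lower bound $d_T\ge d_{\R^m}$. The upper bound comes from the lifted rays having the length of $\gamma$.

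\textbf{The rest of the plan.} With that repair, Steps~1, 3 and~5 are the paper's argument. This includes the contradiction argument that puts the constant subgroups $C_y$ into the hypothesis of (P4).

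Step~4 reaches $T=\R^N$ by a different, valid but longer route. The paper needs no dimension count. Euclidean translations of the $\R^m$-factor together with $\mathcal A$ act transitively on the non-collapsed cone $T$. Since $T$ has $N$-regular points almost everywhere, every point is $N$-regular, including the vertex $o$. A cone is its own tangent cone at the vertex, so $T=\R^N$.

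Your route uses lower semicontinuity and $k\le p$ to get $q=0$, then argues that every point is a vertex, so there are lines through every point, and splits iteratively. It relies on two facts you only assert:
\begin{itemize}
\item \emph{$Z$ is a cone at $z_0$.} The paper obtains this from the degree-one homogeneity of the Busemann functions under the dilations. Alternatively, run your vertex argument directly on $T$, where the same product group acts transitively.
\item \emph{A simply transitive abelian isometry group of $\R^k$ consists of translations.} The paper proves this: $d(gz,z)$ is $\mathcal A$-invariant, hence constant, which forces the linear part to be the identity.
\end{itemize}
In your route the dimension count only fixes the number of split lines, and the measure $\mathcal H^N$ on $T$ already fixes that.
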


\begin{proof}
$T/\mathcal A$ is a tangent cone of $X$ at $x$ \cite[Lemma~3.1]{Wan23}, and hence is $\R^m$; the projection $P\colon T\to\R^m$ is a submetry. By (P2), $T$ is an $\RCD(0,N)$ space. The first three steps use only these facts. Beginning with the fourth step, we also use Proposition~\ref{prop:chain} and (P2), which show that $T$ is non-collapsed and therefore a metric cone with vertex $o$.
\begin{itemize}
\item \emph{Lifting lines.} Let $\gamma$ be a line through $P(o)$. Lift the two rays of $\gamma$ horizontally from $o$ and concatenate. The result $\tilde\gamma$ has the length of $\gamma$ on every segment, and $P$ is $1$-Lipschitz, so $\tilde\gamma$ is a line through $o$.
\item \emph{Busemann functions.} With the convention $b^\pm_\gamma(x):=\lim_{t\to\infty}\bigl(t-d(x,\gamma(\pm t))\bigr)$ of \cite[(2.4)]{Gig26}, since $P$ is $1$-Lipschitz and $P\tilde\gamma=\gamma$, we have $b^\pm_{\tilde\gamma}\le b^\pm_\gamma\circ P$. On $\R^m$, $b^+_\gamma+b^-_\gamma=0$, and on $T$, $b^+_{\tilde\gamma}+b^-_{\tilde\gamma}=0$ \cite[Thm.~4.11]{Gig26}. Hence $b^\pm_{\tilde\gamma}=b^\pm_\gamma\circ P$, which is $\mathcal A$-invariant.
\item \emph{Splitting.} By the splitting theorem \cite{Gig26}, applied to $m$ independent directions, $T=\R^m\times Z$, and $\mathcal A$ preserves the Busemann coordinates, so it acts trivially on $\R^m$. Hence $\mathcal A$ acts transitively on $Z$.
\item \emph{$T=\R^N$.} The space $T$ is a metric cone with vertex $o=(0,z_0)$. Indeed, the lifted lines are unions of rays from $o$, so their Busemann functions are homogeneous of degree one under the cone dilations; those dilations preserve $\{0\}\times Z$. Euclidean translations on the first factor, together with the transitive $\mathcal A$-action on $Z$, act transitively on $T$. Since the non-collapsed $\RCD(0,N)$ space $T$ has $N$-regular points almost everywhere and regularity is preserved by isometries, every point of $T$ is $N$-regular. In particular $o$ is regular. But the tangent cone of $T$ at its cone vertex is $T$ itself, so $T=\R^N$ and $Z=\R^k$.
\item \emph{$\mathcal A$.} Because $\mathcal A$ is abelian and acts transitively on $Z=\R^k$, all stabilizers coincide. Such a common stabilizer acts trivially and is therefore trivial. The orbit map $\mathcal A\to\R^k$ is consequently a proper continuous bijection, hence a homeomorphism, and the action is simply transitive. If $g(z)=A_gz+b_g$, then $d(gz,z)$ is $\mathcal A$-invariant, by commutativity, and therefore constant in $z$. Thus $|(A_g-I)z+b_g|$ is constant on $\R^k$, which forces $A_g=I$. Hence $\mathcal A$ is the full translation group.
\end{itemize}
Finally, suppose that $C_y\ne1$. Under the blow-up, the constant sequence of subgroups $C_y$ converges to a subgroup of $\mathcal A$ fixing $o$, and that subgroup is trivial. Thus $C_y$ becomes small in the sense of (P4), which forces $C_y$ itself to be trivial---a contradiction.
\end{proof}

An orbit over a regular point of $X$ is also no larger than it should be. This bound is what excludes a torus factor in $H_0$.

\begin{lemma}[Orbit dimension]\label{lem:orbitdim}
If $y$ lies over a regular point, then the orbit $H_0y$ has Hausdorff, hence topological, dimension $\le k$.
\end{lemma}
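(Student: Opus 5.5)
The plan is to prove that the orbit looks flat of dimension $k$ at every small scale, uniformly along the orbit, and then turn this into a covering-number bound. The flatness comes from Lemma~\ref{lem:blowupreg} through a compactness argument. The uniformity along the orbit comes for free, because $H_0$ acts by isometries and is abelian. The comparison with topological dimension is then standard: topological dimension is bounded by Hausdorff dimension for separable metric spaces.

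\emph{Step 1: flatness at small scales at $y$.} I claim that for every $\eta>0$ there is $s_0>0$ such that, for all $s<s_0$, the set $H_0y\cap B(y,s)$ lies in the $\eta s$-neighbourhood of a subset $S_s\subset B(y,s)$. Here $(S_s,s^{-1}d)$ is $\eta$-GH close to a subset of $B^k(1)$.

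I argue by contradiction. Suppose $s_j\to0$ violates the claim. Pass to a subsequence so that $(s_j^{-1}Y,y,H)\to(\R^N,o,\mathcal A)$ equivariantly. By Lemma~\ref{lem:blowupreg}, $\mathcal A$ is the translation group of an $\R^k$-factor. Pass to a further subsequence so that $H_0$ converges to a subgroup of $\mathcal A$, and so that the rescaled sets $H_0y\cap B(y,s_j)$ converge in the Hausdorff sense inside the equivariant approximations. Limits of orbit points of $H_0\le H$ are orbit points of the limit group. Hence the Hausdorff limit lies in $\mathcal Ao\cap\bar B(o,1)$, which is a $k$-disc. This contradicts the choice of $s_j$.

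\emph{Step 2: uniformity along the orbit.} Every $z=hy$ with $h\in H_0$ is the image of $y$ under an isometry $h$ that preserves $H_0y$, since $H_0$ is abelian and $h(H_0y)=H_0y$. So Step~1 holds at every point of the orbit with the same $s_0$.

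\emph{Step 3: covering numbers.} Fix $\epsilon\in(0,1)$ and choose $\eta\ll\epsilon$. Then each $H_0y\cap B(z,s)$, with $z\in H_0y$ and $s<s_0$, is covered by at most $C(k)\epsilon^{-k}$ balls of radius $\epsilon s$ centred on the orbit. The constant $C(k)$ depends only on $k$: it comes from covering a $k$-disc of radius $s$ by $\epsilon s/2$-balls, after absorbing the $\eta s$-thickening.

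Iterating $j$ times, $H_0y\cap B(y,s_0)$ is covered by $(C(k)\epsilon^{-k})^j$ balls of radius $\epsilon^js_0$. Hence its Hausdorff dimension is at most
\[
k+\frac{\log C(k)}{\log(1/\epsilon)}.
\]
Letting $\epsilon\to0$ gives dimension at most $k$. The orbit is a countable union of translates of this ball, since $H_0$ is $\sigma$-compact. So $\dim_{\mathcal H}H_0y\le k$.

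\emph{Main obstacle.} The delicate point is Step~1. It needs the Hausdorff limit of the rescaled orbit pieces to lie in the limit orbit $\mathcal Ao$, and not merely in the limit of $H$-orbits. This holds because $H_0\le H$ and, after passing to a subsequence, equivariant convergence carries the limit of $H_0$ into $\mathcal A$. It also needs a single constant $C(k)$, independent of $\epsilon$, so that the iteration in Step~3 gives exactly $k$.
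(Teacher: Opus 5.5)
Your proposal is correct and follows the paper's argument: a contradiction argument with Lemma~\ref{lem:blowupreg} gives $k$-dimensional flatness of the orbit at all scales below some $s_0(\epsilon)$; homogeneity under $H_0$ makes $s_0$ uniform along the orbit; and iterating the resulting $C(k)\epsilon^{-k}$ covering bound gives dimension at most $k+\log C(k)/\log(1/\epsilon)$. The one thing to tidy is the order of quantifiers at the end: since $s_0$ depends on $\epsilon$, first fix $\epsilon$ and cover the whole orbit by countably many balls $B(hy,s_0(\epsilon))$, and only then let $\epsilon\to0$ (your containment in a subset of the $k$-disc, rather than closeness to the full disc as in the paper, is all the upper bound needs).
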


\begin{proof}
By Lemma~\ref{lem:blowupreg} and a contradiction argument, for every $\varepsilon$ there is $r_0$ such that $r^{-1}(Hy\cap\bar B_r(y))$ is $\varepsilon$-GH close to $\bar B^k_1$ for all $r<r_0$. This gives a cover of $H_0y\cap B_r(y)$ by at most $C(k)\varepsilon^{-k}$ balls of radius $6\varepsilon r$ centred on $H_0y$: take a maximal $6\varepsilon r$-separated subset. By homogeneity of the orbit the same holds at every orbit point, with the same $r_0$. Iterating gives upper box dimension $\le\log(C\varepsilon^{-k})/\log(6\varepsilon)^{-1}\to k$.
\end{proof}

\begin{proof}[Proof of Theorem~\ref{thm:master}(i)]
Proposition~\ref{prop:chain} gives the first assertion. Write $H_0=\R^k\times\T^c$, where $\T^c$ is the identity component of $C'$. At a lift $y$ of a regular point, Lemma~\ref{lem:blowupreg} gives $\Stab_{H_0}(y)\subset C_y=1$. Hence $H_0y\cong\R^k\times\T^c$ is a $(k+c)$-manifold, while Lemma~\ref{lem:orbitdim} bounds its dimension by $k$. Thus $c=0$. It follows that $C:=C'$ is finite and $H_0=\R^k$. The action of $H_0$ is free because a stabilizer is compact, whereas $\R^k$ has no non-trivial compact subgroup.

The quotient group $\Gamma=H/H_0\cong\Z^b\times C$ is discrete and acts properly on $W=Y/H_0$, with $X=W/\Gamma$ by Lemma~\ref{lem:proper}(c). Write $P_W\colon Y\to W$ for the projection. Every stabilizer is finite and hence lies in $\Tor(\Gamma)=C$; moreover, $\Gamma_{[y]}$ is the isomorphic image of $C_y$. The asserted local quotient description is standard for properly discontinuous isometric actions, and Lemma~\ref{lem:blowupreg} gives triviality over regular points.
\end{proof}

\section{Pointwise linearization and translational limit symmetry}\label{sec:norot}

We next show that the free action of $H_0\cong\R^k$ is asymptotically translational near every regular orbit, that is, every orbit $H_0y$ with $y\in\mathcal R(Y)$. Such orbits may lie over non-regular points of $X$, which Lemma~\ref{lem:blowupreg} does not cover. Direct differentiation along the orbits is unavailable in the $\RCD$ setting. Instead, we prove that the harmonic differences $u\circ\varphi_v-u$ grow at most linearly in $|v|$ (Proposition~\ref{prop:lip}), linearize their values at one orbit point, and then use the transformation theorem to obtain linear models at every sufficiently small scale. The first-hitting argument in Lemma~\ref{lem:linearmodel} ensures that the whole model $k$-plane, not merely a subgroup of it, occurs in each blow-up.

We do not assume (R). The argument is local near a single orbit $H_0y$, with $y\in\mathcal R(Y)$, where $Y$ is almost Euclidean at every small scale (Lemma~\ref{lem:tube}). All suprema below are taken over compact sets. Under (R), every point is regular, and the conclusions therefore hold everywhere.

Write $\varphi_v$, $v\in\R^k=H_0$, for the free action, and put $D_z(v):=d(\varphi_vz,z)$. Since $H$ is abelian, $D_{hz}=D_z$ for $h\in H$. Moreover $D_z(-v)=D_z(v)$ and $D_z(v+v')\le D_z(v)+D_z(v')$.

\begin{lemma}[Tubes around regular orbits]\label{lem:tube}
Let $y\in\mathcal R(Y)$. There is $\eta_0=\eta_0(N)\in(0,1/2]$ such that, for every $\eta\in(0,\eta_0]$, one can choose $r_0=r_0(y,\eta)\in(0,1]$ satisfying $|K|r_0^2\le\eta$ and the following properties. Every ball $B(z,s)$ with $d(z,H_0y)\le\eta r_0$ and $s\le r_0/2$ is $\Psi(\eta|N)s$-close to $B^N(s)$ in the measured sense, and its volume ratio is at least $1-\Psi(\eta|N)$. Moreover, the $H_0$-invariant tube $\{z:d(z,H_0y)<\eta r_0\}$ is a topological $N$-manifold.
\end{lemma}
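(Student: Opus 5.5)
The plan is to work first at the single point $y$ and then spread the estimates along the orbit using the $H_0$-action. Since $y\in\mathcal R(Y)$ and $Y$ is non-collapsed (Proposition~\ref{prop:chain}), every tangent cone at $y$ is $\R^N$ with measure $\mathcal H^N$. By volume convergence in the non-collapsed class (P2), the Bishop--Gromov volume ratio $\mathcal H^N(B(y,s))/\omega_Ns^N$ tends to $1$ as $s\to0$. Given $\eta$, I would choose $r_0\le1$ with $|K|r_0^2\le\eta$ so small that the ratio at $y$ and radius $2r_0$ is at least $1-\eta$.

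Next I would transfer this bound to all centres near the orbit. Let $z$ satisfy $d(z,H_0y)\le\eta r_0$. Since $H_0$ acts by measure-preserving isometries (the limit measure is $c\mathcal H^N$ and $H$ preserves it), there is $h\in H_0$ with $d(z,hy)\le\eta r_0$, and the volume ratio at $hy$ equals the one at $y$. Compare $B(z,r_0)$ with the concentric balls $B(hy,(1\pm\eta)r_0)$. Combined with the ratio bound at $y$ at radius $2r_0$ and Bishop--Gromov monotonicity (with the $|K|r_0^2\le\eta$ correction), this gives ratio $\ge1-\Psi(\eta|N)$ at $z$ at radius $r_0$. Monotonicity then pushes the bound to every $s\le r_0$, up to a $\Psi(\eta)$ error from the curvature term.

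With the ratio bound uniform over the $H_0$-invariant region, I would apply the volume--Reifenberg package of \S\ref{ssec:packages}. Volume almost rigidity \cite[Thm.~1.6]{DPG18} makes every $B(z,s)$ with $s\le r_0/2$ $\Psi s$-GH close to $B^N(s)$. Measured closeness follows from volume convergence, since the measure is $c\mathcal H^N$; alternatively, Lemma~\ref{lem:measured} applied to the larger ball gives it. Here $\eta_0(N)$ is chosen so that $\Psi(\eta_0|N)$ lies below the Reifenberg threshold. The metric Reifenberg theorem \cite[Thm.~A.1.1]{CC97} applied at every centre of the open tube then shows that the tube is a topological $N$-manifold. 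Points of the tube with distance close to $\eta r_0$ from the orbit still have Reifenberg control at all small scales, since the estimate holds at every point of the closed $\eta r_0$-neighbourhood.

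The only delicate step is the uniformity in $z$. The orbit is non-compact, so one cannot argue by compactness. Homogeneity under the isometric, measure-preserving $H_0$-action resolves this, and it relies on the fact that $D$ and the volume ratio are $H$-invariant. I expect the bookkeeping of radii to be the main nuisance: the ball of radius $2r_0$ at $y$ must dominate the balls of radius $r_0$ at the perturbed centres, while the curvature error in Bishop--Gromov stays $\Psi(\eta)$.
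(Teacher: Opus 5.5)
Your proposal is correct and follows essentially the paper's route. You fix $r_0$ through the Bishop--Gromov ratio at $y$, transfer the lower bound to centres within $\eta r_0$ of the orbit by ball inclusion and monotonicity, and spread it along the orbit by $H_0$-invariance. You then apply the volume--Reifenberg package, with $\eta_0(N)$ chosen below the Reifenberg threshold.

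The ratio bound at radius $2r_0$ is harmless but unnecessary: the bound at radius $r_0$ already suffices by monotonicity. Normalizing by the model volume $V_{K,N}$, as the paper does, makes that monotonicity exact and removes any separate curvature correction.
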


\begin{proof}
Let $\vartheta_s(z):=\mathcal H^N(B(z,s))/V_{K,N}(s)$ be the Bishop--Gromov ratio, where $V_{K,N}(s)$ is the volume of the radius-$s$ ball in the model space. This ratio is non-increasing in $s$. Since every tangent cone at $y$ is $\R^N$, (P2) gives $\vartheta_s(y)\to1$ as $s\to0$. We may therefore choose $r_0$ with $|K|r_0^2\le\eta$ and $\vartheta_{r_0}(y)\ge1-\eta$. If $d(z,y)\le\eta r_0$ and $t\in[r_0(1-\eta),r_0]$, then $B(z,t)\supseteq B(y,r_0(1-2\eta))$ and $V_{K,N}(t)\le V_{K,N}(r_0)$. Bishop--Gromov now gives
\[
\vartheta_t(z)\ \ge\ \frac{\mathcal H^N(B(y,r_0(1-2\eta)))}{V_{K,N}(r_0)}\ \ge\ (1-\Psi(\eta))\,\vartheta_{r_0}(y)\ \ge\ 1-\Psi(\eta),
\]
and monotonicity extends this estimate to every $t\le r_0$. The volume--Reifenberg package of \S\ref{ssec:packages} then gives the asserted measured Euclidean control for $s\le r_0/2$. Because the estimates are $H_0$-invariant, they hold throughout the orbit tube. Choosing $\eta_0(N)$ below the Reifenberg threshold makes every point in the interior of that tube have a Euclidean neighbourhood.
\end{proof}

For the rest of this section we fix $y\in\mathcal R(Y)$.

\begin{lemma}[Linear lower bound]\label{lem:linlower}
Let $Q\subset Y$ be compact and $t_0>0$. There are $c,c_1>0$ such that for all $z\in H_0Q$:
\begin{itemize}
\item $D_z(v)\ge c|v|$ for $|v|\le t_0$;
\item $D_z(v)\ge c_1$ for $|v|\ge t_0$.
\end{itemize}
In particular, $D_z(v)<c_1$ implies $|v|\le D_z(v)/c$.
\end{lemma}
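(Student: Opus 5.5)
First reduce from $H_0Q$ to $Q$. Since $H$ is abelian, $D_{hz}=D_z$ for $h\in H_0$, so it suffices to prove both bounds for $z\in Q$. The main tool is a compactness argument for the free proper action $H_0\cong\R^k$ on $Y$ from Theorem~\ref{thm:master}(i).

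\emph{Second bound.} Consider the continuous function $(z,v)\mapsto D_z(v)$ on the compact set $Q\times\{t_0\le|v|\le R\}$ for a fixed large $R$. Freeness gives $D_z(v)>0$ there, so it has a positive minimum. For $|v|\ge R$ I use properness (Lemma~\ref{lem:proper}(a)). The set $\{v:\varphi_v\bar B(Q,1)\cap\bar B(Q,1)\ne\emptyset\}$ is compact. Choosing $R$ beyond it gives $D_z(v)\ge1$ for $z\in Q$ and $|v|\ge R$. Taking $c_1$ as the minimum of these two constants proves the second bullet.

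\emph{First bound.} This is the main obstacle, because compactness alone gives nothing as $v\to0$. I will argue by contradiction. Suppose there are $z_j\in Q$ and $0<|v_j|\le t_0$ with $D_{z_j}(v_j)\le|v_j|/j$. The bound $D\ge c_1'>0$ on $Q\times\{t_0/2\le|v|\le t_0\}$ from the previous paragraph forces $|v_j|\to0$. Let $n_j$ be the largest integer with $n_j|v_j|\le t_0/2$, and set $w_j:=n_jv_j$, so that $t_0/4\le|w_j|\le t_0/2$ for large $j$. Subadditivity gives
\[
D_{z_j}(w_j)\le n_jD_{z_j}(v_j)\le n_j|v_j|/j\le t_0/(2j)\to0 .
\]
This contradicts the positive lower bound on $Q\times\{t_0/4\le|w|\le t_0/2\}$. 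Hence $D_z(v)\ge c|v|$ holds for some $c>0$. So the apparent difficulty dissolves: the only input needed is that $D_z$ is subadditive and that $D$ is bounded below on a compact annulus in $v$, by freeness, continuity, and compactness.

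\emph{Last clause.} If $D_z(v)<c_1$, the second bullet forces $|v|<t_0$. The first bullet then gives $|v|\le D_z(v)/c$.
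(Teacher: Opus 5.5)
Your proof is correct and follows the paper's route: you reduce to $Q$, get the annulus and large-$|v|$ bounds from freeness, compactness and properness, and then use subadditivity to push a short $v$ into a fixed annulus. The paper does the last step directly with $n=\lceil t_0/|v|\rceil$, which gives the explicit constant $c=c_0/(2t_0)$, whereas you argue by contradiction. One small imprecision: the annulus bound you cite only gives $|v_j|<t_0/2$ eventually, not $|v_j|\to0$, but that weaker conclusion is all your argument uses.
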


\begin{proof}
By $H_0$-invariance, it suffices to take $z\in Q$. Properness gives a uniform positive lower bound for $D_z(v)$ when $|v|$ is large, while freeness and compactness give one on each fixed annulus. This proves the second assertion and, in particular, gives $c_0>0$ such that $D_z(v)\ge c_0$ whenever $t_0\le|v|\le2t_0$. If $0<|v|\le t_0$, choose $n=\lceil t_0/|v|\rceil$. Then $n|v|\in[t_0,2t_0]$, and subadditivity yields $c_0\le D_z(nv)\le nD_z(v)$. Thus $D_z(v)\ge(c_0/2t_0)|v|$.
\end{proof}

\emph{A harmonic chart at $y$.} For each sufficiently small $\delta\in(0,\delta(N)]$ we make the following construction; in Lemma~\ref{lem:linearmodel}, $\delta$ will be chosen in terms of the desired accuracy. In Lemma~\ref{lem:tube}, choose $\eta=\eta(\delta)$ so that $\Psi(\eta|N)\le\delta$, and put $r_0:=r_0(y,\eta)$. Next choose $S>0$ with $200S\le\eta r_0$ and $|K|(100S)^2\le\delta$. Every ball of radius at most $100S$ centred in $B(y,100S)$ is then measured-$\delta$-close to a Euclidean ball and lies in a topological $N$-manifold.
After rescaling, \cite[Thm.~2.5(1)]{HHWZ26} gives a $\Psi(\delta)$-splitting map $u'$ on $B(y,17S)$. The free proper action of $H_0\cong\R^k$ makes $Y$ non-compact, so the non-exhaustion hypothesis of Lemma~\ref{lem:harmonicpackage} holds at every finite scale. Let $u$ be the harmonic replacement of $u'$ on $B(y,17S)$. The fixed-radii form of that lemma, including its Lipschitz clause, shows that $u$ is a harmonic $\Psi(\delta)$-splitting map on $B(y,16S)$.
By canonical Reifenberg \cite[Thm.~2.10]{HHWZ26}, on $B(y,8S)$ we have
\[
(1-\Phi)(8S)^{-\Phi}d(z,z')^{1+\Phi}\le|u(z)-u(z')|\le(1+\Phi)d(z,z'),
\]
with $\Phi=\Phi(\delta)\to0$ as $\delta\to0$. We always take $\delta$ small enough that $\Phi\le0.07$. Put $Q:=\bar B(y,16S)$, and let $c,c_1$ be as in Lemma~\ref{lem:linlower} for $Q$ and some fixed $t_0>0$. For $0<R\le16S$ and $\rho>0$ put
\[
F_R(\rho):=\sup_{|v|\le\rho,\ z\in B(y,R)}D_z(v).
\]
Since the action is continuous and $Q$ is compact, $F_{16S}(\rho)\to0$ as $\rho\to0$. Fix $\rho_1>0$ with $F_{16S}(2\rho_1)\le S/8$. For $0<R\le16S$ and $0<\rho\le2\rho_1$ put
\[
M_R(\rho):=\sup_{|v|\le\rho,\ z\in B(y,R)}|u(\varphi_vz)-u(z)| ;
\]
here $\varphi_vz\in B(y,17S)$, the domain of $u$. Both $F_R$ and $M_R$ are non-decreasing in $R$ and in $\rho$. For $\rho\le\rho_1$ and $R\le7S$, the points $z$ and $\varphi_vz$ in these suprema lie in $B(y,8S)$, and the bi-H\"older bounds give
\begin{equation}\label{eq:FM}
M_R(\rho)\le(1+\Phi)F_R(\rho),\qquad F_R(\rho)\le CS^{\Phi/(1+\Phi)}M_R(\rho)^{1/(1+\Phi)} .
\end{equation}
In particular $M_R(\rho)\to0$ as $\rho\to0$. For $|v|\le2\rho_1$ the function $h_v:=u\circ\varphi_v-u$ is harmonic on $B(y,15S)$, since $\varphi_v$ is a measure-preserving isometry mapping $B(y,15S)$ into $B(y,17S)$. By the gradient estimate of \cite[Thm.~1.1]{Jia14} (with $\lambda=0$; it is of Cheng--Yau type), for $\varrho\le S$ and $B(z,2\varrho)\subset B(y,15S)$,
\begin{equation}\label{eq:CY}
|\nabla h_v|\le C(N)\sup_{B(z,2\varrho)}|h_v|/\varrho\quad\text{a.e.\ on }B(z,\varrho).
\end{equation}
The harmonicity of $u$ is essential here. If we knew only that $|\Delta u|\le\delta/S$, the gradient bound for $h_v$ would contain an additive error independent of $|v|$, and the iteration below would fail.

\begin{proposition}[Lipschitz modulus in the chart]\label{prop:lip}
There are $\rho_2\in(0,\rho_1]$ and $\mathsf L$, depending on $\delta$, $S$, $u$ and the action, with $M_{2S}(\rho)\le\mathsf L\rho$ for $\rho\le\rho_2$.
\end{proposition}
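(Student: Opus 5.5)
The plan is to run a dyadic ``halving'' iteration based on the cocycle identity together with the Cheng--Yau estimate \eqref{eq:CY}. Since $\varphi$ is an action, for $|v|\le\rho_1$ we have
\[
h_{2v}=h_v\circ\varphi_v+h_v,\qquad\text{hence}\qquad
2h_v(z)=h_{2v}(z)-\bigl(h_v(\varphi_vz)-h_v(z)\bigr).
\]
The error term is controlled by the gradient of $h_v$ along a short path. Take a geodesic from $z$ to $\varphi_vz$; it has length $D_z(v)\le F_R(\rho)$. By \eqref{eq:CY} and the Sobolev-to-Lipschitz property, $h_v$ is Lipschitz near $z$, with constant at most $C\,M_{R'}(\rho)/\varrho$, provided $B(z,2\varrho)\subset B(y,R')$. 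This gives
\[
M_{R}(\rho)\le\tfrac12 M_{R}(2\rho)+C\,\frac{F_{R}(\rho)}{\varrho}\,M_{R+2\varrho+F_R(\rho)}(\rho),
\]
so the unavoidable cost is that the radius increases on the right-hand side. The point where harmonicity enters is that $M$ appears multiplicatively in the error; with only $|\Delta u|\le\delta/S$ there would be an additive term, as the paper warns.

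Next I read this inequality downward in scale. Put $\rho^{(j)}:=2^{-j}\rho_2$. Choose decreasing radii $R_0=6S>R_1>\dots\downarrow R_\infty\ge2S$ with gaps $R_j-R_{j+1}=:3\varrho_j$, where $\varrho_j=c_0S\,2^{-j/2}$ and $\sum_j\varrho_j\le 4S/3$. Applying the inequality with $\rho=\rho^{(j+1)}$, the radius $R=R_{j+1}$, and $\varrho=\varrho_j$ (and assuming $F_{R_j}\le\varrho_j$), and using monotonicity in $R$, yields
\[
M_{R_{j+1}}(\rho^{(j+1)})\le\frac{1}{2}\,M_{R_j}(\rho^{(j)})+\epsilon_j\,M_{R_j}(\rho^{(j+1)}),\qquad \epsilon_j:=C\,F_{R_j}(\rho^{(j+1)})/\varrho_j .
\]
Since $M_{R_j}(\rho^{(j+1)})\le M_{R_j}(\rho^{(j)})$, this gives $a_{j+1}\le(\tfrac12+\epsilon_j)a_j$ for $a_j:=M_{R_j}(\rho^{(j)})$.

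The main obstacle is making $\epsilon_j$ summable, i.e.\ showing that $F$ decays faster than the radius gaps $2^{-j/2}$. Here I use \eqref{eq:FM}: $F_R\le CS^{\Phi/(1+\Phi)}M_R^{1/(1+\Phi)}$ with $\Phi\le0.07$. I argue by bootstrap. Suppose $a_i\le A\,2^{-i}\rho_2$ for all $i\le j$; if $\rho_2$ is small, $A$ can be taken to be the value at $j=0$ scaled by $1/\rho_2$, and $a_0$ is small because $M\to0$. Then
\[
F_{R_j}\lesssim (A\rho_2)^{1/1.07}\,2^{-j/1.07},\qquad
\epsilon_j\lesssim 2^{-j(1/1.07-1/2)}(A\rho_2)^{1/1.07}/S ,
\]
which is geometrically summable. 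The constant $A\rho_2=a_0 2^{\,\cdot}$ is governed by $a_0=M_{6S}(\rho_2)\le(1+\Phi)F_{6S}(\rho_2)\to0$. Therefore, once $\rho_2$ is small enough, $\prod_j(1+2\epsilon_j)\le2$, the condition $F_{R_j}\le\varrho_j$ holds at every step, and the bootstrap closes with $a_j\le 2\,a_0\,2^{-j}$.

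Finally, for $\rho\le\rho_2$ choose $j$ with $\rho^{(j+1)}<\rho\le\rho^{(j)}$. Since $R_j\ge2S$ and $M$ is monotone,
\[
M_{2S}(\rho)\le a_j\le 2a_0\,2^{-j}\le 4a_0\rho/\rho_2 .
\]
This proves the proposition with $\mathsf L:=4M_{6S}(\rho_2)/\rho_2$, a constant depending only on $\delta$, $S$, $u$ and the action. Lemma~\ref{lem:linlower} is not needed for this upper bound; it becomes relevant only for the matching lower bound.
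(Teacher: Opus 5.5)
Your proposal is correct and follows essentially the paper's route: the cocycle doubling inequality with a Cheng--Yau margin $\varrho_j\sim S2^{-j/2}$ on shrinking radii, then \eqref{eq:FM} to make the error superlinear in $a_j$, and finally a summable product $\prod_j(1+2\epsilon_j)$ controlling $a_j/\rho^{(j)}$. The only difference is bookkeeping: you bootstrap the linear bound $a_j\le 2a_0 2^{-j}$ directly, whereas the paper first proves the cruder decay $a_j\le(2/3)^j a_0$ and then reads off the linear rate from the same product.
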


\begin{proof}
\emph{Doubling with a margin.} Let $\rho\le\rho_1$, $R\le5S$ and $\varrho\le S$ with $F_{R+2\varrho}(\rho)\le\varrho$. For $z\in B(y,R)$ and $|v|\le\rho$ use the cocycle identity
\[
u(\varphi_{2v}z)-u(z)=h_v(\varphi_vz)+h_v(z) .
\]
A geodesic from $z$ to $\varphi_vz$ has length $D_z(v)\le\varrho$, so it stays in $\bar B(z,\varrho)$. By the Sobolev-to-Lipschitz property of $\RCD$ spaces, the almost-everywhere bound from \eqref{eq:CY} gives the same Lipschitz bound for the continuous representative of $h_v$ on $B(z,\varrho)$. If $D_z(v)=\varrho$, apply it first to interior points of the geodesic and pass to the endpoint by continuity. Together with \eqref{eq:FM}, this yields $|h_v(\varphi_vz)-h_v(z)|\le CF_{R+2\varrho}(\rho)F_R(\rho)/\varrho$, and taking suprema,
\begin{equation}\label{eq:double}
M_R(2\rho)\ \ge\ 2M_R(\rho)-CF_{R+2\varrho}(\rho)\,F_R(\rho)/\varrho .
\end{equation}
The margin $2\varrho$ is needed because the gradient estimate uses $h_v$ on a neighbourhood of the base points. We let it shrink with the scale, so that the base balls shrink by a bounded amount in total.

\emph{Iteration.} Put $\rho_j:=\rho_22^{-j}$ and $\varrho_j:=(S/8)2^{-j/2}$. Let $R_0:=4S$ and $R_{j+1}:=R_j-2\varrho_j$. Then $R_j$ decreases to $R_\infty\ge4S-(S/4)\sum_j2^{-j/2}\ge3S$. Put $a_j:=M_{R_j}(\rho_j)$, so that $a_{j+1}\le a_j$ by monotonicity. Apply \eqref{eq:double} with $R=R_{j+1}$, $\rho=\rho_{j+1}$ and $\varrho=\varrho_j$, so that $R+2\varrho=R_j$. Using $M_{R_{j+1}}(\rho_j)\le a_j$ and \eqref{eq:FM},
\[
2a_{j+1}\ \le\ a_j+CS^{2\Phi/(1+\Phi)}\varrho_j^{-1}a_j^{2/(1+\Phi)}\ =\ a_j(1+\varepsilon_j),\qquad\varepsilon_j:=C\,2^{j/2}(a_j/S)^\beta,
\]
with $\beta:=\frac{1-\Phi}{1+\Phi}$, provided $F_{R_j}(\rho_j)\le\varrho_j$. Since $\Phi$ is small, $\lambda:=2^{1/2}(2/3)^{\beta}<1$ and $\lambda':=2^{1/2}(2/3)^{1/(1+\Phi)}<1$. Choose $\rho_2\le\rho_1$ so small that $C(a_0/S)^\beta\le\frac14$ and $CS^{\Phi/(1+\Phi)}a_0^{1/(1+\Phi)}\le S/8$; this is possible because $a_0=M_{4S}(\rho_2)\to0$ as $\rho_2\to0$. We show by induction that $a_j\le(2/3)^ja_0$.
\begin{itemize}
\item By \eqref{eq:FM}, $F_{R_j}(\rho_j)\le CS^{\Phi/(1+\Phi)}\bigl((2/3)^ja_0\bigr)^{1/(1+\Phi)}\le(S/8)\lambda'^j2^{-j/2}\le\varrho_j$, so the doubling step applies.
\item $\varepsilon_j\le C(a_0/S)^\beta\lambda^j\le\frac14$, so $a_{j+1}\le\frac58a_j\le\frac23a_j$.
\end{itemize}
Hence $\varepsilon_j\le\frac14\lambda^j$, and $a_{j+1}/\rho_{j+1}=2a_{j+1}/\rho_j\le(a_j/\rho_j)(1+\varepsilon_j)$. So $a_j/\rho_j\le(a_0/\rho_2)\exp(\sum_l\varepsilon_l)=:\mathsf L/2$. For $\rho\in(\rho_{j+1},\rho_j]$, $M_{2S}(\rho)\le a_j\le\mathsf L\rho_j/2\le\mathsf L\rho$, since $R_j\ge2S$.
\end{proof}

The value of the harmonic cocycle at one orbit point has an actual linear
part.  This is stronger than merely saying that its rescalings are almost
additive.

\begin{lemma}[Pointwise linearization]\label{lem:pointlinear}
After decreasing $\rho_2$, put
\[
 a(v):=h_v(y)=u(\varphi_vy)-u(y),
 \qquad \alpha:=\frac1{1+\Phi}.
\]
There exist a linear map $A\colon\R^k\to\R^N$ and a constant
$C<\infty$ such that
\begin{equation}\label{eq:pointlinear}
 |a(v)-Av|\le C|v|^{1+\alpha}\qquad(|v|\le\rho_2).
\end{equation}
Moreover, for $|v|\le\rho_2$ and $z\in B(y,S)$,
\begin{equation}\label{eq:cocycleosc}
 |h_v(z)-h_v(y)|\le C|v|d(z,y).
\end{equation}
\end{lemma}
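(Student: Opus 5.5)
I would prove \eqref{eq:cocycleosc} first and then deduce \eqref{eq:pointlinear} from it via the cocycle identity.

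\emph{Step 1: the oscillation bound.} By Proposition~\ref{prop:lip}, $|h_v|\le M_{2S}(|v|)\le\mathsf L|v|$ on $B(y,2S)$ for $|v|\le\rho_2$. The function $h_v$ is harmonic on $B(y,15S)$. Applying the gradient estimate \eqref{eq:CY} with $\varrho=S/2$ at centres in $B(y,S)$, I get $|\nabla h_v|\le C\mathsf L|v|/S$ a.e.\ on $B(y,3S/2)$. The Sobolev-to-Lipschitz property, as used in the proof of Proposition~\ref{prop:lip}, then gives $|h_v(z)-h_v(y)|\le C|v|d(z,y)$ for $z\in B(y,S)$. Geodesics from $y$ to such $z$ stay in $B(y,S)$, so there is no issue with leaving the domain. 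This proves \eqref{eq:cocycleosc}.

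\emph{Step 2: almost additivity.} Using the cocycle identity $a(v+w)=h_w(\varphi_vy)+a(v)$, I would get
\[
|a(v+w)-a(v)-a(w)|=|h_w(\varphi_vy)-h_w(y)|\le C|w|\,d(\varphi_vy,y).
\]
Now $d(\varphi_vy,y)=D_y(v)$, and the bi-H\"older bound gives $D_y(v)\le C|a(v)|^{\alpha}\le C(\mathsf L|v|)^{\alpha}$. Hence
\[
|a(v+w)-a(v)-a(w)|\le C|w|\,|v|^{\alpha}.
\]
This requires $|v|,|w|$ small enough that $\varphi_vy\in B(y,S)$, which is why $\rho_2$ may need to be decreased. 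By symmetry the error is at most $C\min(|v|,|w|)\max(|v|,|w|)^\alpha$.

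\emph{Step 3: Hyers--Ulam linearization.} This is the main step. Put $a_j(v):=2^ja(2^{-j}v)$. From Step~2 with $w=v$,
\[
|a(2v)-2a(v)|\le C|v|^{1+\alpha},
\]
so $|a_{j+1}(v)-a_j(v)|\le C2^{-j\alpha}|v|^{1+\alpha}$. This is summable, so $a_j\to A$ uniformly on $\{|v|\le\rho_2\}$. The telescoping sum gives $|a(v)-A(v)|\le C'|v|^{1+\alpha}$.

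Additivity of $A$ follows by rescaling Step~2: the defect of $a_j$ is $2^j\cdot C|2^{-j}w|\,|2^{-j}v|^\alpha=O(2^{-j\alpha})\to0$. Extending by homogeneity, $A$ is additive on $\R^k$. It is also continuous, since $|a_j(v)|\le\mathsf L|v|$ gives $|A(v)|\le\mathsf L|v|$. A continuous additive map is linear, which proves \eqref{eq:pointlinear}.

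The obstacle I expect is the bookkeeping of domains: every $\varphi_v y$ and every $\varphi_{2^{-j}v}y$ appearing must lie in $B(y,S)$. This is handled by shrinking $\rho_2$ so that $F_{S}(2\rho_2)<S/2$. The other delicate point is that the H\"older exponent $\alpha<1$ enters only through $D_y(v)\lesssim|v|^\alpha$. If one instead had $D_y(v)\le C|v|$, which Lemma~\ref{lem:linlower} does not give, exponent $2$ would follow. As it stands, exponent $1+\alpha$ is what the argument yields.
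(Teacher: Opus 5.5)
Your proposal is correct and follows the paper's proof essentially step for step. The paper also proceeds in three moves. First, the gradient estimate \eqref{eq:CY} together with Proposition~\ref{prop:lip} and the Sobolev-to-Lipschitz property gives \eqref{eq:cocycleosc}. Second, the cocycle identity combined with the bi-H\"older bound $D_y(w)\le C|w|^\alpha$ controls the additivity defect. Third, the dyadic limit of $2^na(2^{-n}v)$ produces $A$, which is then extended from a small ball to a linear map on $\R^k$.
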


\begin{proof}
Proposition~\ref{prop:lip} and \eqref{eq:CY}, applied at the centre $y$
and scale $S$, give $|\nabla h_v|\le C|v|$ on $B(y,S)$.  The
Sobolev-to-Lipschitz property gives \eqref{eq:cocycleosc} for the continuous
representative.  On the other hand, canonical Reifenberg,
Proposition~\ref{prop:lip}, and $a(w)=h_w(y)$ give
\begin{equation}\label{eq:parameterholder}
 D_y(w)\le C|a(w)|^\alpha\le C|w|^\alpha.
\end{equation}
After decreasing $\rho_2$, the geodesic from $y$ to $\varphi_wy$ lies in
$B(y,S)$.  The cocycle identity therefore yields
\begin{equation}\label{eq:cocycledefect}
 \begin{split}
 |a(v+w)-a(v)-a(w)|
 &=|h_v(\varphi_wy)-h_v(y)|\\
 &\le C|v|D_y(w)\le C|v|\,|w|^\alpha
 \end{split}
\end{equation}
whenever $v,w,v+w$ lie in a fixed smaller parameter ball.

For such $v$, set $b_n(v):=2^na(2^{-n}v)$.  Applying
\eqref{eq:cocycledefect} with both arguments equal to $2^{-n-1}v$ gives
\[
 |b_{n+1}(v)-b_n(v)|\le C|v|^{1+\alpha}2^{-n\alpha}.
\]
Thus $A(v):=\lim_nb_n(v)$ exists and satisfies
\eqref{eq:pointlinear}.  Applying \eqref{eq:cocycledefect} to
$2^{-n}v$ and $2^{-n}w$, and then multiplying by $2^n$, shows that
$A(v+w)=A(v)+A(w)$.  Since $A$ is continuous by
\eqref{eq:pointlinear}, this local additive map is the restriction of a
unique linear map $A\colon\R^k\to\R^N$.

\end{proof}

Comparing charts at the same scale repeatedly uses the following compactness
principle.

\begin{lemma}[Euclidean alignment]\label{lem:nearisom}
For every $n$ and $\eta>0$ there is $\iota=\iota(n,\eta)>0$ with the
following properties.
\begin{enumerate}[label=\textup{(\alph*)}]
\item If $R>0$ and $f\colon B^n(0,R)\to\R^n$ satisfies
\[
 \bigl||f(p)-f(q)|-|p-q|\bigr|\le\iota R
 \quad(p,q\in B^n(0,R)),
\]
then there is a Euclidean isometry $E$ with
$|f-E|\le\eta R$ on $B^n(0,R)$.
\item Let $(Z,z)$ be a pointed metric space, and suppose
$\pi\colon B_Z(z,2R)\to B^n(0,2R)$ is an $\iota R$-GH approximation
with $\pi(z)=0$. If $u\colon B_Z(z,2R)\to\R^n$ satisfies
\[
 |u(x)-u(y)|\le d(x,y)+\iota R
\]
and $u(B_Z(z,2R))$ is $\iota R$-dense in $B^n(0,2R)$, then some
Euclidean isometry $E$ satisfies
\[
 \sup_{B_Z(z,R)}|u-E\circ\pi|\le\eta R.
\]
In particular, $u$ is an $O(\eta R)$-GH approximation on $B_Z(z,R)$.
\end{enumerate}
The radii $2R$ and $R$ may be replaced by any fixed pair of nested
radii, after changing $\iota$ by a constant depending only on that pair.
\end{lemma}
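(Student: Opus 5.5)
Both parts are compactness-and-contradiction statements. For each I would argue by contradiction, normalize by scaling, and pass to a limit in which the hypotheses become exact.

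\emph{Part (a).} By scaling take $R=1$, and compose with a translation so that $f(0)=0$. Suppose the claim fails for some $\eta$. Then there are maps $f_j\colon B^n(0,1)\to\R^n$ with $f_j(0)=0$ whose additive distortion is at most $1/j$, yet $\sup|f_j-E|>\eta$ for every Euclidean isometry $E$. These maps are uniformly bounded, since $|f_j(p)|\le|p|+1/j$. They are not continuous, so I would use a diagonal argument on a countable dense subset and then extend by uniform approximate continuity, $|f_j(p)-f_j(q)|\le|p-q|+1/j$. This extracts a subsequence converging uniformly to a map $f$ that preserves distances exactly on the open unit ball. Such an $f$ extends to an isometry of the closed ball, and an isometry of a ball is affine, hence the restriction of a Euclidean isometry $E$. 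Uniform convergence then contradicts $\sup|f_j-E|>\eta$. The only delicate point is this uniform limit of discontinuous maps, and the Arzel\`a--Ascoli-type argument for asymptotically equicontinuous families handles it.

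\emph{Part (b).} I would reduce to (a) by transporting $u$ to the Euclidean ball through a quasi-inverse of $\pi$. Since $\pi$ is an $\iota R$-GH approximation, there is $\bar\pi\colon B^n(0,2R)\to B_Z(z,2R)$ with $|\pi\bar\pi-\mathrm{id}|\le C\iota R$ and $d(\bar\pi\pi x,x)\le C\iota R$. Set $g:=u\circ\bar\pi$. Then $|g(p)-g(q)|\le|p-q|+C\iota R$, so $g$ is $1$-Lipschitz up to $C\iota R$, and $g(B^n(0,2R))$ is $C\iota R$-dense in $B^n(0,2R)$.

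The key step is the lower bound on $|g(p)-g(q)|$ on the smaller ball. I would again argue by contradiction and pass to a limit. Normalizing $R=1$, suppose counterexamples $g_j$ with $\iota_j\to0$. As in (a), they converge along a subsequence to a $1$-Lipschitz map $g\colon\bar B^n(0,2)\to\bar B^n(0,2)$ that is surjective. The normalization $u(z)$ near $0$ is not given, so translations must be absorbed; surjectivity of $g$ onto the ball holds anyway. Surjectivity shows that $g$ is distance-preserving. Indeed, $g$ is a $1$-Lipschitz surjection of a compact metric space onto itself, and by the standard Freudenthal--Hurewicz argument (the expanding/non-contracting maps lemma for compact spaces), such a map is an isometry. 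This step is where the density hypothesis is used, and it is the main obstacle I expect. Only the ball $B(0,1)$ is needed, but I would apply the lemma on $\bar B(0,2)$ and then restrict. Since $g$ is an isometry, it is the restriction of some Euclidean $E$, so $g_j$ is uniformly close to $E$ on $B(0,1)$, giving a contradiction.

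Finally I unwind the reduction. For $x\in B_Z(z,R)$ we have $u(x)$ within $\iota R$ of $u(\bar\pi\pi x)=g(\pi x)$, because $d(x,\bar\pi\pi x)\le C\iota R$ and $u$ is Lipschitz up to $\iota R$. Since $g(\pi x)$ is within $\eta R/2$ of $E\pi x$, this gives $\sup_{B_Z(z,R)}|u-E\circ\pi|\le\eta R$ after shrinking $\iota$. The GH-approximation claim follows because $E\circ\pi$ is an $O(\iota R)$-GH approximation. For general nested radii $r_1<r_2$, the same argument goes through with $r_2,r_1$ in place of $2R,R$, changing constants only through the ratio $r_2/r_1$.
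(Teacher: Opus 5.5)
Your proposal is correct and follows essentially the paper's route: a compactness-and-contradiction argument for (a), and for (b) composition with a quasi-inverse of $\pi$, extraction of a limiting non-expanding surjection of the ball onto itself, which is an isometry and hence Euclidean. The one detail you leave implicit is that the limit's image lies in $\bar B^n(0,2)$, since the hypotheses do not confine $u$ to the ball. It follows because a $1$-Lipschitz image of $\bar B^n(0,2)$ has diameter at most $4$ and contains $\bar B^n(0,2)$, so any point of norm greater than $2$ would lie more than $4$ from the boundary point opposite to it.
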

\begin{proof}
After scaling, a contrary sequence in (a) converges on a countable dense set
to a distance-preserving map, hence to a Euclidean isometry; a finite net
upgrades the convergence to uniform convergence. For (b), take
quasi-inverses to $\pi$. A contrary sequence converges uniformly on compact
subballs to a non-expanding surjection of a Euclidean ball onto itself. Such
a map is an isometry, and part~(a), followed by a finite-net argument on
$Z$, gives the stated estimate.
\end{proof}

The linearization becomes useful only after it is normalized at the metric
scale under consideration.  Part~(c) below prevents the image plane
from containing formal directions that are not realized by
bounded-displacement elements.

\begin{lemma}[Linear models at one scale]\label{lem:linearmodel}
For every $R\ge1$ and $\sigma\in(0,1/10)$ there is
$r_{R,\sigma}=r_{R,\sigma}(y)>0$ such that, whenever
$0<r\le r_{R,\sigma}$, there are a map
\[
 U_r\colon B(y,10Rr)\longrightarrow\R^N,\qquad U_r(y)=0,
\]
a $k$-plane $V_r\le\R^N$, and a linear isomorphism
$L_r\colon\R^k\to V_r$ with the following properties:
\begin{enumerate}[label=\textup{(\alph*)}]
\item $U_r$ is a $\sigma r$-GH approximation on $B(y,10Rr)$;
\item if $D_y(v)\le Rr$, then
\begin{equation}\label{eq:scaletranslation}
 |U_r(\varphi_vz)-U_r(z)-L_rv|\le\sigma r
 \qquad(z\in B(y,4Rr)),
\end{equation}
and $|D_y(v)-|L_rv||\le2\sigma r$;
\item if $\xi\in V_r$ and $|\xi|\le Rr$, there is $v\in\R^k$ with
$L_rv=\xi$ and
\begin{equation}\label{eq:firsthitbound}
 D_y(v)\le r+2|\xi|.
\end{equation}
\end{enumerate}
The map $L_r$ has a lower singular-value bound independent of $r$.
\end{lemma}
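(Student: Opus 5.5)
We take $U_r$ to be a renormalization of the fixed harmonic chart $u$ from the construction before Proposition~\ref{prop:lip}, and $L_r$ to be the same renormalization applied to the linear part $A$ of Lemma~\ref{lem:pointlinear}. Since $y\in\mathcal R(Y)$, the transformation theorem for splitting maps \cite{HHWZ26} applies at every scale $r\le S$. It gives matrices $T_r$ (lower triangular) such that $T_r(u-u(y))$ is an $(N,\Psi(\delta))$-splitting map on $B(y,20Rr)$. It also gives the growth bound $|T_r|\le C r^{-\epsilon}$, where $\epsilon=\epsilon(\delta)\to0$.

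We then set $U_r:=T_r(u-u(y))$. Because every tangent cone at $y$ is $\R^N$, for small $r$ the ball $B(y,20Rr)$ is $\iota r$-close to a Euclidean ball. A splitting map on an almost Euclidean ball is almost $1$-Lipschitz and has almost dense image. Lemma~\ref{lem:nearisom}(b) therefore makes $U_r$ a $\sigma r$-GH approximation on $B(y,10Rr)$, which is (a). We work throughout with $2R$ in place of $R$ in this construction, to leave room for (c).

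For (b), write $U_r(\varphi_vz)-U_r(z)=T_rh_v(z)$ and put $L_r:=T_rA$. Suppose $D_y(v)\le 2Rr$. Then $|v|\le 2Rr/c$ by Lemma~\ref{lem:linlower}, once $r$ is small. Combining \eqref{eq:pointlinear} with \eqref{eq:cocycleosc} gives, for $z\in B(y,4Rr)$,
\[
 |T_rh_v(z)-L_rv|\le Cr^{-\epsilon}\bigl(|v|\,d(z,y)+|v|^{1+\alpha}\bigr)\le Cr^{-\epsilon}\bigl(r^2+r^{1+\alpha}\bigr).
\]
This is at most $\sigma r$ once $\epsilon<\alpha$ and $r$ is small. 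Next, take $z=y$ and use (a): $D_y(v)$ differs from $|U_r(\varphi_vy)|$ by at most $\sigma r$, hence from $|L_rv|$ by at most $2\sigma r$.

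The lower singular-value bound comes from the same comparison. Choose $v$ with $|v|=r$; then $|L_rv|\ge D_y(v)-2\sigma r\ge cr-2\sigma r$. Since $L_r$ is linear, this yields a bound $s_{\min}(L_r)\ge c/2$, independent of $r$, provided $\sigma<c/4$. If $\sigma$ is not that small, shrink $\sigma$ internally; the conclusions for the given $\sigma$ follow. In particular $L_r$ is injective, and we set $V_r:=L_r(\R^k)$.

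For (c), let $\xi=L_rv_0\in V_r$ with $|\xi|\le Rr$. We use a first-hitting argument along the segment $t\mapsto tv_0$, $t\in[0,1]$. Let $J$ be the set of $t$ with $D_y(tv_0)\le2Rr$; it contains $0$ and is closed. On $J$, part (b) at level $2R$ gives
\[
 D_y(tv_0)\le t|\xi|+2\sigma r\le Rr+r/5<2Rr .
\]
By continuity of $D_y$, $J$ is also open, so $J=[0,1]$. Taking $t=1$ gives $D_y(v_0)\le|\xi|+2\sigma r\le r+2|\xi|$. This is exactly the step that rules out formal directions of $V_r$ not realized by bounded-displacement elements.

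The main obstacle is the use of the transformation theorem. We need the growth $|T_r|\le Cr^{-\epsilon}$ with $\epsilon$ strictly smaller than the exponent $\alpha$ in \eqref{eq:pointlinear}. We also need the renormalized map to remain splitting on the larger ball $B(y,20Rr)$, so that the orbit points $\varphi_vz$ stay in the domain. The first thing to try is taking $\delta$ small, since both $\epsilon(\delta)\to0$ and $\Phi(\delta)\to0$ (so $\alpha\to1$). If the general transformation theorem is not uniform enough here, the fallback is a contradiction/blow-up argument at $y$, using that all tangent cones at $y$ are $\R^N$.
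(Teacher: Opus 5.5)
Your construction $U_r=T_r(u-u(y))$, $L_r=T_rA$ and your proofs of (a) and (b) coincide with the paper's. The error $|T_r|\bigl(|h_v(z)-h_v(y)|+|a(v)-Av|\bigr)$ is $O(r^{2-\epsilon}+r^{1+\alpha-\epsilon})=o(r)$ because the transformation exponent is below $\alpha$.

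\textbf{Gap in the lower singular-value bound.} You take $|v|=r$ and use $|L_rv|\ge D_y(v)-2\sigma r$. That estimate comes from (b) and needs $D_y(v)\le 2Rr$, which you have not checked and which can fail:
\begin{itemize}
\item Lemma~\ref{lem:linlower} bounds $D_y$ only from below.
\item The only upper bound is the H\"older bound $D_y(v)\le C|v|^\alpha$ of \eqref{eq:parameterholder}.
\item $|L_r|=|T_rA|$ may grow like $|T_r|$.
\end{itemize}
So for small $r$, an element with $|v|=r$ can have displacement much larger than $2Rr$.

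The fix is to normalize by displacement rather than by parameter length, as the paper does. For a unit vector $q$, take the first $t>0$ with $D_y(tq)=r$. Then $t\le r/c$ by Lemma~\ref{lem:linlower}, and (a) together with \eqref{eq:scaletranslation} gives $t|L_rq|\ge(1-2\sigma)r$. Hence $|L_rq|\ge c/2$, and you no longer need $\sigma<c/4$.

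Alternatively, your argument for (c) never uses injectivity of $L_r$. It yields $c|v_0|\le|L_rv_0|+2\sigma r$ whenever $|L_rv_0|\le Rr$ and $|v_0|\le t_0$. Applying this to $v_0=(2Rr/c)q$ with $|L_rq|<c/2$ gives $2Rr\le Rr+2\sigma r$, a contradiction.

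\textbf{Part (c): correct, by a different route.} You run an open--closed argument on $\{t\in[0,1]:D_y(tv_0)\le 2Rr\}$, using (a) and (b) at the doubled level $2R$. This gives the sharper bound $D_y(v_0)\le|\xi|+2\sigma r$.

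The paper uses (b) only at displacement $r$. With $\tau$ the first time at which $D_y(\tau v)=r$, it gets $\tau|\xi|\ge r/2$. Subadditivity with $n=\lceil 1/\tau\rceil$ then gives $D_y(v)\le nD_y(v/n)\le nr\le r+2|\xi|$.

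The paper's argument needs no enlarged scale and does not use the hypothesis $|\xi|\le Rr$. Yours needs that size bound to keep the bootstrap below $2Rr$, but in return shows that the orbit parametrization is almost isometric on the whole range.
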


\begin{proof}
Choose $\epsilon'\in(0,1/4)$ so small that every transformation and
alignment error below is at most $\sigma r/10$ on the displayed fixed
radii.  Next choose $\delta$ below the threshold in
\cite[Thm.~2.8]{HHWZ26}, with $C(N)\Psi(\delta)$ below that threshold,
and perform the harmonic-chart construction above.  The generalized Reifenberg hypothesis of that theorem follows from Lemma~\ref{lem:tube}, exactly as in
\cite[\S2.3]{HHWZ26}.

For sufficiently small $r$, the transformation theorem applied to
$u-u(y)$ gives a lower-triangular matrix $T_{80Rr}$ such that
\[
 |T_{80Rr}|\le C\Bigl(\frac S{Rr}\Bigr)^{\epsilon'},
 \qquad U_r:=T_{80Rr}(u-u(y))
\]
is an $\epsilon'$-splitting map on $B(y,80Rr)$.  The matrix bound is
(5) of \cite[Thm.~2.6]{HHWZ26}, with the outer-scale normalization
$|T_1-I|\le C\Psi(\delta)$ supplied by (1) there.  The product approximation from
\cite[Thm.~2.5(2)]{HHWZ26}, regularity of $y$, and
Lemma~\ref{lem:nearisom}(b) show that $U_r$ is a
$C(R)(\Psi(\epsilon')+o_r(1))r$-GH approximation on $B(y,10Rr)$.
After the choices above and a decrease of $r_{R,\sigma}$, this proves
part~(a).

Let $A$ be supplied by Lemma~\ref{lem:pointlinear}, and put
$L_r:=T_{80Rr}A$ and $V_r:=L_r(\R^k)$.  If $D_y(v)\le Rr$, then
Lemma~\ref{lem:linlower} gives $|v|\le Rr/c$.  For
$z\in B(y,4Rr)$, equations \eqref{eq:pointlinear} and
\eqref{eq:cocycleosc} give
\begin{align*}
 |U_r(\varphi_vz)-U_r(z)-L_rv|
 &\le |T_{80Rr}|\bigl(|h_v(z)-h_v(y)|+|a(v)-Av|\bigr)\\
 &\le C_R\bigl(r^{2-\epsilon'}+r^{1+\alpha-\epsilon'}\bigr)=o(r).
\end{align*}
Here $\epsilon'<\alpha$, and $\varphi_vz\in B(y,5Rr)$ because
$d(y,\varphi_vz)\le D_y(v)+d(y,z)$.  Decreasing $r_{R,\sigma}$ proves
\eqref{eq:scaletranslation}; part~(a), evaluated at $y$ and
$\varphi_vy$, gives the metric estimate in (b).

We next prove that $L_r$ is uniformly non-degenerate.  For a unit vector
$q\in\R^k$, continuity and Lemma~\ref{lem:linlower} give a first
$t=t(q,r)>0$ with $D_y(tq)=r$, and $t\le r/c$.  Applying (a) and
\eqref{eq:scaletranslation} to $tq$ gives
\[
 t|L_rq|\ge(1-2\sigma)r\ge r/2.
\]
Thus $|L_rq|\ge c/2$, uniformly in $q$ and $r$, so $L_r$ is an
isomorphism onto the $k$-plane $V_r$.

Finally take $\xi\in V_r$, $|\xi|\le Rr$, and let
$v=L_r^{-1}\xi$.  If $D_y(v)\le r$, there is nothing to prove.
Otherwise let $\tau\in(0,1)$ be the first time for which
$D_y(\tau v)=r$.  The same two estimates give
$\tau|\xi|\ge(1-2\sigma)r\ge r/2$.  With
$n:=\lceil1/\tau\rceil$, we have $1/n\le\tau$, and the definition of
the first hitting time gives $D_y(v/n)\le r$.  Subadditivity now yields
\[
 D_y(v)\le nD_y(v/n)\le nr\le r+2|\xi|,
\]
which is (c).
\end{proof}

The scale models identify the equivariant blow-ups directly.

\begin{proposition}[Blow-ups]\label{prop:blowup}
Let $s_j\to0$, and let $C_y:=\Stab_H(y)$.
\begin{enumerate}[label=\textup{(\alph*)}]
\item Every limit $(s_j^{-1}Y,y,H_0)\to(\R^N,0,\mathcal A)$ has $\mathcal A$ equal to the translation group of a $k$-plane $V$. In particular every tangent cone of $W$ at $P_W(y)$ is $\R^N/V\cong\R^m$ \cite[Lemma~3.1]{Wan23}.
\item $C_y$ fixes $H_0y$ pointwise. It acts on the limit by $(\mathrm{id}_V,\rho_y)$, with $\rho_y\colon C_y\to O(V^\perp)$ faithful. The conjugacy class of $\rho_y$ does not depend on the blow-up (Lemma~\ref{lem:isorep}).
\end{enumerate}
\end{proposition}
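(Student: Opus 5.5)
For part (a), I will pass the scale models of Lemma~\ref{lem:linearmodel} to the limit. Fix $R\ge1$ and $\sigma\in(0,1/10)$, and for large $j$ take $r=s_j\le r_{R,\sigma}$. By Lemma~\ref{lem:linearmodel}(a), the maps $s_j^{-1}U_{s_j}$ are $\sigma$-GH approximations from $B(y,10Rs_j)\subset s_j^{-1}Y$ onto balls in $\R^N$. The maps $s_j^{-1}L_{s_j}$ are isomorphisms with singular values bounded below, and after rescaling they are also bounded above by part~(b). After a subsequence, the planes $V_{s_j}$ converge to a $k$-plane $V^{R,\sigma}$. Every limit element $g\in\mathcal A$ with displacement $\le R/2$ at $0$ is a limit of $\varphi_{v_j}$ with $D_y(v_j)\le Rs_j$. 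By \eqref{eq:scaletranslation}, $g$ agrees within $O(\sigma)$ with translation by $\lim s_j^{-1}L_{s_j}v_j\in V^{R,\sigma}$ on $B(0,4R)$. Conversely, part~(c) shows that every $\xi\in V^{R,\sigma}$ with $|\xi|\le R$ is realized, up to $O(\sigma)$, by a limit element of displacement at most $1+2|\xi|$. Hence the displacement bound is uniform, and the limit group contains elements close to all such translations.

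Next I remove the dependence on $R$ and $\sigma$ by a diagonal argument. Taking $\sigma_\ell\to0$ and $R_\ell\to\infty$, I can identify the limit isometry $\R^N\to\R^N$ with the pmGH limit of the $s_j^{-1}U_{s_j}$, changing it by Euclidean isometries via Lemma~\ref{lem:nearisom}. The planes $V^{R_\ell,\sigma_\ell}$ then converge to a $k$-plane $V$. Every element of $\mathcal A$ is a translation by a vector in $V$, since each lies within $O(\sigma_\ell)$ of such a translation on arbitrarily large balls. Every translation in $V$ lies in $\mathcal A$ by the realization step and the closedness of $\mathcal A$. The main obstacle is this consistency step. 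For different $(R,\sigma)$ the charts $U_r$ differ, because the transformation matrices $T_{80Rr}$ depend on $R$. I would handle it by fixing the limit identification first and comparing each $s_j^{-1}U_{s_j}$ with it through Lemma~\ref{lem:nearisom}(b), so that only the images of the planes under near-isometries need to converge. The tangent-cone statement then follows from \cite[Lemma~3.1]{Wan23}, since $\R^N/V\cong\R^m$.

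For part (b), $C_y$ is finite by Theorem~\ref{thm:master}(i), and $H$ is abelian, so for $c\in C_y$ we have $c\varphi_vy=\varphi_vcy=\varphi_vy$; thus $C_y$ fixes $H_0y$ pointwise. In the blow-up, each $c$ converges to an isometry $\bar c$ of $\R^N$ that fixes $0$ and commutes with all translations in $V$. Hence $\bar c$ is linear, fixes $V$ pointwise (as the limit of the fixed orbit), and preserves $V^\perp$, so it has the form $(\mathrm{id}_V,\rho_y(c))$. For faithfulness, suppose $\rho_y(c)=1$ for some $c\ne1$. Then the group $\langle c\rangle$ converges to the identity in $\Isom(\R^N)$, so it is small in the sense of (P4) on the rescaled non-collapsed sequence and hence eventually trivial, a contradiction. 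Independence of the conjugacy class from the blow-up is deferred to Lemma~\ref{lem:isorep}, as the statement indicates.
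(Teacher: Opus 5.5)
Your proposal is correct and follows essentially the same route as the paper: the scale models of Lemma~\ref{lem:linearmodel} identify the blow-up with $\R^N$, part~(b) of that lemma makes every element of $\mathcal A$ a translation by a limit of $s_j^{-1}L_jv_j\in V_j$, part~(c) realizes all of $V$, and part~(b) of the proposition follows from commutativity, linearity of limit isometries fixing $0$, and (P4). The paper avoids your two-stage diagonal, and with it the consistency worry about the matrices $T_{80Rr}$ for different $R$, by choosing $R_j=n(j)\to\infty$ and $\sigma_j=1/n(j)\to0$ slowly along the given sequence from the start. Note also that at fixed $R$ your realization step only covers $|\xi|\le(R-1)/2$, not $|\xi|\le R$: that restriction is what guarantees $D_y(v)\le r+2|\xi|\le Rr$, so that Lemma~\ref{lem:linearmodel}(b) applies, and it is harmless once $R\to\infty$.
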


\begin{proof}
(a) Choose integers $n(j)\to\infty$ so slowly that
$s_j\le r_{n(j),1/n(j)}$ in Lemma~\ref{lem:linearmodel}; we may assume
$n(j)\ge20$.  Apply that lemma with
$R_j:=n(j)$ and $\sigma_j:=1/n(j)$, and denote its output by
$(U_j,V_j,L_j)$.  After rescaling by $s_j^{-1}$, the maps $U_j$ are
$\sigma_j$-GH approximations on balls of radius $10R_j$.  Comparing them
with the space approximations defining the given limit, using
Lemma~\ref{lem:nearisom}(a), and passing to a subsequence, we may use
$s_j^{-1}U_j$ to identify the limit with $\R^N$ and assume that the
$k$-planes $V_j$ converge to a $k$-plane $V$.

Let $g\in\mathcal A$.  Equivariant convergence supplies $v_j\in H_0$ converging
to $g$, with $D_y(v_j)/s_j$ bounded.  Part~(b) of
Lemma~\ref{lem:linearmodel} shows on every fixed ball that $\varphi_{v_j}$
is $o(s_j)$-close in the chart $U_j$ to translation by $L_jv_j$.
Moreover, $s_j^{-1}L_jv_j$ is bounded and lies in $V_j$.  After passage to
a subsequence it converges to some $\xi\in V$, and $g$ is translation by
$\xi$.  Hence $\mathcal A$ contains no rotations and is contained in the
translation group of $V$.

Conversely, let $\xi\in V$ and choose $\xi_j\in V_j$ with
$\xi_j\to\xi$.  Part~(c) of Lemma~\ref{lem:linearmodel}, applied to
$s_j\xi_j$, gives $v_j\in H_0$ such that
\[
 L_jv_j=s_j\xi_j,
 \qquad D_y(v_j)\le s_j(1+2|\xi_j|).
\]
Since $R_j\to\infty$, part~(b) applies for large $j$ and shows that
$\varphi_{v_j}$ converges to translation by $\xi$.  Thus $\mathcal A$ is the full
translation group of $V$.  The tangent cones of $W$ at $P_W(y)$ are the
limits of $(s_j^{-1}Y,y)/H_0$, so they are
$\R^N/V\cong\R^m$ by \cite[Lemma~3.1]{Wan23}.

(b) Since $H$ is abelian, every $c\in C_y$ satisfies $c(\varphi_vy)=\varphi_v(cy)=\varphi_vy$. We use labelled equivariant convergence, which tracks each element of $C_y$ along the sequence. The limit is therefore a representation of $C_y$, rather than merely an unspecified subgroup of $O(N)$. This limit action fixes $0$ and is consequently linear. Because it commutes with every translation along $V$, it fixes $V$ pointwise and acts orthogonally on $V^\perp$. As in Lemma~\ref{lem:blowupreg}, (P4) makes the representation faithful. Lemma~\ref{lem:isorep} shows that it is independent of the chosen blow-up.
\end{proof}

The next lemma isolates the rigidity used in part~(b). We work with the labelled pointed equivariant GH distance. Thus the approximations are $\epsilon$-GH approximations on radius-$1/\epsilon$ balls that almost intertwine both the $H_0$-action, as in \cite{FY92}, and the action of every labelled element $c\in C_y$.

\begin{lemma}[Isotropy representations]\label{lem:isorep}
Let $\mathcal B_y$ be the set of labelled limits $(\R^N,0,G,(A_c)_{c\in C_y})$ of $(s^{-1}Y,y,H_0,(c)_{c\in C_y})$ as $s\to0$.
\begin{enumerate}[label=\textup{(\alph*)}]
\item \emph{Local rigidity.} There is $\eta=\eta(N,|C_y|)>0$ such that, if two elements of $\mathcal B_y$ are $\eta$-close, then their representations $A$ and $B$ of $C_y$ are conjugate in $O(N)$.
\item \emph{Connectedness.} $\mathcal B_y$ is compact and connected.
\end{enumerate}
Hence the conjugacy class of $A$ is constant on $\mathcal B_y$. Since $A=\mathrm{id}_V\oplus\rho_y$ with $\dim V=k$, and the characters satisfy $\chi_A=k+\chi_{\rho_y}$, the class of $\rho_y$ is constant as well.
\end{lemma}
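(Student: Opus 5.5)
For (a) I will use a compactness-plus-rigidity argument for finite group representations. Fix $\eta$ small. If two labelled limits $(\R^N,0,G,(A_c))$ and $(\R^N,0,G',(B_c))$ are $\eta$-close in the labelled pointed equivariant GH sense, there is an almost-isometry $f$ of balls of radius $1/\eta$ that almost intertwines each $A_c$ with $B_c$. Lemma~\ref{lem:nearisom}(a) replaces $f$ on $B^N(0,1)$ by a Euclidean isometry $E$ within $\Psi(\eta|N)$. Since both actions fix $0$, we may take $E\in O(N)$ after a small correction. Then $\|E A_c E^{-1}-B_c\|\le\Psi(\eta|N)$ for every $c\in C_y$, so the two representations $EAE^{-1}$ and $B$ of the finite group $C_y$ are uniformly close. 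Two homomorphisms of a finite group into $O(N)$ that are sufficiently close are conjugate. One can see this by comparing characters: $\chi$ takes values in a discrete set determined by $N$ and $|C_y|$ (sums of $|C_y|$-th roots of unity with at most $N$ terms), so close characters coincide. Alternatively, the averaged intertwiner $\frac1{|C_y|}\sum_c B_cEA_cE^{-1}A_c^{-1}\cdots$ is near the identity and hence invertible, and then polar decomposition gives an orthogonal intertwiner. Either way, $\eta$ depends only on $N$ and $|C_y|$.

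For (b), compactness follows because $\mathcal B_y$ is the set of subsequential limits of a precompact family. Indeed, $s^{-1}Y$ at $y$ is uniformly doubling and the labelled finite group is fixed, so Gromov--Fukaya--Yamaguchi compactness applies; the limit set of a precompact curve is closed. For connectedness, I will note that $s\mapsto(s^{-1}Y,y,H_0,(c))$ is continuous in $s\in(0,1]$ for the labelled equivariant distance, since rescaling by nearby factors gives nearby spaces. The $\omega$-limit set, as $s\to0$, of a continuous curve in a compact metric space (the closure of the family, with the labelled distance made a metric on isometry classes) is connected. This is the standard argument: if the limit set split into two disjoint closed pieces at positive distance, the curve would have to cross the middle region infinitely often, and compactness would produce a limit point there.

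Combining the two parts, the map $\mathcal B_y\to\{\text{conjugacy classes}\}$ is locally constant by (a) and $\mathcal B_y$ is connected, so it is constant. The final character identity is immediate from $A=\mathrm{id}_V\oplus\rho_y$, given by Proposition~\ref{prop:blowup}. Since $\chi_A$ is constant, so is $\chi_{\rho_y}$, and representations of a finite group are determined by their characters, which yields orthogonal conjugacy. The main obstacle is making the labelled distance an honest metric on a compact space so that the connectedness argument applies. I would handle this by working with the Gromov--Hausdorff distance between the pointed metric spaces together with the graphs of the labelled maps, restricted to balls, summed over radii with weights $2^{-n}$.
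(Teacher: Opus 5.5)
Your proposal is correct and follows the paper's proof. For (a), both arguments use Lemma~\ref{lem:nearisom}(a), then the discreteness of characters, then a polar-decomposition orthogonal intertwiner. For (b), you take the connected $\omega$-limit set of the continuous rescaling curve, which the paper phrases as a decreasing intersection of compact connected closures of scale intervals. The only blemish is the garbled averaged-intertwiner formula, which should read $T=\frac1{|C_y|}\sum_cB_c(EA_cE^{-1})^{-1}$; your character argument already suffices without it.
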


\begin{proof}
(a) A labelled $\eta$-approximation is, by Lemma~\ref{lem:nearisom}(a) applied on $B(0,1)$, $\Psi(\eta|N)$-close there to a linear isometry $E$, and labelled equivariance gives $|EA_cE^{-1}-B_c|\le\Psi(\eta|N)$. For a finite group, irreducible multiplicities are integer-valued continuous functions of the character. Thus sufficiently close representations have equal real characters and are linearly equivalent. Since both representations are orthogonal, the polar part of an intertwining operator is an orthogonal intertwiner; hence they are conjugate in $O(N)$.

(b) Equivariant precompactness \cite{FY92} makes the labelled rescalings precompact, while the identity map compares nearby scales continuously. Hence $\mathcal B_y$ is a decreasing intersection of compact connected closures of scale intervals. It is therefore compact and connected.

By (a), the conjugacy class of $A$ is locally constant on $\mathcal B_y$. By (b) it is constant.
\end{proof}

\emph{The scale $r_\epsilon(y)$.} Proposition~\ref{prop:blowup} and a contradiction argument now give uniform control at all sufficiently small scales. This fixes the scale function used throughout the rest of the paper: for every $y\in\mathcal R(Y)$ and $\epsilon>0$ there is $r_\epsilon(y)>0$ such that, whenever $r\le r_\epsilon(y)$, the rescaled action $(r^{-1}Y,y;H_0,C_y)$ is $\epsilon$-close on radius-$1/\epsilon$ balls, in the labelled equivariant GH sense, to
$(\R^k\times\R^m,0;\text{translations of }\R^k,(\mathrm{id},\rho_y))$. We also arrange that $\epsilon\le\eta_0(N)$ and $r_\epsilon(y)\le\epsilon^2r_0(y,\epsilon)/2$. Lemma~\ref{lem:tube} then shows that every ball of radius at most $r/\epsilon$, centred within $r/\epsilon$ of $H_0y$, is $\Psi(\epsilon|N)$-close to Euclidean and lies in a topological $N$-manifold.

The later lattice construction needs the group component of this approximation to be linear on a fixed multiple of the same scale, not merely along a limiting sequence. We record the fixed-range consequence of Proposition~\ref{prop:blowup}.

\begin{lemma}[Fixed-scale linearization]\label{lem:fixedlinear}
There is $\epsilon_1=\epsilon_1(N)>0$ such that, for every
$0<\epsilon\le\epsilon_1$, the following holds after decreasing
$r_\epsilon(y)$. Let $\mathcal F$ be the space component of any labelled
equivariant $\epsilon r$-approximation at a scale $r\le r_\epsilon(y)$,
and let $a_r$ be its group component from $H_0$ to the translation group
$\R^k$. There is a linear isomorphism $L_r\colon H_0\to\R^k$ such that
\[
 |a_r(v)-L_rv|\le\Psi(\epsilon|N)r
\]
whenever $d(vy,y)\le50r$.
\end{lemma}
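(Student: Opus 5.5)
We argue by contradiction and compactness, using Proposition~\ref{prop:blowup}(a) as the only structural input. Suppose that for some small $\epsilon$ there are scales $r_j\to0$, labelled equivariant $\epsilon r_j$-approximations $(\mathcal F_j,a_j)$ at scale $r_j$, and a number $\theta>0$ with the following property: for every linear isomorphism $L\colon H_0\to\R^k$ there is some $v$ with $d(vy,y)\le50r_j$ and $|a_j(v)-Lv|>\theta r_j$. Rescale by $r_j^{-1}$. Proposition~\ref{prop:blowup}(a) says that, after a subsequence, the rescaled actions converge to $(\R^N,0,\text{translations of }V)$. The scale models of Lemma~\ref{lem:linearmodel}, taken with $R=100$ and $\sigma$ small, then give charts $U_{r_j}$ and linear maps $L_{r_j}\colon\R^k\to V_{r_j}$ such that $\varphi_v$ is $\sigma r_j$-close to translation by $L_{r_j}v$ whenever $D_y(v)\le100r_j$.

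The linear map is obtained by comparing the given approximation $\mathcal F_j$ with the chart $U_{r_j}$. Both are GH approximations of $B(y,50r_j)$ onto Euclidean balls, so Lemma~\ref{lem:nearisom}(b) provides a Euclidean isometry $E_j$ with $\mathcal F_j=E_j\circ U_{r_j}+O(\Psi r_j)$ on $B(y,60r_j)$. Here $\Psi=\Psi(\epsilon,\sigma|N)$. Now fix $v$ with $D_y(v)\le50r_j$. Labelled equivariance says that $\mathcal F_j(\varphi_vz)$ is within $\epsilon r_j$ of $\mathcal F_j(z)+a_j(v)$ on $B(y,10r_j)$. Lemma~\ref{lem:linearmodel}(b) says that $U_{r_j}(\varphi_vz)$ is within $\sigma r_j$ of $U_{r_j}(z)+L_{r_j}v$. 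Compare these two statements at $z=y$ and let $\bar E_j$ be the linear part of $E_j$. This gives
\[
|a_j(v)-\bar E_jL_{r_j}v|\le\Psi r_j.
\]
We then set $L:=\bar E_jL_{r_j}$, which maps $H_0$ into the $k$-plane $\bar E_jV_{r_j}$. The target $\R^k$ of $a_j$ is the translation factor of the model, and $a_j(v)$ lies in that factor. Hence $\bar E_jV_{r_j}$ is $\Psi$-close to the model factor on the relevant ball. Composing with orthogonal projection onto $\R^k$ changes $L$ by at most $\Psi|Lv|\le\Psi r_j$ on the range $D_y(v)\le50r_j$. It also preserves invertibility, because of the uniform lower singular-value bound in Lemma~\ref{lem:linearmodel}. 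This contradicts the choice of $\theta$ once $\Psi<\theta$, so a direct argument replaces the contradiction setup.

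Concretely, choose $\sigma=\epsilon$, decrease $r_\epsilon(y)$ below $r_{100,\epsilon}(y)$, and take $\epsilon_1$ below the threshold of Lemma~\ref{lem:nearisom} for $n=N$. Then the estimate holds at every $r\le r_\epsilon(y)$ with an explicit $\Psi(\epsilon|N)$, and no subsequence is needed.

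The main obstacle is the range condition. The hypothesis is $d(vy,y)\le50r$, while Lemma~\ref{lem:linearmodel}(b) is stated for $D_y(v)\le Rr$; these agree, since $D_y(v)=d(vy,y)$, so taking $R\ge100$ covers the range. A second point to check is that $a_r$ is only defined up to the approximation error. We must confirm that labelled equivariance on radius-$1/\epsilon$ balls, with $1/\epsilon\ge60$, controls $\mathcal F(\varphi_vy)$ for all such $v$. This is where $\epsilon_1(N)$ is used.
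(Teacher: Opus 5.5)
Your proposal is correct and is essentially the paper's argument: you take the scale model of Lemma~\ref{lem:linearmodel}, align the given approximation with it through Lemma~\ref{lem:nearisom}, evaluate labelled equivariance at $y$, and project onto the $\R^k$ factor. The paper differs only cosmetically: it first rotates $V_r$ to $\R^k\times\{0\}$, and it proves invertibility from $|a_r(v)|\ge r-C\epsilon r$ when $|L^0_rv|=r$, which is equivalent to your plane-closeness route. The one step you should make explicit is the passage from ``$\bar E_jL_{r_j}v$ is $\Psi r_j$-close to $\R^k$ whenever $d(vy,y)\le50r_j$'' to relative closeness of the whole plane $\bar E_jV_{r_j}$, and hence to invertibility after projection. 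That passage needs every vector of $V_{r_j}$ of length comparable to $r_j$ to be realized by such a $v$, which is Lemma~\ref{lem:linearmodel}(c) (or its first-hitting argument), not the lower singular-value bound by itself.
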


\begin{proof}
Choose $\epsilon_1(N)\le\min\{\eta_0(N),1/200\}$ so small that, for all
$0<\epsilon\le\epsilon_1$,
\[
 C\epsilon+\Psi(\epsilon\mid N)\le\frac12,
\]
where $C=C(N)$ is the distortion constant below. Thus every approximation
below is defined on all the fixed balls used in the argument.
Apply Lemma~\ref{lem:linearmodel} with $R=200$ and accuracy $\epsilon$.
After decreasing $r_\epsilon(y)$, it gives $(U_r,V_r^0,L_r^0)$ at every
scale under consideration.  Choose $O_r\in O(N)$ carrying $V_r^0$ to
$\R^k\times\{0\}$, and replace $U_r$ by
$\mathcal F^0:=O_rU_r$ and $L_r^0$ by $O_rL_r^0$, viewed as a linear
isomorphism $H_0\to\R^k$.  Then $\mathcal F^0$ is a preferred
$\epsilon r$-approximation, and
\[
 |\mathcal F^0(\varphi_vz)-\mathcal F^0(z)-\iota L_r^0v|
 \le\epsilon r
\]
on the fixed balls whenever $d(vy,y)\le100r$, where
$\iota\colon\R^k\to\R^k\times\R^m$ is the first-factor inclusion.

Let $(\mathcal F,a_r)$ be any labelled approximation in the statement.
On the fixed ball $B(y,100r)$, compare $\mathcal F$ and $\mathcal F^0$
through a quasi-inverse.  Lemma~\ref{lem:nearisom}(a), with fixed nested
radii, gives a Euclidean isometry $E(x)=Qx+b$ such that
\[
 |\mathcal F-E\mathcal F^0|\le\Psi r
 \quad\text{on }B(y,60r),
 \qquad |b|\le\Psi r.
\]
Write $P$ for the first-factor projection.  Labelled
equivariance, evaluated at $y$, gives
\[
 |a_r(v)-P Q\iota L_r^0v|\le\Psi r
 \qquad\text{if }d(vy,y)\le50r.
\]
Set $L_r:=P Q\iota L_r^0$.  Let $v\in H_0$ satisfy $|L_r^0v|=r$.
Lemma~\ref{lem:linearmodel}(c) gives
$D_y(v)\le3r$, and part~(b) then gives
$|D_y(v)-r|\le2\epsilon r$.  The distortion and equivariance errors of
$\mathcal F$ give $|a_r(v)|\ge r-C\epsilon r$, while the preceding
display gives $|a_r(v)-L_rv|\le\Psi r$.
Consequently
\[
 |L_rv|\ge r-C\epsilon r-\Psi(\epsilon\mid N)r\ge\frac r2
\]
by the choice of $\epsilon_1(N)$. Thus $L_r$ is injective and hence an
isomorphism. The labelled-equivariance display therefore reads $|a_r(v)-L_rv|\le\Psi r$ whenever $d(vy,y)\le50r$. Since every comparison was made on a fixed radius
ratio, taking
monotone envelopes produces a modulus $\Psi(\epsilon|N)\to0$ without any
rate assumption on Lemma~\ref{lem:nearisom}.
\end{proof}

\section{The transverse coordinate and the orbifold structure}\label{sec:eqmaps}

We now construct the transverse coordinate from the Introduction: an equivariant map whose local level sets are precisely the $H_0$-orbits. After passing to the quotient, this map supplies the orbifold charts and proves parts~(ii) and~(iii) of the Main Theorem. Write $P_W\colon Y\to W=Y/H_0$ for the quotient map.

\subsection{Equivariant harmonic splitting maps}
The construction takes place at a scale on which the action is close to the model of Proposition~\ref{prop:blowup}. We pass to a fine lattice quotient $Y/\Lambda$, replace an aligned splitting map harmonically, and average the replacement over $(H_0/\Lambda)\times C_y$. Averaging preserves harmonicity, so the resulting map is both harmonic and exactly equivariant. After lifting to $Y$, the orbit--slice criterion identifies its local level sets with the $H_0$-orbits.

The splitting map to be averaged is built in the same way as in the parallel construction on the approximating spaces in \S\ref{ssec:eqsplit}, so we record that step once.

\begin{lemma}[Aligned splitting maps]\label{lem:align}
Let $1\le m\le N$, let $\epsilon>0$ be small depending on $N$, and write $\Psi=\Psi(\epsilon|N)$. Let $(Z,d,\mathcal H^N)$ be a non-collapsed $\RCD(K,N)$ space with $|K|r^2\le\epsilon$, let a compact group $\mathsf G$ act on $Z$ by measure-preserving isometries, let $\rho\colon\mathsf G\to O(m)$, and let $z\in Z$. Assume that
\[
\pi\colon B(z,r/\epsilon)\longrightarrow\R^m,\qquad\pi(z)=0,
\]
is a $\Psi r$-GH approximation onto $B^m(r/\epsilon)$ with $|\pi(gx)-\rho(g)\pi(x)|\le\Psi r$ whenever $x,gx\in B(z,10r)$. Then $Z\setminus B(z,11r)\ne\varnothing$, the ball $B(z,400r)$ is measured-$\Psi$-close to a Euclidean ball, and there is an $(m,\Psi)$-splitting map $u\colon B(z,40r)\to\R^m$ with
\[
|u-\pi|\le\Psi r,\qquad|u\circ g-\rho(g)\circ u|\le\Psi r\quad(g\in\mathsf G)
\]
on $B(z,5r)$.
\end{lemma}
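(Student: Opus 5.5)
The proof has three parts, in order: the exterior point, the measured closeness, and an equivariant splitting map.

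\emph{Exterior point.} Since $\pi$ is a $\Psi r$-GH approximation onto $B^m(r/\epsilon)$, pick $x\in B(z,r/\epsilon)$ with $|\pi(x)|$ close to $12r$. The distortion bound gives $d(x,z)\ge12r-2\Psi r>11r$ once $\epsilon$ is small, so $Z\setminus B(z,11r)\ne\varnothing$.

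\emph{Measured closeness.} Rescale by $r^{-1}$. Then $(Z,r^{-1}d)$ is $\RCD(-\epsilon,N)$, and $B(z,1/\epsilon)$ is $\Psi$-GH close to $B^m(1/\epsilon)$. Lemma~\ref{lem:measured}, applied with radius $R=400$, shows that $B(z,400r)$ with the normalized measure is measured-$\Psi$-close to a Euclidean ball. Here $\Psi$ absorbs the function $\delta\mapsto\psi$ through the usual monotone-envelope argument.

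\emph{Splitting map.} Lemma~\ref{lem:measured} also lets us invoke the existence of splitting maps close to GH approximations, \cite[Thm.~2.5(1)]{HHWZ26}, rescaled. This gives an $(m,\Psi)$-splitting map $u_0$ on $B(z,80r)$ with $|u_0-\pi|\le\Psi r$ on $B(z,60r)$. If that theorem only supplies a splitting map up to a Euclidean isometry, Lemma~\ref{lem:nearisom}(b) aligns it with $\pi$ at the cost of another $\Psi r$. Next, Lemma~\ref{lem:harmonicpackage}, with the exterior point above, replaces $u_0$ harmonically on a ball $B(z,8r')$ with $r'=5r$. This keeps the splitting estimate on $B(z,40r)$ up to $C\Psi$, makes $\operatorname{Lip}u\le C$, and gives $|\Delta u|=0$ and $|u-u_0|\le\Psi r$. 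Replacing the displayed radii by other fixed nested radii, as permitted in \S\ref{ssec:packages}, is harmless.

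\emph{Approximate equivariance.} Let $x\in B(z,5r)$ and $g\in\mathsf G$. If $gx\in B(z,10r)$, then
\[
|u(gx)-\rho(g)u(x)|\le|u(gx)-\pi(gx)|+|\pi(gx)-\rho(g)\pi(x)|+|\rho(g)(\pi(x)-u(x))|\le3\Psi r .
\]
It remains to ensure that $gx$ stays in the region where $\pi$ is defined and equivariant. Without this, $g$ could move $z$ far away, and the equivariance hypothesis would say nothing.

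\emph{Main obstacle.} I expect the real difficulty to be controlling $d(gz,z)$. I would first try the following. Since $\pi(z)=0$ and $\rho(g)0=0$, either $gz\notin B(z,10r)$, in which case no constraint is available, or $|\pi(gz)|\le\Psi r$, which gives $d(gz,z)\le2\Psi r$. To exclude the first alternative, I would use connectedness of the set $\{x\in B(z,r): gx\in B(z,10r)\}$ along a geodesic from $z$. Along this path, equivariance forces $d(gx,z)\le d(x,z)+3\Psi r<2r$, so the path can never exit the region, and hence $gz\in B(z,2\Psi r)$ by continuity. This continuity argument works if $\mathsf G$ is connected. For a general compact group, e.g.\ finite groups as in our applications, I would instead read the hypothesis as implicitly guaranteeing that $\mathsf G$ moves $z$ by at most $\Psi r$. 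This holds in all applications, where $\mathsf G$ fixes $z$ or is the torus $H_0/\Lambda$ with small orbit, and I would state it explicitly if needed. Given $d(gz,z)\le\Psi r$, every $x\in B(z,5r)$ has $gx\in B(z,6r)$, and the triangle estimate above holds throughout $B(z,5r)$. This proves the lemma.
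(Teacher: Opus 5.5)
Your proposal is essentially the paper's proof: the exterior point comes from surjectivity of $\pi$, measured closeness from Lemma~\ref{lem:measured}, the splitting map from \cite[Thm.~2.5]{HHWZ26} aligned with $\pi$ via Lemma~\ref{lem:nearisom}(b), and equivariance from the same three-term triangle inequality. The harmonic replacement you add is not needed for this statement. Your ``main obstacle'' is also not one. The equivariance bound is meant, and proved in the paper, only for $x$ with $x,gx\in B(z,5r)$, and your triangle estimate already covers that case. So you should not add the hypothesis that $\mathsf G$ nearly fixes $z$. Indeed, the unrestricted version fails: take the cyclic group generated by a translation of length $20r$ on a flat torus whose side is a large multiple of $20r$. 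There the hypothesis holds vacuously, yet $|u(gx)-u(x)|\approx20r$.
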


\begin{proof}
Rescale so that $r=1$. Since $\pi$ is onto $B^m(1/\epsilon)$ and $\epsilon$ is small, there are points at distance more than $11$ from $z$. Lemma~\ref{lem:measured} upgrades the metric closeness on the radius-$1/\epsilon$ ball to measured closeness on the radius-$400$ ball. Applying \cite[Thm.~2.5(1)]{HHWZ26} at scale $40$ gives an $(m,\Psi)$-splitting map $u_0\colon B(z,40)\to\R^m$. By \cite[Thm.~2.5(2)]{HHWZ26}, $u_0-u_0(z)$ is, up to $\Psi$, a non-expanding map of $B(z,10)$ with $\Psi$-dense image in $B^m(10)$. Lemma~\ref{lem:nearisom}(b), applied with the prescribed chart $\pi$, supplies a Euclidean isometry $E$ with $|E(u_0-u_0(z))-\pi|\le\Psi$ on $B(z,5)$. Put $u:=E\circ(u_0-u_0(z))$. Translations do not affect derivatives, while the orthogonal part conjugates the Gram matrix, so $u$ is an $(m,C(m)\Psi)$-splitting map. Finally, for $g\in\mathsf G$ and $x$ with $x,gx\in B(z,5)$,
\[
|u(gx)-\rho(g)u(x)|\le|u(gx)-\pi(gx)|+|\pi(gx)-\rho(g)\pi(x)|+|\rho(g)(\pi(x)-u(x))|\le3\Psi. \qedhere
\]
\end{proof}

\begin{proposition}[Equivariant harmonic splitting maps]\label{prop:U2}
Let $y\in\mathcal R(Y)$ and $w:=P_W(y)$, so that $\Gamma_w\cong C_y$. Let $\epsilon>0$ be small, depending on $N$, and let $r\le r_\epsilon(y)$ with $|K|(320r)^2\le\epsilon$. There is a harmonic map $\hat\psi\colon H_0\cdot B(y,8r)\to\R^m$ with $\hat\psi(y)=0$ such that:
\begin{enumerate}[label=\textup{(\alph*)}]
\item $\hat\psi$ is $H_0$-invariant and $C_y$-equivariant: $\hat\psi\circ c=\rho(c)\circ\hat\psi$ for $c\in C_y$, where $\rho=\rho_y$ under the fixed target identification;
\item $\hat\psi$ is an $(m,\Psi(\epsilon|N))$-splitting map on $B(y,2r)$;
\item if $m\ge1$, there is a harmonic map $\zeta'\colon B(y,80r)\to\R^k$ such that, for every $z\in B(y,2r)$,
\[
 \Upsilon_z:=(\zeta',\hat\psi)\colon B(z,r)\longrightarrow\R^N
\]
is a homeomorphism onto an open image and, for $p,q\in B(z,r)$,
\[
 (1-\Phi)r^{-\Phi}d(p,q)^{1+\Phi}
 \le |\Upsilon_z(p)-\Upsilon_z(q)|
 \le(1+\Phi)d(p,q),
 \qquad \Phi=\Phi(\epsilon|N)\longrightarrow0;
\]
\item for $x\in B(y,r/8)$, the level set of $\hat\psi$ through $x$ in $B(y,r/8)$ is $H_0x\cap B(y,r/8)$.
\end{enumerate}
Hence $\hat\psi$ descends to a $\Gamma_w$-equivariant open embedding $h\colon B_W(w,r/8)\to\R^m$ with $h(w)=0$, where $\Gamma_w$ acts on $\R^m$ through $\rho$.
\end{proposition}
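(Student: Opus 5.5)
We follow the recipe announced at the start of this subsection: pass to a fine lattice quotient, apply harmonic replacement, average, and then lift back to $Y$.

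\emph{Approximate chart and lattice.} Fix $r\le r_\epsilon(y)$. By the definition of $r_\epsilon(y)$ and Lemma~\ref{lem:fixedlinear}, there is a labelled equivariant $\epsilon r$-approximation $\mathcal F=(\mathcal F_k,\mathcal F_m)$ from $B(y,r/\epsilon)$ to $\R^k\times\R^m$. Its group component is $\Psi r$-close to a linear isomorphism $L_r\colon H_0\to\R^k$ on elements moving $y$ by at most $50r$. The element $c\in C_y$ acts through $(\mathrm{id},\rho(c))$.

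Choose a lattice $\Lambda\le H_0$ with $L_r\Lambda$ a cubic lattice of mesh $\theta r$. Here $\theta=\theta(\epsilon)$ is small but fixed, and we take $\epsilon\ll\theta$. Set $Z:=Y/\Lambda$. By (P3), $Z$ is a non-collapsed $\RCD(K,N)$ space, and the compact group $\mathsf G:=(H_0/\Lambda)\times C_y$ acts on it by measure-preserving isometries.

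The transverse component $\mathcal F_m$ is almost $H_0$-invariant, with error $\Psi r$ for elements of bounded displacement. It is also almost $\rho$-equivariant. Because $\Lambda$ is fine, its image in $Z$ is defined on $B_Z([y],r/\epsilon')$ and is an approximation onto $B^m$: the $\R^k$-directions are "rolled up" at scale $\theta r$. It is almost $\rho$-equivariant for $\mathsf G$, where $\mathsf G$ acts on $\R^m$ through the projection to $C_y$.

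\emph{Aligned map, replacement, averaging.} Apply Lemma~\ref{lem:align} on $Z$ with the data $(\mathsf G,\rho)$. This gives an $(m,\Psi)$-splitting map $u\colon B([y],40r)\to\R^m$ that is $\Psi r$-close to the chart and $\Psi r$-almost equivariant. The lemma also gives the non-exhaustion condition $Z\setminus B(z,11r)\ne\varnothing$.

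Next, take a $\mathsf G$-invariant domain $\Omega$ with $B([y],8r)\subset\Omega\subset B([y],9r)$; for instance, $\Omega=\mathsf G\cdot B([y],8r)$ works, since $\mathsf G$ moves $[y]$ by at most $C\theta r$, after adjusting radii. Replace $u$ by its harmonic extension $h$ on $\Omega$ via Lemma~\ref{lem:harmonicpackage}. Then average $h$ over $\mathsf G$ with Corollary~\ref{lem:averaging}.

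Averaging harmonic functions against a measure-preserving isometric action keeps them harmonic. So $\bar h:=\int\rho(g)^{-1}h\circ g\,dg$ is harmonic, exactly $\mathsf G$-equivariant and $(m,\Psi)$-splitting. We lift it to $\hat\psi:=\bar h\circ(Y\to Z)$ on $H_0\cdot B(y,8r)$ and subtract the value at $y$. Since $C_y$ fixes $y$ and $\rho$ is linear, exact equivariance makes $\bar h([y])$ a $\rho$-fixed vector, so subtracting it preserves equivariance. This gives (a). Part (b) holds because splitting estimates lift under the local isometry $Y\to Z$ on balls smaller than the injectivity scale of $\Lambda$; these balls have size at least $c\theta r$, and the Gram estimate on $B(y,2r)$ follows by covering.

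\emph{Parts (c), (d).} Take $\zeta'$ to be the harmonic replacement on $B(y,80r)$ of an $(k,\Psi)$-splitting map approximating $\mathcal F_k$. Its existence and the non-exhaustion condition come from Lemma~\ref{lem:tube}, Lemma~\ref{lem:measured} and the splitting-map construction in \cite[Thm.~2.5]{HHWZ26}. Both coordinates of $\Upsilon_z=(\zeta',\hat\psi)$ are $\Psi r$-close to the corresponding components of $\mathcal F$.

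By Lemma~\ref{lem:replacement}, applied at centres $z\in B(y,2r)$ with $\xi$ an $(N,\Psi)$-splitting map close to $\mathcal F$, the map $\Upsilon_z$ is an $(N,\Psi)$-splitting map. The Euclidean control needed there comes from the choice $r_\epsilon(y)\le\epsilon^2r_0/2$ and Lemma~\ref{lem:tube}. Combined with the topological-manifold statement in the same lemma, Lemma~\ref{lem:replacement} gives the bi-Hölder open embedding of (c).

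Part (d) is Lemma~\ref{lem:orbitslice}, applied with $A=H_0$, $a=\zeta'$, $b=\hat\psi$, centred at each $x\in B(y,r/8)$, with radius $r$ on $B(x,r)\subset B(y,2r)$. It gives $\hat\psi^{-1}(\hat\psi(x))\cap B(x,r/4)=H_0x\cap B(x,r/4)$, and $B(y,r/8)\subset B(x,r/4)$.

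The descent is then formal. By $H_0$-invariance, $\hat\psi$ induces $h$ on $B_W(w,r/8)=P_W(B(y,r/8))$ (Lemma~\ref{lem:proper}(b)). Part (d) makes $h$ injective, and (c) makes it open, by invariance of domain applied to $\Upsilon_z$ restricted to slices. Equivariance under $\Gamma_w\cong C_y$ follows from (a).

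\emph{Main obstacle.} The delicate step is the first one. We must show that the image of $\mathcal F_m$ on the lattice quotient $Z$ is a genuine $\Psi r$-GH approximation of a ball onto $B^m$, and that it is almost invariant under all of $H_0/\Lambda$, not only under short elements. This needs Lemma~\ref{lem:fixedlinear}: every element of $H_0/\Lambda$ has a representative of displacement at most $C\theta r\le50r$, and bounded-displacement elements act as $\Psi r$-translations in the $\R^k$-factor. It also needs the compatibility of $C_y$ with $\Lambda$, which holds because $H$ is abelian, so $C_y$ commutes with $\Lambda$.
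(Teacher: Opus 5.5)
Your proposal follows the paper's proof step for step. It uses a lattice quotient $Y/\Lambda$, an aligned splitting map from Lemma~\ref{lem:align}, and harmonic replacement on $(H_0/\Lambda)\cdot B([y],8r)$ followed by averaging over $(H_0/\Lambda)\times C_y$. It then lifts and recentres, applies Lemma~\ref{lem:replacement} against a full harmonic chart for (c), and applies Lemma~\ref{lem:orbitslice} for (d). The deviations are minor.

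\begin{itemize}
\item The paper chooses $\Lambda$ directly by continuity, so that its fundamental parallelepiped moves points of $\bar B(y,2r/\epsilon)$ by at most $\epsilon r$. This makes Lemma~\ref{lem:fixedlinear} and your auxiliary scale $\theta$ unnecessary.
\item The paper reads off (b) as the last $m$ components of the full splitting map $\Upsilon_y$, rather than lifting the Gram estimate from $Y/\Lambda$. Your lifting argument, as phrased through small balls below the injectivity scale, does not work on its own: the Gram bound on $Y/\Lambda$ is an average over a radius-$r$ ball, not over small balls. It would need the sheet-counting estimate from the proof of Lemma~\ref{lem:submersion}. In any case, your own part~(c) already yields (b) in the paper's way.
\end{itemize}
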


\begin{proof}
If $m=0$, then $W$ is a point and there is nothing to prove. Assume $m\ge1$ and write $\Psi=\Psi(\epsilon|N)$. By $r\le r_\epsilon(y)$, every ball of radius at most $r/\epsilon$, centred within $r/\epsilon$ of $H_0y$, is $\Psi$-close to Euclidean and lies in a topological $N$-manifold. We construct the chart first on $Y/\Lambda$ and then on $Y$; on both spaces the $\RCD$ theory applies with measure $\mathcal H^N$. Throughout the proof, we use the fixed labelled equivariant GH approximation from $(r^{-1}Y,y;H_0,C_y)$ to the model on the radius-$1/\epsilon$ ball.

\emph{Averaging on $Y/\Lambda$.} By continuity of the action and compactness, we may choose a lattice $\Lambda\subset H_0$ whose fundamental parallelepiped $P$ satisfies $D_x(v)\le\epsilon r$ whenever $|v|\le\diam P$ and $x\in\bar B(y,2r/\epsilon)$. With $\pi_\Lambda\colon Y\to Y/\Lambda$ the quotient map and $\bar y:=\pi_\Lambda(y)$, every orbit of $\T^k:=H_0/\Lambda$ through $\pi_\Lambda(B(y,2r/\epsilon))$ then has diameter at most $\epsilon r$: representatives may be chosen in $P$, and
$d(\pi_\Lambda\varphi_vx,\pi_\Lambda\varphi_{v'}x)\le D_x(v-v')$. Set $\mathsf G:=\T^k\times C_y$. By (P3), it acts by measure-preserving isometries on the $\RCD(K,N)$ space $Y/\Lambda$.
\begin{itemize}
\item \emph{An aligned splitting map.} The labelled approximation makes $(W,r^{-1}d_W,w)$ $\Psi$-close to $\R^m$ on radius $1/(4\epsilon)$, while $Y/\Lambda\to W$ has fibres of diameter $\le\epsilon r$ over $B_W(w,r/\epsilon)$. The transverse part of the labelled approximation therefore descends to a $\Psi r$-GH approximation $\pi$ of $B(\bar y,r/(4\epsilon))$ onto a Euclidean ball, with $|\pi\circ g-\rho(g)\circ\pi|\le\Psi r$ whenever both points lie in $B(\bar y,20r)$, for $\rho(\tau,c):=\rho_y(c)$. Replace $\pi$ by $\pi-\pi(\bar y)$; the equivariance error changes only by $\Psi r$. Lemma~\ref{lem:align}, applied at scale $2r$ with $8\epsilon$ in place of $\epsilon$, gives the required measured closeness and non-exhaustion and, after restriction, a $\Psi$-splitting map $u\colon B(\bar y,40r)\to\R^m$ with $|u-\pi|\le\Psi r$ and $|u\circ g-\rho(g)\circ u|\le\Psi r$ on $B(\bar y,10r)$. The alignment prevents the average from degenerating.
\item \emph{Harmonic replacement and averaging.} Put $\Omega:=\T^k\cdot B(\bar y,8r)$. This is $\mathsf G$-invariant and lies in $B(\bar y,(8+\epsilon)r)$. Let $h$ be the harmonic replacement of $u$ on $\Omega$, and define
\[
 \psi:=\int_{\mathsf G}\rho(g)^{-1}h\circ g\,dg.
\]
Lemma~\ref{lem:harmonicpackage}, with fixed nested radii as allowed in
\S\ref{ssec:packages}, gives $|h-u|\le C\Psi r$ on $\Omega$ and
$|\nabla h|\le C(N)$ on $B(\bar y,7r)$ (use the fixed-radii variant of
that lemma). Averaging preserves harmonicity,
so $\psi$ is harmonic and exactly $\mathsf G$-equivariant. The approximate
equivariance of $u$ and the preceding pointwise bound also give
$|\psi-u|\le C\Psi r$ on $B(\bar y,6r)$, while
$|\nabla\psi|\le C(N)$ there. These pointwise estimates lift unchanged to
$Y$; the splitting estimate will be recovered below by replacing the
transverse block of a full harmonic chart.
\end{itemize}

\emph{Splitting on $Y$.} Define $\hat\psi:=\psi\circ\pi_\Lambda-\psi(\bar y)$. Because $C_y$ fixes $\bar y$, we have $\psi(\bar y)\in\mathrm{Fix}\,\rho_y(C_y)$. Consequently, $\hat\psi$ is harmonic on $\pi_\Lambda^{-1}(\Omega)\supset B(y,8r)$, is $H_0$-invariant and $C_y$-equivariant, satisfies $\hat\psi(y)=0$, and obeys $|\nabla\hat\psi|\le C(N)$ on $B(y,6r)$. Applying \cite[Thm.~2.5]{HHWZ26} at scale $80r$ and Lemma~\ref{lem:harmonicpackage}, we obtain a harmonic map $\zeta=(\zeta',\zeta'')\colon B(y,80r)\to\R^k\times\R^m$. It satisfies $|\nabla\zeta|\le C(N)$ on $B(y,60r)$ and is $\Psi$-splitting on $B(y,40r)$. After alignment, it is also $C\Psi r$-close on $B(y,10r)$ to the fixed labelled approximation; here $\zeta''$ is the transverse $\R^m$-coordinate.
The difference $\hat\psi-\zeta''$ is harmonic and $C\Psi r$-small on
$B(y,6r)$: the estimate above compares $\hat\psi$ with
$u\circ\pi_\Lambda$, while both $u\circ\pi_\Lambda$ and $\zeta''$ are
$C\Psi r$-close to the transverse part of the fixed approximation.

For $z\in B(y,2r)$, apply Lemma~\ref{lem:replacement}, at scale $2r$, to $\zeta$ and
\[
\Upsilon_z:=(\zeta',\hat\psi).
\]
The map does not depend on $z$; the subscript records the ball. Doubling supplies the splitting estimate for $\zeta$ on $B(z,2r)$, and $B(z,4r)\subset B(y,6r)$. The Euclidean control from the choice of $r$ supplies the Reifenberg hypothesis. Thus $\Upsilon_z$ is a harmonic $C\Psi$-splitting map at $(z,2r)$ and has the bi-H\"older estimates in (c), after absorbing the fixed change of radii into $\Phi$. With $z=y$, the last $m$ components give (b). Decrease $\epsilon$ so that $\Phi$ is below the threshold in Lemma~\ref{lem:orbitslice}. Since $\hat\psi$ is $H_0$-invariant, that lemma gives
\[
 \hat\psi^{-1}(\hat\psi(z))\cap B(z,r/4)
 =H_0z\cap B(z,r/4).
\]

\emph{Conclusion.} Suppose $x_1,x_2\in B(y,r/8)$ and $\hat\psi(x_1)=\hat\psi(x_2)$. With $z=x_1$, we have $x_2\in B(z,r/4)$, hence $x_2\in H_0x_1$. Thus $\hat\psi$ descends to a continuous injective map $h$ on the $\Gamma_w$-invariant ball $B:=B_W(w,r/8)$, with $h(w)=0$. The map is open because $\hat\psi=\mathrm{pr}_2\circ\Upsilon_z$ on $B(z,r)$ and the quotient $Y\to W$ is open. It is $\Gamma_w$-equivariant because $\Gamma_w$ is the image of $C_y$.
\end{proof}

\subsection{Saturated sets}\label{ssec:saturated}
Hausdorff-dimension bounds on $Y$ do not generally descend to its quotients. For example, when $k\ge2$, the set $\R^{k-2}\times\R^m$ has codimension two in $\R^k\times\R^m$ but projects onto all of $\R^m$. The needed bounds do descend, however, for sets that are unions of whole orbits.

\begin{lemma}[Saturated sets]\label{lem:saturated}
Assume $(\ast)$; (R) is not assumed.
\begin{enumerate}[label=\textup{(\alph*)}]
\item Every non-empty open subset of an $H_0$-orbit has positive, possibly infinite, $\mathcal H^k$-measure. So every non-empty $H_0$-invariant set $A\subset Y$ has $\dim_{\mathcal H}A\ge k$.
\item Let $A\subset Y$ be $H_0$-invariant and $s\ge0$. If $\mathcal H^{k+s}(A)=0$, then $\mathcal H^s(P(A))=0$. Hence $\dim_{\mathcal H}P(A)\le\dim_{\mathcal H}A-k$ if $A\ne\emptyset$.
\item $\dim_{\mathcal H}X=m$.
\end{enumerate}
\end{lemma}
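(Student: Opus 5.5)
For (a) I would use the orbit map and the linear lower bound. Fix $z\in Y$ and a non-empty open subset $O$ of $H_0z$. By Lemma~\ref{lem:proper}(a) the orbit map $v\mapsto\varphi_vz$ is a homeomorphism $\R^k\to H_0z$, so $O$ contains the image of a small ball $B^k(v_0,t)$. By $H$-invariance of $D$ we may take $v_0=0$. The map $v\mapsto\varphi_vz$ is Lipschitz for the orbit metric only in one direction; the other direction comes from Lemma~\ref{lem:linlower} with $Q=\{z\}$, namely
\[
d(\varphi_vz,\varphi_{v'}z)=D_z(v-v')\ge c|v-v'|\qquad(|v-v'|\le t_0).
\]
Hence the inverse map $\varphi_vz\mapsto v$ is $c^{-1}$-Lipschitz on the image of a small ball. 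Lipschitz maps do not increase $\mathcal H^k$ by more than the factor $c^{-k}$, and the ball has positive $\mathcal L^k$-measure. Therefore $\mathcal H^k(O)>0$. The dimension bound for invariant $A$ follows because $A$ contains a whole orbit.

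For (b) I would use a covering and co-area type argument, projecting through $W$ and then $X=W/\Gamma$. Since $P$ is $1$-Lipschitz and $\Gamma$ is countable, it suffices to work locally. Fix a compact $Q\subset Y$. Cover $A\cap H_0Q$ by balls $B(z_j,r_j)$ with $\sum r_j^{k+s}<\varepsilon$ and $r_j$ small. The key estimate is a uniform lower bound saying that each image ball $P(B(z_j,r_j))=B(P z_j,r_j)$ needs "$r_j^{-k}$ orbit-mass". The cleanest route is an Eilenberg inequality for the $1$-Lipschitz map $P_W$:
\[
\int^*_W\mathcal H^k(A\cap P_W^{-1}(w))\,d\mathcal H^s(w)\le C\,\mathcal H^{k+s}(A).
\]
This is valid in general metric spaces, with upper integrals and with the $\mathcal H^s$ defined via coverings, though it is delicate in full generality. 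By part (a), every fibre $A\cap P_W^{-1}(w)$ with $w\in P_W(A)$ is a whole orbit and has positive $\mathcal H^k$-measure. So if $\mathcal H^{k+s}(A)=0$, the upper integral vanishes, and $\mathcal H^s(P_W(A))=0$. The quotient by the countable group $\Gamma$ then transfers this to $X$, since $W\to X$ is $1$-Lipschitz.

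I expect the main obstacle to be justifying the Eilenberg inequality in this non-smooth setting. If the general version is unavailable, I would replace it with a direct argument. Restrict to orbit pieces $\varphi_{B^k(1)}z$. Using the uniform lower bound of Lemma~\ref{lem:linlower} on compact sets, each ball of radius $r$ meets such a piece in a set covered by at most $C(r/c)^k$-proportion of it. Counting covering balls against the $\mathcal H^k$-mass of pieces over a positive-measure set of base points then gives the contradiction. The dimension statement follows by taking $s>\dim_{\mathcal H}A-k$.

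For (c), I would get the lower bound from the rectifiable structure. The regular set has full $\mathfrak m$-measure, and $m$-rectifiability with $\mathfrak m\ll\mathcal H^m$ forces $\mathcal H^m(X)>0$, so $\dim_{\mathcal H}X\ge m$. For the upper bound, apply (b) to $A=Y$. The space $Y$ is non-collapsed of dimension $N$, so $\mathcal H^{N+\varepsilon}(Y)=0$. Hence $\mathcal H^{m+\varepsilon}(X)=0$ for every $\varepsilon>0$, which gives $\dim_{\mathcal H}X\le m$.
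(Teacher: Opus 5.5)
Your proposal is correct. Part~(b) and the upper bound in~(c) follow the paper's own route: Eilenberg's inequality for the $1$-Lipschitz map $P_W$, positivity of the integrand because the fibres over $P_W(A)$ are whole orbits, and then $A=Y$. On your worry about Eilenberg's inequality, the paper applies Federer~2.10.25 after noting, via Lemma~\ref{lem:proper}(b), that $Y$ and $W$ are proper. That is the setting in which the coarea inequality is fully justified, so your sketched fallback covering argument is not needed.

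The genuine differences are in~(a) and in the lower bound of~(c).

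In~(a), the paper argues purely topologically. The orbit map is a homeomorphism onto the closed orbit, so a non-empty open piece has covering dimension $k$. Szpilrajn's theorem (a separable metric space with vanishing $\mathcal H^k$-measure has covering dimension at most $k-1$) then gives positivity. The paper stresses that no metric regularity of the orbits is used. You instead use the co-Lipschitz bound of Lemma~\ref{lem:linlower}, which holds for any compact $Q$ since its proof needs only freeness, properness and subadditivity. This makes the inverse orbit map $c^{-1}$-Lipschitz on a small parameter ball. You thereby avoid dimension theory and obtain the quantitative, locally uniform bound $\mathcal H^k(\varphi_{B^k(t)}z)\ge c^k\mathcal L^k(B^k(t))$, which is stronger than what the statement requires.

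For $\dim_{\mathcal H}X\ge m$, the paper stays internal. At a regular point $x$ the stabilizer is trivial, so $X$ is locally isometric to $W$. Proposition~\ref{prop:U2} then supplies a topological $m$-chart, and Szpilrajn again gives $\mathcal H^m(X)>0$. You instead import the absolute continuity $\mathfrak m\ll\mathcal H^m$ for $\RCD(K,N)$ spaces of rectifiable dimension $m$. This is the work of Kell--Mondino, De~Philippis--Marchese--Rindler and Gigli--Pasqualetto, combined with Bru\'e--Semola's constancy of dimension. Only the absolute continuity, not rectifiability itself, is actually used, and you should cite it explicitly.

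What your route buys is that the lower bound holds for every such space, so the maximal Betti drop is seen to matter only for the upper bound; this is consistent with the Pan--Wei example. What the paper's route buys is that it relies only on its own chart construction rather than on that external theorem.
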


\begin{proof}
(a) Fix $y\in Y$. The orbit map $\R^k=H_0\to Y$, $v\mapsto\varphi_vy$, is injective because the action is free (Theorem~\ref{thm:master}(i)), and it is proper by Lemma~\ref{lem:proper}(a). It is therefore a homeomorphism onto the closed orbit $H_0y$. Every non-empty open subset of $H_0y$ contains a closed $k$-ball and has covering dimension $k$. Szpilrajn's theorem \cite{Szp37}, \cite[Ch.~VII]{HW41} says that a separable metric space of vanishing $\mathcal H^k$-measure has covering dimension at most $k-1$. Hence the subset has positive $\mathcal H^k$-measure.

(b) The projection $P_W\colon Y\to W$ is $1$-Lipschitz, and $Y$ and $W$ are proper (Lemma~\ref{lem:proper}(b)). So Eilenberg's inequality \cite[2.10.25]{Fed69} applies (see also \cite{EH21}):
\[
\int^*_W\mathcal H^k\bigl(A\cap P_W^{-1}(w)\bigr)\,d\mathcal H^s(w)\ \le\ c(k,s)\,\mathcal H^{k+s}(A)\ =\ 0 .
\]
For every $w\in P_W(A)$, invariance of $A$ implies that $A\cap P_W^{-1}(w)$ is an entire $H_0$-orbit. Part~(a) makes the integrand positive on $P_W(A)$, so $\mathcal H^s(P_W(A))=0$. The quotient map $W\to X=W/\Gamma$ is $1$-Lipschitz, and therefore $\mathcal H^s(P(A))=0$. To obtain the dimension bound, apply this conclusion for every $s>\dim_{\mathcal H}A-k$, which is non-negative by part~(a).

(c) Bishop--Gromov implies that $\mathcal H^N$ is finite on bounded sets, hence $\mathcal H^{N+\epsilon}(Y)=0$ for every $\epsilon>0$. Applying part~(b) with $A=Y$ gives $\mathcal H^{m+\epsilon}(X)=0$ and thus $\dim_{\mathcal H}X\le m$. For the reverse inequality, choose a regular point $x\in X$ and a lift $y\in Y$, and put $w=P_W(y)$. Lemma~\ref{lem:blowupreg} gives $y\in\mathcal R(Y)$ and $\Gamma_w=1$. By Theorem~\ref{thm:master}(i), a neighbourhood of $x$ is therefore isometric to a ball $B_W(w,r)$. Proposition~\ref{prop:U2} shows that this ball contains an open set homeomorphic to an open subset of $\R^m$. Szpilrajn's theorem then gives $\mathcal H^m(X)>0$, so $\dim_{\mathcal H}X\ge m$.
\end{proof}

The only feature of the orbits used in (a) is their topological dimension. Their metric regularity plays no role: the orbits may be non-rectifiable, and their $\mathcal H^k$-measure may be infinite.

\subsection{The set \texorpdfstring{$G$}{G}}\label{ssec:setG}
We obtain the open set $G$ in the Main Theorem by taking the charts of Proposition~\ref{prop:U2} centred at regular orbits. Their radii are fixed once and for all, small enough to accommodate the later constructions of \S\ref{sec:local}.

Fix small constants $\eta=\eta(N)\in(0,\eta_0]$ and $\epsilon=\epsilon(N)\in(0,1/400]$, to be used in \S\ref{sec:local}. For $y\in\mathcal R(Y)$ with $w=P_W(y)$ choose $r(y)>0$ with
\begin{itemize}
\item $r(y)\le r_\epsilon(y)$, as in \S\ref{sec:norot}, and $|K|(320r(y))^2\le\epsilon$, as in Proposition~\ref{prop:U2};
\item $r(y)\le\epsilon\,\eta\,r_0(y,\eta)/1000$, with $r_0$ as in Lemma~\ref{lem:tube};
\item $r(y)\le\epsilon\,r(w)/100$, where $\gamma B_W(w,3r(w))\cap B_W(w,3r(w))=\emptyset$ for $\gamma\in\Gamma\setminus\Gamma_w$, by proper discontinuity (Lemma~\ref{lem:proper}(c)).
\end{itemize}
Let $h_y\colon B_W(w,r(y)/8)\to\R^m$ be the chart of Proposition~\ref{prop:U2}. Choose these data for one representative of each $H$-orbit in $\mathcal R(Y)$, and transport them by the $H$-action. Equivariance ensures that two transports of the same chart differ only by a linear map. Define
\[
G_W:=\bigcup_{y\in\mathcal R(Y)}B_W\bigl(P_W(y),r(y)/16\bigr),\qquad G:=G_W/\Gamma\subset X .
\]

\begin{lemma}[The set $G$]\label{lem:setG}
\begin{enumerate}[label=\textup{(\alph*)}]
\item $G_W$ is open and $\Gamma$-invariant. $G$ is open in $X$ and contains $P(\mathcal R(Y))$.
\item Theorem~\ref{thm:master}(ii) holds for $G$.
\item Theorem~\ref{thm:master}(iii) holds for $G$.
\end{enumerate}
\end{lemma}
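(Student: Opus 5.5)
For part (a), I would first note that $G_W$ is a union of open balls, hence open. It is $\Gamma$-invariant because the chart data were chosen on one representative per $H$-orbit and transported by $H$. Thus $\gamma B_W(P_W(y),r(y)/16)=B_W(P_W(hy),r(hy)/16)$ with $r(hy)=r(y)$, and $\mathcal R(Y)$ is $H$-invariant. The quotient map $W\to X$ is open, so $G$ is open. Each $P(y)$ with $y\in\mathcal R(Y)$ is the centre of one of these balls, so $P(\mathcal R(Y))\subset G$.

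For part (b), I would build orbifold charts from Proposition~\ref{prop:U2}. Take $x\in G$ and a lift $w'\in G_W$ with $w'\in B_W(w,r(y)/16)$, $w=P_W(y)$. The choice $r(y)\le\epsilon r(w)/100$ gives $\gamma B\cap B=\emptyset$ for $B=B_W(w,r(y)/8)$ and $\gamma\notin\Gamma_w$. Hence $B/\Gamma_w$ maps isometrically onto a neighbourhood of $P(w)$ in $X$, and this neighbourhood contains $x$.

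The embedding $h_y$ is $\Gamma_w$-equivariant, open, and has $h_y(w)=0$, with $\Gamma_w\cong C_y\le C$ acting linearly through $\rho_y$. This action is faithful by Proposition~\ref{prop:blowup}(b) and abelian. It yields a chart $h_y(B)/\Gamma_w\cong B/\Gamma_w$ around $x$. To get a chart centred at $x$ itself, I would restrict to a small $\Gamma_{w'}$-invariant ball around $h_y(w')$. Then $\Gamma_{w'}\le\Gamma_w$ acts linearly after translating the centre, since the fixed point is $h_y(w')$ and $\rho$ is linear. Faithfulness on the smaller ball follows because a nontrivial orthogonal map cannot fix a nonempty open set. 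Compatibility of overlapping charts is automatic, since all charts are open embeddings of open pieces of the single space $W$. Equivariance makes transported charts differ by linear maps. For $x\in\mathcal R(X)$, Lemma~\ref{lem:blowupreg} gives $C_x=1$.

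For part (c), Lemma~\ref{lem:blowupreg} puts every lift of a regular point in $\mathcal R(Y)$, so $\mathcal R(X)\subset P(\mathcal R(Y))\subset G$. Density and full measure then follow from the corresponding properties of $\mathcal R(X)$. Lemma~\ref{lem:saturated}(c) gives $\dim_{\mathcal H}X=m$.

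The main obstacle is the codimension-two bound when the $X_i$ have no boundary. I would argue as follows.
\begin{itemize}
\item $P^{-1}(X\setminus G)\subset Y\setminus\mathcal R(Y)=:A$, and $A$ is $H$-invariant.
\item The spaces $\hat X_i$ have no boundary and are uniformly non-collapsed by Proposition~\ref{prop:chain}. Their non-collapsed limit $Y$ then has no boundary either; this uses stability of boundary-freeness in the non-collapsed setting of De~Philippis--Gigli and Bru\`e--Naber--Semola.
\item For a non-collapsed space without boundary, the singular set satisfies $\dim_{\mathcal H}(Y\setminus\mathcal R(Y))\le N-2$.
\item Lemma~\ref{lem:saturated}(b), applied to the saturated set $A$, then gives $\dim_{\mathcal H}P(A)\le N-2-k=m-2$.
\end{itemize}
The delicate points are the boundary stability input and checking that the $N-2$ bound applies to all non-$N$-regular points, not merely to the top stratum. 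I would cite the known stratification, in which $\mathcal S^{N-1}\setminus\mathcal S^{N-2}$ is empty absent boundary.
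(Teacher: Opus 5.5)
Your proposal is correct and follows the paper's proof essentially step by step. In (b) you restrict to a small ball about $h_y(w')$, which should also be chosen disjoint from its $\rho(\Gamma_w\setminus\Gamma_{w'})$-translates. In (c) you use the same chain as the paper: $X\setminus G\subset P(\mathcal S(Y))$, boundary-freeness of $Y$ via \cite{BNS22}, $\dim_{\mathcal H}\mathcal S(Y)\le N-2$, and Lemma~\ref{lem:saturated}(b).

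The paper settles the point you flag as follows. If one tangent cone at a point is $\R^N$, the volume density there is $1$, and \cite[Cor.~1.7]{DPG18} then makes every tangent cone $\R^N$; hence $\mathcal S(Y)=S^{N-1}(Y)=S^{N-2}(Y)$. For $m\le1$ you should also add that Lemma~\ref{lem:saturated}(a) forces $\mathcal S(Y)=\emptyset$, so $X\setminus G=\emptyset$.
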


\begin{proof}
(a) This is clear from the definition.

(b) Let $w'\in B_W(w,r(y)/16)$, where $w=P_W(y)$ and $y\in\mathcal R(Y)$. Put $h:=h_y$, and let $\Gamma_w$ act through $\rho$. Every translate of $B_W(w,r(y)/8)$ by an element of $\Gamma\setminus\Gamma_w$ is disjoint from that ball. Hence $\Gamma_{w'}\le\Gamma_w$, and $\rho(\Gamma_{w'})$ fixes $h(w')$. Choose a ball $D$ about $h(w')$, compactly contained in $h(B_W(w,r(y)/8))$, so small that $h^{-1}(\bar D)\subset B_W(w,r(y)/16)\subset G_W$ and $\rho(\gamma)D\cap D=\emptyset$ for every $\gamma\in\Gamma_w\setminus\Gamma_{w'}$. Then $h^{-1}(D)$ is a $\Gamma_{w'}$-invariant neighbourhood of $w'$ and is disjoint from all its translates by $\Gamma\setminus\Gamma_{w'}$. Its image in $X$ is a neighbourhood of $[w']$ contained in $G$ and homeomorphic to
$h^{-1}(D)/\Gamma_{w'}\cong D/\rho(\Gamma_{w'})$, where $\rho(\Gamma_{w'})$ acts linearly about $h(w')$. Thus $G$ is an orbifold with local groups $C_x\cong\Gamma_{w'}\le C$, acting faithfully because $\rho$ is conjugate to the faithful representation $\rho_y$ (Proposition~\ref{prop:blowup}(b)). If $x\in\mathcal R(X)$ and $y$ lies over $x$, then Lemma~\ref{lem:blowupreg} gives $y\in\mathcal R(Y)$ and $C_y=1$. Consequently $C_x\cong\Gamma_{P_W(y)}\cong C_y=1$.

(c) By Lemma~\ref{lem:blowupreg}, the regular points of $X$ lie in $P(\mathcal R(Y))\subset G$. They are dense and of full measure \cite{BS20}. By Lemma~\ref{lem:saturated}(c), $\dim_{\mathcal H}X=m$.

Now assume that the $X_i$ have no boundary. Their covers $\hat X_i$ are locally isometric to them and therefore also have no boundary. Since $(\hat X_i,\mathcal H^N)\to(Y,\mathcal H^N)$ by volume convergence, boundary stability \cite[Thm.~1.6]{BNS22} implies that the non-collapsed limit $Y$ has no boundary either. If one tangent cone at a point is $\R^N$, then the volume density there is $1$; volume convergence and \cite[Cor.~1.7]{DPG18}, applied to the tangent cones, which are non-collapsed $\RCD(0,N)$ cones, then force every tangent cone at that point to be $\R^N$. Thus the singular set $\mathcal S(Y):=Y\setminus\mathcal R(Y)$ is the stratum $S^{N-1}(Y)$ of \cite{DPG18}. The points of $S^{N-1}(Y)\setminus S^{N-2}(Y)$ are exactly those admitting a half-space tangent cone, and their closure is the boundary. Because the boundary is empty, $\mathcal S(Y)=S^{N-2}(Y)$ and $\dim_{\mathcal H}\mathcal S(Y)\le N-2$ by \cite[Thm.~1.8]{DPG18}. This is the only use of the no-boundary hypothesis; the remaining argument concerns the quotient.
The set $\mathcal S(Y)$ is $H$-invariant, and $X\setminus G\subset P(\mathcal S(Y))$. Lemma~\ref{lem:saturated}(b) therefore gives
\[
\dim_{\mathcal H}(X\setminus G)\le\dim_{\mathcal H}P(\mathcal S(Y))\le(N-2)-k=m-2.
\]
If $m\le1$, then $\dim_{\mathcal H}\mathcal S(Y)\le N-2<k$ and Lemma~\ref{lem:saturated}(a) force $\mathcal S(Y)=\emptyset$, so $X\setminus G=\emptyset$.
\end{proof}

\begin{proof}[Proof of Theorem~\ref{thm:master}(ii) and (iii)]
These are Lemma~\ref{lem:setG}(b) and (c).
\end{proof}

\section{Exact fibre groups and local submersions}\label{sec:groups}

The limit geometry must now be transferred back to the approximating spaces $X_i$. There are two tasks. First, we split the deck groups compatibly with $H\to\Gamma$, producing rank-$k$ fibre groups and covers on which the finite group $C$ acts freely. Second, we lift transverse splitting maps to the maximal abelian covers $\hat X_i$, which are almost Euclidean near regular orbits, and complete them there to Euclidean charts.

\subsection{Compatible quotients and the fibre group}
The deck group $H_i$ carries no canonical splitting, but the limit group $H$ does. We transport that splitting to $H_i$ by an approximate-homomorphism argument.

Fix a splitting realizing the complement $\Gamma_{\mathrm f}$ chosen in Theorem~\ref{thm:master}(i), and write its lifted free factor as $\Z^b$, so that $H=H_0\times\Z^b\times C$ with $H_0\cong\R^k$. Set $\Gamma:=H/H_0=\Z^b\times C$, let $\pr\colon H\to\Gamma$ be the quotient map, and let $(f_i,\phi_i,\psi_i)$ be $\epsilon_i$-approximations for $(\hat X_i,\hat p_i,H_i)\to(Y,\hat p,H)$. Proposition~\ref{prop:CQ} constructs compatible homomorphisms $\Pi_i\colon H_i\to\Gamma$. Their rank-$k$ kernels $H_i'$, which we call the \emph{fibre groups}, converge to $H_0$. The finite factor of $\Gamma$ then yields the $C$-covers $X_i'$ in Corollary~\ref{cor:Ccover}. For $\delta>0$, set
\[
U_\delta:=\{h\in H:d(hz,z)<\delta\ \text{for all }z\in B(\hat p,1/\delta)\}.
\]
Since $H$ is a Lie group, $H_0$ is open, and we fix $\bar\delta$ with $U_{\bar\delta}\subset H_0$. Write
\[
H_i(R):=\{g\in H_i:d(g\hat p_i,\hat p_i)\le R\},
\qquad H(R):=\{h\in H:d(h\hat p,\hat p)\le R\}.
\]

\begin{lemma}[Extension of pseudo-homomorphisms {\cite[Lemma~6.6]{HHWZ26}}]\label{lem:det}
Let $A$ be an abelian group, and let $\varphi\colon H_i(20D)\to A$ satisfy $\varphi(g_1g_2)=\varphi(g_1)\varphi(g_2)$ whenever $g_1,g_2,g_1g_2\in H_i(20D)$. Then $\varphi$ extends uniquely to a homomorphism $H_i\to A$.
\end{lemma}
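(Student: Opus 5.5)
We would prove uniqueness and existence separately. Existence goes through the universal cover of $X_i$, where Gromov's short-relation presentation of $\pi_1$ applies, and then uses that $A$ is abelian to descend to $H_i=H_1(X_i;\Z)$.

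\emph{Uniqueness.} It suffices that $H_i(20D)$ generates $H_i$, which is the usual chain argument since $\diam X_i\le D$. Given $g\in H_i$, join $\hat p_i$ to $g\hat p_i$ by a geodesic in $\hat X_i$ and subdivide it at points $x_0=\hat p_i,x_1,\dots,x_n=g\hat p_i$ with consecutive distances at most $D$. Choose $h_j\in H_i$ with $d(h_j\hat p_i,x_j)\le D$, taking $h_0=1$ and $h_n=g$. Then $h_{j-1}^{-1}h_j$ moves $\hat p_i$ by at most $3D$, so it lies in $H_i(20D)$, and $g$ is the product of these elements. Any two extensions agree on generators, hence everywhere.

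\emph{Existence.} Let $\tilde X_i\to X_i$ be the universal cover; it exists because $X_i$ is semi-locally simply connected by (P3). Write $\pi_i:=\pi_1(X_i)$ acting by deck transformations, and fix a lift $\tilde p_i$ of $\hat p_i$. The covering $\tilde X_i\to\hat X_i$ is $1$-Lipschitz and intertwines the actions through the abelianization $\mathrm{ab}\colon\pi_i\to H_i$. Hence $\mathrm{ab}$ maps $\pi_i(R):=\{\gamma:d(\gamma\tilde p_i,\tilde p_i)\le R\}$ into $H_i(R)$. Define $\tilde\varphi:=\varphi\circ\mathrm{ab}$ on $\pi_i(3D)$. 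If $\gamma_1,\gamma_2,\gamma_1\gamma_2\in\pi_i(3D)$, their images lie in $H_i(3D)\subset H_i(20D)$, so $\tilde\varphi(\gamma_1\gamma_2)=\tilde\varphi(\gamma_1)\tilde\varphi(\gamma_2)$.

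\emph{Presentation step.} Next invoke Gromov's presentation lemma for a simply connected, geodesic space with a cocompact free properly discontinuous action and quotient of diameter at most $D$. It states that $\pi_i$ has the presentation with generators $\pi_i(2D+\epsilon)$ and relations $\gamma_1\gamma_2=\gamma_3$ for $\gamma_j$ in $\pi_i(3D)$ with $\gamma_1\gamma_2=\gamma_3$ in $\pi_i$.

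This is the main obstacle, because $\tilde X_i$ is only a metric space. The proof is the discrete-homotopy argument, which we would check goes through verbatim:
\begin{itemize}
\item A word in the generators that represents $1$ traces a closed chain of orbit points. Joining consecutive points by geodesics gives a loop in $\tilde X_i$.
\item Since $\tilde X_i$ is simply connected, this loop bounds a homotopy.
\item By uniform continuity, subdivide the homotopy square finely enough that each small cell maps into a set of diameter at most $\epsilon$.
\item Replace each grid vertex by a nearby orbit point within distance $D$. Each cell then becomes a product of triangle relations among elements of $\pi_i(2D+\epsilon)\subset\pi_i(3D)$.
\end{itemize}
Only path-connectedness and simple connectivity of $\tilde X_i$ are used, so this is safe.

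\emph{Conclusion.} With the presentation in hand, $\tilde\varphi$ respects every defining relation, so it extends to a homomorphism $\Phi\colon\pi_i\to A$. As $A$ is abelian, $\Phi$ factors through $\mathrm{ab}$ to a homomorphism $\bar\Phi\colon H_i\to A$.

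It remains to check that $\bar\Phi$ agrees with $\varphi$ on $H_i(20D)$, not merely on $\mathrm{ab}(\pi_i(3D))$. Take $g\in H_i(20D)$. Apply the uniqueness chain argument inside the segment from $\hat p_i$ to $g\hat p_i$, and lift that segment to $\tilde X_i$. This writes $g=\mathrm{ab}(\gamma_1)\cdots\mathrm{ab}(\gamma_n)$ with $\gamma_j\in\pi_i(3D)$, where every partial product $\mathrm{ab}(\gamma_1\cdots\gamma_j)$ stays within $20D$ of $\hat p_i$; this holds because the chain points lie near the segment, whose length is at most $20D$. Repeated use of the pseudo-homomorphism property of $\varphi$ along these partial products gives
\[
\varphi(g)=\prod_j\varphi(\mathrm{ab}\gamma_j)=\bar\Phi(g).
\]
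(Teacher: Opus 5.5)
Your proof is correct. It uses the same two ingredients as the paper: a Gromov-type short-relation presentation, and the commutativity of $A$. Uniqueness is handled the same way, through generation by $H_i(3D)$. The difference is where the presentation lives.

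\begin{itemize}
\item The paper imports from \cite[Lemma~6.6]{HHWZ26} a presentation of $H_i$ itself, by the elements of $H_i(20D)$ and the relations visible there. So $\varphi$ extends at once, and agreement on $H_i(20D)$ is built in.
\item You instead prove the classical presentation of $\pi_1(X_i)$ on the simply connected universal cover, which exists by (P3). You then extend $\varphi\circ\mathrm{ab}$ there and factor through $\mathrm{ab}$.
\end{itemize}

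Your route makes the lemma self-contained. It also shows that existence needs the pseudo-homomorphism property only on $H_i(3D)$. The radius $20D$ enters only through your final consistency check, which is the price of not presenting $H_i$ directly.

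Three details should be tightened.

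\begin{itemize}
\item State the presentation with generators $\pi_i(3D)$ and all relations visible in $\pi_i(3D)$. As written, the generating set $\pi_i(2D+\epsilon)$ does not match the relation set. Your final step also needs $\Phi=\tilde\varphi$ on all of $\pi_i(3D)$ to conclude $\bar\Phi=\varphi$ on $H_i(3D)=\mathrm{ab}(\pi_i(3D))$.
\item In the discrete homotopy, assign each grid vertex on the boundary segment from $a\tilde p_i$ to $ag\tilde p_i$ the nearer endpoint, not an arbitrary orbit point within $D$. Then the boundary word is exactly the given word. Every edge and diagonal label has displacement at most $5D/2+2\epsilon<3D$ for small $\epsilon$.
\item In the last step, a chain point at distance close to $20D$ from $\hat p_i$ can give a partial product of displacement up to $21D$, not $20D$. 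To fix this, use chain points with spacing at most $D$ only up to distance $d(\hat p_i,g\hat p_i)-D$, and then step directly to $g$. All partial products then lie in $H_i(20D)$, and every step, including the last, lies in $H_i(3D)$.
\end{itemize}
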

\begin{proof}
Lemma~6.6 of \cite{HHWZ26} identifies $H_i$ with the group presented by the elements of $H_i(20D)$ and the multiplication relations visible in that set. After abelianization, the relation-preserving map $\varphi$ therefore factors through $H_i$ because $A$ is abelian. Uniqueness follows since $H_i(3D)$ generates $H_i$ when $\diam X_i\le D$.
\end{proof}

We can now transport the splitting of $H$ to the deck groups.

\begin{proposition}[Compatible quotient map]\label{prop:CQ}
For large $i$ there is a surjective homomorphism $\Pi_i\colon H_i\to\Gamma$ such that:
\begin{enumerate}[label=\textup{(\alph*)}]
\item for every $R$ there is $i_0(R)$ such that, for $i\ge i_0(R)$, $\Pi_i(g)=\pr(\phi_i(g))$ for $g\in H_i(R)$ and $\Pi_i(\psi_i(h))=\pr(h)$ for $h\in H(R)$;
\item $H_i':=\ker\Pi_i$ converges to $H_0$. That is, limits of elements of $H_i'$ of bounded displacement lie in $H_0$, and every element of $H_0$ is such a limit;
\item $\phi_i(H_i'(R))\subset H_0$ for $i\ge i_0(R)$.
\end{enumerate}
\end{proposition}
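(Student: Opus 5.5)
The plan is to define $\Pi_i$ first on the generating ball $H_i(20D)$ by $\varphi_i:=\pr\circ\phi_i$, show that this is a pseudo-homomorphism for large $i$, and then extend it by Lemma~\ref{lem:det}. The key point is that $\pr\colon H\to\Gamma$ has a discrete target and an open kernel $H_0$. It is therefore locally constant on $H$, and in fact constant on cosets of $U_{\bar\delta}\subset H_0$. The equivariant approximations $\phi_i$ are $\epsilon_i$-almost homomorphisms on bounded displacement sets. For $g_1,g_2,g_1g_2\in H_i(20D)$, the element $\phi_i(g_1g_2)^{-1}\phi_i(g_1)\phi_i(g_2)$ has displacement $\Psi(\epsilon_i)$ on $B(\hat p,1/\bar\delta)$, so it lies in $U_{\bar\delta}\subset H_0$ for large $i$. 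Hence $\varphi_i(g_1g_2)=\varphi_i(g_1)\varphi_i(g_2)$ exactly. Lemma~\ref{lem:det} then gives a unique homomorphism $\Pi_i\colon H_i\to\Gamma$.

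For (a), fix $R$. By the same argument, $\pr\circ\phi_i$ is exactly multiplicative on $H_i(R')$ for $i\ge i_0(R')$. Every $g\in H_i(R)$ can be written as a product of boundedly many generators from $H_i(3D)$, with all partial products staying in $H_i(R+3D)$; this follows by walking along a geodesic from $\hat p_i$ to $g\hat p_i$ and using the diameter bound. Multiplicativity then shows $\Pi_i(g)=\pr\phi_i(g)$. The second identity holds because $\phi_i\psi_i(h)$ is $o(1)$-close to $h$, so both have the same image under $\pr$. Surjectivity follows from the second identity, since $\pr(H(R))$ generates $\Gamma$ for a fixed $R$: $\Gamma$ is finitely generated, with generators represented by elements of $H(R_0)$.

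For (b) and (c), take $g\in H_i'(R)$. By (a), $\pr\phi_i(g)=\Pi_i(g)=1$, so $\phi_i(g)\in H_0$, which gives (c). Limits of such $g$ are limits of $\phi_i(g)$, and these lie in the closed set $H_0$. Conversely, given $h\in H_0$, the element $g_i:=\psi_i(h)$ satisfies $\Pi_i(g_i)=\pr(h)=1$ by (a), so $g_i\in H_i'$ and $g_i\to h$.

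The main obstacle is the precise uniformity in the almost-homomorphism property of $\phi_i$ on $H_i(R)$. This is where the fixed open neighbourhood $U_{\bar\delta}$ and the discreteness of $\Gamma$ do the work. A secondary point is the bounded-word argument, which is needed so that the extension agrees with $\pr\circ\phi_i$ on arbitrarily large but fixed displacement balls.
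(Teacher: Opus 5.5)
Your proposal is correct and follows essentially the same route as the paper. The defect $\phi_i(g_1g_2)^{-1}\phi_i(g_1)\phi_i(g_2)$ lies in $U_{\bar\delta}\subset H_0$, so $\pr\circ\phi_i$ is exactly multiplicative on fixed displacement balls. You then extend by Lemma~\ref{lem:det}, use the bounded-word decomposition along a geodesic to get (a) on $H_i(R)$, and read off (b), (c) and surjectivity from (a). The only cosmetic difference is that you justify surjectivity by choosing bounded representatives of a finite generating set of $\Gamma$, where the paper uses that $\pr H(3D)$ generates $\Gamma$.
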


\begin{proof}
\emph{Step 1: pseudo-homomorphism.} Fix $R'$. If $g_1,g_2,g_1g_2\in H_i(R')$ and $i$ is sufficiently large depending on $R'$, then both $\phi_i(g_1)\phi_i(g_2)$ and $\phi_i(g_1g_2)$ move $f_i(z)$ to within $2\epsilon_i$ of $f_i(g_1g_2z)$ on large balls. Density of the image of $f_i$ gives
$\phi_i(g_1g_2)^{-1}\phi_i(g_1)\phi_i(g_2)\in U_{8\epsilon_i}\subset H_0$. We use this observation with $R'=20D$ here and with $R'=R+3D$ in Step~2. Thus $g\mapsto\pr(\phi_i(g))$ is a pseudo-homomorphism into the abelian group $\Gamma$, and Lemma~\ref{lem:det} extends it to a homomorphism $\Pi_i$.

\emph{Step 2: proof of (a).} Let $g\in H_i(R)$ and set $M:=\lceil R/D\rceil+1$. Because $\diam X_i\le D$, we can write $g=a_0\cdots a_{M-1}$ with each $a_j\in H_i(3D)$. Indeed, choose $g_j$ so that $g_j\hat p_i$ lies within $D$ of the equally spaced points on a geodesic from $\hat p_i$ to $g\hat p_i$, and set $a_j:=g_j^{-1}g_{j+1}$. Every partial product lies in $H_i(R+3D)$, so iterating Step~1 with $R'=R+3D$ gives
\[
\phi_i(g)^{-1}\textstyle\prod_j\phi_i(a_j)\in U_{CM\epsilon_i}\subset H_0 .
\]
Hence $\Pi_i(g)=\sum_j\pr\phi_i(a_j)=\pr\phi_i(g)$. If $h\in H(R)$, then $\psi_i(h)\in H_i(R+1)$ for large $i$, and equivariant approximation gives
$\phi_i(\psi_i(h))^{-1}h\in U_{C\epsilon_i}\subset H_0$. Applying the first equality with $R+1$ yields
$\Pi_i(\psi_i(h))=\pr\phi_i(\psi_i(h))=\pr h$. This proves both assertions in~(a).

\emph{Step 3: surjectivity.} $\Gamma$ is generated by $\pr H(3D)$. For $h\in H(3D)$, both $h$ and $\phi_i(\psi_i(h))$ approximate the action of $\psi_i(h)$, so $\phi_i(\psi_i(h))^{-1}h\in U_{8\epsilon_i}$. By (a), $\Pi_i(\psi_i(h))=\pr(h)$.

\emph{Step 4: proofs of (b) and (c).} Part~(c) is part~(a) applied to $g\in\ker\Pi_i$. If $g_i\in H_i'(R)$ converge to $h$, then $h\in\phi_i(g_i)U_{\bar\delta}\subset H_0$. Conversely, every $h\in H_0$ is the limit of $\psi_i(h)$, and $\Pi_i(\psi_i(h))=\pr(h)=0$.
\end{proof}

Proposition~\ref{prop:CQ} is the abelian case of \cite[Thm.~3.4]{Wan23}, with simple connectivity replaced by Lemma~\ref{lem:det} as in \cite[Thm.~6.8]{HHWZ26}; the compatibility (a) is implicit in the proof there.

\begin{lemma}[The fibre group]\label{lem:fibre}
For large $i$:
\begin{enumerate}[label=\textup{(\alph*)}]
\item $H_i'$ is generated by $H_i'(R_2)$, for a constant $R_2$ independent of $i$;
\item $\rank H_i'=k$.
\end{enumerate}
\end{lemma}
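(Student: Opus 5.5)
For (a), I will use the standard generation argument for the kernel of a homomorphism out of a group generated by small elements. $H_i$ is generated by $H_i(3D)$. $\Gamma=\Z^b\times C$ is finitely generated, and by Proposition~\ref{prop:CQ}(a) it is generated by $\pr H(3D)=\Pi_i(H_i(3D))$. The first step is to choose, once and for all, a finite set $S\subset H(R_1)$ whose image generates $\Gamma$, together with finitely many relations presenting $\Gamma$; then transport $S$ to $s_i:=\psi_i(S)\subset H_i(R_1+1)$. For each $a\in H_i(3D)$ I pick a word $w_a$ in $S$ with $\pr\phi_i(a)=\pr(w_a)$. The words can be taken from a fixed finite list, because $\pr H(3D+1)$ is a finite subset of $\Gamma$.

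The Reidemeister--Schreier type argument then runs as follows. $H_i'$ is generated by two families: the elements $a\,w_a(s_i)^{-1}$ for $a\in H_i(3D)$, and the relators of $\Gamma$ evaluated on $s_i$. Both families lie in $\ker\Pi_i$ by Proposition~\ref{prop:CQ}(a), and their displacements are bounded by $3D$ plus the fixed word lengths times $R_1+1$. This bound is some $R_2$ independent of $i$. To check generation, write $g\in H_i'$ as a product of the elements $a_j\in H_i(3D)$. Replacing each $a_j$ by $(a_jw_{a_j}^{-1})\,w_{a_j}$ and using commutativity, $g$ becomes a product of the first-family generators times a word in $s_i$. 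That word maps to $0$ in $\Gamma$, so it is a product of relators, since $H_i$ is abelian and the relations present $\Gamma$.

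For (b), I will argue two inequalities. \emph{Upper bound:} $H_i/H_i'\cong\Gamma$ has rank $b$, while $\rank H_i=b_1(X_i)=b+k$ under $(\ast)$, so $\rank H_i'=k$ exactly by additivity of rank for finitely generated abelian groups. This already settles (b) without any convergence argument. As a consistency check, the rank inequality of Remark~\ref{rem:rank} applies to $H_i'$ acting on $\hat X_i$: it is generated by $H_i'(R_2)$ by (a), and it converges to $H_0\cong\R^k$ by Proposition~\ref{prop:CQ}(b). That inequality gives $\rank H_i'\le\rank H_0$ only if the limits are compared in the opposite direction, so the exact sequence is the cleaner route.

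The main point to verify is that the finite list of words and relators genuinely yields an $i$-independent $R_2$. This needs Proposition~\ref{prop:CQ}(a) to hold uniformly on $H_i(R)$ for the single fixed radius $R=3D+(\text{word length})(R_1+1)$, which is the case for $i\ge i_0(R)$.
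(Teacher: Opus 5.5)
Your proof is correct and is essentially the paper's. For (a) you lift generators of $\Gamma$ via $\psi_i$, correct each element of $H_i(3D)$ by a word in these lifts, and add the lifted relators of $\Gamma$; the paper makes the concrete choice of the standard generators of $\Z^b$ and all elements of $C$, with relators $\delta_c+\delta_{c'}-\delta_{c+c'}$. For (b) you use the same rank count from $H_i/H_i'\cong\Gamma$ and $\rank H_i=b+k$.

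Your aside about Remark~\ref{rem:rank} is muddled: applied to $H_i'\to H_0$, it gives $\rank H_i'\le k$ in the correct direction. Nothing in your argument depends on it, though.
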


\begin{proof}
(a) Choose lifts $h_{j,i}:=\psi_i(0,e_j,0)$ of the standard generators of $\Z^b$ and lifts $\tilde c_i(c):=\psi_i(0,0,c)$ of the elements $c\in C$; their displacements at $\hat p_i$ are bounded by a constant $R_*$ independent of $i$, and $\Pi_i$ maps them to $e_j$ and $c$ by Proposition~\ref{prop:CQ}. The set $\pr(H(3D+1))$ is finite, so its $\Z^b$-coordinates are uniformly bounded. After correcting each generator in $H_i(3D)$ by these chosen lifts, the remaining factor lies in $H_i'(R_1)$ for a uniform $R_1$. A word in the resulting generators belongs to $H_i'$ exactly when its $\Z^b$-exponents vanish and its $C$-exponents lie in $\ker(\Z^C\to C)$. This kernel is generated by the relations $\delta_c+\delta_{c'}-\delta_{c+c'}$, whose lifts lie in $H_i'(3R_*+3)$. Hence a uniform displacement ball generates $H_i'$.

(b) $H_i/H_i'\cong\Z^b\times C$ has rank $b$, and $\rank H_i=b_1(X_i)=b+k$.
\end{proof}

Elements of the fibre group converge into $H_0$, whose orbits project to single points downstairs. At a fixed small scale they therefore cannot move between the sheets over a ball.

\begin{lemma}[The fibre group stabilizes small tubes]\label{lem:fibreball}
Let $Z_i$ be $X_i$ or $X_i'$ (Corollary~\ref{cor:Ccover}), and let $\epsilon>0$. For large $i$ the following holds. For every $\hat x\in\hat X_i$, with image $x\in Z_i$, $H_i'$ stabilizes the component through $\hat x$ of the preimage of $B(x,\epsilon)$ in $\hat X_i$.
\end{lemma}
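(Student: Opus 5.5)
Since $H_i'$ is generated by $H_i'(R_2)$ (Lemma~\ref{lem:fibre}(a)), it suffices to show that every generator $g\in H_i'(R_2)$ maps the component $\hat B$ through $\hat x$ of the preimage of $B(x,\epsilon)$ to itself. The preimage is invariant under the deck group: $H_i$ acts on $\hat X_i$ covering the identity of $X_i$, and $H_i'$ (or its relevant quotient, when $Z_i=X_i'$) acts covering the identity of $Z_i$. Hence each $g$ permutes the components, and $g\hat B=\hat B$ as soon as $g\hat x$ can be joined to $\hat x$ by a path whose projection stays in $B(x,\epsilon)$.

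For a fixed $g$ I would argue by contradiction. Suppose there are $i\to\infty$, points $\hat x_i$ and generators $g_i\in H_i'(R_2)$ with $g_i\hat B_i\ne\hat B_i$. Translating by $H_i$, which commutes with $H_i'$ because the deck group is abelian, I may assume $d(\hat x_i,\hat p_i)\le D$. Then $d(g_i\hat x_i,\hat x_i)\le R_2+2D$ by commutativity. After passing to a subsequence, $\hat x_i\to y$ and $g_i\to h$. Proposition~\ref{prop:CQ}(b),(c) gives $h\in H_0$, say $h=\varphi_v$. The path $t\mapsto\varphi_{tv}y$, $t\in[0,1]$, lies in the orbit $H_0y$, so its projection to $X$ (or to $W'$) is a single point. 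I would take a fine partition $0=t_0<\dots<t_n=1$ with consecutive points within $\epsilon/10$, and approximate $\varphi_{t_jv}y$ by points $\hat z_{j,i}\in\hat X_i$ with $\hat z_{0,i}=\hat x_i$ and $\hat z_{n,i}=g_i\hat x_i$. The endpoint is admissible because $g_i\hat x_i$ is $o(1)$-close to $hy$.

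I would then join consecutive points by geodesics in $\hat X_i$ of length less than $\epsilon/5$. The projection of each such geodesic is $1$-Lipschitz and starts at a point whose image in $Z_i$ is within $o(1)$ of the image of $\hat x_i$: indeed $\hat z_{j,i}$ is $o(1)$-close to a lift of the same point of $X$ as $\hat x_i$, and the quotient approximations are induced from the equivariant ones. One detail needs care here. Being close to an $H_0$-translate of $y$ in $Y$ must give closeness downstairs in $Z_i$, that is, the approximations must be compatible with the quotient by $H_i'$, and this holds by \cite[Lemma~3.1]{Wan23}. For $Z_i=X_i'$ I would use the analogous compatibility with $K_i$. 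Granting this, the concatenated path projects into $B(x,\epsilon/5+o(1))\subset B(x,\epsilon)$, which contradicts $g_i\hat B_i\ne\hat B_i$.

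The main obstacle is uniformity. I need one $i_0$ that works for all $\hat x$ and all generators at once. Compactness of $B(\hat p,D)$ and of the set of limits of $H_i'(R_2)$ handles this inside the contradiction argument. The contradiction sequence is allowed to vary both $\hat x_i$ and $g_i$, and the partition size $n$ depends only on the limit pair $(y,h)$, so no additional uniform estimate is required.
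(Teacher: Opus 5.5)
Your proposal is correct and follows the paper's proof. Both arguments reduce by $H_i$-translation and Lemma~\ref{lem:fibre}(a) to bounded generators, obtain a limit $h=\exp(v)\in H_0$ from Proposition~\ref{prop:CQ}(b), and chain along the orbit path $t\mapsto\exp(tv)y$ in short steps. The one difference is the choice of chain points. The paper takes exact deck translates $\psi_i(\exp(t_mv))\hat x_i$, which lie in $H_i$, resp.\ in $H_i'\le K_i$ by Proposition~\ref{prop:CQ}(a). These project exactly to $x$, so no compatibility of the quotient approximations is needed. Your arbitrary approximants do need that compatibility, and it holds, for instance via $H_i'\to H_0$ and \cite[Lemma~3.1]{Wan23}.
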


\begin{proof}
\emph{Reduction to a bounded set.} The group $H_i$ is abelian and normalizes the deck group of $\hat X_i\to Z_i$. Translation by $H_i$ therefore preserves the relevant stabilizers, so we may assume $d(\hat x,\hat p_i)\le D$. By Lemma~\ref{lem:fibre}(a), it is enough to consider generators $g\in H_i'(R_2)$. Their displacement at $\hat x$ is at most $R_2+2D$.

\emph{Contradiction.} Suppose that the conclusion fails along a subsequence. After passing to a further subsequence, let $\hat x_i\to\hat y\in Y$ and let the offending generators satisfy $g_i\to h$. Proposition~\ref{prop:CQ}(b) gives $h=\exp(v)\in H_0$.
\begin{itemize}
\item The path $t\mapsto\exp(tv)\hat y$, $t\in[0,1]$, lies in one $H_0$-orbit. So it projects to a single point of $Z:=X$ or $W'$.
\item Choose $0=t_0<\cdots<t_M=1$ so that the points $z_m:=\exp(t_mv)\hat y$ have consecutive distances $<\epsilon/4$. Put $z_0^i:=\hat x_i$ and, for $m\ge1$, $z_m^i:=\psi_i(\exp(t_mv))\hat x_i$. Then $z_m^i\to z_m$, and $z_M^i$ lies within $o(1)$ of $g_i\hat x_i$.
\item Every $z_m^i$ projects exactly to $x_i$. This is automatic for $Z_i=X_i$. For $Z_i=X_i'$, Proposition~\ref{prop:CQ}(a) gives $\Pi_i(\psi_i(\exp(t_mv)))=0$ for large $i$, so these elements lie in $H_i'\le K_i$.
\item Consecutive balls $B(z_m^i,\epsilon/4)$ overlap and lie in the preimage of $B(x_i,\epsilon)$. So they lie in one component, which also contains $g_i\hat x_i$.
\end{itemize}
Hence $g_i$ stabilizes the component, a contradiction.
\end{proof}

Dividing $\hat X_i$ by the kernel of the finite part of $\Pi_i$ produces covers on which $C$ acts freely. Every bundle below is built there first and divided by $C$ afterwards.

\begin{corollary}[The $C$-cover]\label{cor:Ccover}
Let $\chi\colon\Gamma\to C$ be the projection along $\Z^b$. Put $\chi_i:=\chi\circ\Pi_i$, $K_i:=\ker\chi_i$ and $X_i':=\hat X_i/K_i$. Also put $K:=\pr^{-1}(\Z^b)$ and $W':=Y/K=W/\Z^b$.
\begin{enumerate}[label=\textup{(\alph*)}]
\item $K_i\to K$, and $(X_i',C)\to(W',C)$ equivariantly. For large $i$ the group approximation is the identity of $C$, under $\chi_i$ and $\chi\circ\pr$.
\item $C$ acts effectively on $W'$.
\end{enumerate}
\end{corollary}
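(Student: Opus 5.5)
Both parts follow from Proposition~\ref{prop:CQ} together with the quotient convergence lemma \cite[Lemma~3.1]{Wan23}. I would take them in order.

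\emph{Part (a), first claim: $K_i\to K$.} Fix $g_i\in K_i(R)$ with bounded displacement converging to $h\in H$. By Proposition~\ref{prop:CQ}(a), $\Pi_i(g_i)=\pr\phi_i(g_i)$ for large $i$. Since $\phi_i(g_i)$ and $h$ differ by an element of $U_{o(1)}\subset H_0$, we get $\pr\phi_i(g_i)=\pr h$. Then $\chi(\pr h)=\chi_i(g_i)=0$, so $h\in K$. Conversely, for $h\in K$ the elements $\psi_i(h)$ converge to $h$, and $\chi_i(\psi_i(h))=\chi(\pr h)=0$ by Proposition~\ref{prop:CQ}(a). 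Hence $\psi_i(h)\in K_i$.

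\emph{Part (a), second claim: equivariant convergence.} Since $H_i$ is abelian, $K_i\lhd H_i$, and \cite[Lemma~3.1]{Wan23} gives
\[
(\hat X_i/K_i,\,H_i/K_i)\to(Y/K,\,H/K).
\]
Here $H_i/K_i\cong C$ via $\chi_i$, which is surjective because $\Pi_i$ is. Likewise $H/K\cong C$ via $\chi\circ\pr$. The remaining point is that the induced group approximation is the identity of $C$. Each $c\in C$ is represented by $\tilde c=(0,0,c)\in H(R_*)$, and its approximant $\psi_i(\tilde c)$ satisfies $\chi_i\psi_i(\tilde c)=c$ by Proposition~\ref{prop:CQ}(a). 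So the induced approximation $H/K\to H_i/K_i$ is $c\mapsto c$. Because $C$ is finite and the approximations are $o(1)$-almost homomorphisms, this identification holds for all large $i$. This step is the main point to check: the quotient approximations must be the ones induced by $(\phi_i,\psi_i)$, which is exactly the convention fixed in \S2.

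\emph{Part (b): $C$ acts effectively on $W'$.} Suppose $c\in C$ acts trivially on $W'=Y/K$. Take a regular point $x\in\mathcal R(X)$ and a lift $y\in Y$. Triviality gives $cy=ky$ for some $k\in K$, so $k^{-1}c\in\Stab_H(y)=C_y$. By Lemma~\ref{lem:blowupreg}, $C_y$ is trivial, so $c=k\in K$. Since $K\cap C=\pr^{-1}(\Z^b)\cap C=\{0\}$, this forces $c=0$.

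Note that the lift $y$ need not be chosen carefully: every lift of $x$ has trivial stabilizer, by the same lemma.
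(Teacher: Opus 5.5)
Your proposal is correct and matches the paper's proof essentially step for step. You use Proposition~\ref{prop:CQ}(a) in both directions to get $K_i\to K$, then \cite[Lemma~3.1]{Wan23} for the quotient convergence, reading the group approximation off the lifts $\psi_i(0,0,c)$ (you read it in the $\psi_i$ direction, the paper in the $\phi_i$ direction on the same representatives); for (b) you combine the trivial stabilizer at a lift of a regular point with $K\cap C=1$, exactly as the paper does.
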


\begin{proof}
(a) Proposition~\ref{prop:CQ}(a) shows that every bounded sequence $g_i\in K_i$ converges into $K$. Conversely, each $h\in K$ is the limit of elements $\psi_i(h)\in K_i$. We may therefore apply \cite[Lemma~3.1]{Wan23}. For representatives of bounded displacement, its group approximation sends the class of $g$ to the class of $\phi_i(g)$. On choosing the representatives $\tilde c_i(c)$ from Lemma~\ref{lem:fibre}, Proposition~\ref{prop:CQ}(a) identifies this approximation with $\chi_i$.

(b) Suppose that $c\in C$ acts trivially on $W'$. Choose $y\in Y$ over a
regular point of $X$. Since $c[y]_K=[y]_K$, there is $a\in K$ with
$cy=ay$. Lemma~\ref{lem:blowupreg} gives $\Stab_H(y)=1$, hence $c=a$.
But the fixed splitting $H=H_0\times\Z^b\times C$ has $K\cap C=1$, so
$c=1$.
\end{proof}

It remains to identify, on the approximating spaces, the subgroup that plays the role of the isotropy group. It is the $\Pi_i$-preimage of the local group.

\begin{lemma}[Local groups]\label{lem:localgroups}
Let $x\in X$ have a lift $y\in Y$, put $w=[y]\in W$, and set $C_y:=\Gamma_w\le C$. This agrees with $\Stab_H(y)$ as used in \S\S\ref{sec:limit} and \ref{sec:unfold}: indeed, $\Stab_H(y)\subset\Tor(H)=C$ and $\pr|_C=\mathrm{id}$. For the image $w'\in W'$, also $\Stab_C(w')=\Gamma_w$. Choose $r>0$ so that $\gamma B_W(w,3r)\cap B_W(w,3r)=\emptyset$ for $\gamma\notin\Gamma_w$; we call such $r$ \emph{admissible at $x$}, and smaller radii remain admissible. Let $p_i\to x$, choose lifts $\hat q_i\to y$, and let $\Lambda_i\le H_i$ stabilize the component through $\hat q_i$ of the preimage of $B(p_i,r)$. Then, for large $i$,
\[
H_i'\subset\Lambda_i=\Pi_i^{-1}(C_y).
\]
\end{lemma}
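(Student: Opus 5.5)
The preliminary identifications come first, then the two inclusions $H_i'\subset\Lambda_i$ and $\Lambda_i=\Pi_i^{-1}(C_y)$.

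\emph{Preliminary identifications.} By Theorem~\ref{thm:master}(i), $\Stab_H(y)$ is compact and acts freely modulo $H_0$, so it is finite and lies in $\Tor(H)=C$. Since $\pr|_C$ is the identity onto the torsion of $\Gamma$, we get $\Stab_H(y)\cong\Gamma_w$. For $w'$, suppose $c\in C$ fixes $w'$. Then $cw=\gamma w$ for some $\gamma\in\Z^b$, so $\gamma^{-1}c\in\Gamma_w\subset C$, which forces $\gamma=0$. Hence $\Stab_C(w')=\Gamma_w$.

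\emph{The inclusion $H_i'\subset\Lambda_i$.} This is Lemma~\ref{lem:fibreball} with $Z_i=X_i$ and $\epsilon=r$.

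\emph{The equality $\Lambda_i=\Pi_i^{-1}(C_y)$.} Let $\tilde B_i$ be the component through $\hat q_i$ of the preimage of $B(p_i,r)$. In the limit, the corresponding object is the component $\tilde B$ through $y$ of $P^{-1}(B(x,r))$. Admissibility of $r$ means the $\Gamma$-translates of $B_W(w,3r)$ split into those by $\Gamma_w$, which equal it, and the rest, which are disjoint from it. Consequently $P_W^{-1}(B_W(w,r))$ is a union of $H_0$-orbits and is connected, because orbits are connected and $B_W(w,r)$ is connected (it is geodesic). It follows that $\tilde B=P_W^{-1}(B_W(w,r))$, and its stabilizer in $H$ is $H_0\times C_y=\pr^{-1}(\Gamma_w)$. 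Moreover, for $h$ outside this stabilizer, $h\tilde B$ stays at distance roughly $\ge r$ from $\tilde B$ near $y$, by the $3r$ margin.

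Both inclusions are now proved by contradiction and compactness. Since $\Lambda_i\supset H_i'$, and $\Pi_i(H_i)=\Gamma$ is generated by lifts of bounded displacement (Lemma~\ref{lem:fibre}), it suffices to test elements of bounded displacement at $\hat q_i$ modulo $H_i'$.

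For the inclusion $\supset$: given $c\in C_y$, take $g_i=\psi_i(c)$. By Proposition~\ref{prop:CQ}(a), $\Pi_i(g_i)=c$. Since $c$ fixes $y$, $g_i\hat q_i$ is $o(1)$-close to $\hat q_i$, so the two points lie in the same component and $g_i\in\Lambda_i$. Together with $H_i'\subset\Lambda_i$, this gives $\Pi_i^{-1}(C_y)\subset\Lambda_i$.

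For the inclusion $\subset$: suppose $g_i\in\Lambda_i$ with $\Pi_i(g_i)\notin C_y$. After composing with elements of $H_i'$ we may assume $g_i$ has bounded displacement. This normalization uses the fact that the fibre group converges to $H_0$ and acts almost transitively along the fibres of bounded size, so that $\Pi_i^{-1}(\gamma)$ contains an element of displacement bounded in terms of $\gamma$. The delicate point is that $g_i\hat q_i$ may be far from $\hat q_i$ while staying inside $\tilde B_i$. To handle it, I would use that $\tilde B_i$, modulo $H_i'$, has bounded diameter: it converges to $\tilde B/H_0$, which is a ball in $W$. Choosing $h_i\in H_i'$ with $h_ig_i\hat q_i$ near $\hat q_i$ up to bounded distance, we pass to a limit $h_ig_i\to h$ with $\pr(h)=\Pi_i(g_i)$ by Proposition~\ref{prop:CQ}(a). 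By the limiting analysis above, $h$ must stabilize $\tilde B$, so $\pr(h)\in\Gamma_w$, a contradiction.

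\emph{Main obstacle.} The hardest step is justifying that the limit of the components $\tilde B_i$ is exactly $\tilde B$, and not a union of several translates. This is where the $3r$ disjointness is used. The argument is a path-chaining one in the style of Lemma~\ref{lem:fibreball}: a path inside $\tilde B_i$ projects to $B(p_i,r)$, and its limit stays in $P^{-1}(\bar B(x,r))$. The $3r$ margin then keeps the limit path in a single component, possibly after shrinking $r$ slightly, which is allowed since smaller radii remain admissible.
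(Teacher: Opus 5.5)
\textbf{Verdict: genuine gap.} Your preliminary identifications are correct and match the paper. So are the inclusion $H_i'\subset\Lambda_i$ from Lemma~\ref{lem:fibreball}, the inclusion $\Pi_i^{-1}(C_y)\subset\Lambda_i$ via $\psi_i(c)$, and your description of the limit component $P_W^{-1}(B_W(w,r))$ with stabilizer $\pr^{-1}(\Gamma_w)$. The gap is in $\Pi_i(\Lambda_i)\subset C_y$, at exactly the step you flag as the main obstacle, and your proposed resolution does not close it.

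\emph{Bounded generation does not pass to $\Lambda_i$.} Bounded generation of $H_i$ or $\Gamma$ says nothing about the subgroup $\Lambda_i$: a subgroup of a group generated by bounded-displacement elements need not be. Nothing in your argument excludes, say, $\Pi_i(\Lambda_i)=C_y\times n_i\Z$, with $n_i\Z$ in one $\Z$-factor of $\Gamma$ and $n_i\to\infty$. Then every $g_i\in\Lambda_i\setminus\Pi_i^{-1}(C_y)$ has $\Pi_i(g_i)$ unbounded. A displacement bound ``in terms of $\gamma$'' is therefore not uniform in $i$, and no contradiction sequence converges.

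\emph{The bounded-diameter claim is circular.} $\tilde B_i/H_i'$ is a connected cover of $B(p_i,r)$ with deck group $\Lambda_i/H_i'$. Its boundedness is essentially the finiteness of $\Lambda_i/H_i'$ that you are trying to prove, so it cannot be obtained by asserting that it converges to $\tilde B/H_0$.

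\emph{Limits of paths need bounded length.} Passing to limits of paths in $\tilde B_i$ only makes sense for paths of bounded length. Bounded \emph{extrinsic} displacement of $h_ig_i$ does not show that the limit $h$ stabilizes $\tilde B$, rather than carrying $y$ into another component of $P^{-1}(B(x,r))$.

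\emph{The paper's fix: short generators.} The missing ingredient is Gromov's short-generator argument for the component. Since $\tilde B_i/\Lambda_i=B(p_i,r)$ has intrinsic diameter at most $2r$, $\Lambda_i$ is generated by elements $\lambda$ for which $\hat q_i$ and $\lambda\hat q_i$ are joined inside $\tilde B_i$ by a path of length at most $5r$. Because $\Pi_i$ is a homomorphism, it suffices to show these generators map into $C_y$. They have uniformly bounded displacement, so Proposition~\ref{prop:CQ}(a) gives $\Pi_i(\lambda)=\pr\phi_i(\lambda)$. Their connecting paths have bounded length, so the $f_i$-images are fine chains over $\bar B(x,r+o(1))$. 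These chains cannot jump between the sets $P_W^{-1}(\gamma B_W(w,2r))$ for distinct cosets $\gamma\Gamma_w$, which the admissibility margin separates. Hence every limit of such generators lies in $\pr^{-1}(C_y)$, and discreteness of $\Gamma$ gives $\pr\phi_i(\lambda)\in C_y$ for all of them once $i$ is large.

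\emph{An alternative that repairs your route.} You can make your bounded-diameter claim honest by a trapping argument at finite $i$. Let $\bar q_i$ be the image of $\hat q_i$ in $\hat X_i/H_i'$. By Proposition~\ref{prop:CQ}(a), for large $i$ the only deck translates $\gamma\bar q_i$ within $5r$ of $\bar q_i$ have $\gamma\in C_y$. So the preimage of $B(p_i,r)$ meets $B(\bar q_i,3r)$ only inside $B(\bar q_i,r+o(1))$. A clopen argument then confines the component through $\bar q_i$ to $B(\bar q_i,2r)$. Either way, connectedness of $\tilde B_i$ must be used quantitatively at finite $i$, not only in the limit.
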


\begin{proof}
$H_i'\subset\Lambda_i$ is Lemma~\ref{lem:fibreball} with $Z_i=X_i$.

\emph{$\Pi_i(\Lambda_i)\subset C_y$.} By Gromov's short-generator argument for the component's length metric, $\Lambda_i$ is generated by elements $\lambda$ for which $\hat q_i$ and $\lambda\hat q_i$ are joined inside the component by a path of length at most $5r$. Here we use that $B(p_i,r)$ has intrinsic diameter at most $2r$. Every limit of such generators preserves the component through $y$ of the preimage of $B(x,2r)$, whose stabilizer is $\pr^{-1}(C_y)$. To see the component assertion explicitly, the balls $B_W(\gamma w,2r)$ for distinct cosets $\gamma\Gamma_w$ are separated by at least $2r$, while each set $H_0\cdot\widetilde B_W(\gamma w,2r)$ is connected because both $H_0$ and the lifted ball are connected. The $f_i$-images of the connecting paths are fine chains over $\bar B(x,r+o(1))$, so they cannot jump between these components.

At the base point $\hat p_i$, these generators have displacement at most $5r+2d(\hat q_i,\hat p_i)$, uniformly bounded in $i$. Proposition~\ref{prop:CQ}(a) therefore gives $\Pi_i(\lambda)=\pr\phi_i(\lambda)$ for all such generators once $i$ is large. These images lie in $C_y$. Otherwise, after passing to a subsequence, generators $\lambda_i$ with $\pr\phi_i(\lambda_i)\notin C_y$ would converge to an element $h$ with $\pr h\in C_y$; discreteness of $\Gamma$ would then force $\pr\phi_i(\lambda_i)=\pr h\in C_y$ for large $i$, a contradiction.

\emph{$C_y\subset\Pi_i(\Lambda_i)$.} For $c\in C_y$, the element $h_c\in C_y=\Stab_H(y)$ fixes $y$. So $\psi_i(h_c)$ moves $\hat q_i$ by $o(1)$, lies in $\Lambda_i$, and $\Pi_i(\psi_i(h_c))=c$.

Since $\ker\Pi_i=H_i'\subset\Lambda_i$, we conclude $\Lambda_i=\Pi_i^{-1}(C_y)$.
\end{proof}

\subsection{Almost Euclidean covers and local submersions}
Near a lift of a regular orbit, $\hat X_i$ is almost Euclidean at every sufficiently small scale. A splitting map on the quotient can therefore be lifted and completed to a canonical-Reifenberg chart; in those coordinates, it becomes a projection and hence a submersion.

\begin{lemma}[Almost Euclidean covers]\label{lem:rewinding}
\begin{enumerate}[label=\textup{(\alph*)}]
\item Assume (R). For every $\eta>0$ there is $r>0$ such that, for large $i$, every ball $B(\hat z,s)\subset\hat X_i$ with $s\le10r$ is $\Psi(\eta|N)s$-close to $B^N(s)$, in the measured sense, and has volume ratio $\ge1-\Psi(\eta|N)$.
\item Let $y_0\in\mathcal R(Y)$, $\eta\in(0,\eta_0]$, $r_0:=r_0(y_0,\eta)$ as in Lemma~\ref{lem:tube}, and $\hat y_i\in\hat X_i$ with $\hat y_i\to y_0$. For large $i$ the conclusion of (a) holds for the balls $B(\hat z,s)$ with $d(\hat z,\hat y_i)\le\eta r_0/2$ and $s\le r_0/8$. By $H_i$-invariance it also holds for the balls of radius $\le r_0/8$ centred at any lift of a point of $B(\pi(\hat y_i),\eta r_0/2)$, for the projection $\pi$ of $\hat X_i$ to $X_i$ or $X_i'$.
\end{enumerate}
In case (a), $\hat X_i$ is a topological $N$-manifold, and in case (b) so is the set of points within $\eta r_0/4$ of the $H_i$-orbit of $\hat y_i$ \cite[Thm.~A.1.1]{CC97}.
\end{lemma}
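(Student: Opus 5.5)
The plan is to transfer volume ratios from $Y$ to $\hat X_i$ by volume convergence, and then let Bishop--Gromov monotonicity and the volume--Reifenberg package of \S\ref{ssec:packages} do the rest. By Proposition~\ref{prop:chain}, the spaces $\hat X_i$ are uniformly non-collapsed with measure $c_i\mathcal H^N$ and converge to the non-collapsed $Y$. Hence $\mathcal H^N$ of balls converges under GH convergence of centres, by volume convergence \cite{DPG18}. The only quantity to control is the Bishop--Gromov ratio $\vartheta_s$ at a single outer radius, uniformly in the centre.

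For (b), argue as follows. Lemma~\ref{lem:tube} gives $\vartheta_{r_0}(z)\ge1-\Psi(\eta|N)$ for every $z$ with $d(z,H_0y_0)\le\eta r_0$, in particular for all $z\in\bar B(y_0,\eta r_0)$. Suppose the conclusion fails. Then there are centres $\hat z_i$ with $d(\hat z_i,\hat y_i)\le\eta r_0/2$ and $\vartheta_{t}(\hat z_i)<1-\Psi'$ for some $t\le r_0/8$. By monotonicity, this also gives $\vartheta_{r_0/4}(\hat z_i)<1-\Psi'$. After passing to a subsequence, $\hat z_i\to z$ with $d(z,y_0)\le\eta r_0/2$. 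Volume convergence then yields $\vartheta_{r_0/4}(z)\le1-\Psi'$, contradicting the lower bound from Lemma~\ref{lem:tube} once $\Psi'$ is chosen larger than that $\Psi(\eta|N)$. Monotonicity propagates the ratio bound to all smaller radii. The volume--Reifenberg package then gives measured $\Psi s$-closeness for $s\le r_0/8$; here $|K|r_0^2\le\eta$ is built into Lemma~\ref{lem:tube}.

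The statement about lifts uses only that $H_i$ acts by measure-preserving isometries. Any lift of a point of $B(\pi(\hat y_i),\eta r_0/2)$ is moved by some deck element (of $H_i$, or of $K_i\le H_i$ in the $X_i'$ case) into $B(\hat y_i,\eta r_0/2)$, and ratios and closeness are invariant. The manifold assertion follows from the metric Reifenberg theorem \cite[Thm.~A.1.1]{CC97}, applied at every centre within $\eta r_0/4$ of the orbit, where the uniform control holds on balls of all sufficiently small radii.

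For (a), argue by compactness. Under (R), every $y\in Y$ is regular. By $H$-invariance and the $D$-cocompactness of the action, it suffices to treat $y\in\bar B(\hat p,D)$. Cover this compact set by finitely many balls $B(y_j,\eta r_0(y_j,\eta)/4)$, and take $r:=\min_j r_0(y_j,\eta)/80$. Then apply (b) at each $y_j$, with $\hat y_{j,i}\to y_j$, and translate by $H_i$ as before. The only delicate point is uniformity: (b) is a contradiction argument, so "large $i$" depends on $y_j$. Finiteness of the cover removes this dependence. I expect no real obstacle beyond making sure that the contradiction argument uses a fixed outer radius $r_0/4$, so that volume convergence applies to a single radius rather than to radii tending to zero.
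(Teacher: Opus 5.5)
Your proof is correct. Part~(b) follows the paper's argument; your contradiction argument just makes explicit the uniformity hidden in the paper's phrase ``volume convergence transfers it'' at the fixed radius $r_0/4$.

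For part~(a) you take a somewhat different route. The paper does not pass through (b) or Lemma~\ref{lem:tube}. It uses that the ratios $\vartheta_s(y)$ are continuous in $y$, non-increasing in $s$, and tend to $1$ at every point under (R). Dini's theorem on a compact $Q$ with $HQ=Y$ then gives a single radius $20r$ with $\vartheta_{20r}\ge1-\eta$ on all of $Y$. That one bound is transferred to every radius-$20r$ ball of $\hat X_i$ by volume convergence and $H_i$-invariance. You instead take a finite subcover of a fundamental ball by the tube balls $B(y_j,\eta r_0(y_j,\eta)/4)$ and invoke (b) at each centre. This avoids needing continuity of $y\mapsto\mathcal H^N(B(y,s))$, since Lemma~\ref{lem:tube} only uses a Bishop--Gromov inclusion. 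The cost is choosing approximating points $\hat y_{j,i}$ for each $j$ and taking a minimum of finitely many thresholds in $i$. Dini, by contrast, produces the uniform radius on the limit before any transfer.

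Two small points to tidy up:
\begin{enumerate}
\item $r_0(y,\eta)$ is only defined for $\eta\le\eta_0$, and the manifold conclusion needs $\Psi(\eta|N)$ below the Reifenberg threshold. So first replace $\eta$ by $\min\{\eta,\eta_0\}$, as the paper does.
\item The $H_i$-translate of $\hat z$ lies within $D$ of $\hat p_i$, but its image under the approximation lies only within $D+o(1)$ of $\hat p$. So cover $\bar B(\hat p,D+1)$ instead, or use that a finite open cover of a compact set also covers a neighbourhood of it. The margin between $\eta r_0(y_j)/4$ and $\eta r_0(y_j)/2$ then absorbs the $o(1)$ error uniformly.
\end{enumerate}
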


\begin{proof}
(a) Replace $\eta$ at the outset by a smaller number below both $\eta$ and the canonical-Reifenberg threshold $\eta_0(N)$; the conclusion for this smaller number implies the stated one after enlarging $\Psi$. The ratios $\vartheta_s(y):=\mathcal H^N(B(y,s))/V_{K,N}(s)$ are continuous in $y$, non-increasing in $s$, and converge to $1$ as $s\downarrow0$ at every point. Choose a compact set $Q$ with $HQ=Y$. Dini's theorem on $Q$, followed by $H$-invariance, gives $r>0$ with $|K|(20r)^2\le\eta$ and $\vartheta_{20r}\ge1-\eta$ throughout $Y$. For large $i$, volume convergence and $H_i$-invariance transfer the bound $\vartheta_{20r}\ge1-2\eta$ to every ball of radius $20r$ in $\hat X_i$. The volume--Reifenberg package then gives the conclusion for $s\le10r$.

(b) Lemma~\ref{lem:tube} supplies the required ratio bound on the limit tube. Volume convergence transfers it to radius-$r_0/4$ balls centred within $\eta r_0/2$ of $\hat y_i$; monotonicity and the same package finish the proof.
\end{proof}

Completion is the step that turns a transverse splitting map into a full chart, by adjoining the missing $N-m$ coordinates.

\begin{lemma}[Relative completion on a Reifenberg ball]\label{lem:completion}
For every $N$ there are $\delta_c=\delta_c(N)>0$ and $\tau=\tau(N)\in(0,1/10)$ with the following property. Let $0<\delta\le\delta_c$, and let $(Z,z)$ be a non-collapsed $\RCD(-\delta,N)$ space such that every ball $B(a,s)$ with $a\in B(z,10)$ and $s\le10$ is $\delta s$-close to $B^N(s)$. If $u\colon B(z,10)\to\R^m$ is an $(m,\delta)$-splitting map, then there is a Lipschitz map $v\colon B(z,2\tau)\to\R^{N-m}$, in the domain of the Laplacian, such that $\Xi:=(u-u(z),v)$ is an $(N,\Psi(\delta|N))$-splitting map on $B(z,2\tau)$ and a bi-H\"older embedding on $B(z,\tau)$. The image $\Xi(B(z,\tau))$ is open.
\end{lemma}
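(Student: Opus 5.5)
We argue by compactness and contradiction, in the style of Lemma~\ref{lem:measured} and Lemma~\ref{lem:nearisom}, and then read off the embedding from the replacement lemma. The plan is to produce the complementary coordinates $v$ by first building a full $(N,\Psi)$-splitting map $\xi\colon B(z,1)\to\R^N$ whose first $m$ components are close to $u-u(z)$. We then replace that block by $u-u(z)$ itself, using Lemma~\ref{lem:replacement}.

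\emph{Step 1: a full chart aligned with $u$.} By the Reifenberg hypothesis at scale $10$, \cite[Thm.~2.5(1)]{HHWZ26} gives an $(N,\Psi(\delta|N))$-splitting map $\xi_0$ on $B(z,8)$. By \cite[Thm.~2.5(2)]{HHWZ26}, both $\xi_0-\xi_0(z)$ and $u-u(z)$ are, up to $\Psi$, non-expanding maps with almost dense image, onto $B^N$ and $B^m$ respectively. Lemma~\ref{lem:nearisom}(b) then makes $\xi_0$ a $\Psi$-GH approximation, say $\pi$, to $B^N(4)$. The map $u-u(z)$ is a $\Psi$-submetry-like map to $\R^m$. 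A compactness argument, in which contrary sequences converge to a $1$-Lipschitz almost-surjective map $\R^N\supset B(0,4)\to B^m(4)$ with Gram matrix the identity (hence a linear orthogonal projection on the limit), gives an orthogonal $O\in O(N)$ with
\[
|u-u(z)-\mathrm{pr}_{\R^m}\circ O(\xi_0-\xi_0(z))|\le\Psi\quad\text{on }B(z,4).
\]
This is the step I expect to be the main obstacle. It requires that the limit of $u$ be affine, which follows because limits of splitting maps on Euclidean limits are linear; the identity Gram matrix then forces orthogonality. Set $\xi:=O(\xi_0-\xi_0(z))$ and let $v$ be its last $N-m$ components restricted to $B(z,2\tau)$. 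The map $v$ is Lipschitz with $|\Delta v|\le\Psi$, so it lies in the domain of the Laplacian.

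\emph{Step 2: block replacement.} Put $F:=(u-u(z),v)$. Then $\operatorname{Lip}F\le C(N)$, $|\Delta F|\le\delta+\Psi$, and $|F-\xi|\le\Psi$ on $B(z,4)$. Choose $\tau$ so that the fixed nested radii $2\tau<4$ fit Lemma~\ref{lem:replacement}, applied with radius $r=2\tau$, after rescaling to make $|K|r^2\le\delta$. That lemma gives that $F=\Xi$ is an $(N,C\Psi)$-splitting map on $B(z,2\tau)$. Its second clause, whose hypotheses are the non-collapsed condition and the Reifenberg control at all centres in $B(z,2\tau)$ and scales $\le2\tau$, yields the bi-H\"older embedding on $B(z,\tau)$ once $\delta_c(N)$ is small enough that $C\Psi(\delta|N)$ lies below the threshold $\delta(N,L)$.

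\emph{Step 3: openness.} By the metric Reifenberg theorem \cite[Thm.~A.1.1]{CC97}, the ball $B(z,2\tau)$ is a topological $N$-manifold. The final sentence of Lemma~\ref{lem:replacement}, which rests on invariance of domain, then shows that $\Xi(B(z,\tau))$ is open. The constants $\delta_c$ and $\tau$ depend only on $N$, because every $\Psi$ above does.
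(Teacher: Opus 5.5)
Your proposal is correct and follows essentially the paper's route: a full splitting map from \cite[Thm.~2.5(1)]{HHWZ26}, upgraded to a GH approximation by Lemma~\ref{lem:nearisom}(b), is aligned with $u-u(z)$ by an orthogonal matrix via compactness. Its complementary block is taken as $v$, and Lemma~\ref{lem:replacement}, together with Reifenberg and invariance of domain, gives the splitting, bi-H\"older and openness conclusions.

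The one variation is in the alignment step. The paper obtains affineness of the limit of $u$ geometrically: it uses the product GH structure in \cite[Thm.~2.5(2)]{HHWZ26} and the observation that the first component of an isometry from a Euclidean ball onto a ball of $\R^m\times Z'$ is affine. Your analytic argument (harmonic limit with identity Gram matrix, hence affine by Bochner) is also valid, but it implicitly uses $H^{1,2}$-strong convergence of the $u_j$, which the uniform Laplacian bound supplies via \cite{AH18}.
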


\begin{proof}
Write $\Psi=\Psi(\delta|N)$. By \cite[Thm.~2.5(1)]{HHWZ26} on $B(z,10)$, there is an $(N,\Psi)$-splitting map $w\colon B(z,5/2)\to\R^N$. By \cite[Thm.~2.5(2)]{HHWZ26}, $(u-u(z),f)$ is a $\Psi$-GH isometry from $B(z,5/2)$ onto a ball of a product $\R^m\times Z_u$ centred on $\{0\}\times Z_u$, for some pointed space $Z_u$ and some map $f$. Applied also to $w$, the same theorem gives the non-expansion and density hypotheses of Lemma~\ref{lem:nearisom}(b) on $B(z,5/8)$; using the prescribed Euclidean approximation supplied by the Reifenberg hypothesis, that lemma makes $w-w(z)$ a $\Psi$-GH approximation from $B(z,5/16)$ onto $B^N(5/16)$.

Let $I=(g,h)$ be an isometry from a Euclidean ball $B^N(\rho)$ onto a ball of a product $\R^m\times Z'$ centred on $\{0\}\times Z'$, with $I(0)$ the centre. The distance of a product is the Euclidean norm of the pair of factor distances, so equality in the triangle inequality along a geodesic forces equality in each factor: $g$ maps segments to segments traversed at constant speed, hence is affine. Since $g$ is $1$-Lipschitz, $g(0)=0$ and $g(B^N(\rho))=B^m(\rho)$, it equals $P\circ E$ for a linear isometry $E$ of $\R^N$ and the projection $P$ to the first $m$ coordinates. By compactness, there is therefore a linear isometry $E$ with $|u-u(z)-PE(w-w(z))|\le\Psi$ on $B(z,1/4)$.

Put $\xi:=E(w-w(z))$, again an $(N,\Psi)$-splitting map, and $v:=P'E(w-w(z))$, where $P'$ is the projection to the last $N-m$ coordinates. Then $F:=(u-u(z),v)$ satisfies $\|F-\xi\|_{L^\infty(B(z,1/4))}\le\Psi$, $\operatorname{Lip}F\le C(N)$ and $|\Delta F|\le\Psi$. Lemma~\ref{lem:replacement} at scale $1/8$, whose Euclidean hypothesis is part of ours, makes $F$ an $(N,C\Psi)$-splitting map on $B(z,1/8)$ and a bi-H\"older embedding on $B(z,1/16)$. We take $\tau:=1/32$ and restrict these conclusions to $B(z,2\tau)$ and $B(z,\tau)$. By \cite[Thm.~A.1.1]{CC97}, $B(z,1/8)$ is an open subset of a topological $N$-manifold, so invariance of domain makes the image open.
\end{proof}

Lifting, completing, and descending now give the submersion statement used in \S\S\ref{sec:local}--\ref{sec:global}.

\begin{lemma}[Local submersions]\label{lem:submersion}
Given $N$ and $L\ge0$, there is $\delta_0=\delta_0(N,L)>0$ with the following property. Let $Z$ be the quotient of $\hat X_i$ by a subgroup of $H_i$, with projection $\pi$; for instance $Z=X_i$ or $X_i'$. Let $x\in Z$, $\hat x\in\hat X_i$ over $x$, and $\rho>0$ with $|K|\rho^2\le\delta_0$. Suppose every ball $B(\hat z,s)$ with $\hat z\in B(\hat x,10\rho)$ and $s\le10\rho$ is $\delta_0s$-close to $B^N(s)$. Let $\Phi\colon B(x,10\rho)\to\R^m$ be Lipschitz, in the domain of the Laplacian, with
\[
\operatorname{Lip}\Phi\le C(N),\qquad |\Delta\Phi|\le L/\rho,\qquad\fint_{B(x,10\rho)}|\langle\nabla\Phi_a,\nabla\Phi_b\rangle-\delta_{ab}|\le\delta_0 .
\]
Then $Z$ is a topological $N$-manifold near $x$, and $\Phi$ is a topological submersion at $x$: there are a neighbourhood $O$ of $x$ and a homeomorphism $O\to U\times V$, with $U\subset\R^m$ and $V\subset\R^{N-m}$ open, under which $\Phi$ becomes the projection to $U$.
\end{lemma}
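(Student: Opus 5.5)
The plan is the one the preceding subsection announces: lift $\Phi$ to the cover, complete it there to a full canonical chart, and descend. First rescale so that $\rho=1$; then $|K|\le\delta_0$, $\Phi$ has Laplacian at most $L$, and the Euclidean hypothesis holds on all balls of radius at most $10$ centred in $B(\hat x,10)$. Put $\hat\Phi:=\Phi\circ\pi$ on the component of $\pi^{-1}(B(x,10))$ through $\hat x$. Since $\pi$ is a local isometry of metric measure spaces (P3), $\hat\Phi$ has the same Lipschitz and Laplacian bounds.

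The Gram estimate for $\hat\Phi$ has to be checked on $B(\hat x,10)$ rather than on $B(x,10)$. On small balls $\pi$ is a measure-preserving isometry, but $B(\hat x,10)$ may map many-to-one. I would therefore not transfer the averaged estimate directly. Instead I would first use the non-expansion part of \cite[Thm.~2.5(2)]{HHWZ26} and the almost-Euclidean structure of $B(\hat x,10)$ to find a radius $s_0=s_0(N)$ on which the transfer is harmless. Concretely, the deck group acts freely, so by the Reifenberg control and (P4)-type reasoning the injectivity radius of $\pi$ near $\hat x$ is at least some $c(N)$. This holds because nontrivial deck elements with displacement tending to $0$ on a Euclidean-like ball would be small subgroups. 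On $B(\hat x,c(N))$ the averaged Gram estimate then transfers up to a doubling factor $C(N)$. If that route fails, the alternative is to work at the scale $s_0$ from the start: the hypotheses propagate to smaller balls with $\delta_0$ replaced by $C\delta_0$ via the doubling of averages. The Laplacian bound $L/\rho$ becomes $Ls_0/\rho\cdot$(rescaled), which is acceptable because Lemma~\ref{lem:completion} allows only $\delta$ for the Laplacian. So $\delta_0$ must also be chosen so that the Laplacian is made small at the reduced scale. This is why $\delta_0$ depends on $L$. I would shrink the scale by a factor $\lambda=\lambda(N,L)$ so that $L\lambda\le\delta_c$, and use doubling to keep the Gram defect $\le C\lambda^{-N}\delta_0$, which is small once $\delta_0\ll\lambda^N$.

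Next apply Lemma~\ref{lem:completion} to $\hat\Phi$ at that scale. This gives $v$ with $\Xi=(\hat\Phi-\hat\Phi(\hat x),v)$ a bi-H\"older embedding of $B(\hat x,\tau\lambda)$ with open image, and $B(\hat x,\tau\lambda)$ a topological $N$-manifold by \cite[Thm.~A.1.1]{CC97}. Choosing $\tau\lambda$ below the injectivity radius of $\pi$ makes $\pi$ an isometry from $B(\hat x,\tau\lambda)$ onto $B(x,\tau\lambda)$. So $Z$ is a manifold near $x$, and $\Xi\circ\pi^{-1}$ is a homeomorphism of a neighbourhood of $x$ onto an open set in $\R^m\times\R^{N-m}$ whose first component is $\Phi-\Phi(x)$.

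Finally, choose a product box $U\times V\subset\Xi(B(\hat x,\tau\lambda))$ around $\Xi(\hat x)=0$ and set $O:=(\Xi\circ\pi^{-1})^{-1}(U\times V)$. In these coordinates $\Phi$ is projection to $U$, after translating $U$ by $\Phi(x)$. The main obstacle is the uniform lower bound on the injectivity radius of $\pi$ on the almost-Euclidean ball, together with the bookkeeping that transfers the quotient Gram estimate to the cover. I would settle that first, by the contradiction argument with no small subgroups.
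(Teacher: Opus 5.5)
There is a genuine gap, and it sits in the step you single out as the main obstacle. You claim a uniform lower bound $c(N)\rho$ on the injectivity radius of $\pi$ at $\hat x$. That claim is false, and it fails exactly where the lemma is applied.

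For a concrete case, take $X_i=\T^k_{1/i}\times\T^m$ with flat metrics (this satisfies $(\ast)$), $Z=X_i$, $\hat X_i=\R^k\times\R^m$, a fixed small $\rho$, and $\Phi$ a flat coordinate on the $\T^m$ factor. Every hypothesis holds, with zero Laplacian and zero Gram defect. Yet the deck translation by $e_1/i$ in the first factor moves $\hat x$ by $1/i$.

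The paper's own use has the same feature. In Lemma~\ref{lem:bundlecrit}, $\rho=r/100$ is fixed and $Z=X_i'$. The deck group contains $H_i'$, which converges to $H_0\cong\R^k$ by Proposition~\ref{prop:CQ}(b). So elements of displacement $o(1)$ at $\hat x$ are always present: $Z$ is collapsed at scale $\rho$ even though the cover is not.

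The no-small-subgroups property (P4) cannot exclude this. It concerns entire subgroups all of whose elements move points little. A single element of small displacement generates an infinite cyclic group whose powers move $\hat x$ arbitrarily far.

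Your fallback of shrinking the scale by a fixed factor $\lambda(N,L)$ does not help either. The injectivity radius can be smaller than any fixed fraction of $\rho$. Doubling on $Z$ only bounds the Gram defect on $B(x,s)$ by $C(\rho/s)^N\delta_0$, which is useless once $s$ is comparable to the injectivity radius.

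The missing idea is a way to transfer the averaged Gram estimate to the cover that does not depend on how many sheets of $\pi$ meet the ball. The paper counts the sheets.

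\begin{itemize}
\item Let $D$ be the deck group of $\pi$ and put $\nu:=\#\{g\in D:d(g\hat x,\hat x)<10\rho\}$.
\item Lifting a minimizing geodesic from $x$ to $z$ and using the triangle inequality shows that each $z\in B(x,5\rho)$ has at most $\nu$ lifts in $B(\hat x,5\rho)$ and at least $\nu$ lifts in $B(\hat x,15\rho)$.
\item The area formula for the local isometry $\pi$ then gives $\int_{B(\hat x,5\rho)}q\circ\pi\le\nu\int_{B(x,5\rho)}q$ and $\nu\,\mathcal H^N(B(x,5\rho))\le\mathcal H^N(B(\hat x,15\rho))$.
\item Bishop--Gromov on both spaces yields $\fint_{B(\hat x,5\rho)}q\circ\pi\le C(N)\fint_{B(x,10\rho)}q$ for every $q\ge0$, independently of $\nu$.
\end{itemize}

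With this estimate in hand, your shrinking of the scale by a factor depending on $(N,L)$ and your application of Lemma~\ref{lem:completion} go through as you describe.

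The descent also needs no injectivity-radius bound. The lemma asks only for some neighbourhood $O$ of $x$. So choose the product box about $\Xi(\hat x)$ small enough that its preimage under $\Xi$ lies in an evenly covered neighbourhood of $\hat x$, and push it down by $\pi$.
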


\begin{proof}
\begin{itemize}
\item \emph{Lift.} Put $\hat\Phi:=\Phi\circ\pi$. Since the covering projection is a local isomorphism of metric measure spaces, $\Delta\hat\Phi=(\Delta\Phi)\circ\pi$. We use the following elementary covering-count estimate. If $q\ge0$ is defined on $B(x,10\rho)$, then
\[
 \fint_{B(\hat x,5\rho)}q\circ\pi
 \le C(N)\fint_{B(x,10\rho)}q.
\]
Let $D$ be the deck group of $\pi$ and put
\[
 \nu:=\#\{g\in D:d(g\hat x,\hat x)<10\rho\}.
\]
For $R>0$ and $z\in Z$, let
$N_R(z):=\#\bigl(\pi^{-1}(z)\cap B(\hat x,R)\bigr)$. If
$z\in B(x,5\rho)$, lift a minimizing geodesic from $x$ to $z$ to obtain
$\hat z\in B(\hat x,5\rho)$. Every other lift of $z$ in
$B(\hat x,5\rho)$ is $g\hat z$ for some $g\in D$, and
\[
 d(g\hat x,\hat x)
 \le d(g\hat x,g\hat z)+d(g\hat z,\hat x)<10\rho.
\]
Thus $N_{5\rho}(z)\le\nu$. Conversely, for every $g$ counted by
$\nu$, the lift $g\hat z$ lies in $B(\hat x,15\rho)$, so
$N_{15\rho}(z)\ge\nu$.

The area formula for the local isometry $\pi$---equivalently, a
countable partition into evenly covered Borel sets---now gives
\[
 \int_{B(\hat x,5\rho)}q\circ\pi
 \le \nu\int_{B(x,5\rho)}q,
 \qquad
 \nu\mathcal H^N(B(x,5\rho))
 \le\mathcal H^N(B(\hat x,15\rho)).
\]
Dividing the first inequality by
$\mathcal H^N(B(\hat x,5\rho))$ and bounding $\nu$ by the second gives
\[
 \fint_{B(\hat x,5\rho)}q\circ\pi
 \le
 \frac{\mathcal H^N(B(\hat x,15\rho))}
      {\mathcal H^N(B(\hat x,5\rho))}
 \frac{\mathcal H^N(B(x,10\rho))}
      {\mathcal H^N(B(x,5\rho))}
 \fint_{B(x,10\rho)}q.
\]
Bishop--Gromov on the cover and the base bounds the two ratios by a
constant depending only on $N$, since $|K|\rho^2\le\delta_0$. This proves
the estimate. Apply it to
$q=|\langle\nabla\Phi_a,\nabla\Phi_b\rangle-\delta_{ab}|$ to obtain
\[
\fint_{B(\hat x,5\rho)}|\langle\nabla\hat\Phi_a,\nabla\hat\Phi_b\rangle-\delta_{ab}|\le C(N)\delta_0.
\]
\item \emph{Smaller scale.} Let $\theta=\theta(N,L)\in(0,10^{-2})$ be small and $\rho':=\theta\rho$. Then
\[
|\Delta\hat\Phi|\le\theta L/\rho',\qquad
\fint_{B(\hat x,10\rho')}|\langle\nabla\hat\Phi_a,\nabla\hat\Phi_b\rangle-\delta_{ab}|
 \le C(N)\theta^{-N}\delta_0 .
\]
Choose $\theta$ first so that $C(N)\theta L\le\delta_c/3$, and then choose $\delta_0$. Put
\[
\delta_*:=C(N)\bigl(\theta L+\theta^{-N}\delta_0+\delta_0\bigr),
\]
and choose $\delta_0$ so that $\delta_*\le\delta_c$. After rescaling the metric by $\rho'^{-1}$ and replacing the target map by
\[
u_{\rho'}:=\rho'^{-1}\bigl(\hat\Phi-\hat\Phi(\hat x)\bigr),
\]
the map $u_{\rho'}$ is an $(m,\delta_*)$-splitting map on the radius-$10$ ball, and all hypotheses of Lemma~\ref{lem:completion} are satisfied with $\delta=\delta_*$.
\item \emph{Completion and chart.} Apply Lemma~\ref{lem:completion} in the rescaled metric, let $v_{\mathrm{sc}}$ be the resulting complementary map, and put $v:=\rho'v_{\mathrm{sc}}$. With $c:=\tau(N)$, the map $v$ is defined on $B(\hat x,2c\rho')$, and $\Xi:=(\hat\Phi-\hat\Phi(\hat x),v)$ is an $(N,\Psi(\delta_*|N))$-splitting map there and a bi-H\"older homeomorphism on $B(\hat x,c\rho')$ onto an open subset of $\R^N$. In this chart, $\hat\Phi-\hat\Phi(\hat x)$ is the projection to the first $m$ coordinates.
\item \emph{Descent.} Because $\pi$ is a covering map, we may choose a product box $U\times V$ about $\Xi(\hat x)$, compactly contained in the chart image, whose inverse image $\hat O$ lies in an evenly covered neighbourhood of $\hat x$. Then $O:=\pi(\hat O)$ is an open neighbourhood of $x$, the restriction $\pi|_{\hat O}$ is a homeomorphism, and the induced chart $O\to U\times V$ carries $\Phi$ to projection onto $U$.\qedhere
\end{itemize}
\end{proof}

\section{The orbit coordinate and local Seifert fibrations}\label{sec:local}

This section proves part~(iv). Section~\ref{sec:eqmaps} supplies the transverse coordinate and Section~\ref{sec:groups} the fibre group; what remains is the orbit coordinate. We construct it, pair it with the transverse coordinate to form a product chart, and descend to the Seifert model. Write $P'\colon Y\to W'=W/\Z^b$, and fix once and for all the quotient approximations $f_i'\colon X_i'\to W'$ and $f_i\colon X_i\to X$ induced by the original equivariant approximations of \S\ref{sec:prelim}; every later subsequence uses their restrictions. If $m=0$, then $X$ is a point and $X_i\cong\T^N$ for large $i$ \cite{MMP22,ZZ26}, proving (iv) and (v). Henceforth $m\ge1$.

\subsection{Local equivariant splitting maps}\label{ssec:eqsplit}
We average splitting maps on $X_i'$ over the local group, making their symmetry exact while preserving the splitting estimates. Their subsequential limits are equivariant charts of the kind constructed in Proposition~\ref{prop:U2}.

Fix $y_0\in\mathcal R(Y)$, and set $r:=r(y_0)$, $w_0:=P_W(y_0)$, and $w_0':=P'(y_0)$. By Lemma~\ref{lem:localgroups}, $C_0:=C_{w_0'}\cong\Gamma_{w_0}$ acts on $\R^m$ through $\rho_0:=\rho_{y_0}$. The map $W\to W'$ restricts to an isometry from $B_W(w_0,r/\epsilon)$ onto $B_{W'}(w_0',r/\epsilon)$: non-zero elements of $\Z^b$ lie outside $\Gamma_{w_0}$ and move $w_0$ by at least $6r(w_0)\ge600r/\epsilon$, and balls project onto balls (Lemma~\ref{lem:proper}(b)). Choose $x_i^0\in X_i'$ with $f_i'(x_i^0)\to w_0'$. Corollary~\ref{cor:Ccover} says that $C$ acts freely and isometrically on $X_i'$ and that $f_i'$ is uniformly almost equivariant. Each $X_i'$ is $\RCD(K,N)$ with measure a constant multiple of $\mathcal H^N$ (Theorem~\ref{thm:master}(i)); the constant plays no role below.

\begin{proposition}[Equivariant splitting maps]\label{prop:eqsplit}
For large $i$ there are maps $\Phi_i\colon B(x_i^0,30r)\to\R^m$ with the following properties, where $\Psi=\Psi(\epsilon|N)$.
\begin{enumerate}[label=\textup{(\alph*)}]
\item $\Phi_i(cx)=\rho_0(c)\Phi_i(x)$ whenever $c\in C_0$ and $x,cx\in B(x_i^0,30r)$.
\item $\operatorname{Lip}\Phi_i\le C(N)$ and $|\Delta\Phi_i|\le\Psi/r$. Moreover $\Phi_i$ is an $(m,\Psi)$-splitting map on $B(x,r/10)$ for every $x\in B(x_i^0,4r)$.
\item There is a fixed $\Psi r$-GH approximation $\Pi\colon B_{W'}(w_0',9r)\to\R^m$ such that $|\Phi_i-\Pi\circ f_i'|\le2\Psi r$ on $B(x_i^0,9r)$ for all large $i$. Every subsequence has a further subsequence along which $\Phi_i\to\Phi_\infty$ uniformly under the approximations $f_i'$, on $B(x_i^0,7r)$. Every such limit $\Phi_\infty\colon B_{W'}(w_0',7r)\to\R^m$ is $C_0$-equivariant, satisfies $|\Phi_\infty-\Pi|\le\Psi r$, and maps $B_{W'}(w_0',r/8)$ homeomorphically onto an open subset of $\R^m$. The set $\mathcal L$ of these limits is compact in the topology of uniform convergence.
\end{enumerate}
\end{proposition}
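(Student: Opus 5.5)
The construction mirrors Proposition~\ref{prop:U2}, now carried out on the approximating covers $X_i'$ rather than on a lattice quotient of $Y$. First I fix the limit chart. By the choice $r=r(y_0)\le r_\epsilon(y_0)$, the labelled equivariant approximation of $(r^{-1}Y,y_0;H_0,C_{y_0})$ by the model descends to $W$. The map $W\to W'$ is an isometry on $B_W(w_0,r/\epsilon)$. Together these give a $\Psi r$-GH approximation $\Pi\colon B_{W'}(w_0',r/\epsilon)\to\R^m$ with $\Pi(w_0')=0$ and $|\Pi(cw)-\rho_0(c)\Pi(w)|\le\Psi r$ for $c\in C_0$. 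Composing with $f_i'$, and using Corollary~\ref{cor:Ccover}(a) (the group approximation is the identity on $C$), I get $\pi_i:=\Pi\circ f_i'$ on $B(x_i^0,r/(2\epsilon))$. For large $i$, $\pi_i$ is a $2\Psi r$-GH approximation onto a Euclidean ball and is $2\Psi r$-almost $\rho_0$-equivariant under $C_0$.

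Next I build $\Phi_i$. Apply Lemma~\ref{lem:align} on $X_i'$ with $\mathsf G=C_0$ (finite, acting by measure-preserving isometries) at scale comparable to $4r$. Its hypotheses hold because $X_i'$ is a non-collapsed $\RCD(K,N)$ space with $|K|(320r)^2\le\epsilon$. This yields an $(m,\Psi)$-splitting map $u_i$ on $B(x_i^0,160r)$ with $|u_i-\pi_i|\le\Psi r$ and $|u_i\circ c-\rho_0(c)u_i|\le\Psi r$ on $B(x_i^0,20r)$. The ball $\Omega=C_0\cdot B(x_i^0,32r)$ is $C_0$-invariant and of size comparable to $32r$, since $C_0$ moves $x_i^0$ by $o(1)$. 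I take the harmonic replacement of $u_i$ on $\Omega$ and then average over $C_0$ via Corollary~\ref{lem:averaging}. Lemma~\ref{lem:harmonicpackage} supplies the Lipschitz bound $C(N)$, keeps $|\Delta|$ at $0$ (so certainly $\le\Psi/r$), gives closeness $C\Psi r$ to $\pi_i$, and preserves the energy (Gram) estimate on the ball of radius $\sim20r$. The result $\Phi_i$ is exactly equivariant, which gives (a) and the first half of (b). For the small-ball splitting estimate in (b), I use that each $B(x,r/10)$ with $x\in B(x_i^0,4r)$ is almost Euclidean in the transverse directions at that scale. The model is Euclidean at every scale up to $r/\epsilon$, so $\pi_i$ is a $\Psi r$-approximation at scale $r/10$ up to a fixed factor. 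The comparison clause of Lemma~\ref{lem:harmonicpackage}, applied to $\Phi_i$ and a local splitting map aligned to $\pi_i$ on $B(x,r)$, then transfers the Gram estimate down to radius $r/10$ with error $C\Psi$.

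For (c), the bound $|\Phi_i-\Pi\circ f_i'|\le2\Psi r$ was already obtained above. The uniform Lipschitz bound and Arzel\`a--Ascoli along the approximations $f_i'$ give subsequential uniform limits $\Phi_\infty$. These inherit $C_0$-equivariance, by Corollary~\ref{cor:Ccover}(a), and the bound $|\Phi_\infty-\Pi|\le\Psi r$. The limit of harmonic maps with uniform Lipschitz bound is harmonic on $W'$ only in a weak sense, so I would not argue through harmonicity on $W'$. Instead I lift to $Y$. The lifts $\Phi_i\circ$(projection to $X_i'$) on $\hat X_i$ are $H_i'$-invariant, and their limits are $H_0$-invariant on $Y$ by Proposition~\ref{prop:CQ}(b). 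These limits are harmonic, since harmonic maps with uniform gradient bounds converge to harmonic maps under pmGH convergence of the non-collapsed covers, and they are splitting maps close to the transverse chart. Repeating the conclusion step of Proposition~\ref{prop:U2}, via Lemma~\ref{lem:replacement} and the orbit--slice criterion (Lemma~\ref{lem:orbitslice}), shows that the lifted limit has level sets equal to $H_0$-orbits on $B(y_0,r/8)$. It is also open in the transverse coordinate, so $\Phi_\infty$ maps $B_{W'}(w_0',r/8)$ homeomorphically onto an open set. Compactness of $\mathcal L$ follows by a diagonal argument: uniform limits of elements of $\mathcal L$ are again limits of $\Phi_{i_j}$.

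I expect the main obstacle to be this last step, namely the homeomorphism property of every limit $\Phi_\infty$. It requires that the splitting estimate survives passage from $X_i'$ through $\hat X_i$ to $Y$. The Gram estimate must descend from the cover, using the covering-count estimate in the proof of Lemma~\ref{lem:submersion}. It must also be stable under the limit, via $W^{1,2}$ convergence of harmonic maps. Once these two transfers are secured, Proposition~\ref{prop:U2} finishes the argument.
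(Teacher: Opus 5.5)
Your plan is correct and follows the paper's proof. The skeleton is the same: Lemma~\ref{lem:align} on $X_i'$ with $\mathsf G=C_0$, averaging over $C_0$, Arzel\`a--Ascoli limits, and then injectivity and openness obtained by lifting to $Y$, pairing with $\zeta'$ through Lemma~\ref{lem:replacement}, and applying Lemma~\ref{lem:orbitslice}. The paper differs in three details, each of which simplifies your version.
\begin{itemize}
\item It averages $u_i$ directly with no harmonic replacement, since $|\Delta\Phi_i|\le\Psi/r$ is inherited from $u_i$.
\item It gets the small-ball estimate in (b) by doubling from $B(x_i^0,5r)$, not by your comparison argument.
\item It never transfers the Gram estimate to $Y$. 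Lemma~\ref{lem:replacement} needs only Lipschitz and Laplacian bounds, which pass to the limit by \cite{AH18}, together with $L^\infty$-closeness to $\zeta''$. So the obstacle you flag is not actually needed.
\end{itemize}
If you keep the replacement step, enlarge $\Omega$. Lemma~\ref{lem:harmonicpackage} gives the Lipschitz bound only on $B(q,6r')$, which for your choice is $B(x_i^0,24r)$ rather than all of $B(x_i^0,30r)$.
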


\begin{proof}
\emph{Splitting maps.} The labelled approximation makes $(r^{-1}W',w_0';C_0)$ $\Psi$-equivariantly close to $(\R^m,0;\rho_0)$ on radius $1/(4\epsilon)$; write $\Pi$ for its $\R^m$-component and normalize it by $\Pi(w_0')=0$. Since $X_i'\to W'$ and $|K|(320r)^2\le\epsilon$, the map $\pi:=\Pi\circ f_i'-\Pi(f_i'(x_i^0))$ is a $\Psi r$-GH approximation of $B(x_i^0,r/(4\epsilon))$ onto a Euclidean ball with $\pi(x_i^0)=0$ and $|\pi\circ c-\rho_0(c)\circ\pi|\le\Psi r$ whenever both points lie in $B(x_i^0,20r)$, for $c\in C_0$, by the almost equivariance of both $f_i'$ and $\Pi$. Lemma~\ref{lem:align}, applied with $\mathsf G=C_0$ at scale $2r$ and with $8\epsilon$ in place of $\epsilon$, gives, after restriction, $(m,\Psi)$-splitting maps $u_i\colon B(x_i^0,40r)\to\R^m$ with
\[
|u_i-\pi|\le\Psi r,\qquad
|u_i\circ c-\rho_0(c)\circ u_i|\le\Psi r
\]
on $B(x_i^0,10r)$ for every $c\in C_0$.

\emph{Averaging.} Elements of $C_0$ move $x_i^0$ by $o(1)$. Put
\[
\Phi_i:=\frac1{|C_0|}\sum_{c\in C_0}\rho_0(c)^{-1}\circ u_i\circ c
\]
on the $C_0$-invariant set $\{x:C_0x\subset B(x_i^0,40r)\}\supseteq B(x_i^0,30r)$. Corollary~\ref{lem:averaging} gives the exact equivariance in (a), the Lipschitz and Laplacian bounds, and $|\Phi_i-u_i|\le\Psi r$ on $B(x_i^0,9r)$. It also shows that $\Phi_i$ is an $(m,\Psi)$-splitting map on $B(x_i^0,5r)$ and, by doubling, on every $B(x,r/10)$ with $x\in B(x_i^0,4r)$. This is (b).

\emph{The limit.} Uniform Lipschitz bounds give, from every subsequence, a further subsequence for which $\Phi_i$ converges uniformly under $f_i'$ on $B(x_i^0,7r)$ to a Lipschitz map $\Phi_\infty$ on $B_{W'}(w_0',7r)$. The family $\mathcal L$ of all such limits is bounded, equicontinuous, and closed by a diagonal argument, hence compact. Part~(a) and the almost equivariance of $f_i'$ make every $\Phi_\infty\in\mathcal L$ $C_0$-equivariant. Since $f_i'(x_i^0)\to w_0'$ and $\Pi(w_0')=0$, the centring constant satisfies $|\Pi(f_i'(x_i^0))|\le\Psi r+o(1)$. Absorbing it into the modulus gives $|\Phi_i-\Pi\circ f_i'|\le2\Psi r$ on $B(x_i^0,9r)$, and hence $|\Phi_\infty-\Pi|\le\Psi r$ on $B_{W'}(w_0',7r)$. The margin between the radii $9r$ and $7r$ absorbs the $o(1)$ error by which $f_i'$ may move a point across a fixed-radius boundary.

It remains to prove that $\Phi_\infty$ is injective and open on
$B_{W'}(w_0',r/8)$. Lift $\Phi_i$ to $\hat X_i$. The lifts are
$K_i$-invariant and converge uniformly to $\Phi_\infty\circ P'$ under
$(\hat X_i,K_i)\to(Y,K)$, so $\Phi_\infty\circ P'$ is $H_0$-invariant.
Local isometry preserves the Lipschitz and Laplacian bounds in~(b); pmGH
stability of Sobolev functions and Laplacians
\cite[Thm.~4.2, Cor.~4.3, Thm.~4.4]{AH18} therefore gives
\[
 \operatorname{Lip}(\Phi_\infty\circ P')\le C(N),\qquad
 |\Delta(\Phi_\infty\circ P')|\le\Psi/r
 \quad\text{on }B(y_0,6r).
\]

Let $\zeta=(\zeta',\zeta'')$ be the aligned full harmonic splitting map constructed at $y_0$ in the proof of Proposition~\ref{prop:U2}, using the same labelled approximation. Both $\zeta''$ and $\Phi_\infty\circ P'$ are $C\Psi r$-close to the same transverse coordinate. Hence
\[
 \|\Phi_\infty\circ P'-\zeta''\|_{L^\infty(B(y_0,6r))}\le C\Psi r.
\]
For $z\in B(y_0,2r)$, Lemma~\ref{lem:replacement}, at scale $2r$, makes
\[
 \widetilde\Upsilon_z:=(\zeta',\Phi_\infty\circ P')
\]
a full splitting map and a bi-H\"older open embedding on $B(z,r)$; Lemma~\ref{lem:tube} supplies the Reifenberg hypothesis. Its last $m$ coordinates are $H_0$-invariant. After decreasing $\epsilon$, Lemma~\ref{lem:orbitslice} therefore identifies their local level sets with the $H_0$-orbits. Consequently, $\Phi_\infty\circ P'$ descends to a continuous, injective, open map on $B_W(w_0,r/8)$. This ball maps isometrically to $B_{W'}(w_0',r/8)$, proving (c).
\end{proof}

The limit $\Phi_\infty$ may depend on the subsequence. The next lemma gives moduli that are uniform over all of $\mathcal L$; these will make the later geometric choices subsequence-independent.

\begin{lemma}[Uniform charts]\label{lem:unifcharts}
There are non-decreasing functions $\omega_1,\omega_2\colon(0,r/100]\to(0,\infty)$ such that, for all $\Phi\in\mathcal L$, distinct $a,b\in B_{W'}(w_0',r/9)$ and $t\in(0,r/100]$,
\[
|\Phi(a)-\Phi(b)|\ge\omega_1\bigl(\min\{d(a,b),r/100\}\bigr),\qquad \Phi(B(a,t))\supset B(\Phi(a),\omega_2(t)).
\]
\end{lemma}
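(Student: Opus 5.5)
We prove both moduli by a compactness argument over the compact family $\mathcal L$ from Proposition~\ref{prop:eqsplit}(c). Each $\Phi\in\mathcal L$ is continuous on $B_{W'}(w_0',7r)$ and maps $B_{W'}(w_0',r/8)$ homeomorphically onto an open subset of $\R^m$. The closed ball $\bar B_{W'}(w_0',r/9)$ is compact, since $W'$ is proper by Lemma~\ref{lem:proper}(b), and it lies inside $B_{W'}(w_0',r/8)$. For $s\in(0,r/100]$ we define
\[
 \omega_1(s):=\inf\bigl\{|\Phi(a)-\Phi(b)|:\Phi\in\mathcal L,\ a,b\in\bar B_{W'}(w_0',r/9),\ d(a,b)\ge s\bigr\}.
\]
This function is non-decreasing in $s$, because the admissible set shrinks as $s$ grows. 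Since $d(a,b)\ge\min\{d(a,b),r/100\}$, it yields the first inequality.

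It remains to show that $\omega_1(s)>0$. Suppose instead that $\Phi_j,a_j,b_j$ realize values tending to $0$. By compactness of $\mathcal L$ (uniform topology) and of the closed ball, we may pass to limits $\Phi_j\to\Phi$, $a_j\to a$, $b_j\to b$. Equicontinuity, coming from the uniform Lipschitz bound $C(N)$, gives $\Phi(a)=\Phi(b)$ with $d(a,b)\ge s$. This contradicts the injectivity of $\Phi$ on $B_{W'}(w_0',r/8)$.

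For $\omega_2$, we fix $t$ and set
\[
 \omega_2'(t):=\inf\bigl\{\operatorname{dist}\bigl(\Phi(a),\R^m\setminus\Phi(B(a,t))\bigr):\Phi\in\mathcal L,\ a\in\bar B_{W'}(w_0',r/9)\bigr\}.
\]
Since $t\le r/100$, the ball $B(a,t)$ lies in $B_{W'}(w_0',r/8)$, so $\Phi(B(a,t))$ is an open neighbourhood of $\Phi(a)$ and each term is positive. To rule out a zero infimum, we again argue by contradiction. Take $\Phi_j\to\Phi$, $a_j\to a$ and points $q_j\notin\Phi_j(B(a_j,t))$ with $|q_j-\Phi_j(a_j)|\to0$.

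The key step, and the main obstacle, is showing that a fixed open neighbourhood of $\Phi(a)$ lies in $\Phi_j(B(a_j,t))$ for all large $j$. For this we use a degree (or Brouwer) argument. Let $S$ be the topological sphere $\Phi^{-1}(\partial B(\Phi(a),\varrho))$, which lies inside $B(a,t/2)$ for small $\varrho$, because $\Phi$ restricted to $B_{W'}(w_0',r/8)$ is a homeomorphism onto an open set. The maps $\Phi_j\circ\Phi^{-1}$ converge uniformly to the identity on $\bar B(\Phi(a),\varrho)$. By the standard Brouwer fixed-point (degree) lemma, a map within $\varrho/2$ of the identity on a closed ball covers the concentric ball of radius $\varrho/2$. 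Hence $\Phi_j(B(a,t/2))\supset B(\Phi(a),\varrho/2)$. For large $j$ we have $B(a,t/2)\subset B(a_j,t)$, and both $q_j$ and $\Phi_j(a_j)$ converge to $\Phi(a)$. So $q_j\in\Phi_j(B(a_j,t))$, which is a contradiction.

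Finally, $\omega_2'$ is non-decreasing in $t$ because the balls grow with $t$. We take $\omega_2:=\omega_2'/2$, which gives the strict containment $\Phi(B(a,t))\supset B(\Phi(a),\omega_2(t))$. If strict positivity of $\omega_1$ at $s=r/100$ needs care with the closed/open radii, we replace $r/9$ by a slightly larger radius still below $r/8$ before taking the infimum.
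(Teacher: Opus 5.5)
Your proposal is correct and follows essentially the same route as the paper. Both use compactness of $\mathcal L$ and of the closed ball $\bar B_{W'}(w_0',r/9)\subset B_{W'}(w_0',r/8)$ against injectivity for $\omega_1$, and a Brouwer covering argument for maps uniformly close to the embedding $\Phi$, made uniform in $(\Phi,a)$ by compactness, for $\omega_2$. You simply write out the details the paper leaves implicit. The sphere $S$ you introduce is never used, and your closing caveat about the radii is unnecessary, since you already work on the closed ball.
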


\begin{proof}
If the first modulus failed, compactness of $\mathcal L$ would produce a non-injective limit. For the second, uniform convergence and Brouwer degree show that any map sufficiently close to an embedding on $\bar B(a,t/2)$ covers a fixed ball about its value at $a$. Compactness makes the radius uniform in both $\Phi\in\mathcal L$ and $a\in B(w_0',r/9)$.
\end{proof}

\subsection{Local bundles}\label{ssec:bundles}
We localize $\Phi_i$ over small Euclidean balls. Lemma~\ref{lem:submersion} gives a topological submersion, and properness upgrades it to a fibre bundle.

\begin{lemma}[Local bundles]\label{lem:bundlecrit}
In the setting of Proposition~\ref{prop:eqsplit}, let $\Phi_i\to\Phi_\infty\in\mathcal L$ along a subsequence, and let $D$ be an open Euclidean ball with $\bar D\subset\Phi_\infty(B_{W'}(w_0',r/16))$. For large $i$ in the subsequence, the set $E_i(D):=\Phi_i^{-1}(D)\cap B(x_i^0,r)$ lies in $B(x_i^0,r/8)$, satisfies $f_i'(E_i(D))\subset B_{W'}(w_0',r/9)$, and $\Phi_i\colon E_i(D)\to D$ is a fibre bundle whose fibre is a closed $k$-manifold. Moreover $\Phi_\infty^{-1}(D)\subset B_{W'}(w_0',r/8)$, and $(f_i')^{-1}(L)\subset E_i(D)$ for every compact $L\subset\Phi_\infty^{-1}(D)$ and large $i$.
\end{lemma}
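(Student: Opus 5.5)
The plan has three parts: localize the level sets using the uniform charts, show that the restriction of $\Phi_i$ is a topological submersion, and then apply Lemma~\ref{lem:bundlepromotion}.

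\emph{Localization.} Put $K:=\Phi_\infty^{-1}(\bar D)\cap\bar B_{W'}(w_0',r/16)$. First I show $\Phi_\infty^{-1}(D)\cap B_{W'}(w_0',7r)\subset B_{W'}(w_0',r/8)$. Inside the ball of radius $r/8$, injectivity of $\Phi_\infty$ from Proposition~\ref{prop:eqsplit}(c) means $\Phi_\infty^{-1}(D)$ meets it only in the homeomorphic preimage of $D$, which lies in radius $r/16$. Outside that ball I use $|\Phi_\infty-\Pi|\le\Psi r$ and the fact that $\Pi$ is a $\Psi r$-GH approximation: a point $a$ with $d(a,w_0')\ge r/8$ has $|\Pi(a)|\ge r/8-\Psi r$. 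On the other hand $D\subset\Phi_\infty(B(w_0',r/16))$ lies in $B(0,r/16+2\Psi r)$, so for small $\epsilon$ such points cannot map into $D$.

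For the approximants I argue by contradiction, using uniform convergence $|\Phi_i-\Phi_\infty\circ f_i'|=o(1)$ on $B(x_i^0,7r)$. If $x\in E_i(D)$ had $f_i'(x)$ outside $B(w_0',r/9)$, then $\Phi_\infty(f_i'(x))$ would lie within $o(1)$ of $D$. A limit point would then give a point $a$ with $d(a,w_0')\ge r/9$ and $\Phi_\infty(a)\in\bar D$, which contradicts the preceding paragraph since $\bar D$ is compact inside the image of the $r/16$-ball. This gives $f_i'(E_i(D))\subset B_{W'}(w_0',r/9)$, and hence $E_i(D)\subset B(x_i^0,r/9+o(1))\subset B(x_i^0,r/8)$. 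Symmetrically, for compact $L\subset\Phi_\infty^{-1}(D)$ we have $\Phi_\infty(L)\Subset D$. So any $x$ with $f_i'(x)\in L$ satisfies $\Phi_i(x)\in D$ for large $i$, and $d(x,x_i^0)<r/8+o(1)<r$, so $x\in E_i(D)$.

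\emph{Submersion and properness.} I apply Lemma~\ref{lem:submersion} at each $x\in E_i(D)$ with a fixed scale $\rho=\theta r$. Its hypotheses come from three sources. The lifted points lie near the $H_i$-orbit of lifts converging into the tube of $y_0$, so Lemma~\ref{lem:rewinding}(b) (using $r\le\epsilon\eta r_0/1000$) gives the Reifenberg control of balls in $\hat X_i$. Proposition~\ref{prop:eqsplit}(b) gives the Lipschitz and Laplacian bounds. The Gram estimate at scale $10\rho\le r/10$ comes from the splitting estimate on $B(x,r/10)$ by doubling, after fixing $\theta$ and then shrinking $\epsilon$ so that $\Psi\le\delta_0$. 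Hence $E_i(D)$ is an open subset of a topological $N$-manifold and $\Phi_i|_{E_i(D)}$ is a topological submersion.

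For properness, a compact $Q\subset D$ has preimage $\Phi_i^{-1}(Q)\cap\bar B(x_i^0,r)$, which is closed in a proper space. Applying the localization argument to a slightly larger ball $D'$ with $Q\subset D'\Subset D$ shows this preimage stays inside $B(x_i^0,r/8)$, so it is compact and contained in $E_i(D)$.

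\emph{Conclusion and main obstacle.} Nonemptiness follows from the last assertion, since $\Phi_\infty^{-1}(D)\ne\varnothing$. Lemma~\ref{lem:bundlepromotion} then gives a fibre bundle over the connected ball $D$ whose fibre is a closed $(N-m)$-manifold, that is, a closed $k$-manifold. I expect the main obstacle to be the localization step: choosing the margins $r/16<r/9<r/8<r$ so that GH errors and the $\Psi r$ deviation of $\Phi_\infty$ from $\Pi$ cannot bring far-away points into $D$. I would handle this purely through the injectivity on $B(w_0',r/8)$ and compactness, possibly using the uniform moduli of Lemma~\ref{lem:unifcharts} to keep the choice of $i$ controlled.
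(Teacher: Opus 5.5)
Your proposal is correct and is essentially the paper's proof. You localize $E_i(D)$ using $\Phi_i=\Phi_\infty\circ f_i'+o(1)$ and the $\Psi r$-comparison of $\Phi_\infty$ with $\Pi$, then apply Lemma~\ref{lem:submersion} with Reifenberg control from Lemma~\ref{lem:rewinding}(b), and finish with Lemma~\ref{lem:bundlepromotion}; the paper uses only the quantitative bound $d(w,w_0')\le r/16+6\Psi r$ and takes $\rho=r/100$ directly, so your injectivity/compactness detour and extra doubling are optional.

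One small fix: for properness, use the localization for $D$ itself rather than a $Q$-dependent ball $D'$, whose threshold in $i$ would vary with $Q$. The localization for $D$ gives $\Phi_i^{-1}(Q)\cap B(x_i^0,r)=\Phi_i^{-1}(Q)\cap\bar B(x_i^0,r/8)$ for every compact $Q\subset D$ once $i$ is large.
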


\begin{proof}
\emph{Localization.} Regard $\Phi_\infty^{-1}(D)$ as a subset of $B_{W'}(w_0',7r)$. Proposition~\ref{prop:eqsplit}(c) gives $|\Phi_\infty(w)-\Phi_\infty(w_0')|\ge d(w,w_0')-3\Psi r$ there. Since $D\subset B^m(\Phi_\infty(w_0'),r/16+3\Psi r)$, the condition $\Phi_\infty(w)\in D$ forces $d(w,w_0')\le r/16+6\Psi r$. We fixed $\epsilon$ so that $1/16+6\Psi<1/9$. Thus $\Phi_i=\Phi_\infty\circ f_i'+o(1)$ gives both $E_i(D)\subset B(x_i^0,r/8)$ and $f_i'(E_i(D))\subset B_{W'}(w_0',r/9)$ for large $i$. It also gives properness, because the preimage of a compact $L\subset D$ is closed in $\bar B(x_i^0,r/8)$.

\emph{Bundle structure.} Let $x\in E_i(D)$ and set $\rho:=r/100$. On $B(x,10\rho)=B(x,r/10)$, Proposition~\ref{prop:eqsplit}(b) verifies the hypotheses of Lemma~\ref{lem:submersion}, with Laplacian bound $L=1$ and almost orthonormality error $\Psi$. Lemma~\ref{lem:rewinding}(b) supplies the Reifenberg hypothesis: for lifts $\hat y_i\to y_0$ of $x_i^0$, the point $x$ lies within $r\le\eta r_0/1000$ of $x_i^0$, and $10\rho\le r_0/8$ by the choice of $r$ in \S\ref{ssec:setG}. Thus $\Phi_i$ is a topological submersion at every point of $E_i(D)$, and $E_i(D)$ is a topological $N$-manifold. The properness established above and Lemma~\ref{lem:bundlepromotion} make $\Phi_i\colon E_i(D)\to D$ a fibre bundle with closed $k$-manifold fibre.

Finally, suppose that $L\subset\Phi_\infty^{-1}(D)$ is compact and $f_i'(x)\in L$. Then $\Phi_i(x)$ lies near the compact subset $\Phi_\infty(L)\subset D$, while $x\in B(x_i^0,r)$. Hence $x\in E_i(D)$ for large $i$.
\end{proof}

\subsection{From the fibre group to a lattice coordinate}\label{ssec:fibrecoords}
The preceding construction gives local bundles with closed $k$-manifold fibres. To identify these fibres as tori, and the local group actions as translations, we first realize $H_i'$ modulo torsion as a lattice in the limiting translation group. Long discrete strings approximate the coordinate axes, rounding places every bounded-displacement element near their span, and the fixed bounded generating set makes that span finite index. The maximal-rank identity $\rank H_i'=k$ forces the strings to be independent. Multiplication by the index gives an exact homomorphism $\ell_i\colon H_i'\to\R^k$, and a doubling estimate controls it on short elements. Torsion will be eliminated only after the local product chart has been constructed.

\emph{Standing notation for Sections~\ref{ssec:fibrecoords}--\ref{ssec:proofiv}.} Fix $x\in G$. By definition of $G$, there are $y_0\in\mathcal R(Y)$ and $w'\in B_{W'}(w_0',r/16)$ lying over $x$, where $w_0'$ and $r=r(y_0)$ are as in \S\ref{ssec:eqsplit}. As in the proof of Lemma~\ref{lem:setG}(b), $C_{w'}\le C_0$ as subgroups of $C$. Choose $y\in Y$ over $w'$ with $d(y,y_0)=d(w',w_0')$. Let $r_*\le r/100$ be admissible for Lemma~\ref{lem:localgroups} at $x$, and set $\Lambda_i:=\Pi_i^{-1}(C_{w'})$.
\begin{itemize}
\item We write $H_i$ and its subgroups additively, and their actions multiplicatively: $\lambda z$, and $\lambda^{-1}z:=(-\lambda)z$.
\item $C$ is the torsion subgroup of $H$ and contains every stabilizer (Lemma~\ref{lem:localgroups}). So each element of $\pr^{-1}(C)=H_0\oplus C$ has an $H_0$-component.
\item Choose lifts $\hat y_i\in\hat X_i$ of $x_i^0$ with $\hat y_i\to y_0$, and write $\pi\colon\hat X_i\to X_i'$ for the projection. In \S\S\ref{ssec:fibrecoords}--\ref{ssec:chart} we measure displacement at $\hat y_i$: $\mathfrak d_i(g):=d(g\hat y_i,\hat y_i)$, $H_i'(R):=\{g\in H_i':\mathfrak d_i(g)\le R\}$ and $\Lambda_i(R):=\{\lambda\in\Lambda_i:\mathfrak d_i(\lambda)\le R\}$. This changes $H_i'(R)$ of \S\ref{sec:groups} only by a bounded shift of $R$.
\item $\Psi=\Psi(\epsilon,\eta\,|\,N)$, and $o(1)$ is as $i\to\infty$ with the other parameters fixed. The constants $\epsilon,\eta$ of \S\ref{ssec:setG} are taken so small that $\epsilon\le\epsilon_1(N)$ from Lemma~\ref{lem:fixedlinear}, that $C\Psi\le10^{-3}$ for the finitely many constants $C=C(N)$ below, and that $\Psi$ is below the thresholds of \cite[Thms.~2.5 and 2.10]{HHWZ26} used in \S\ref{ssec:chart}.
\end{itemize}

\emph{The model at scale $r$.} Since $r\le r_\epsilon(y_0)$, the labelled equivariant space $(r^{-1}Y,y_0;H_0,C_{y_0})$ is $\epsilon$-close, on balls of radius $1/\epsilon$, to
$(\R^k\times\R^m,0;\text{translations of }\R^k,(\mathrm{id},\rho_0))$.
Let $\mathcal F=(\mathcal F',\mathcal F'')$ denote the corresponding approximation in the unscaled metric.
\begin{itemize}
\item By Lemma~\ref{lem:fixedlinear}, its group part is $\Psi r$-close, on elements of displacement at most $50r$, to a linear isomorphism $H_0\to\R^k$. We identify $H_0$ with $\R^k$ through that isomorphism and write $\mathfrak d_0(v):=d(vy_0,y_0)$.
\item $\mathcal F''$ is $\Psi r$-almost $H_0$-invariant. We take the approximation $\Pi$ in the proof of Proposition~\ref{prop:eqsplit} to be the map that $\mathcal F''$ induces on $W'$.
\end{itemize}
Compose $\mathcal F$ with the equivariant convergence $(\hat X_i,\hat y_i,H_i)\to(Y,y_0,H)$. Corollary~\ref{cor:Ccover} makes this convergence compatible with $(X_i',x_i^0,C)\to(W',w_0',C)$. For large $i$, we obtain $\Psi r$-GH approximations
\[
\mathcal F_i=(\mathcal F_i',\mathcal F_i'')\colon B(\hat y_i,r/(2\epsilon))\to\R^k\times\R^m,\qquad \mathcal F_i(\hat y_i)=0,
\]
with $|\mathcal F_i''-\Pi\circ f_i'\circ\pi|\le\Psi r+o(1)$ on $B(\hat y_i,10r)$. For every bounded-displacement $\lambda\in\Lambda_i$, Proposition~\ref{prop:CQ}(a) gives $\pr\phi_i(\lambda)=\Pi_i(\lambda)\in C_{w'}$. Define $\phi_i^0(\lambda)$ to be the $H_0$-component of $\phi_i(\lambda)$ under $H=H_0\oplus\Z^b\oplus C$. Then
$\phi_i(\lambda)=\phi_i^0(\lambda)+\Pi_i(\lambda)\in H_0\oplus C_{w'}$, and $\phi_i^0=\phi_i$ on bounded-displacement elements of $H_i'$. The $C_{w'}$-component lies in $C_{y_0}$ and acts on the model by $(\mathrm{id},\rho_0)$. Hence, for $0<R\le40$, uniformly for $\lambda\in\Lambda_i$ with $\mathfrak d_i(\lambda)\le Rr$ and $z,\lambda z\in B(\hat y_i,Rr)$,
\begin{equation}\label{eq:modeleq}
\bigl|\mathcal F_i'(\lambda z)-\mathcal F_i'(z)-\phi_i^0(\lambda)\bigr|\le\Psi r+o(1),\qquad \bigl|\mathcal F_i''(\lambda z)-\rho_0(\Pi_i\lambda)\mathcal F_i''(z)\bigr|\le\Psi r+o(1).
\end{equation}

\emph{Three consequences of convergence.} The function $\mathfrak d_0$ is continuous, positive on $H_0\setminus\{0\}$ by freeness, and proper by Lemma~\ref{lem:proper}(a). For $c>0$ set $\mathfrak d_0^*(c):=\max\{\mathfrak d_0(v):|v|\le c\}$, and for $t>0$ set $\kappa(t):=\max\{|v|:\mathfrak d_0(v)\le t\}$. Both functions are finite and non-decreasing, and $\kappa(t)\to0$ as $t\downarrow0$. Proposition~\ref{prop:CQ} and equivariant convergence give the following facts, uniformly for each fixed $R$:
\begin{itemize}
\item $\mathfrak d_i(g)=\mathfrak d_0(\phi_i(g))+o(1)$ for $g\in H_i'(R)$;
\item $\phi_i^0(g+h)=\phi_i^0(g)+\phi_i^0(h)+o(1)$ and $\phi_i^0(-g)=-\phi_i^0(g)+o(1)$ for $g,h\in\Lambda_i(R)$;
\item for every $c>0$, each $v\in H_0$ with $|v|\le c$ satisfies $v=\phi_i(g)+o(1)$ for some $g\in H_i'(\mathfrak d_0^*(c)+1)$.
\end{itemize}

For $a\in\R^k$, write $\lfloor a\rceil\in\Z^k$ for coordinatewise rounding to a nearest integer, with either choice at half-integers.

\begin{proposition}[The lattice coordinate]\label{prop:lattice}
For large $i$:
\begin{enumerate}[label=\textup{(\alph*)}]
\item there is a homomorphism $\ell_i\colon H_i'\to\R^k$ satisfying
$\sup_{g\in H_i'(R)}|\ell_i(g)-\phi_i(g)|\to0$ for every $R$;
\item $\ker\ell_i=\Tor(H_i')$, and $L_i:=\ell_i(H_i')$ is a full
lattice in $\R^k$.
\end{enumerate}
\end{proposition}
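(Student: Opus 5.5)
Following the outline at the start of \S\ref{ssec:fibrecoords}, the plan has three steps: build long discrete strings approximating the coordinate axes of $H_0\cong\R^k$, use coordinatewise rounding to place every short element of $H_i'$ near the integer span of those strings, and then rescale by the index to obtain an exact homomorphism. Fix a large integer $n$ (depending only on the model) and a small step $s>0$. By the third consequence of convergence, for each basis vector $e_j$ we choose $g_{j,i}\in H_i'$ of bounded displacement with $\phi_i(g_{j,i})=s e_j+o(1)$. By the almost-additivity of $\phi_i^0$ on $H_i'(R)$, the multiples $t\,g_{j,i}$ with $|t|\le n$ then satisfy $\phi_i(t g_{j,i})=ts e_j+o(1)$; here $n$ is fixed before $i\to\infty$, and displacements stay bounded by properness of $\mathfrak d_0$. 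Let $S_i$ be the subgroup generated by $g_{1,i},\dots,g_{k,i}$, and define $\ell_i^0\colon S_i\to\R^k$ on words by $\sum t_jg_{j,i}\mapsto s\,(t_j)$. It remains to check that this is well defined.

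\textbf{Rounding step.} Fix $g\in H_i'(R_2)$, a generator as in Lemma~\ref{lem:fibre}(a), and write $a:=\phi_i(g)/s$. Put $g':=g-\sum_j\lfloor a\rceil_j g_{j,i}$. By almost-additivity, $\phi_i(g')$ has norm at most $s\sqrt k/2+o(1)$, so $g'$ lies in $H_i'(\mathfrak d_0^*(s\sqrt k)+1)$, a fixed small displacement ball. The doubling estimate, which I expect to be the main obstacle, is the claim that every element of such a short ball is torsion for large $i$, or more precisely has a multiple $M g'\in S_i$ for a uniform $M$. To prove it I would argue by contradiction. Suppose $g'$ has infinite order modulo $S_i$. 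Iterated doubling $g',2g',4g',\dots$ moves the image under $\phi_i$ by roughly doubling amounts, and stops once the displacement reaches a fixed size. At that point the element is again close to some $\phi_i(\sum t_jg_{j,i})$, and we round once more. The point of the argument is that $\phi_i$ is injective modulo $o(1)$ at bounded displacement, since $\phi_i(g)\approx 0$ forces $g$ into the $o(1)$-small group, which is eventually trivial by (P4) or is a torsion stabilizer. So any element whose $\phi_i$-image is within $o(1)$ of the image of an element of $S_i$ differs from it by torsion. Hence $g'\in S_i+\Tor(H_i')$ once $s$ is below a threshold. This shows $S_i+\Tor(H_i')$ contains all generators and therefore has finite index; in fact it equals $H_i'$.

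\textbf{Independence and exactness.} Since $\rank H_i'=k$ by Lemma~\ref{lem:fibre}(b) and $S_i$ has finite index modulo torsion, the $k$ generators $g_{j,i}$ are independent. So $S_i\cong\Z^k$ and $\ell_i^0$ is a well-defined injective homomorphism onto $s\Z^k$. The quotient $H_i'/(S_i\oplus\Tor)$ has some finite exponent $M_i$. Define $\ell_i(g):=\ell_i^0(M_ig)/M_i$, which is an exact homomorphism with kernel $\Tor(H_i')$; if the rounding step already gives $M_i=1$, this is immediate. Its image $L_i$ contains $s\Z^k$ and is contained in $M_i^{-1}s\Z^k$, so it is a full lattice, proving (b). For (a), on $H_i'(R)$ decompose $g$ by repeated rounding, using a bounded number of terms that depends on $R$. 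Each term contributes an error $o(1)$, and the torsion part has $\phi_i$-image $o(1)$ because torsion elements of bounded displacement converge into the compact, hence trivial, subgroups of $H_0$. This yields $\sup_{H_i'(R)}|\ell_i-\phi_i|\to0$.
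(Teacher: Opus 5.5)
Your rounding step fails. You claim that $\phi_i$ is injective modulo $o(1)$ at bounded displacement, because an element with $\phi_i(g)\approx0$ lies in an ``$o(1)$-small group'' that (P4) makes trivial. But (P4) applies only to \emph{subgroups} all of whose elements move bounded balls by $o(1)$. The set of short elements is not a subgroup, since multiples of a short element eventually have large displacement. In fact, because $H_i'$ converges to $H_0\cong\R^k$, the fibre group must contain infinite-order elements whose displacement tends to zero. For the flat tori $X_i=\T^k_{1/i}\times\T^m$, $H_i'$ is the translation lattice $\frac1i\Z^k$. With your fixed step $s$, the subgroup $S_i$ has index of order $(si)^k$. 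The short generator ``translation by $\frac1ie_1$'' rounds to $0$, so $g'=g$, and its order modulo $S_i$ is at least roughly $si$. Hence $g'\notin S_i+\Tor(H_i')$, no uniform $M$ exists, and $M_i\to\infty$. Your conclusion is also incompatible with (a). A bounded $M$, let alone $S_i+\Tor(H_i')=H_i'$, would put $L_i$ inside the fixed lattice $M^{-1}s\Z^k$. Yet the third convergence fact makes $\phi_i(H_i'(R))$ $o(1)$-dense in a ball of $H_0$.

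Three ideas are missing.

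First, the grid must refine with $i$. The paper uses a diagonal argument to build strings with $\phi_i(nu_{i,j})\approx(n/N_i)e_j$ for $|n|\le N_i$ and $N_i\to\infty$. Then $\tau_i(R)$, the largest displacement of a rounding residual $g-g^\sharp$ with $g\in H_i'(R)$, tends to zero.

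Second, finite index of the string subgroup $\Lambda_i'$ comes from \emph{closure under addition}, not from torsion. Once $\tau_i(2\delta)\le\delta$, the image of the finite set $H_i'(\delta)$ in $H_i'/\Lambda_i'$ is closed under addition. It also contains the images of the generators from Lemma~\ref{lem:fibre}(a), so it is the whole quotient. Rank $k$ then forces independence, and dividing by the (unbounded) index gives the exact homomorphism with kernel $\Tor(H_i')$.

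Third, (a) requires a bound on $|\ell_i|$ over short elements, which are not torsion. This is the actual doubling estimate. Let $c$ maximize $|\ell_i|$ on $H_i'(\delta)$ and write $2c=(2c)^\sharp+c''$ with $c''\in H_i'(\delta)$. This gives $\max_{H_i'(\delta)}|\ell_i|\le\kappa(3\delta)+O(N_i^{-1})$. Your ``repeated rounding'' proof of (a) has no substitute for this step. With a fixed step, the closure argument would still give (b) for $\delta$ comparable to $\mathfrak d_0^*(s\sqrt k)$. However, the error in (a) would vanish only as $s\to0$, so the step must shrink with $i$ in any case.
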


\begin{proof}
Put $R_1:=\mathfrak d_0^*(1)+1$, and let $e_1,\dots,e_k$ be the standard basis of the fixed identification $H_0=\R^k$.

\emph{Step 1: long coordinate strings.} We claim that there are integers $N_i\to\infty$, elements $u_{i,1},\dots,u_{i,k}\in H_i'$, and numbers $\eta_i\to0$ such that, for $|n|\le N_i$,
\begin{equation}\label{eq:strings}
 n u_{i,j}\in H_i'(R_1),\qquad
 \left|\phi_i(n u_{i,j})-\frac n{N_i}e_j\right|\le\eta_i.
\end{equation}
Fix an integer $M$. The third convergence fact supplies $u_{i,j}^{(M)}\in H_i'(R_1)$ with $\phi_i(u_{i,j}^{(M)})=M^{-1}e_j+o(1)$. Suppose inductively that $n u_{i,j}^{(M)}\in H_i'(R_1)$ and has the expected $\phi_i$-coordinate. Subadditivity first puts $(n+1)u_{i,j}^{(M)}$ in $H_i'(2R_1)$. Approximate additivity and the displacement comparison on that fixed ball give
\[
 \phi_i((n+1)u_{i,j}^{(M)})=\frac{n+1}{M}e_j+o(1),
 \qquad
 \mathfrak d_i((n+1)u_{i,j}^{(M)})\le\mathfrak d_0^*(1)+o(1)<R_1.
\]
The inverse relation treats negative $n$. For fixed $M$ there are only
finitely many $j$ and $|n|\le M$, so choose increasing indices $i(M)\ge M$
such that all these errors are at most $1/M$ whenever $i\ge i(M)$. Define
\[
 N_i:=\max\{M\le i:i(M)\le i\},
 \qquad u_{i,j}:=u_{i,j}^{(N_i)}.
\]
After discarding finitely many indices, this proves \eqref{eq:strings} with $\eta_i\le N_i^{-1}$.

Write $\operatorname{ev}_i(n):=\sum_{j=1}^kn_ju_{i,j}$ and
$\Lambda_i':=\operatorname{ev}_i(\Z^k)$.

\emph{Step 2: rounding to $\Lambda_i'$.} For $g\in H_i'(R)$ define
$n_i(g):=\lfloor N_i\phi_i(g)\rceil$ and
$g^\sharp:=\operatorname{ev}_i(n_i(g))$.
We claim that
\begin{equation}\label{eq:rounding}
 \tau_i(R):=\sup_{g\in H_i'(R)}\mathfrak d_i(g-g^\sharp)\longrightarrow0
\end{equation}
for every fixed $R$. The displacement comparison gives $|\phi_i(g)|\le\kappa(R+1)$ for large $i$. Put $P_R:=\lceil\kappa(R+1)\rceil+2$. Each coordinate of $n_i(g)$ is a sum of at most $P_R$ integers of absolute value at most $N_i$. Thus $g^\sharp$ is a sum of at most $kP_R$ axis pieces covered by \eqref{eq:strings}, and all partial sums lie in $H_i'(kP_RR_1)$. Uniform approximate additivity on that fixed ball gives
\[
 \phi_i(g^\sharp)=\frac{n_i(g)}{N_i}+o(1)=\phi_i(g)+o(1).
\]
Now $g-g^\sharp$ and all terms used to form it lie in one fixed displacement ball depending only on $R$. Applying the inverse and addition relations there, followed by the displacement comparison, proves \eqref{eq:rounding} uniformly in $g$.

\emph{Step 3: finite index and independence.} Fix $\delta>0$ and take $i$ so large that
$\tau_i(2\delta)\le\delta$ and $\tau_i(R_2)\le\delta$, where $R_2$ is the generating radius in Lemma~\ref{lem:fibre}(a). If $a,b\in H_i'(\delta)$, then
\[
 a+b=(a+b)^\sharp+c,\qquad c\in H_i'(\delta).
\]
The set $H_i'(\delta)$ is finite because the deck action is properly
discontinuous. Consequently, its image in $F_i:=H_i'/\Lambda_i'$ is a finite
symmetric set closed under addition, and is therefore a subgroup.
For every generator $g\in H_i'(R_2)$, the rounding estimate writes
$g=g^\sharp+c$ with $c\in H_i'(\delta)$. Thus the image of
$H_i'(\delta)$ contains the images of the generating set from
Lemma~\ref{lem:fibre}(a), so it equals $F_i$. Hence $F_i$ is finite and
every coset of $\Lambda_i'$ meets $H_i'(\delta)$.

Finite index and Lemma~\ref{lem:fibre}(b) give $\rank\Lambda_i'=\rank H_i'=k$. The surjection $\operatorname{ev}_i\colon\Z^k\to\Lambda_i'$ therefore has a rank-zero kernel. Every subgroup of $\Z^k$ is free, so the kernel is trivial and $\operatorname{ev}_i$ is an isomorphism onto $\Lambda_i'$.

\emph{Step 4: an exact homomorphism.} Put $I_i:=|F_i|$. For $g\in H_i'$
there is a unique $m_i(g)\in\Z^k$ with
$I_ig=\operatorname{ev}_i(m_i(g))$. Uniqueness makes $m_i$ additive. Define
$\ell_i(g):=m_i(g)/(I_iN_i)$. Then $\ell_i$ is a homomorphism,
$\ell_i(\operatorname{ev}_i(n))=n/N_i$, and
$\ker\ell_i=\Tor(H_i')$. Indeed, $\ell_i(g)=0$ exactly when $I_ig=0$.
Conversely, $\Tor(H_i')$ injects into $F_i$ because $\Lambda_i'$ is torsion-free, so the order of every torsion element divides $I_i$.

\emph{Step 5: the doubling estimate.} For fixed $\delta>0$, let
$\mu_i(\delta):=\max_{c\in H_i'(\delta)}|\ell_i(c)|$, and choose a maximizer
$c_i$. By \eqref{eq:rounding}, for large $i$,
$2c_i=(2c_i)^\sharp+c_i''$ with $c_i''\in H_i'(\delta)$.
Since $2c_i\in H_i'(2\delta)$, the displacement comparison gives $\mathfrak d_0(\phi_i(2c_i))\le3\delta$ for large $i$. Hence
\[
 \left|\ell_i((2c_i)^\sharp)\right|
 =\frac{|\lfloor N_i\phi_i(2c_i)\rceil|}{N_i}
 \le\kappa(3\delta)+\frac{\sqrt k}{2N_i}.
\]
Thus
\begin{equation}\label{eq:doubling}
 \begin{aligned}
 2\mu_i(\delta)=|\ell_i(2c_i)|
 &\le|\ell_i((2c_i)^\sharp)|+|\ell_i(c_i'')|\\
 &\le\kappa(3\delta)+\frac{\sqrt k}{2N_i}+\mu_i(\delta),
 \qquad\text{hence}\qquad
 \mu_i(\delta)\le\kappa(3\delta)+\frac{\sqrt k}{2N_i}.
 \end{aligned}
\end{equation}

\emph{Step 6: comparison with the limiting coordinate.} Let $g\in H_i'(R)$ and write $g=g^\sharp+c$. For each fixed $\delta>0$, equation~\eqref{eq:rounding} gives $c\in H_i'(\delta)$ for all large $i$. Since $\ell_i(g^\sharp)=n_i(g)/N_i$, equation~\eqref{eq:doubling} yields
\[
 |\ell_i(g)-\phi_i(g)|
 \le\frac{\sqrt k}{2N_i}+\mu_i(\delta)
 \le\frac{\sqrt k}{N_i}+\kappa(3\delta).
\]
The bound is uniform on $H_i'(R)$. First let $i\to\infty$ and then
$\delta\downarrow0$. This proves (a). By Step~4,
$\ker\ell_i=\Tor(H_i')$. Finally,
\[
 N_i^{-1}\Z^k=\ell_i(\Lambda_i')
 \subset L_i\subset(I_iN_i)^{-1}\Z^k.
\]
Therefore $L_i$ is a full lattice. This proves (b).
\end{proof}

Since $\Lambda_i/H_i'$ is finite, the lattice coordinate extends to the local group.

\begin{corollary}[The local group]\label{cor:localgroup}
$\ell_i$ extends uniquely to a homomorphism $\ell_i\colon\Lambda_i\to\R^k$, namely $\ell_i(\lambda):=\ell_i(e\lambda)/e$ with $e$ the exponent of $C_{w'}$. For every $R$, $\sup\{|\ell_i(\lambda)-\phi_i^0(\lambda)|:\lambda\in\Lambda_i(R)\}\to0$.
\end{corollary}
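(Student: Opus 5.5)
The plan is to check in turn that the formula is well defined, that it is a homomorphism, that the extension is unique, and that the stated approximation holds. First I check that $e\lambda\in H_i'$ for every $\lambda\in\Lambda_i$. By Lemma~\ref{lem:localgroups}, $\Lambda_i=\Pi_i^{-1}(C_{w'})$ and $H_i'=\ker\Pi_i$. Hence $\Pi_i(e\lambda)=e\,\Pi_i(\lambda)=0$, because $e$ is the exponent of $C_{w'}$, and so $e\lambda\in H_i'$. The formula $\ell_i(\lambda):=\ell_i(e\lambda)/e$ therefore makes sense. Since $\Lambda_i$ is abelian (written additively), $\lambda\mapsto e\lambda$ is a homomorphism, so the extension is a homomorphism. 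For $\lambda\in H_i'$ it returns $e\ell_i(\lambda)/e=\ell_i(\lambda)$, so it extends the map of Proposition~\ref{prop:lattice}.

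For uniqueness, let $\ell$ be any homomorphism $\Lambda_i\to\R^k$ extending $\ell_i$. Then $e\,\ell(\lambda)=\ell(e\lambda)=\ell_i(e\lambda)$. Since $\R^k$ is torsion-free, $\ell(\lambda)$ is forced.

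For the approximation, take $\lambda\in\Lambda_i(R)$. Then $e\lambda$ has displacement at most $eR$, so $e\lambda\in H_i'(eR)$. Proposition~\ref{prop:lattice}(a) gives
\[
|\ell_i(e\lambda)-\phi_i(e\lambda)|\to0
\]
uniformly over such $\lambda$, and on $H_i'(eR)$ we have $\phi_i=\phi_i^0$. Next I use the approximate additivity of $\phi_i^0$ on $\Lambda_i(R')$. Iterating it $e-1$ times with $R'=eR$, where all partial sums $j\lambda$ lie in $\Lambda_i(eR)$ by subadditivity of displacement, gives
\[
\phi_i^0(e\lambda)=e\,\phi_i^0(\lambda)+o(1),
\]
uniformly in $\lambda$. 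Dividing by $e$ then gives
\[
|\ell_i(\lambda)-\phi_i^0(\lambda)|\le e^{-1}\bigl(|\ell_i(e\lambda)-\phi_i^0(e\lambda)|+o(1)\bigr)\to0,
\]
uniformly on $\Lambda_i(R)$.

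The only point needing care is that the $o(1)$ in the approximate additivity is uniform on the fixed ball $\Lambda_i(eR)$ for the fixed, $i$-independent number $e$ of steps. The second convergence fact listed before Proposition~\ref{prop:lattice} provides exactly this: $e$ depends only on $C_{w'}$, and each of the finitely many steps contributes a uniform $o(1)$. I expect no genuine obstacle beyond tracking this uniformity.
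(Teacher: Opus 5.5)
Your proposal is correct and follows the paper's proof essentially step by step: $\Pi_i(e\lambda)=e\,\Pi_i(\lambda)=0$ puts $e\lambda$ in $H_i'$, uniqueness comes from torsion-freeness of $\R^k$, and the estimate combines Proposition~\ref{prop:lattice}(a) on $H_i'(eR)$ with $e-1$ applications of the approximate additivity of $\phi_i^0$. Your remark that the $o(1)$ terms stay uniform, because $e$ is fixed and all partial sums lie in the fixed ball $\Lambda_i(eR)$, makes explicit what the paper's shorter version leaves implicit.
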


\begin{proof}
$\Pi_i(e\lambda)=e\,\Pi_i(\lambda)=0$, so $e\lambda\in H_i'$, and the formula defines a homomorphism extending $\ell_i$. It is unique because $\R^k$ is torsion-free. For $\lambda\in\Lambda_i(R)$ we have $e\lambda\in H_i'(eR)$, and approximate additivity of $\phi_i^0$ gives $\phi_i(e\lambda)=e\,\phi_i^0(\lambda)+o(1)$. So $\ell_i(\lambda)=\ell_i(e\lambda)/e=\phi_i(e\lambda)/e+o(1)=\phi_i^0(\lambda)+o(1)$, by Proposition~\ref{prop:lattice}(a).
\end{proof}

\subsection{The orbit coordinate and the Seifert chart}\label{ssec:chart}
The next step is analytic. We glue the local harmonic coordinates along the group orbits into a single, exactly equivariant coordinate $U_i$. Two overlapping coordinates differ by a harmonic function that is small in $C^0$, and therefore also small in $C^1$; the gluing consequently preserves the splitting estimates. Pairing $U_i$ with the lift of $\Phi_i$ produces the product chart in Theorem~\ref{thm:chart}.

\emph{Setting.} Work along a subsequence for which $\Phi_i\to\Phi\in\mathcal L$. Choose a ball $D$ centred at $\Phi(w')$ such that $\bar D\subset\Phi(B_{W'}(w_0',r/16))$. We require its preimage $V:=\Phi^{-1}(D)$, taken in $B_{W'}(w_0',7r)$, to satisfy $\bar V\subset B_{W'}(w',r_*/10)$. Set $E_i:=\Phi_i^{-1}(D)\cap B(x_i^0,r)$. For large $i$:
\begin{itemize}
\item By Lemma~\ref{lem:bundlecrit}, $E_i\subset B(x_i^0,r/8)$, $\Phi_i\colon E_i\to D$ is a fibre bundle with compact fibre, hence proper, and $(f_i')^{-1}(L)\subset E_i$ for compact $L\subset V$. $E_i$ is $C_{w'}$-invariant, because $\rho_0(C_{w'})$ preserves $D$ and $C_{w'}$ moves $x_i^0$ by $o(1)$ (Proposition~\ref{prop:eqsplit}(a)).
\item Lemma~\ref{lem:bundlecrit} gives $f_i'(E_i)\subset B_{W'}(w_0',r/9)$, where Lemma~\ref{lem:unifcharts} applies. Since $\Phi_i=\Phi\circ f_i'+o(1)$, the modulus $\omega_1$ then gives $f_i'(E_i)\subset B_{W'}(w',r_*/5)$.
\item Choose $w_i\in E_i$ with $f_i'(w_i)\to w'$, and lifts $\hat w_i\in\hat X_i$ with $\hat w_i\to y$. Then $q_i(w_i)\to x$, where $q_i\colon X_i'\to X_i$ is the quotient map, and $E_i\subset B(w_i,r_*/4)$. Also $B(w_i,s')\subset E_i$ for some $s'>0$ independent of $i$, since $\Phi_i(w_i)\to\Phi(w')$ and $\operatorname{Lip}\Phi_i\le C(N)$.
\end{itemize}
Let $\hat E_i:=\pi^{-1}(E_i)\subset\hat X_i$, and let $\tilde T_i$ be the component of $\hat E_i$ containing $\hat w_i$.

\begin{lemma}[The tube in the cover]\label{lem:tubecover}
For large $i$:
\begin{enumerate}[label=\textup{(\alph*)}]
\item $E_i$ is connected, and $\pi(\tilde T_i)=E_i$;
\item the stabilizer of $\tilde T_i$ in $K_i$ is $H_i'$; so $E_i=\tilde T_i/H_i'$;
\item the stabilizer of $\tilde T_i$ in $H_i$ is $\Lambda_i$, and for each $c\in C_{w'}$ there is $\lambda_c\in\Pi_i^{-1}(c)$ with $d(\lambda_c\hat w_i,\hat w_i)\to0$;
\item $\tilde T_i\subset H_i'\cdot B(\hat w_i,r_*/4)$.
\end{enumerate}
\end{lemma}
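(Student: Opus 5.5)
Part (a) comes first. $E_i$ is the total space of the fibre bundle $\Phi_i\colon E_i\to D$ from Lemma~\ref{lem:bundlecrit}, and $D$ is a ball, so $E_i\cong D\times F$ with $F$ a closed $k$-manifold. I will show that $F$ is connected by a fine-chain argument. On the limit, $\Phi^{-1}(\Phi(w))$ is a single point of $W'$, namely an $H_0$-orbit in $Y$, and that orbit is connected. Take two points of one fibre of $\Phi_i$ and lift them to $\hat X_i$. Their $f_i'$-images lie near a single point of $V$, so the lifts converge to points of one $H_0$-orbit, which we may take in $H_0\cdot y$. The path $t\mapsto\exp(tv)y$ is approximated by $\psi_i(\exp(t_mv))\hat w_i$ with $\psi_i(\exp(t_mv))\in H_i'$ (Proposition~\ref{prop:CQ}(a)), exactly as in the proof of Lemma~\ref{lem:fibreball}. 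Consecutive balls of a small fixed radius around these points lie in $\hat E_i$, because $(f_i')^{-1}(L)\subset E_i$ for compact $L\subset V$. Thus every point over the central fibre is joined inside $\hat E_i$ to $\hat w_i$ by a fine chain. Projecting gives connectivity of $\pi(\tilde T_i)\cap\Phi_i^{-1}(\Phi_i(w_i))$ inside $E_i$. Since $D$ is connected and $\pi|_{\hat E_i}$ is a covering of $E_i$, the set $\pi(\tilde T_i)$ is open and closed in $E_i$, and it meets every fibre; this gives (a). The same chains show that $\tilde T_i$ meets all $H_i'$-translates of nearby lifts, which also yields (d). Indeed, every point of $\tilde T_i$ lies over $E_i\subset B(w_i,r_*/4)$, so it is a $K_i$-translate of a point of $B(\hat w_i,r_*/4)$. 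The chain argument then replaces that $K_i$-element by an element of $H_i'$.

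For (b) and (c) I will compare with Lemma~\ref{lem:localgroups}, applied at $x$ with radius $r_*$ and lifts $\hat q_i:=\hat w_i$. Since $E_i\subset B(w_i,r_*/4)$, the component $\tilde T_i$ lies in the component through $\hat w_i$ of the preimage of $B(q_i(w_i),r_*)$ in $\hat X_i$. Hence $\Stab_{H_i}(\tilde T_i)\subset\Lambda_i=\Pi_i^{-1}(C_{w'})$. For the reverse inclusion, $H_i'$ stabilizes $\tilde T_i$ by the chain argument above, which is Lemma~\ref{lem:fibreball} adapted to the tube. For $c\in C_{w'}$, choose $\lambda_c:=\psi_i(h_c)$ with $h_c\in\Stab_H(y)$ mapping to $c$, so that $d(\lambda_c\hat w_i,\hat w_i)\to0$ and $\Pi_i(\lambda_c)=c$ by Proposition~\ref{prop:CQ}(a). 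Because $B(w_i,s')\subset E_i$ with $s'$ independent of $i$, the small ball $B(\hat w_i,s')$ lies in $\tilde T_i$. Therefore $\lambda_c\tilde T_i$ meets $\tilde T_i$, and so $\lambda_c$ stabilizes it. Now $\Lambda_i$ is generated by $H_i'$ and the $\lambda_c$, which proves (c). Intersecting with $K_i$ gives $\Lambda_i\cap K_i=\Pi_i^{-1}(C_{w'}\cap\ker\chi)=H_i'$, since $\chi$ restricted to $C$ is the identity. Combined with (a), this yields $E_i=\tilde T_i/H_i'$, which is (b).

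I expect the main obstacle to be the connectivity of the fibres in (a), and in particular controlling the chains uniformly. The fine chain must stay inside $\hat E_i$ rather than merely inside the preimage of a ball. This requires the compact-set inclusion from Lemma~\ref{lem:bundlecrit}, applied to a compact neighbourhood of $w'$ inside $V$, together with the injectivity modulus $\omega_1$ of Lemma~\ref{lem:unifcharts}. Together these ensure that the limit fibre of $\Phi$ really is a single $H_0$-orbit near $y$.
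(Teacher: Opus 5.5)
Your (b) and (c) follow the paper. You get the upper bound $\Stab_{H_i}(\tilde T_i)\subset\Lambda_i$ from Lemma~\ref{lem:localgroups}, the lower bound $H_i'\subset\Stab(\tilde T_i)$ from Lemma~\ref{lem:fibreball} together with $B(\hat w_i,s')\subset\tilde T_i$, the elements $\lambda_c=\psi_i(h_c)$, and $\Lambda_i\cap K_i=H_i'$. You should add that $\lambda_c$ preserves $\hat E_i$ because $E_i$ is $C_{w'}$-invariant; otherwise $\lambda_c\tilde T_i$ need not be a component of $\hat E_i$.

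The genuine gap is in (d). You write $z=\kappa z'$ with $z\in\tilde T_i$, $z'\in B(\hat w_i,r_*/4)$ and $\kappa\in K_i$, and say the chain argument replaces $\kappa$ by an element of $H_i'$. The chain argument only shows that elements of $H_i'$ preserve $\tilde T_i$. Part (d) needs the opposite kind of statement: no $\kappa\in K_i\setminus H_i'$ can carry a point of $B(\hat w_i,r_*/4)$ into $\tilde T_i$.

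Your own (b) does not apply directly either, because $z'$ is not known to lie in $\tilde T_i$. The segment from $\hat w_i$ to $z'$ can leave $\hat E_i$, since $E_i$ is only contained in $B(w_i,r_*/4)$, not equal to it. What is missing is a stabilizer bound for a connected set containing both $z$ and $z'$.

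The paper takes the component $\hat B$ through $\hat w_i$ of $\pi^{-1}(B(w_i,r_*/4))$. This component contains $\tilde T_i$ and $B(\hat w_i,r_*/4)$, and Lemma~\ref{lem:localgroups} gives $\Stab_{K_i}(\hat B)\subset K_i\cap\Lambda_i=H_i'$. Since $z\in\kappa\hat B\cap\hat B$, it follows that $\kappa\in H_i'$.

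Your component through $\hat w_i$ of the preimage of $B(q_i(w_i),r_*)$ works equally well, because it contains $B(\hat w_i,r_*/4)$. So the fix uses tools you already have, but (d) must be derived after the stabilizer bound, not from the chains.

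In (a), the $H_0$-orbit chains are unnecessary, and your concluding step does not close as written. Knowing that $\pi(\tilde T_i)$ is a component of $E_i$ meeting every fibre does not give $\pi(\tilde T_i)=E_i$. If $F$ were disconnected, every component of $D\times F$ would still meet every fibre. You need $\pi(\tilde T_i)$ to contain an entire fibre.

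Your sentence ``every point over the central fibre is joined inside $\hat E_i$ to $\hat w_i$'' is false as stated when $b\ge1$. The translates $\kappa\tilde T_i$ with $\kappa\in K_i\setminus H_i'$ are other components of $\hat E_i$ lying over the same fibre. Only ``some lift of every point'' is true, and that is immediate.

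For $x_0$ over the centre of $D$, $\Phi_i=\Phi\circ f_i'+o(1)$ and the modulus $\omega_1$ of Lemma~\ref{lem:unifcharts} put $f_i'(x_0)$ within $o(1)$ of $w'$. Since $f_i'$ is a GH approximation, this gives $d(x_0,w_i)=o(1)$. Hence the whole central fibre lies in $B(w_i,s')\subset E_i$, and it lifts into $B(\hat w_i,s')\subset\tilde T_i$.

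This is essentially the paper's argument: the central fibre has small diameter downstairs, so it lies in one component. The real content of the lemma is therefore the stabilizer bound, not fibre connectivity. Also note that a point of $W'$ lifts to a $K$-orbit in $Y$, not to a single $H_0$-orbit.
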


\begin{proof}
\emph{Notation.} Put $t:=r_*/4$. Let $\hat B$ be the component through $\hat w_i$ of $\pi^{-1}(B(w_i,t))$, and $\hat B^X$ the component through $\hat w_i$ of the preimage in $\hat X_i$ of $B(q_i(w_i),t)\subset X_i$; so $\hat B\subset\hat B^X$. Lemma~\ref{lem:localgroups}, applied with $p_i:=q_i(w_i)\to x$, $\hat q_i:=\hat w_i\to y$ and the admissible radius $t$, gives $\Stab_{H_i}(\hat B^X)=\Lambda_i$. If $\kappa\in K_i$ stabilizes $\hat B$, then $\kappa\hat B^X\cap\hat B^X\ne\emptyset$, so
\begin{equation}\label{eq:stabK}
\Stab_{K_i}(\hat B)\subset K_i\cap\Lambda_i=H_i' ,
\end{equation}
since $\chi$ is the identity on $C$. Let $\hat B'$ be the component through $\hat w_i$ of $\pi^{-1}(B(w_i,s'))$. By Lemma~\ref{lem:fibreball} with $Z_i=X_i'$, $H_i'$ stabilizes $\hat B'$ for large $i$. Since $\hat B'$ is connected, contains $\hat w_i$ and lies in $\hat E_i$, we have $\hat B'\subset\tilde T_i$. Also $B(\hat w_i,s')\subset\hat B'$.

(a) Each component of $E_i$ is open and maps onto $D$, since $\Phi_i\colon E_i\to D$ is a fibre bundle. Let $x_0,x_1\in E_i$ lie over the centre $\Phi(w')$ of $D$. Since $\Phi_i=\Phi\circ f_i'+o(1)$, and $\Phi$ has the injectivity modulus $\omega_1$ of Lemma~\ref{lem:unifcharts} on $B_{W'}(w_0',r/9)\supset f_i'(E_i)$, we get $d(f_i'(x_0),f_i'(x_1))\to0$; hence $d(x_0,x_1)\to0$, uniformly. The ball $B(x_0,2d(x_0,x_1))$ is connected, and lies in $E_i$ for large $i$, because $\operatorname{Lip}\Phi_i\le C(N)$ and the centre of $D$ is interior. So all points over the centre lie in one component, and $E_i$ is connected. Since $\hat E_i\to E_i$ is a covering of a connected, locally path-connected space, each of its components maps onto $E_i$.

(b) Since $E_i\subset B(w_i,t)$, we have $\tilde T_i\subset\hat B$. If $\kappa\in K_i$ stabilizes $\tilde T_i$, it stabilizes $\hat B$, so $\kappa\in H_i'$ by \eqref{eq:stabK}. Conversely, for $h\in H_i'$ the set $h\tilde T_i\supset h\hat B'=\hat B'$ meets $\tilde T_i$, so $h\tilde T_i=\tilde T_i$. With (a), $E_i=\tilde T_i/H_i'$.

(c) If $h\in H_i$ stabilizes $\tilde T_i$, it stabilizes $\hat B^X\supset\tilde T_i$, so $h\in\Lambda_i$. For $c\in C_{w'}$, let $h_c\in C_y=\Stab_H(y)$ be the element over $c$ (Lemma~\ref{lem:localgroups}), and put $\lambda_c:=\psi_i(h_c)$. Then $\Pi_i(\lambda_c)=c$ by Proposition~\ref{prop:CQ}(a), and $\lambda_c$ moves $\hat w_i$ by $o(1)$, since $h_c$ fixes $y$. It maps $\hat E_i$ to itself, as $E_i$ is $C_{w'}$-invariant, and $\lambda_c\hat w_i\in B(\hat w_i,s')\subset\tilde T_i$ for large $i$. So $\lambda_c\tilde T_i=\tilde T_i$. The $\lambda_c$ and $H_i'$ generate $\Lambda_i$.

(d) Let $z\in\tilde T_i$. Since $\pi(z)\in E_i\subset B(w_i,t)$, there is $\kappa\in K_i$ with $d(\kappa^{-1}z,\hat w_i)<t$. The ball $B(\hat w_i,t)$ is connected and projects into $B(w_i,t)$, so it lies in $\hat B$. Thus $z$ and $\kappa^{-1}z$ both lie in $\hat B$, so $\kappa$ stabilizes $\hat B$, and $\kappa\in H_i'$ by \eqref{eq:stabK}. Hence $z\in\kappa B(\hat w_i,t)\subset H_i'\cdot B(\hat w_i,r_*/4)$.
\end{proof}

The gluing below uses a partition of unity indexed by $\Lambda_i$, so we must bound how many translates can be active at a single point.

\begin{lemma}[Counting orbit points]\label{lem:count}
For large $i$ and every $z\in\hat X_i$ with $d(z,\Lambda_i\hat w_i)<r/4$,
\[
\mathcal M(z):=\#\{\lambda\in\Lambda_i:d(\lambda^{-1}z,\hat w_i)<2r\}
\le C(N)\,\#\{\lambda\in\Lambda_i:d(\lambda^{-1}z,\hat w_i)<r/2\}
=:C(N)\mathcal M'(z).
\]
\end{lemma}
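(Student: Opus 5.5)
This is a packing argument on the cover $\hat X_i$. The two counts are about the orbit points $\lambda^{-1}z$, or equivalently $\lambda\hat w_i$, near $z$ or $\hat w_i$. Since $\lambda$ is an isometry, $d(\lambda^{-1}z,\hat w_i)=d(z,\lambda\hat w_i)$. So $\mathcal M(z)=\#\{\lambda:\lambda\hat w_i\in B(z,2r)\}$ and $\mathcal M'(z)=\#\{\lambda:\lambda\hat w_i\in B(z,r/2)\}$, counted with multiplicity through the stabilizer $\Stab_{\Lambda_i}(\hat w_i)$. The deck action is free, so this stabilizer is trivial and each orbit point is counted once. We compare the two counts with the volume of a small ball.

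First, I would show that the orbit $\Lambda_i\hat w_i$ is uniformly $\delta r$-dense along its own "tube" for some small $\delta$ independent of $i$. More precisely, there is a uniform $s_0=s_0(N)r$ such that $B(\hat w_i,s_0)$ contains no other orbit point "too close" in a measure sense. The concrete plan is as follows. Let $\nu:=\#\{\lambda:d(\lambda\hat w_i,\hat w_i)<s\}$ for a small fixed $s\le r/8$. Then $\mathcal M'(z)$ and $\mathcal M(z)$ are both comparable to $\nu$ times ratios of volumes, exactly as in the covering-count estimate in the proof of Lemma~\ref{lem:submersion}.

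The key steps are these.

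(1) Reduce to the base point. Since $d(z,\Lambda_i\hat w_i)<r/4$, choose $\lambda_0$ with $d(z,\lambda_0\hat w_i)<r/4$. Replacing $z$ by $\lambda_0^{-1}z$ reindexes both sets by the translation $\lambda\mapsto\lambda_0^{-1}\lambda$, so we may assume $d(z,\hat w_i)<r/4$.

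(2) Lower bound on $\mathcal M'$. Every $\lambda$ with $d(\lambda\hat w_i,\hat w_i)<r/4$ satisfies $d(\lambda\hat w_i,z)<r/2$. Hence $\mathcal M'(z)\ge\#\{\lambda:\mathfrak d(\lambda)<r/4\}=:A$.

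(3) Upper bound on $\mathcal M$. Every counted $\lambda$ has $d(\lambda\hat w_i,\hat w_i)<9r/4$. Take a maximal $r/4$-separated subfamily $\{\lambda_j\hat w_i\}$ of these orbit points. Each of the remaining counted points lies within $r/4$ of some $\lambda_j\hat w_i$, and the number of orbit points in $B(\lambda_j\hat w_i,r/4)$ is exactly $A$, by translation. The balls $B(\lambda_j\hat w_i,r/8)$ are disjoint and lie in $B(\hat w_i,3r)$. Bishop--Gromov on the non-collapsed cover, with $|K|r^2$ small, bounds their number by $C(N)$. So $\mathcal M(z)\le C(N)A\le C(N)\mathcal M'(z)$.

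The only point needing care is step (3)'s volume comparison. We need $\mathcal H^N(B(\lambda_j\hat w_i,r/8))\ge c(N)\mathcal H^N(B(\hat w_i,3r))$. This is relative Bishop--Gromov with centres within $9r/4$, valid since $|K|(3r)^2\le\epsilon$. This is elementary, and I expect no real obstacle beyond this bookkeeping. In fact the hypothesis $d(z,\Lambda_i\hat w_i)<r/4$ is used only in step (1).
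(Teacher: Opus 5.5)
Your steps (1)--(3) are correct and are essentially the paper's argument. You reduce by $\Lambda_i$-invariance to $d(z,\hat w_i)<r/4$, bound a maximal separated family of orbit points in the large ball by Bishop--Gromov, and use freeness and translation to compare each cluster with a small-displacement set; the paper measures displacement at $z$ through $A_s(z)$ rather than at $\hat w_i$, which is only cosmetic.

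Two minor points. The exploratory paragraph about density of the orbit and comparability with $\nu$ is unused and unsupported, so drop it. Non-collapsing is not needed in step (3), because each ball $B(\lambda_j\hat w_i,r/8)$ has the same measure as $B(\hat w_i,r/8)$.
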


\begin{proof}
Both counts are $\Lambda_i$-invariant, so assume
$d(z,\hat w_i)<r/4$. Put
\[
 A_s(z):=\{\lambda\in\Lambda_i:d(\lambda z,z)<s\}.
\]
Since inversion is a bijection of $\Lambda_i$, the triangle inequality gives
$\mathcal M(z)\le|A_{9r/4}(z)|$ and
$\mathcal M'(z)\ge|A_{r/5}(z)|$. Choose a maximal
$r/10$-separated subset of the orbit points
$A_{9r/4}(z)z\subset B(z,9r/4)$. The local doubling estimate following
from Bishop--Gromov, together with $|K|r^2\le1$, bounds its cardinality by
$C(N)$. Maximality shows that every $\lambda\in A_{9r/4}(z)$ can be
written as $\lambda_j+h$, where $\lambda_j$ is one of the chosen elements
and $h\in A_{r/10}(z)$. Since the deck action is free,
\[
 |A_{9r/4}(z)|\le C(N)|A_{r/10}(z)|
 \le C(N)|A_{r/5}(z)|.
\]
\end{proof}

\emph{Local orbit charts.} Lemma~\ref{lem:rewinding}(b) and the inequality $r\le\epsilon\eta r_0/1000$ show that every ball of radius at most $160r$, centred in $B(\hat y_i,40r)$, is $\Psi$-close to Euclidean in the measured sense. The maximal abelian cover $\hat X_i$ is non-compact because its deck group has positive rank. The splitting theorem \cite[Thm.~2.5]{HHWZ26}, Euclidean alignment (Lemma~\ref{lem:nearisom}(b)), and harmonic replacement (Lemma~\ref{lem:harmonicpackage}) therefore give
\[
\xi_i=(\xi_i',\xi_i'')\colon B(\hat y_i,40r)\to\R^k\times\R^m,\qquad |\xi_i-\mathcal F_i|\le\Psi r\ \text{on }B(\hat y_i,5r).
\]
Here $\xi_i$ is harmonic, $|\nabla\xi_i|\le C(N)$ on $B(\hat y_i,30r)$, and $\xi_i$ is an $(N,\Psi)$-splitting map on every $B(z,s)\subset B(\hat y_i,30r)$ with $s\ge r/100$; the last assertion follows by doubling. If $\lambda\in\Lambda_i$ and $z,\lambda z\in B(\hat y_i,5r)$, then $\mathfrak d_i(\lambda)\le20r$. Equation~\eqref{eq:modeleq} and Corollary~\ref{cor:localgroup} now give
\begin{equation}\label{eq:approxeq}
\bigl|\xi_i'(\lambda z)-\xi_i'(z)-\ell_i(\lambda)\bigr|\le4\Psi r .
\end{equation}

\emph{Partition of unity.} Let $b\colon\hat X_i\to[0,1]$ be a good cut-off function \cite[Lemma~3.1]{MN19}: $b=1$ on $B(\hat w_i,r/2)$, $\operatorname{supp}b\subset B(\hat w_i,r)$, and $r|\nabla b|+r^2|\Delta b|\le C(N)$. Put
\[
S(z):=\sum_{\lambda\in\Lambda_i}b(\lambda^{-1}z),\qquad \varpi_\lambda(z):=b(\lambda^{-1}z)/S(z).
\]
Let $\Omega_i:=\Lambda_i\cdot B(\hat w_i,r/4)$; it contains $\tilde T_i$. On $\Omega_i$, the sum is locally finite, $S\ge\mathcal M'\ge1$, $\sum_\lambda\varpi_\lambda=1$, and $\varpi_\lambda(\mu z)=\varpi_{\lambda-\mu}(z)$. By Lemma~\ref{lem:count}, at most $C(N)S(z)$ terms are active on $B(z,r)$. Combining this bound with the cutoff estimates and the chain and product rules \cite[Prop.~4.28, Thm.~4.29]{Gig15}, we obtain, almost everywhere on $\Omega_i$,
\begin{equation}\label{eq:weights}
\sum_\lambda|\nabla\varpi_\lambda|\le\frac{C}{r},\qquad\sum_\lambda|\Delta\varpi_\lambda|\le\frac{C}{r^2},\qquad C=C(N).
\end{equation}

\begin{proposition}[Equivariant orbit coordinate]\label{prop:orbit}
For large $i$, the map
\[
U_i(z):=\sum_{\lambda\in\Lambda_i}\varpi_\lambda(z)\bigl(\xi_i'(\lambda^{-1}z)+\ell_i(\lambda)\bigr),\qquad z\in\Omega_i,
\]
is well defined, locally Lipschitz and in the domain of the local Laplacian on $\Omega_i$, and satisfies:
\begin{enumerate}[label=\textup{(\alph*)}]
\item $U_i(\lambda z)=U_i(z)+\ell_i(\lambda)$ for all $\lambda\in\Lambda_i$;
\item on $B(\hat w_i,r/8)$: $|U_i-\xi_i'|\le4\Psi r$ and $|\nabla U_i-\nabla\xi_i'|\le C\Psi$;
\item $|\Delta U_i|\le C\Psi/r$ and $|\nabla U_i|\le C(N)$ on $\Omega_i$.
\end{enumerate}
\end{proposition}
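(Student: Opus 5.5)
We verify the three assertions by direct manipulation of the sum, following the standard equivariant-gluing template. First we check that the sum is well defined. For $z\in\Omega_i$ the active terms satisfy $d(\lambda^{-1}z,\hat w_i)<r$. Since $\hat w_i$ lies within $o(1)$ of $f$-preimages near $\hat y_i$ (indeed $d(\hat w_i,\hat y_i)\le r/16+o(1)$), such $\lambda^{-1}z$ lie in $B(\hat y_i,5r)$, where $\xi_i'$ is defined. The number of active terms is locally finite by proper discontinuity. Local Lipschitz regularity and membership in the domain of the local Laplacian then follow termwise from the chain and product rules \cite{Gig15}, because each summand is a product of $\varpi_\lambda$ with a harmonic function composed with an isometry.

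\emph{Equivariance (a).} We substitute $\lambda=\mu+\nu$ and use $\varpi_{\lambda}(\mu z)=\varpi_{\lambda-\mu}(z)$ together with additivity of $\ell_i$ on $\Lambda_i$ (Corollary~\ref{cor:localgroup}). This gives
\[
U_i(\mu z)=\sum_\nu\varpi_\nu(z)\bigl(\xi_i'(\nu^{-1}z)+\ell_i(\nu)+\ell_i(\mu)\bigr)=U_i(z)+\ell_i(\mu),
\]
since $\sum_\nu\varpi_\nu=1$.

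\emph{Estimates (b), (c).} Fix $z$ and a reference term $\lambda_0$ active at $z$. We write
\[
U_i(z)-\xi_i'(\lambda_0^{-1}z)-\ell_i(\lambda_0)=\sum_\lambda\varpi_\lambda(z)\,g_\lambda(z),
\qquad g_\lambda:=\xi_i'\circ\lambda^{-1}+\ell_i(\lambda)-\xi_i'\circ\lambda_0^{-1}-\ell_i(\lambda_0).
\]
Applying \eqref{eq:approxeq} to the element $\lambda_0-\lambda$ at the point $\lambda^{-1}z'$, for $z'$ near $z$, we obtain $|g_\lambda|\le4\Psi r$ on $B(z,r/2)$ for all active $\lambda$. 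Each $g_\lambda$ is harmonic, being a difference of harmonic functions composed with measure-preserving isometries. The Cheng--Yau gradient estimate \cite[Thm.~1.1]{Jia14} then gives $|\nabla g_\lambda|\le C\Psi$ near $z$. Differentiating the sum yields
\[
\nabla U_i=\nabla(\xi_i'\circ\lambda_0^{-1})+\sum\varpi_\lambda\nabla g_\lambda+\sum g_\lambda\nabla\varpi_\lambda ,
\]
and \eqref{eq:weights} bounds this by $C(N)+C\Psi+C\Psi$. For the Laplacian, $\Delta(\xi_i'\circ\lambda_0^{-1})=0$ and $\Delta g_\lambda=0$, so
\[
\Delta U_i=\sum\bigl(g_\lambda\Delta\varpi_\lambda+2\langle\nabla\varpi_\lambda,\nabla g_\lambda\rangle\bigr),
\]
and \eqref{eq:weights} gives $|\Delta U_i|\le C\Psi r/r^2+C\Psi/r$, which is (c). For (b), take $z\in B(\hat w_i,r/8)$ and $\lambda_0=0$, which is active there because $b=1$ on $B(\hat w_i,r/2)$. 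The same decomposition gives $|U_i-\xi_i'|\le\sup|g_\lambda|\le4\Psi r$ and $|\nabla(U_i-\xi_i')|\le C\Psi$.

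\emph{Main obstacle.} The delicate point is the uniform bound on $g_\lambda$. It requires that \eqref{eq:approxeq} apply to the difference elements $\lambda_0-\lambda$, which in turn needs both $\lambda^{-1}z'$ and $\lambda_0^{-1}z'$ to lie in $B(\hat y_i,5r)$ and the displacement of $\lambda_0-\lambda$ to be at most $20r$. This follows from the support bound $d(\lambda^{-1}z,\hat w_i)<r$ and the triangle inequality. We also need the harmonic-gradient estimate on balls of radius comparable to $r$ inside the domain, and we need the counting of Lemma~\ref{lem:count} to be already built into \eqref{eq:weights}.
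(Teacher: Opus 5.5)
Your proposal is correct and follows the paper's proof essentially step for step. It uses the same reindexing $\lambda=\mu+\nu$ for (a). For (b) and (c) it compares each active summand with a reference summand $\xi_i'\circ\lambda_0^{-1}+\ell_i(\lambda_0)$ via \eqref{eq:approxeq} and Jiang's gradient estimate, then combines this with \eqref{eq:weights} and the product rule, taking $\lambda_0=0$ on $B(\hat w_i,r/8)$.

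The only slip is cosmetic: at the point $\lambda^{-1}z'$ the element to which \eqref{eq:approxeq} applies is $\lambda-\lambda_0$, not $\lambda_0-\lambda$. Equivalently, one may apply $\lambda_0-\lambda$ at the point $\lambda_0^{-1}z'$. This does not affect the argument.
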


\begin{proof}
If $\varpi_\lambda(z)>0$, then $\lambda^{-1}z\in B(\hat w_i,r)\subset B(\hat y_i,2r)$, so every summand is defined. Write $u^\lambda:=\xi_i'\circ\lambda^{-1}+\ell_i(\lambda)$; this function is harmonic on its domain.

(a) In the formula for $U_i(\mu z)$, use $\varpi_\lambda(\mu z)=\varpi_{\lambda-\mu}(z)$ and then substitute $\lambda=\mu+\lambda'$. Since $\sum_{\lambda'}\varpi_{\lambda'}=1$, the result is $U_i(\mu z)=U_i(z)+\ell_i(\mu)$.

\emph{Comparison of summands.} Let $z\in\Omega_i$, and suppose that $\lambda,\mu\in\Lambda_i$ satisfy $d(\lambda^{-1}z,\hat w_i)<2r$ and $d(\mu^{-1}z,\hat w_i)<2r$. Set $y':=\lambda^{-1}z$ and $g:=\lambda-\mu$, so that $\mu^{-1}z=gy'$. Both points lie in $B(\hat w_i,2r)\subset B(\hat y_i,3r)$. On $B(z,2r)$ we have
\[
u^\lambda-u^\mu=f\circ\lambda^{-1},\qquad f(y''):=\xi_i'(y'')-\xi_i'(gy'')+\ell_i(g).
\]
For $y''\in B(y',2r)$, both $y''$ and $gy''$ lie in $B(\hat y_i,5r)$; hence \eqref{eq:approxeq} gives $|f|\le4\Psi r$. The function $f$ is harmonic on $B(y',2r)$ because it is the difference of two harmonic functions, one composed with a measure-preserving isometry. Jiang's gradient estimate \cite[Thm.~1.1]{Jia14} yields $|\nabla f|\le C\Psi$ on $B(y',r)$. Therefore, on $B(z,r)$,
\[
|u^\lambda-u^\mu|\le4\Psi r,\qquad|\nabla u^\lambda-\nabla u^\mu|\le C\Psi .
\]

(b), (c). Fix $z\in\Omega_i$ and choose $\lambda_0$ with $\varpi_{\lambda_0}(z)>0$. Put $B_z:=B(z,r/2)\cap\Omega_i$. Only those $\lambda$ with $d(\lambda^{-1}z,\hat w_i)<3r/2$ contribute on $B_z$, so the preceding comparison applies to every pair $(\lambda,\lambda_0)$. Moreover, $U_i$ is locally a finite sum of products of Lipschitz functions with bounded Laplacian, and $\sum_\lambda\nabla\varpi_\lambda=\sum_\lambda\Delta\varpi_\lambda=0$. Thus
\begin{align*}
U_i-u^{\lambda_0}&=\sum_\lambda\varpi_\lambda\,(u^\lambda-u^{\lambda_0}),\\
\nabla U_i-\nabla u^{\lambda_0}&=\sum_\lambda\varpi_\lambda(\nabla u^\lambda-\nabla u^{\lambda_0})+\sum_\lambda(u^\lambda-u^{\lambda_0})\nabla\varpi_\lambda,\\
\Delta U_i&=2\sum_\lambda\langle\nabla\varpi_\lambda,\nabla u^\lambda-\nabla u^{\lambda_0}\rangle+\sum_\lambda(u^\lambda-u^{\lambda_0})\Delta\varpi_\lambda ,
\end{align*}
where we used $\Delta u^\lambda=0$ and the product rule \cite[Prop.~4.28, Thm.~4.29]{Gig15}. The comparison estimates and \eqref{eq:weights} now give $|U_i-u^{\lambda_0}|\le4\Psi r$, $|\nabla U_i-\nabla u^{\lambda_0}|\le C\Psi$, and $|\Delta U_i|\le C\Psi/r$ almost everywhere on $B_z$. On $B(\hat w_i,r/8)$ we may choose $\lambda_0=0$, because $b=1$ on $B(\hat w_i,r/2)$; this proves (b). Finally, $|\nabla\xi_i'|\le C(N)$ gives the gradient bound in (c).
\end{proof}

Pairing the orbit coordinate with the transverse one completes the local chart. On the common domain
\[
\Omega_i^\Phi:=\Omega_i\cap\pi^{-1}(B(x_i^0,30r))
\]
put $\mathcal Z_i:=(U_i,\hat\Phi_i)$, where $\hat\Phi_i:=\Phi_i\circ\pi$.

\begin{theorem}[Local Seifert chart]\label{thm:chart}
For large $i$, the restriction $\mathcal Z_i|_{\tilde T_i}\colon\tilde T_i\to\R^k\times\R^m$ satisfies:
\begin{enumerate}[label=\textup{(\alph*)}]
\item $\mathcal Z_i$ is a local homeomorphism, and $\mathcal Z_i(\lambda z)=\bigl(U_i(z)+\ell_i(\lambda),\ \rho_0(\Pi_i\lambda)\hat\Phi_i(z)\bigr)$ for $\lambda\in\Lambda_i$;
\item $\mathcal Z_i$ descends to a homeomorphism $\bar{\mathcal Z}_i\colon E_i\to\T_i^k\times D$, $\T_i^k:=\R^k/L_i$, whose second component is $\Phi_i$;
\item $\bar{\mathcal Z}_i$ is $C_{w'}$-equivariant, where $c\in C_{w'}$ acts on $\T_i^k\times D$ by $(a,d)\mapsto(a+\ell_i(\lambda_c),\rho_0(c)d)$ for any $\lambda_c\in\Pi_i^{-1}(c)$. The transverse action $\rho_0|_{C_{w'}}$ is faithful. The product action is free, and $C_{w'}$ acts on $\T_i^k$ freely by translations. Moreover $E_i/C_{w'}$ is an open subset of $X_i$, and
\[
E_i/C_{w'}\ \cong\ (\T_i^k\times D)/C_{w'}\ \longrightarrow\ D/C_{w'}
\]
is a Seifert model, with $\Phi_i$ inducing the projection.
\end{enumerate}
\end{theorem}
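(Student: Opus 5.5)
The plan is to prove that $\mathcal Z_i$ is a local homeomorphism by realizing it, on each ball of a fixed radius, as a small perturbation of a full harmonic Reifenberg chart. Then properness and degree arguments give global injectivity modulo $\Lambda_i$, and the descent to the Seifert model is formal bookkeeping.

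\emph{Step 1 (local chart).} Fix $z\in\tilde T_i$. By Lemma~\ref{lem:tubecover}(d) there is $\kappa\in H_i'$ with $\kappa^{-1}z\in B(\hat w_i,r_*/4)\subset B(\hat w_i,r/8)$. By Proposition~\ref{prop:orbit}(a) and the equivariance of $\hat\Phi_i$ ($H_i'\le K_i$), $\mathcal Z_i\circ\kappa$ differs from $\mathcal Z_i$ by a translation. So it suffices to treat $z\in B(\hat w_i,r/8)$.

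There I compare $\mathcal Z_i$ with the harmonic chart $\xi_i$. Proposition~\ref{prop:orbit}(b) gives $|U_i-\xi_i'|\le4\Psi r$. Proposition~\ref{prop:eqsplit}(c), together with $|\xi_i''-\mathcal F_i''|\le\Psi r$ and $|\mathcal F_i''-\Pi\circ f_i'\circ\pi|\le\Psi r+o(1)$, gives $|\hat\Phi_i-\xi_i''|\le C\Psi r$. Both maps have Lipschitz constants $C(N)$ and Laplacians bounded by $C\Psi/r$ (Propositions~\ref{prop:eqsplit}(b) and~\ref{prop:orbit}(c)). Lemma~\ref{lem:replacement} at a scale $\sim r/100$ then makes $\mathcal Z_i$ an $(N,C\Psi)$-splitting map. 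The Reifenberg control of Lemma~\ref{lem:rewinding}(b) turns it into a bi-H\"older embedding with open image on a ball of radius $c r$ about $z$.

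The transformation law in (a) is immediate from Proposition~\ref{prop:orbit}(a) and Proposition~\ref{prop:eqsplit}(a). For the latter, lift through $\pi$: $\lambda$ acts on $X_i'$ through its image $\chi_i(\lambda)=\Pi_i(\lambda)\in C_{w'}$, and on $E_i$ the restriction to nearby points is automatic.

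\emph{Step 2 (descent and bijectivity).} By (a), $\mathcal Z_i$ restricted to $\tilde T_i$ and composed with $\R^k\to\T_i^k$ is $H_i'$-invariant, since $H_i'=\ker\Pi_i$ and $\ell_i(H_i')=L_i$. By Lemma~\ref{lem:tubecover}(b) it descends to a local homeomorphism $\bar{\mathcal Z}_i\colon E_i\to\T_i^k\times D$ whose second component is $\Phi_i$. I would show $\bar{\mathcal Z}_i$ is proper: $\Phi_i$ is proper over $D$ by Lemma~\ref{lem:bundlecrit}, and $\T^k_i$ is compact. Hence $\bar{\mathcal Z}_i$ is a finite covering. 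Restricted to each fibre, it is a covering $F\to\T_i^k$ of the closed $k$-manifold fibre of $\Phi_i$, and since $D$ is contractible this covering is fibrewise constant.

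\textbf{The main obstacle} is showing that the degree is one, i.e.\ injectivity. My first attempt is local. Two points of $\tilde T_i$ with equal $\mathcal Z_i$-values modulo $L_i$ can be translated by some $\kappa\in H_i'$ so that both lie in $B(\hat w_i,r/4)$ with equal $U_i$-values. Equal $\hat\Phi_i$-values and the $\Psi r$-GH property of $\mathcal Z_i$ then force them to be $C\Psi r$-close, and the local injectivity of Step~1 makes them equal. Here it matters that distinct elements of $\ker\ell_i=\Tor(H_i')$ are excluded. A torsion element $\kappa\ne0$ fixes $U_i$ and $\hat\Phi_i$, yet moves a point $z$ to $\kappa z\ne z$, because the deck action is free. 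If $\kappa z$ lands in the small chart ball this contradicts local injectivity. If it does not, I get a point of the fibre where the covering has degree $\ge|\Tor|$, and I still must show the degree is one. For that I would use that $\ell_i$ approximates $\phi_i$ (Proposition~\ref{prop:lattice}(a)) and that $\phi_i$ detects displacement: an element with $\ell_i(\kappa)$ small has $\mathfrak d_i(\kappa)$ small by the doubling estimate, so $\kappa z$ does fall in the chart ball. This yields $\Tor(H_i')=0$ and a homeomorphism, consistent with the paper's remark that $\ker\Pi_i\cong\Z^k$.

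\emph{Step 3 (Seifert model).} Lemma~\ref{lem:tubecover}(c) shows that $\Lambda_i/H_i'\cong C_{w'}$ acts on $E_i$ through the elements $\lambda_c$, and (a) gives the stated action on $\T_i^k\times D$. This action is independent of the lift, since two lifts differ by an element of $H_i'$ whose $\ell_i$-image lies in $L_i$. Faithfulness of $\rho_0|_{C_{w'}}$ comes from Proposition~\ref{prop:blowup}(b). Freeness of $C$ on $X_i'$ (Corollary~\ref{cor:Ccover}), transported through the homeomorphism $\bar{\mathcal Z}_i$, makes the product action free. Since $\rho_0(c)$ fixes the centre of $D$, freeness on the fibre over the centre forces $\ell_i(\lambda_c)\notin L_i$ for $c\ne1$, so the translations are free. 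Finally, $E_i$ is $C_{w'}$-invariant, and translates by $C\setminus C_{w'}$ are disjoint from it because $f_i'(E_i)\subset B_{W'}(w',r_*/5)$ and $r_*$ is admissible. Hence $E_i/C_{w'}$ embeds openly in $X_i=X_i'/C$, and the Seifert model follows.
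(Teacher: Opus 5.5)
Your Step~1, the descent and properness part of Step~2, and Step~3 agree with the paper. The gap is in the degree-one step, which you correctly identify as the crux.

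\textbf{Where the argument fails.} Your local injectivity argument needs two things. First, for the discrepancy $\ell'\in L_i$ between the $U_i$-values of two $H_i'$-translates in $B(\hat w_i,r_*/4)$, it needs some $\kappa\in H_i'$ with $\ell_i(\kappa)=\ell'$ and displacement $O(r_*)+\Psi r$. Second, your torsion argument needs every $\kappa\in\ker\ell_i$ to have small displacement. You justify both by ``an element with $\ell_i(\kappa)$ small has $\mathfrak d_i(\kappa)$ small by the doubling estimate''. That is the reverse of what is available:
\begin{itemize}
\item \eqref{eq:doubling} bounds $|\ell_i|$ on $H_i'(\delta)$, i.e.\ small displacement implies small $\ell_i$;
\item Proposition~\ref{prop:lattice}(a) compares $\ell_i$ with $\phi_i$ only on $H_i'(R)$ for fixed $R$, so only for elements whose displacement is bounded a priori.
\end{itemize}
For a torsion element, $\ell_i(\kappa)=0$ holds automatically, and nothing you cite bounds its displacement. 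The argument is therefore circular exactly where it matters. Separately, torsion in $H_i'$ does not by itself create extra sheets of $\bar{\mathcal Z}_i$, since $z$ and $\kappa z$ are the same point of $E_i=\tilde T_i/H_i'$. Extra sheets would come from non-$H_i'$-related points in one fibre, which is the first requirement above.

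\textbf{The paper's route.} The paper avoids displacement control altogether. Since $E_i$ is connected and $\tilde T_i\to E_i$ is a connected regular cover with deck group $H_i'$, the monodromy $\theta\colon\pi_1(E_i)\to H_i'$ is onto. Lifting loops and applying $\mathcal Z_i$ shows $\bar{\mathcal Z}_{i*}=\ell_i\circ\theta$, which is onto $L_i=\pi_1(\T_i^k\times D)$. A connected covering with surjective $\pi_1$-map has one sheet. Injectivity of $\ell_i\circ\theta$ then gives $\ker\ell_i=\Tor(H_i')=0$ as a byproduct.

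\textbf{Repairing your route.} If you want to keep the local argument, the missing input is Step~3 of the proof of Proposition~\ref{prop:lattice}: every $g\in H_i'$ can be written $g=\operatorname{ev}_i(n)+c$ with $c\in H_i'(\delta)$. Then $|n/N_i|\le|\ell_i(g)|+\mu_i(\delta)$, and the string bounds \eqref{eq:strings} control $\mathfrak d_i(\operatorname{ev}_i(n))$. This yields the converse implication you need, uniformly in $g$. Combined with \eqref{eq:modeleq}, it places $\kappa^{-1}p_2$ within $C\Psi r$ of $p_1$, and local injectivity of $\mathcal Z_i$ then gives $\kappa^{-1}p_2=p_1$. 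Torsion-freeness follows in the same way.
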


\begin{proof}
(a) \emph{Equivariance.} The first component is Proposition~\ref{prop:orbit}(a). For the second, $\lambda\in\Lambda_i$ acts on $X_i'$ by the deck transformation $\chi_i(\lambda)=\Pi_i(\lambda)\in C_{w'}$, so $\hat\Phi_i(\lambda z)=\rho_0(\Pi_i\lambda)\hat\Phi_i(z)$ by Proposition~\ref{prop:eqsplit}(a).

\emph{Prescribed-coordinate chart.} Let $z_0\in B(\hat w_i,r_*/4)$. Since $r_*\le r/100$, we have $B(z_0,r/25)\subset B(\hat w_i,r/8)\subset\Omega_i$. Moreover the projection of this ball lies in $B(x_i^0,30r)$ for large $i$, so $B(z_0,r/25)\subset\Omega_i^\Phi$, the domain of $\mathcal Z_i$. Proposition~\ref{prop:orbit}(b)--(c) gives $|U_i-\xi_i'|\le4\Psi r$, $\operatorname{Lip}U_i\le C(N)$, and $|\Delta U_i|\le C\Psi/r$ there. After lifting through the local isomorphism $\pi$, Proposition~\ref{prop:eqsplit}(b) gives the analogous Lipschitz and Laplacian bounds for $\hat\Phi_i$. The estimate $|\Phi_i-\Pi\circ f_i'|\le2\Psi r$, together with the choice of $\Pi$ and the bound $|\xi_i-\mathcal F_i|\le\Psi r$, also gives
\[
 |\hat\Phi_i-\xi_i''|\le5\Psi r
 \quad\text{on }B(\hat y_i,5r).
\]
Consequently, $\|\mathcal Z_i-\xi_i\|_{L^\infty(B(z_0,r/25))}\le C\Psi r$. The map $\xi_i$ is an $(N,\Psi)$-splitting map on $B(z_0,r/50)$. At that scale, Lemma~\ref{lem:replacement} makes $\mathcal Z_i$ an $(N,C\Psi)$-splitting map and a bi-H\"older open embedding on $B(z_0,r/100)$; Lemma~\ref{lem:rewinding}(b) supplies the required Reifenberg and manifold hypotheses. Because $\mathcal Z_i\circ h=\mathcal Z_i+(\ell_i(h),0)$ for $h\in H_i'$, the same conclusion holds on every translate of this ball. Lemma~\ref{lem:tubecover}(d) says that these translates cover $\tilde T_i$. Hence $\mathcal Z_i$ is a local homeomorphism throughout $\tilde T_i$.

(b) \emph{Descent.} Lemma~\ref{lem:tubecover}(b) identifies $\tilde T_i\to E_i$ as a regular covering with deck group $H_i'$. For $h\in H_i'$, we have $\Pi_i(h)=0$, so $h$ covers the identity of $X_i'$ and $\hat\Phi_i\circ h=\hat\Phi_i$. On the target, therefore, $h$ acts by the translation $(\ell_i(h),0)$. The kernel of $\ell_i$ is finite by Proposition~\ref{prop:lattice}(b); using an evenly covered neighbourhood and its disjoint translates shows that the equivariant local homeomorphism $\mathcal Z_i$ remains a local homeomorphism after quotienting first by this kernel and then by $H_i'/\ker\ell_i\cong L_i$. We obtain a local homeomorphism $\bar{\mathcal Z}_i\colon E_i\to\T_i^k\times\R^m$ with second component $\Phi_i$, and its image lies in $\T_i^k\times D$. This map is proper as a map into $\T_i^k\times D$, because $\Phi_i\colon E_i\to D$ is proper and $\T_i^k$ is compact. A proper local homeomorphism into a locally compact Hausdorff space has open and closed image. Since $\T_i^k\times D$ is connected, $\bar{\mathcal Z}_i$ is onto and is therefore a finite covering map.

\emph{Degree one and torsion.} By Lemma~\ref{lem:tubecover}(a), $E_i$ is connected. The connected regular covering $\tilde T_i\to E_i$ has deck group $H_i'$, so its monodromy map $\theta\colon\pi_1(E_i)\to H_i'$ is onto. If a loop in $E_i$ lifts to a path from $z$ to $hz$, applying $\mathcal Z_i$ shows that
\[
 \bar{\mathcal Z}_{i*}=\ell_i\circ\theta\colon
 \pi_1(E_i)\longrightarrow\pi_1(\T_i^k\times D)=L_i.
\]
This map is onto. A connected covering with surjective induced map on
fundamental groups has one sheet, so $\bar{\mathcal Z}_i$ is a
homeomorphism. Thus $\bar{\mathcal Z}_{i*}$ is an isomorphism. Since
$\theta$ is onto and $\ell_i\circ\theta$ is injective, both $\theta$ and
$\ell_i$ are injective. Consequently
$H_i'\cong L_i\cong\Z^k$, and in particular the kernel $\ker\ell_i=\Tor(H_i')$ in
Proposition~\ref{prop:lattice}(b) vanishes.

(c) By (a), $\bar{\mathcal Z}_i(cz)=c\cdot\bar{\mathcal Z}_i(z)$ for the stated action. It is well defined because $\lambda_c$ is determined modulo $H_i'$ and $\ell_i(H_i')=L_i$. The representation $\rho_0$ is faithful on $C_0$ by Proposition~\ref{prop:blowup}(b), hence also on $C_{w'}\le C_0$. The group $C_{w'}$ acts freely on $E_i$, since $C$ acts freely on $X_i'$; so it acts freely on $\T_i^k\times D$ through $\bar{\mathcal Z}_i$. The centre $\Phi(w')$ of $D$ is fixed by $\rho_0(C_{w'})$. So for $c\ne1$ the translation $\ell_i(\lambda_c)$ is not in $L_i$, and $C_{w'}$ acts freely on $\T_i^k$ by translations. Finally, $f_i'(E_i)\subset B_{W'}(w',r_*/5)$, the admissibility of $r_*$, and the almost $C$-equivariance of $f_i'$ give $cE_i\cap E_i=\emptyset$ for $c\in C\setminus C_{w'}$. So $E_i/C_{w'}\to X_i'/C=X_i$ is injective and open, and $\bar{\mathcal Z}_i$ induces the stated equivalence.
\end{proof}

\subsection{Proof of Theorem~\ref{thm:master}(iv)}\label{ssec:proofiv}
It remains to assemble the charts of Theorem~\ref{thm:chart} near an arbitrary point of $G$. Lemma~\ref{lem:unifcharts} makes the geometric choices uniform over all subsequential limits $\Phi$; a final diagonal argument then removes the subsequences.

\emph{Choices.} Let $x$, $y_0$, $w'$ and $r_*$ be as in \S\ref{ssec:fibrecoords}. Then $C_x\cong C_{w'}\le C_0$, and $\rho_0(C_{w'})$ fixes $\Phi(w')$ for every $\Phi\in\mathcal L$. By Lemma~\ref{lem:unifcharts} and $\operatorname{Lip}\Phi\le C(N)$, there are radii $s_1>s_2>0$ and $\tau_1>\tau_0>0$ such that, for every $\Phi\in\mathcal L$, with $D_j:=B(\Phi(w'),s_j)$ for $j=1,2$:
\begin{itemize}
\item $\bar D_1\subset\Phi(B_{W'}(w_0',r/16))$;
\item $\Phi^{-1}(\bar D_1)\subset B_{W'}(w',r_*/10)$;
\item $B_{W'}(w',\tau_1)\subset\Phi^{-1}(D_1)$ and $B_{W'}(w',\tau_0)\subset\Phi^{-1}(D_2)\subset B_{W'}(w',\tau_1)$.
\end{itemize}
Here the preimages are taken in $B_{W'}(w_0',r/8)$. Let $U_x$ and $U_x^0$ be the images in $X$ of $B_{W'}(w',\tau_1)$ and $B_{W'}(w',\tau_0)$.

\emph{Along a subsequence.} Let $\Phi_i\to\Phi\in\mathcal L$ along a subsequence, and let $D_1$, $D_2$ be as above for this $\Phi$. Then $D:=D_1$ satisfies the requirements of \S\ref{ssec:chart}. Put $D_j':=D_j-\Phi(w')$. By Theorem~\ref{thm:chart}(c), for large $i$ in the subsequence, $E_i(D_1)/C_{w'}$ is an open subset of $X_i$, and $\Phi_i$ induces on it a map equivalent to the model
\[
(\T_i^k\times D_1')/C_{w'}\longrightarrow D_1'/C_{w'},
\]
with $C_{w'}$ acting linearly on $D_1'$ through $\rho_0$ and freely by translations on $\T_i^k$. Near any point $p\in D_1'$ this model restricts to $(\T_i^k\times D_3)/(C_{w'})_p\to D_3/(C_{w'})_p$ for a small ball $D_3$ centred at $p$. Here $(C_{w'})_p$ is the local group at $p$, and it still acts on $\T_i^k$ by translations. Put $V_1:=\Phi^{-1}(D_1)$ and $\sigma_i:=\Phi^{-1}\circ\Phi_i\colon E_i(D_1)\to V_1$. Then $\sup d(\sigma_i,f_i')\to0$, since $\Phi_i=\Phi\circ f_i'+o(1)$ and $\Phi^{-1}$ is uniformly continuous on $\bar D_1$. Also $(f_i')^{-1}(L)\subset E_i(D_1)$ for every compact $L\subset V_1$, by Lemma~\ref{lem:bundlecrit}.

Let $E_i\subset X_i$ be the image of $\sigma_i^{-1}(B_{W'}(w',\tau_1))$; for the rest of this subsection, $E_i$ denotes this subset of $X_i$ rather than the set $E_i(D_1)\subset X_i'$. Let $\bar\sigma_i\colon E_i\to U_x$ be the induced map. Then $\bar\sigma_i$ is a Seifert $\T^k$-fibration over $U_x$, and the image of $\Phi^{-1}(D_2)$, which contains $U_x^0$ and lies in $U_x$, is a model neighbourhood of $x$ with local group $C_{w'}\cong C_x$. Write $q_i\colon X_i'\to X_i$ and $q\colon W'\to X$ for the quotient maps, and choose the approximations compatibly, so that
\[
\sup_{z\in X_i'}d_X\bigl(q(f_i'(z)),f_i(q_i(z))\bigr)\longrightarrow0,
\]
by the quotient-map convention of \S\ref{sec:prelim}, since both maps are induced by the same equivariant approximations $(f_i,\phi_i,\psi_i)$. Hence $\sup d(\bar\sigma_i,f_i)\to0$. If $L\subset U_x$ is compact, its lift in $B_{W'}(w',\tau_1)$ has a compact neighbourhood still contained in that ball. The compatibility above and the compact-capture conclusion for $f_i'$ then give $f_i^{-1}(L)\subset E_i$ for all large $i$.

\emph{Removing the subsequences.} For $\theta\in(0,\tau_1)$, let $L_\theta$ be the image of $\bar B_{W'}(w',\tau_1-\theta)$ in $X$. Let $\mathcal P_i(\theta)$ assert the existence of a Seifert $\T^k$-fibration $\bar\sigma\colon E\to U_x$, with $E\subset X_i$ open, such that: $x$ has a model neighbourhood containing $U_x^0$ with local group $C_x$; $\sup_Ed(\bar\sigma,f_i)\le\theta$; and $f_i^{-1}(L_\theta)\subset E$. The preceding argument and Proposition~\ref{prop:eqsplit}(c) show that every subsequence has a further subsequence on which $\mathcal P_i(\theta)$ holds eventually. Thus, for each fixed $\theta$, it holds for all sufficiently large $i$. Moreover, $\mathcal P_i(\theta)$ implies $\mathcal P_i(\theta')$ whenever $\theta'\ge\theta$. We may therefore choose $\theta_i\to0$ such that $\mathcal P_i(\theta_i)$ holds for all large $i$. Every compact subset of $U_x$ lies in some $L_\theta$, so this proves (iv). Whenever $C_x=1$, in particular at regular points, the Seifert model reduces to an ordinary $\T^k$-bundle. \qed

\section{Globalization over smooth orbifolds}\label{sec:global}

Under no bubbling the charts of \S\ref{sec:local} cover all of $X$, but they are still only charts. Turning them into a single map requires regularity of the base, and we assume it in a strong form: throughout this section $(\ast)$ and (R) hold, and $X$ is a smooth closed Riemannian orbifold. All orbifolds here are effective.

We first show that $W'$ is a smooth manifold, which provides a target with bounded geometry; \S\ref{ssec:models} identifies the fibres and the local Seifert models, and \S\ref{sec:seifert} then constructs and compares the two global projections.

\subsection{Unfolding: the manifold cover of a smooth orbifold limit}\label{sec:unfold}
The $C$-cover $W'$ is initially only a metric space. The next proposition shows that the smooth orbifold charts of $X$ lift to $W'$ and endow it with a compatible Riemannian structure.

\begin{proposition}[Unfolding]\label{prop:unfold}
Under the standing assumptions of this section:
\begin{itemize}
\item $W'$ is a closed smooth Riemannian manifold, and $C$ acts on it by isometries;
\item $W'\to X=W'/C$ is a Riemannian orbifold covering;
\item for $w'\in W'$ over $x$, the stabilizer $C_{w'}$ is conjugate in $O(m)$ to the orbifold group $\Gamma_x$.
\end{itemize}
\end{proposition}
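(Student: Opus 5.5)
The plan is to build, at every $w'\in W'$, a smooth chart of $W'$ by lifting a smooth orbifold chart of $X$ at $x=q(w')$. The point to establish is that $q\colon W'\to X=W'/C$, near $w'$, is \emph{the same} branched quotient $B(w',r)\to B(w',r)/C_{w'}$ as the orbifold chart $\tilde B\to\tilde B/\Gamma_x$. Once this is done, the smooth structure and the metric lift tautologically.

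First, by (R) and Lemma~\ref{lem:setG}, $G=X$. So at every $w'$ we have the topological chart $h=\Phi_\infty\colon B_{W'}(w',s)\to\R^m$ of Proposition~\ref{prop:eqsplit}(c) and Proposition~\ref{prop:U2}. It is a $C_{w'}$-equivariant open embedding, with $C_{w'}$ acting linearly and faithfully through $\rho_{y}$. The metric on $B_{W'}(w',s)$ is the intrinsic length metric, and $B_{W'}(w',s)/C_{w'}$ is isometric to a ball $B_X(x,s)$ (Lemma~\ref{lem:localgroups} with admissible $s$). Let $(\tilde B,\Gamma_x,\tilde\pi)$ be a smooth Riemannian orbifold chart of $X$ at $x$, with $\tilde B$ a geodesic ball in a smooth Riemannian manifold. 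Then $\tilde B$ and $B_{W'}(w',s)$ are two simply connected manifolds. Each carries a faithful finite isometric action with a fixed point, and their metric quotients are both the same ball of $X$.

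Second, I would identify them. Away from the singular strata, the quotient maps are Riemannian coverings in the length-metric sense. The branch locus of $\tilde B\to\tilde B/\Gamma_x$ is a union of totally geodesic submanifolds of codimension $\ge2$, because the action is faithful and by isometries. For the $W'$ side, the fixed set of $\rho(C_{w'})$ is a proper linear subspace. The case of a reflection needs separate treatment: a codimension-one fixed set would make $X$ have boundary, yet $X$ is a closed orbifold. A reflection in $\Gamma_x$ is excluded because orbifold reflectors give boundary in the metric quotient; the same then holds for $C_{w'}$. So both branch loci have codimension $\ge2$ and do not disconnect. Put $X^\circ=B_X(x,s)\setminus(\text{singular orbifold points})$, a connected Riemannian manifold. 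Over it, both $B_{W'}(w',s)^\circ$ and $\tilde B^\circ$ are connected regular Galois covers. Their fundamental groups surject onto the deck groups, with kernels equal to the images of the meridian loops around the codimension-two strata; these kernels coincide because each cover is branched over the same strata and is simply connected after filling in. Hence the two covers are isomorphic over $X^\circ$, via a homeomorphism $F$ that is a local isometry of length metrics, and hence of the Riemannian metric. $F$ extends by completion (metric completion of the ball minus a codimension-two set) to an isometry $\tilde B\to B_{W'}(w',s)$ intertwining $\Gamma_x$ and $C_{w'}$. This gives the conjugacy of $C_{w'}$ and $\Gamma_x$ in $O(m)$, via the differential of $F$ at the fixed point.

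Third, transport the smooth Riemannian structure of $\tilde B$ through $F$. On overlaps, the transition maps are isometries between open subsets of smooth Riemannian manifolds, hence smooth by Myers--Steenrod/Calabi--Hartman. So $W'$ is a smooth Riemannian manifold whose distance is $d_{W'}$, and it is closed since $X$ is compact and $C$ is finite. $C$ acts by metric isometries, hence smoothly and isometrically. The local pictures $B_{W'}(w',s)\to B_{W'}(w',s)/C_{w'}\cong\tilde B/\Gamma_x$ are exactly orbifold charts, so $W'\to X$ is a Riemannian orbifold covering.

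The main obstacle is the second step: matching the two branched covers. This needs control of the codimension of the branch sets and the exclusion of reflections on both sides. If the meridian argument is awkward, I would instead lift the isometry $F$ directly by path lifting in $X^\circ$, starting at a regular point near $x$, and check the closing condition using that both $\Gamma_x$ and $C_{w'}$ are generated by the stabilizers of the codimension-two strata (Armstrong's theorem for the linear action $\rho$).
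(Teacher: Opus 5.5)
Your second step fails when reflections are present.

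\textbf{The reflection gap.} Faithfulness does not force the branch locus to have codimension $\ge2$. Your argument excluding reflections is also wrong: you say a codimension-one fixed set would give $X$ boundary, whereas $X$ is a closed orbifold. But closed orbifolds may have reflector (mirror) points. These are interior orbifold points, even though the underlying metric space has boundary there, as in $[0,1]=S^1/\Z_2$.

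Mirrors occur under the standing hypotheses. In collapse~(B) of Example~\ref{ex:klein}, (R) holds, $X=[0,a/2]$ is a smooth closed Riemannian orbifold, $W'=\R/2a\Z$, and at each endpoint $C_{w'}\cong\Z_2$ acts by a reflection. With mirrors present, the preimage of $X^\circ$ in the geodesic ball $\tilde B$ is cut by the mirror hyperplanes into several components. So neither $\tilde B^\circ\to X^\circ$ nor its $W'$-counterpart is a connected Galois cover with the full deck group. Neither the meridian comparison nor path lifting from one regular point then produces $F$.

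Supplying this is the content of the paper's Lemma~\ref{lem:lift}. The paper first conjugates $\rho(C_{w'})$ to $\Gamma_x$, via $T_xX=\R^m/\rho(C_{w'})=\R^m/\Gamma_x$ and \cite[Lemma~1]{Swa02}. It then splits $G=W_G\times G_{\mathcal C}$, with $W_G\cong\Z_2^r$ generated by the reflections and $\mathcal C$ an orthant chamber. It lifts over the chamber by covering theory, where all remaining branching has codimension at least two. It shows that the lift permutes the mirror walls, and rebuilds $\R^m$ by the basic construction $W_G\times\mathcal C/\!\sim$.

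\textbf{The kernel comparison.} Even without reflections, this step is not justified as written. The kernel is the normal closure of the elements $\gamma_S^{m_S}$, so it depends on the branching orders, not only on the branch strata; compare $z\mapsto z^2$ and $z\mapsto z^3$ on $\mathbb C$. You need corresponding stabilizers to have equal orders and equal-dimensional fixed spaces. The paper reads the orders off the metric of $X$ through \eqref{eq:order}, $|C_{p'}|=|\Gamma_{u'}|=\omega_m/\mathcal H^m(B_1(o)\subset T_{x'}V)$, and gets the dimensions from invariance of domain.

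\textbf{Simple connectivity.} $B_{W'}(w',s)$ is not known to be simply connected, because $h$ embeds it onto an uncontrolled open set. The paper makes one side star-shaped and uses an index count on the other.

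\textbf{The final isometry.} There is a cleaner route than extension by completion. Once $F$ is a homeomorphism with $q\circ F=\tilde\pi$, use the identity $d_{\tilde B}(u_1,u_2)=d_X(\tilde\pi u_1,\tilde\pi u_2)$ for $u_2$ near $u_1$. It holds at singular points too, since elements of the stabilizer of $u_1$ realize the quotient distance. Together with the same identity on the $W'$ side, it makes $F$ and $F^{-1}$ both $1$-Lipschitz.
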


\begin{proof}
By (R), freeness of the $H_0$-action, Theorem~\ref{thm:master}(i), and Lemma~\ref{lem:localgroups}, $C_y=\Stab_H(y)$ is finite and projects isomorphically onto $\Gamma_{[y]}$. Proposition~\ref{prop:blowup} shows that it fixes $H_0y$ pointwise and has faithful transverse representation $\rho_y$ in every blow-up. Since $W\to W'$ is a local isometry, we work on $W'$.

\emph{Step 1: identify the local groups.} Fix $p\in W'$ over $x\in X$ and an orbifold chart $\pi_U\colon U\to U/\Gamma_x\cong V\ni x$. Since blow-up commutes with quotient,
$T_xX=\R^m/\rho_p(C_p)=\R^m/\Gamma_x$.
By \cite[Lemma~1]{Swa02}, the spherical quotient determines a finite subgroup of $O(m)$ up to conjugacy; hence $\rho_p(C_p)$ and $\Gamma_x$ are conjugate. If $x'\in V$ is near $x$, with lifts $p'\in W'$ and $u'\in U$, the metric on $V$ determines both stabilizer orders:
\begin{equation}\label{eq:order}
|C_{p'}|=|\Gamma_{u'}|=\omega_m/\mathcal H^m\bigl(B_1(o)\subset T_{x'}V\bigr).
\end{equation}

\emph{Step 2: linearize the topological chart.} For $w\in W$ over $p$, the stabilizer $\Gamma_w\le C$ equals $C_p$. Proposition~\ref{prop:U2} gives a $C_p$-equivariant open embedding $h_B$ of an invariant neighbourhood $B$ of $p$ into $\R^m$, sending $p$ to $0$ and intertwining $C_p$ with a conjugate of $\rho_p$. This step does not use smoothness of $X$.

\emph{Step 3: compare the linear charts.} After conjugation, $h_B$ and $h_U:=\exp_{o_U}^{-1}$ are equivariant charts for the same finite abelian $G\subset O(m)$. The identification $B/C_p=V=U/\Gamma_x$ becomes a homeomorphism $\sigma\colon N_1\to N_2$ near $\pi(0)$ in $\mathcal O:=\R^m/G$. Taking $N_1=\pi h_U(U')$ for a small geodesic ball $U'$ makes $N_1$ star-shaped.

\emph{$\sigma$ preserves isotropy type.} Let $x'\in V$, with points $p'$ and $u'$ over it in the two charts.
\begin{itemize}
\item The stabilizer orders agree by \eqref{eq:order}.
\item Let $S_B$ be the set of points near $p'$ whose stabilizer equals $C_{p'}$. Since stabilizers near $p'$ are subgroups of $C_{p'}$, this is the set where the order is $|C_{p'}|$.
\item $h_B$ maps $S_B$ onto a neighbourhood of $h_B(p')$ in $\mathrm{Fix}(C_{p'})$. And $\pi_B|_{S_B}$ is injective near $p'$, since only elements of $C_{p'}$ identify points there, and they fix $S_B$. So $\pi_B(S_B)$ is a topological manifold of dimension $\dim\mathrm{Fix}(C_{p'})$.
\item The same holds on the $U$-side. By \eqref{eq:order}, both images are the set of points of $V$ near $x'$ with a prescribed tangent-cone volume.
\item Invariance of domain then gives equal fixed-space dimensions.
\end{itemize}
In particular the mirror locus is preserved: order $2$ and an $(m-1)$-dimensional fixed space force the non-trivial element to be a reflection. Since $G$ is abelian, the isotropy type now determines every codimension-$\le2$ label:
\begin{itemize}
\item a mirror has order $2$ and codimension $1$;
\item a cone point of order $m_S$ has codimension $2$ and is not in the closure of the mirrors;
\item a corner ($\Z_2^2$) has codimension $2$ and lies in the closure of the mirrors.
\end{itemize}

By Lemma~\ref{lem:lift} (Appendix~\ref{app:lift}), $\sigma$ lifts to a homeomorphism $\tilde\sigma\colon\pi^{-1}N_1\to\pi^{-1}N_2$ with $\pi\tilde\sigma=\sigma\pi$.

Thus $\Sigma:=h_B^{-1}\tilde\sigma h_U$ is a homeomorphism from $U'=h_U^{-1}\pi^{-1}(N_1)$ onto $h_B^{-1}\pi^{-1}(N_2)\subset B$, with $\pi_B\circ\Sigma=\pi_U$. Shrink $U$ and $B$ accordingly.

\emph{Step 4: prove that $\Sigma$ is an isometry.} Restrict to half-radius balls so that the relevant geodesics remain in the domains of $\Sigma$ and $\Sigma^{-1}$. Let $\gamma$ be a geodesic in $B$ and set $\tau:=\Sigma^{-1}\circ\gamma$. For $s$ sufficiently close to $t$,
\[
d_U(\tau(t),\tau(s))=d_V(\pi\tau(t),\pi\tau(s))\le d_B(\gamma(t),\gamma(s)).
\]
The equality holds because, for $u_2$ sufficiently close to $u_1$, only elements fixing $u_1$ can realize the quotient distance $d_V=\min_gd_U(\cdot,g\,\cdot)$. Continuity of $\Sigma^{-1}$ then gives upper metric derivative at most $1$, so $\Sigma^{-1}$ is $1$-Lipschitz. The symmetric argument makes $\Sigma$ $1$-Lipschitz as well; hence it is an isometry. Compare \cite[Lemma~2.2]{Lan20}. Thus $W'$ is locally Riemannian, and Myers--Steenrod makes the charts smoothly compatible. Since $\pi_B\circ\Sigma=\pi_U$, the map $W'\to X=W'/C$ is a Riemannian orbifold covering, with $C_p$ conjugate to $\Gamma_x$ in $O(m)$.
\end{proof}

\subsection{Equivariant triviality and Seifert models}\label{ssec:models}
This subsection supplies the local topology used in \S\ref{sec:seifert}: fibre bundles $X_i'\to W'$ uniformly close to $f_i'$ have fibres homotopy equivalent to $\T^k$, and a $C$-equivariant such bundle with fibre $\T^k$ yields Seifert models over small linear disks. The bundles that equivariant regularization produces there are $C$-equivariant.
Over a linear disk, Lemma~\ref{lem:eqtriv} trivializes them equivariantly.
The local group acts freely on the central fibre; after that fibre is
identified as a torus, Lemma~\ref{lem:translations} conjugates the action to
translations. We first record the required rigidity of tori.

\begin{lemma}[Topological rigidity of tori]\label{lem:torusrigidity}
Every closed topological $d$-manifold homotopy equivalent to $\T^d$ is homeomorphic to $\T^d$.
\end{lemma}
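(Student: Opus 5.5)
This is the topological Borel conjecture for tori. The plan is to prove it by splitting into dimensions and, in each range, citing the appropriate classification or surgery theorem. No new argument is needed.

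\emph{Low dimensions.} For $d\le 2$ the classification of closed curves and surfaces handles everything. A closed $1$-manifold is a circle. A closed surface with $\pi_1\cong\Z^2$ is orientable, has Euler characteristic zero, and is therefore $\T^2$. For $d=3$, a closed $3$-manifold $M$ homotopy equivalent to $\T^3$ is aspherical and has $\pi_1\cong\Z^3$. I will use Moise's theorem to put a smooth structure on $M$. I will then appeal to geometrization, in the form that homotopy equivalent closed aspherical $3$-manifolds with infinite fundamental group are homeomorphic (Waldhausen in the Haken case, which applies here since $M$ is irreducible and $b_1(M)=3>0$). This gives $M\cong\T^3$.

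\emph{High dimensions.} For $d\ge5$ I will invoke topological rigidity of tori. A homotopy equivalence $M\to\T^d$ determines an element of the topological structure set $\mathcal S^{\mathrm{TOP}}(\T^d)$. By the Hsiang--Wall and Wall computations (Wall's book, \S15A), or equivalently by the Farrell--Jones theorem for $\Z^d$, this structure set is trivial. So $M$ is homeomorphic to $\T^d$, in fact via a homeomorphism homotopic to the given equivalence.

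\emph{Dimension four.} Here I will use Freedman's topological surgery. It applies because $\Z^4$ is a good group: it is elementary amenable, in particular polycyclic. Combined with the vanishing of the assembly obstruction for $\Z^4$, this gives that $\mathcal S^{\mathrm{TOP}}(\T^4)$ is trivial (Freedman--Quinn, Thm.~11.5). I expect this case to be the only point requiring care. Smooth rigidity fails in general, so the statement must be kept topological. The lemma is only ever applied to fibres of topological bundles, so the topological statement is exactly what is needed.

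Once these citations are assembled, the lemma follows with no further work. In the applications, this lemma upgrades ``closed $k$-manifold fibre homotopy equivalent to $\T^k$'' to ``fibre homeomorphic to $\T^k$''.
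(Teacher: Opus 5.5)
Your proposal matches the paper's proof: the same split by dimension, using the classification of curves and surfaces for $d\le2$, Moise plus Waldhausen for $d=3$, Freedman--Quinn \S11.5 for $d=4$, and Hsiang--Wall together with topological surgery for $d\ge5$. Two attributions should be tightened. First, in dimension $3$ irreducibility of $M$ does not follow from asphericity alone; it comes from $\pi_2(M)=0$, the sphere theorem and Perelman's Poincar\'e conjecture, which is why the paper cites Perelman separately. Second, for $d\ge5$ Wall's \S15A computes the PL structure set of $\T^d$, which is nonzero because fake PL tori exist, so triviality of $\mathcal S^{\mathrm{TOP}}(\T^d)$ needs Kirby--Siebenmann's topological surgery (or your Farrell--Jones alternative) in addition.
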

\begin{proof}
For $d\ge5$ this is \cite{HW69} and \cite[Essay~V]{KS77}; for $d=4$ use \cite[\S11.5]{FQ90}; for $d=3$ use \cite{Moi52}, the Poincar\'e conjecture \cite{Per02,Per03a,Per03b}, and \cite{Wal68}; and for $d\le2$ it is classical.
\end{proof}

Next we trivialize an equivariant bundle over a linear disk, keeping the action diagonal and the central fibre fixed.

\begin{lemma}[Equivariant triviality]\label{lem:eqtriv}
Let $Q$ be finite, $\rho\colon Q\to O(m)$, and $\bar D\subset\R^m$ the closed unit disk. Let $p\colon E\to\bar D$ be a fibre bundle with closed manifold fibre, and let $Q$ act \emph{freely} on $E$ with $p\circ q=\rho(q)\circ p$. Then there is a $Q$-homeomorphism $\bar D\times F_0\to E$ over $\bar D$, equal to the identity on $F_0:=p^{-1}(0)$, where $Q$ acts diagonally on $\bar D\times F_0$.
\end{lemma}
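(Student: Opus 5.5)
The approach is to build the equivariant trivialization by an equivariant version of the standard radial-contraction (Alexander-trick-free) argument. Over the disk $\bar D$, the radial homotopy $r_t(v)=tv$ is $\rho$-equivariant, because $\rho(Q)\subset O(m)$ commutes with scalar multiplication. The plan is to lift this contraction equivariantly to $E$ and read off the trivialization from the lifted flow. Concretely, we want a $Q$-equivariant continuous family $\Theta_v\colon F_0\to p^{-1}(v)$ of homeomorphisms, $v\in\bar D$, with $\Theta_0=\mathrm{id}$ and $q\Theta_v=\Theta_{\rho(q)v}q$. Then $(v,f)\mapsto\Theta_v(f)$ is the required $Q$-homeomorphism $\bar D\times F_0\to E$.

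\textbf{Step 1: reduce to the quotient.} Since $Q$ acts freely on $E$ and $E$ is compact Hausdorff, $E\to E/Q$ is a regular covering. The map $p$ descends to $\bar p\colon E/Q\to\bar D/\rho(Q)$. I would avoid working with the orbifold quotient directly and instead use an equivariant connection-type lift.

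\textbf{Step 2: an equivariant path-lifting function.} A fibre bundle over a paracompact base is a Hurewicz fibration, so $p$ admits a lifting function $\Lambda$: given $e\in E$ and a path $\gamma$ in $\bar D$ starting at $p(e)$, it produces a lift $\Lambda(e,\gamma)$, continuous in $(e,\gamma)$. I would make $\Lambda$ equivariant by averaging in a convex setting. Concretely, cover $\bar D$ by finitely many $\rho(Q)$-invariant families of small balls over which $E$ is trivial. Using local trivializations, one can construct lifting functions on each piece, and then transfer/average over $Q$: define $\Lambda^Q(e,\gamma):=q^{-1}\Lambda(qe,\rho(q)\gamma)$ for a chosen $q$. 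Plain averaging is not available in a manifold fibre, so I would instead build $\Lambda$ equivariantly from the start. To do this, choose a fundamental domain structure, or more simply construct the lift on the free $Q$-space $E$ by descending to $E/Q$, which is a bundle over the orbit space only away from the fixed set. This is where care is needed.

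\textbf{Step 3: handle the singular strata by radial homogeneity.} The alternative I would actually try first is the following. Equivariantly trivialize over each stratum of $\bar D$, ordered by isotropy type. Over a small $\rho(Q)$-invariant tubular neighbourhood of the central fibre, use the fact that $p^{-1}(\epsilon\bar D)$ deformation retracts equivariantly onto $F_0$. This deformation retraction is obtained by taking any non-equivariant trivialization $\phi\colon\epsilon\bar D\times F_0\to p^{-1}(\epsilon\bar D)$ with $\phi|_{0\times F_0}=\mathrm{id}$. Because $Q$ preserves $F_0$ and the $Q$-twisted trivializations $q\phi(\rho(q)^{-1}v,q^{-1}f)$ agree on $F_0$, any two are close for small $\epsilon$. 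One then glues them using the freeness of the action: the map $F_0\to F_0/Q$ is a covering, so the local homeomorphism group structure allows a canonical choice. Next, rescale radially via $r_t$ to extend from $\epsilon\bar D$ to $\bar D$, using a lift of the radial homotopy performed fibrewise in $E/Q$ over the cone lines. This lift exists since each ray $\{tv\}$ has stabilizer $\Stab(v)$, constant for $t>0$.

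\textbf{The main obstacle.} The hardest step is producing an exactly equivariant lift of the radial contraction near the origin, where isotropy jumps. I expect to resolve it by passing to $E/Q$, which is a compact manifold because the action is free, and lifting the radial homotopy of $\bar D/\rho(Q)$ along $\bar E:=E/Q\to\bar D/\rho(Q)$. On the open cone this map is a bundle stratumwise, and the homotopy is stratum-preserving. The lift is then pulled back to $E$ through the covering $E\to E/Q$, which automatically gives equivariance, and identity on $F_0$ is arranged by normalizing at $t=0$.
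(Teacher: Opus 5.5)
You have the right target: an equivariant family $\Theta_v\colon F_0\to p^{-1}(v)$ with $\Theta_0=\mathrm{id}$ and $q\Theta_v=\Theta_{\rho(q)v}q$ is exactly the family $\psi(d)$ the paper builds. But none of your routes produces it, and the one you propose for the main obstacle fails. The map $\bar p\colon E/Q\to\bar D/\rho(Q)$ is in general neither a fibre bundle nor a fibration. Over a point with stabilizer $Q_v$ its fibre is $p^{-1}(v)/Q_v$, which changes whenever $Q_v$ jumps. Being a bundle on each isotropy stratum only lets you lift the radial homotopy stratum by stratum. Such lifts are not unique, since the fibres are positive-dimensional and there is no covering-space uniqueness to force continuity. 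Nor are they continuous across the closure relations between strata, or at $t=0$, where the contraction crushes every stratum into the origin. That continuity is precisely the content of the lemma. In addition, a mere homotopy lift (or a Hurewicz lifting function) gives only homotopy equivalences $p^{-1}(v)\to F_0$, not homeomorphisms. Your near-origin step has the same defect. The twisted trivializations $q\phi(\rho(q)^{-1}v,q^{-1}f)$ agree on $F_0$ and are close for small $\epsilon$, but that does not produce an exactly equivariant one: there is no averaging in $\Homeo(F_0)$, and freeness of $Q$ on $F_0$ gives no ``canonical choice''.

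The paper supplies two missing ideas. First, for each $J\le Q$ it works over the fixed disk $D^J$, where $J$ acts trivially on the base. After pulling back to $\R^j$ by the radial retraction, $E|_{D^J}/J$ is a proper topological submersion, hence an honest bundle. It is trivialized with the identity on $F_0/J$. Covering-space theory then identifies the regular $J$-cover with $D^J\times F_0\to D^J\times(F_0/J)$. This gives a $J$-equivariant trivialization $\Theta_J$ that is the identity on $F_0$.

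Second, the paper glues these across strata by induction over $Q$-orbits of cone cells $C(\tau)$. These come from an equivariant triangulation of $S^{m-1}$, subdivided so that the setwise stabilizer $J$ of each cell fixes it pointwise, so that $C(\tau)\subset D^J$. The inductive step has three moves:
\begin{itemize}
\item on $C(\partial\tau)$, write $\psi=\Theta_J\circ A$ with $A$ valued in the centralizer $\Homeo_J(F_0)$;
\item extend $A$ over $C(\tau)$ by a retraction $C(\tau)\to C(\partial\tau)$;
\item extend $\psi$ to $Q\cdot C(\tau)$ by equivariance, which is consistent because $qC(\tau)\cap C(\tau)$ lies in the lower skeleton for $q\notin J$.
\end{itemize}
This cellwise extension with centralizer-valued corrections is what replaces the equivariant lifting function you were unable to construct.
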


\begin{proof}
\emph{Trivialization over fixed subspaces.} Fix $J\le Q$. Over the fixed disk $D^J$, the group $J$ acts freely on $E|_{D^J}$ and trivially on the base. Pulling back by the radial retraction $\R^j\to D^J$, where $j=\dim D^J$, extends this to a free fibrewise action over $\R^j$. The quotient is a topological submersion: around a point of the total space, choose an ordinary product box whose distinct $J$-translates are disjoint; its quotient is again a product box. Because the quotient map is proper, it is a bundle \cite[Essay~II, \S1]{KS77}. Trivialize this bundle over the contractible base $\R^j$, using the identity on the central fibre $F_0/J$. In this trivialization, the regular $J$-cover has the same monodromy as
\[
\R^j\times F_0\longrightarrow\R^j\times(F_0/J),
\]
because the inclusion of the central fibre is a homotopy equivalence on every component. Homotopy invariance of coverings, applied componentwise when necessary, therefore identifies the two regular covers. Fixing the identification over the origin makes it unique and $J$-equivariant. Restriction to $D^J$ gives a $J$-homeomorphism $\Theta_J\colon D^J\times F_0\to E|_{D^J}$ that is the identity on $F_0$.

\emph{Extension over cone cells.} There is nothing more to prove when $m=0$. Otherwise, subdivide an equivariant triangulation of $S^{m-1}$ once \cite{Ill83}, so that the setwise stabilizer of each cell fixes that cell pointwise. The disk $\bar D$ is the union of the cones $C(\tau)$. Inducting over the $Q$-orbits of cells, we construct maps $\psi(d)\colon F_0\to F_d$ satisfying $\psi(0)=\mathrm{id}$ and $\psi(\rho(q)d)=q\,\psi(d)\,q|_{F_0}^{-1}$. For a cell $\tau$ with isotropy group $J$:
\begin{itemize}
\item on $C(\partial\tau)$, write $\psi=\Theta_J\circ A$ with $A$ valued in the centralizer $\Homeo_J(F_0)$;
\item extend $A$ over $C(\tau)$ by the standard retraction $C(\tau)\to C(\partial\tau)$; equivalently, after the chosen subdivision, the cone cell simplicially collapses onto the cone on its boundary;
\item extend $\psi$ to $Q\cdot C(\tau)$ by equivariance.
\end{itemize}
The construction is consistent: $A$ commutes with $J$, while $qC(\tau)\cap C(\tau)$ lies in the lower skeleton whenever $q\notin J$. For a $0$-cell, interpret $C(\partial\tau)$ as $\{0\}$. Continuity of $A$ follows from that of $\Theta_J^{-1}$. Finally, $(d,f)\mapsto\psi(d)f$ is a continuous bijection between compact Hausdorff spaces and hence is the required homeomorphism.
\end{proof}

The projection supplied by equivariant regularization will be uniformly
close to the fixed approximations $f_i'$. Comparing any such bundle with
the local charts identifies its fibres and, at the same time, records the
Betti equality later used for the affine classification.

\begin{lemma}[Fibre identification]\label{lem:fibreid}
Let $\sigma_i\colon X_i'\to W'$ be fibre bundles with
$\sup_{X_i'}d_{W'}(\sigma_i,f_i')\to0$, and let
$\theta_i\colon\pi_1(X_i')\to K_i$ be the monodromy homomorphism of the
regular cover $\hat X_i\to X_i'$. Then, for all large $i$, every fibre $F$
of $\sigma_i$ is homotopy equivalent to $\T^k$, and $\theta_i$ maps
$\pi_1(F)$ isomorphically onto $H_i'\cong\Z^k$. Moreover,
\begin{equation}\label{eq:betti-cover}
 b_1(X_i')-b_1(W')=k.
\end{equation}
\end{lemma}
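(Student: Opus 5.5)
The plan is to compare each fibre of $\sigma_i$ with a fibre of the local Seifert charts of Theorem~\ref{thm:chart}, and then read off the Betti equality from the long exact sequence of the bundle. Under (R) every point of $W'$ has a lift $w'$ in $G_W$. Moreover $C_{w'}$ acts freely on $X_i'$, so the charts of \S\ref{ssec:chart}, taken on $X_i'$ before dividing by $C_{w'}$, give open sets $E_i\subset X_i'$ with homeomorphisms $\bar{\mathcal Z}_i\colon E_i\to\T_i^k\times D$ whose second component is $\Phi_i=\Phi\circ f_i'+o(1)$. Fix $w'\in W'$ and radii $\tau_0<\tau_1$ as in \S\ref{ssec:proofiv}, uniform over $\Phi\in\mathcal L$.

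\textbf{Step 1 (sandwich).} Let $F=\sigma_i^{-1}(w'')$ with $w''\in B(w',\tau_0/4)$. Since $\sigma_i$ is $o(1)$-close to $f_i'$, and hence to $\Phi^{-1}\circ\Phi_i$, we get
\[
F\subset\Phi_i^{-1}(D_2)\cong\T_i^k\times D_2\subset E_i .
\]
Symmetrically, the torus slice $T:=\bar{\mathcal Z}_i^{-1}(\T_i^k\times\{\Phi(w'')\})$ lies in $\sigma_i^{-1}(B(w'',\tau_0/4))$, and this preimage is a product $B\times F$ by bundle local triviality (shrinking radii if needed). The composites
\[
F\hookrightarrow\T_i^k\times D_1\to\T_i^k\cong T\hookrightarrow B\times F\to F
\]
and the reverse one are homotopic to the respective identities, by straight-line homotopies in the contractible base factors. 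To make this precise I would use nested radii $D_2\subset D_1$ and $\tau_0<\tau_1$, so that the homotopies stay inside the product regions. This yields a homotopy equivalence $F\simeq\T^k$. I expect this step to be the main obstacle, because the nesting of radii requires care: the bundle trivial neighbourhoods of $\sigma_i$ are not a priori of uniform size. I would first try to avoid needing them by using properness of $\sigma_i$ and the homotopy lifting property over a small contractible ball instead of a product chart.

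\textbf{Step 2 (monodromy).} The inclusion $F\hookrightarrow E_i$ is a homotopy equivalence up to the identifications above, and under it the monodromy $\theta_i$ restricted to $E_i$ is the monodromy of $\tilde T_i\to E_i$. By Lemma~\ref{lem:tubecover}(b) and the degree-one argument in Theorem~\ref{thm:chart}(b), that monodromy is an isomorphism $\pi_1(E_i)\to H_i'\cong\Z^k$. Composing the two isomorphisms, $\theta_i$ maps $\pi_1(F)$ isomorphically onto $H_i'$.

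\textbf{Step 3 (Betti equality).} Apply the bundle sequence
\[
\pi_1(F)\to\pi_1(X_i')\to\pi_1(W')\to1
\]
and abelianize, which gives $b_1(X_i')\le b_1(W')+k$. For the reverse inequality, Step~2 shows that the composite $\pi_1(F)\to\pi_1(X_i')\to K_i\to K_i/\Tor$ is injective, since $H_i'$ is a torsion-free subgroup of $K_i$. Hence the image of $H_1(F)$ in $H_1(X_i')\otimes\mathbb Q$ has rank $k$, which gives $b_1(X_i')=b_1(F\text{-image})+b_1(W')=k+b_1(W')$. Here one uses that the quotient of $H_1(X_i')$ by the image of $H_1(F)$ is $H_1(W')$ up to torsion, by the five-term exact sequence and $\pi_2$-free abelian considerations, i.e.\ $H_1(F)_{\pi_1(W')}\to H_1(X_i')\to H_1(W')\to0$.
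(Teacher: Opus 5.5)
Your proposal is correct and follows essentially the paper's route. You interleave $\sigma_i$-preimages of small balls with the product tubes $E_i(D)$ of Theorem~\ref{thm:chart} to get $F\simeq\T^k$. You read the marking off Lemma~\ref{lem:tubecover} and the chart, and you obtain the Betti equality by abelianizing the homotopy sequence of the bundle.

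The obstacle you flag is not one: any fibre bundle over a contractible geodesic ball of $W'$ is trivial, whatever the size of its trivializing neighbourhoods. The paper uses exactly this, together with a two-out-of-six argument on the sandwich $\sigma_i^{-1}(V^1)\subset E_i(D^2)\subset\sigma_i^{-1}(V^3)\subset E_i(D^4)$. You should also add the sub-subsequence argument, since the charts exist only along subsequences with $\Phi_i\to\Phi$.
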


\begin{proof}
Because $W'$ is connected, all fibres are homeomorphic, and it suffices
first to treat one. Under (R) we have $G=X$. Fix $x\in X$, choose
$y_0,w',r,r_*$ as in \S\ref{ssec:fibrecoords}, set $q:=w'$, and pass to a
subsequence as in \S\ref{ssec:chart}. Choose a ball $D^4$ (superscripts here are labels, not dimensions) centred at $\Phi(q)$
and satisfying the requirements of that section. Choose successively a
geodesic ball $V^3$ centred at $q$, a concentric ball
$D^2\Subset D^4$, and a geodesic ball $V^1\Subset V^3$, all below the
relevant injectivity radii, so that
\[
 \Phi(\bar V^1)\subset D^2,
 \qquad \Phi^{-1}(\bar D^2)\subset V^3,
 \qquad \Phi(\bar V^3)\subset D^4.
\]
Since $\sigma_i$ and $f_i'$ are $o(1)$-close,
$\Phi_i=\Phi\circ f_i'+o(1)$, and Lemma~\ref{lem:unifcharts} applies, for
large $i$ in the subsequence we have
\begin{equation}\label{eq:sandwich}
 \sigma_i^{-1}(V^1)\ \overset{a}{\subset}\ E_i(D^2)
 \ \overset{b}{\subset}\ \sigma_i^{-1}(V^3)
 \ \overset{c}{\subset}\ E_i(D^4).
\end{equation}
The composite $b\circ a$ is a homotopy equivalence because $\sigma_i$ is
a fibre bundle over the contractible ball $V^3$. The composite $c\circ b$
is also one: Theorem~\ref{thm:chart}, applied with $D:=D^4$, identifies
$E_i(D^2)\subset E_i(D^4)$ with
$\T_i^k\times D^2\subset\T_i^k\times D^4$. Thus $b$ has both a left and a
right homotopy inverse, and the chosen fibre satisfies
$F\simeq\sigma_i^{-1}(V^3)\simeq\T^k$.

For the marking, Lemma~\ref{lem:tubecover}(a),(b), applied to either
$D^j$, says that the connected lift maps onto $E_i(D^j)$ and has
stabilizer $H_i'$ in $K_i$. Hence
$\theta_i(\pi_1(E_i(D^j)))=H_i'$. The sandwich gives
$\theta_i(\pi_1(F))=H_i'$. Since $\pi_1(F)\cong\Z^k$ and
$\rank H_i'=k$, this surjection is an isomorphism. Fibres over different
base points are freely homotopic along paths in $W'$, and $K_i$ is
abelian, so the conclusion holds for every fibre.

Put $E:=X_i'$ and $B:=W'$. The composite
$\pi_1(F)\to\pi_1(E)\xrightarrow{\theta_i}K_i$ is injective. Thus the
first map is injective and the boundary $\pi_2(B)\to\pi_1(F)$ vanishes.
The homotopy exact sequence gives
\[
 1\longrightarrow\Z^k\longrightarrow\pi_1(E)
 \longrightarrow\pi_1(B)\longrightarrow1.
\]
Because $K_i$ is abelian and $\theta_i$ is injective on the fibre
subgroup, that subgroup meets $[\pi_1(E),\pi_1(E)]$ trivially.
Abelianization therefore yields an exact sequence
\[
 0\longrightarrow\Z^k\longrightarrow H_1(E;\Z)
 \longrightarrow H_1(B;\Z)\longrightarrow0,
\]
and taking ranks proves \eqref{eq:betti-cover}.

If any conclusion failed for infinitely many $i$, that subsequence would
have a further subsequence on which the preceding argument applies. Thus
all conclusions hold for all large $i$.
\end{proof}

These lemmas now turn an equivariant torus bundle over a linear disk into a
Seifert model.

\begin{lemma}[Seifert models]\label{lem:translations}
Let $\sigma_i\colon X_i'\to W'$ be $C$-equivariant fibre bundles with
fibre $\T^k$ and $\sup d(\sigma_i,f_i')\to0$. Fix $w'\in W'$. Let
$V\subset W'$ be a $C_{w'}$-invariant open neighbourhood of $w'$ with
compact closure and $c\bar V\cap\bar V=\emptyset$ for
$c\in C\setminus C_{w'}$. Let $\chi\colon V\to D$ be a
$C_{w'}$-equivariant homeomorphism onto a ball $D\subset\R^m$, with
$\chi(w')=0$ and the action on $D$ orthogonal. If
$D_1\Subset D$ is a concentric ball, then, for large $i$:
\begin{itemize}
\item $\sigma_i^{-1}(V)/C_{w'}$ is an open subset of $X_i$;
\item over $\chi^{-1}(D_1)/C_{w'}$, the map induced by $\sigma_i$ is
equivalent to
$(D_1\times\T^k)/C_{w'}\to D_1/C_{w'}$, with $C_{w'}$ acting freely by
translations on $\T^k$.
\end{itemize}
\end{lemma}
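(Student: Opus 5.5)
The proof has three parts: separation of translates, equivariant trivialization over the disk, and identification of the free $C_{w'}$-action on the central fibre as translations.

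\emph{Open subset.} Since $\sigma_i$ is $C$-equivariant, $c\,\sigma_i^{-1}(V)=\sigma_i^{-1}(cV)$. For $c\notin C_{w'}$ this is disjoint from $\sigma_i^{-1}(V)$ because $c\bar V\cap\bar V=\emptyset$. Also $C$ acts freely on $X_i'$ (Corollary~\ref{cor:Ccover}). Hence the map $\sigma_i^{-1}(V)/C_{w'}\to X_i'/C=X_i$ is injective and open, and its image is an open subset of $X_i$.

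\emph{Equivariant trivialization.} Put $E:=\sigma_i^{-1}(\chi^{-1}(\bar D_1))$ and $p:=\chi\circ\sigma_i\colon E\to\bar D_1$. This is a fibre bundle with fibre $\T^k$, and $C_{w'}$ acts freely on $E$ with $p\circ c=\rho(c)\circ p$. Lemma~\ref{lem:eqtriv}, rescaled to $\bar D_1$, gives a $C_{w'}$-homeomorphism $\bar D_1\times F_0\to E$ over $\bar D_1$, where $F_0:=p^{-1}(0)\cong\T^k$. The group $C_{w'}$ fixes $0$, so it acts freely on $F_0$, and the action on the product is diagonal. Restricting to the open ball $D_1$ and dividing by $C_{w'}$ identifies the induced map with $(D_1\times F_0)/C_{w'}\to D_1/C_{w'}$.

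\emph{Translations on the central fibre.} It remains to conjugate the free $C_{w'}$-action on $F_0$ to translations. This is the main step. I would first show that the action is homotopic to translations. Each $c\in C_{w'}$ commutes with the $H_i'$-deck action on the lift $\hat X_i\to X_i'$ because $H_i$ is abelian. By Lemma~\ref{lem:fibreid}, $\theta_i$ maps $\pi_1(F_0)$ isomorphically onto $H_i'\cong\Z^k$. So a lift $\lambda_c\in\Pi_i^{-1}(c)$ acts on $\pi_1(F_0)$ trivially, and every $c$ induces the identity on $\pi_1(F_0)$.

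Then form the quotient $F_0/C_{w'}$. It is a closed manifold, and since $C_{w'}$ acts trivially on $\pi_1(F_0)$ it has abelian, torsion-free fundamental group: it is a central extension of the finite group $C_{w'}$ by $\Z^k$. Because $F_0$ is aspherical, $F_0/C_{w'}$ is aspherical, so $\pi_1$ is torsion-free, hence $\cong\Z^k$. Thus $F_0/C_{w'}$ is homotopy equivalent to $\T^k$, and by Lemma~\ref{lem:torusrigidity} it is homeomorphic to a torus $\T^k_{\#}$.

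The covering $F_0\to\T^k_{\#}$ is then the regular abelian cover associated with the subgroup $\pi_1(F_0)\le\pi_1(\T^k_{\#})$. Such a cover is equivalent to a linear cover $\R^k/L\to\R^k/L'$ with $L\le L'$ lattices, whose deck group acts by translations. This gives the required conjugacy: a homeomorphism $F_0\cong\T^k$ intertwining $C_{w'}$ with a free translation action. Composing this with the trivialization yields the Seifert model.

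I expect the step requiring most care to be the aspherical torsion-free argument. Alternatively, one could compare directly with the model of Theorem~\ref{thm:chart}(c) through the sandwich \eqref{eq:sandwich}, which already exhibits $C_{w'}$ acting by $\ell_i(\lambda_c)$-translations up to homotopy.
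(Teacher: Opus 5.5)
Your proposal is correct and follows the paper's proof: exact equivariance separates the translates, Lemma~\ref{lem:eqtriv} trivializes the bundle over a closed disk, and $\pi_1(F_0/C_{w'})$ is shown to be abelian using the marking of Lemma~\ref{lem:fibreid}, followed by asphericity, Lemma~\ref{lem:torusrigidity}, and the classification of finite covers of a torus. The one variation is that the paper gets abelianness at once by showing that $\theta\colon\pi_1(F_0/C_{w'})\to H_i$ is injective. Your centrality route instead needs the fact, which you assert without proof, that a central extension of the finite abelian group $C_{w'}$ by $\Z^k$ is abelian. This holds because the commutator pairing $C_{w'}\times C_{w'}\to\Z^k$ is bilinear, so its values are torsion and hence vanish.
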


\begin{proof}
Exact equivariance gives
$c\sigma_i^{-1}(V)=\sigma_i^{-1}(cV)$, so the translates by
$c\notin C_{w'}$ are disjoint. Hence
$\sigma_i^{-1}(V)/C_{w'}$ embeds as an open subset of $X_i'/C=X_i$.

Choose a concentric ball $D_2$ with
$D_1\Subset D_2\Subset D$. Lemma~\ref{lem:eqtriv}, applied over
$\bar D_2$, gives a $C_{w'}$-equivariant product
\[
 \sigma_i^{-1}\chi^{-1}(\bar D_2)\cong\bar D_2\times F_0,
\]
where $F_0\cong\T^k$ is the fibre over $w'$ and the action on $F_0$ is
free. It remains only to identify this action.

The regular cover $F_0\to F_0/C_{w'}$ gives
\[
 1\longrightarrow\pi_1(F_0)\longrightarrow
 \pi_1(F_0/C_{w'})\longrightarrow C_{w'}\longrightarrow1.
\]
The inclusion $F_0/C_{w'}\hookrightarrow X_i'/C=X_i$ and the maximal
abelian cover define a homomorphism
$\theta\colon\pi_1(F_0/C_{w'})\to H_i$. On the kernel $\pi_1(F_0)$ it
is the marking of Lemma~\ref{lem:fibreid}, hence an isomorphism onto
$H_i'$. On the quotient it is the identity of $C_{w'}\le H_i/K_i$.
Therefore $\theta$ is injective. Its domain is consequently abelian; it is
also torsion-free (it is the fundamental group of the closed aspherical
manifold $F_0/C_{w'}$) and has rank $k$. Thus
$\pi_1(F_0/C_{w'})\cong\Z^k$. Since $F_0/C_{w'}$ is aspherical, it is homotopy
equivalent to $\T^k$, and Lemma~\ref{lem:torusrigidity} gives
$F_0/C_{w'}\cong\T^k$. Classification of finite regular coverings of a
torus now identifies $F_0\to F_0/C_{w'}$ with
$\R^k/\Lambda'\to\R^k/\Lambda$ for a finite-index subgroup
$\Lambda'\le\Lambda$. Hence $C_{w'}\cong\Lambda/\Lambda'$ acts by
translations, and the equivariant product over $D_1$ is the asserted
Seifert model.
\end{proof}

\subsection{The global Seifert fibration and affine replacement}\label{sec:seifert}
Two global projections appear here, and they are not the same map. The first is $C$-equivariant, hence descends to a Seifert fibration of $X_i$ over $X$, but carries no affine structure on its fibres; it comes from equivariant regularization. The second is affine on its fibres, which is what the classification of torus bundles requires, but is produced by a covering-geometry theorem that gives no equivariance, so it lives only on $X_i'$ and $W'$.

Retain the splitting $H=H_0\times\Z^b\times C$ fixed in \S\ref{sec:groups}, together with the notation $\chi_i$, $K_i$, $X_i'=\hat X_i/K_i$, $K$, and $W'=Y/K=W/\Z^b$ from Corollary~\ref{cor:Ccover}. That corollary gives equivariant convergence $(X_i',C)\to(W',C)$. Proposition~\ref{prop:unfold} shows that $W'$ is a closed smooth Riemannian manifold, that $C$ acts on it isometrically, and that $X=W'/C$. As in \S\ref{sec:local}, we assume $m\ge1$.
\begin{theorem}[Global Seifert fibration]\label{thm:seifert}
For large $i$ there are $\epsilon_i$-GH approximations $F_i\colon X_i'\to W'$, $\epsilon_i\to0$, which are $C$-equivariant fibre bundles with fibre $\T^k$. The induced maps $\bar F_i\colon X_i\to X=W'/C$ are Seifert $\T^k$-fibrations (Definition~\ref{def:seifert}): near each point they are the model $(D^m\times\T^k)/C_x\to D^m/C_x$, with $C_x$ acting faithfully and linearly on $D^m$ and freely by translations on $\T^k$. By Proposition~\ref{prop:unfold}, $C_x$ is conjugate in $O(m)$ to the Riemannian orbifold group $\Gamma_x$.

More precisely, let $w'\in W'$ have image $x$, and let $D$ be a
sufficiently small $C_{w'}$-invariant geodesic ball centred at $w'$, so
that the action is linear in normal coordinates and
$c\bar D\cap\bar D=\emptyset$ for $c\in C\setminus C_{w'}$. Then, for all
large $i$,
\[
\bar F_i^{-1}(D/C_{w'})\cong(D\times\T^k)/C_{w'},
\]
with $C_{w'}$ acting linearly on $D$ and freely by translations on $\T^k$, and $\bar F_i$ corresponding to the projection.
\end{theorem}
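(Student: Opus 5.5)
The plan is to obtain $F_i$ from equivariant regularization and then read off the local structure from Lemmas~\ref{lem:fibreid}, \ref{lem:eqtriv} and~\ref{lem:translations}. The construction has three steps.

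First, I will construct $F_i$. By Proposition~\ref{prop:unfold}, $W'$ is a closed smooth Riemannian manifold on which $C$ acts isometrically. Corollary~\ref{cor:Ccover} gives the equivariant convergence $(X_i',C)\to(W',C)$, and $C$ acts freely on $X_i'$. This is the setting of the equivariant regularization theorem \cite[Thm.~1.11]{HHWZ26}. I will apply it to obtain $C$-equivariant maps $F_i\colon X_i'\to W'$ that are $\epsilon_i$-GH approximations with $\sup d(F_i,f_i')\to0$. The theorem produces maps of splitting type: in local normal coordinates, the components of $F_i$ are almost-splitting with bounded Laplacian at a fixed small scale. Exact $C$-equivariance is built in, or it can be restored by averaging (Corollary~\ref{lem:averaging}) in $C_{w'}$-equivariant normal coordinates and centre-of-mass gluing.

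Second, I will show that $F_i$ is a bundle. Under (R), Lemma~\ref{lem:rewinding}(a) makes $\hat X_i$ uniformly Reifenberg at a fixed scale. Composing $F_i$ with normal coordinates near any point gives a map satisfying the hypotheses of Lemma~\ref{lem:submersion}, so $F_i$ is a topological submersion and $X_i'$ is a topological $N$-manifold. Since $X_i'$ is compact, $F_i$ is proper, and Lemma~\ref{lem:bundlepromotion} makes it a fibre bundle with closed $k$-manifold fibre. Lemma~\ref{lem:fibreid} then shows every fibre is homotopy equivalent to $\T^k$ (with marking onto $H_i'$). Lemma~\ref{lem:torusrigidity} upgrades this to a homeomorphism with $\T^k$.

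Third, I will derive the Seifert structure. Fix $w'$ and a small $C_{w'}$-invariant geodesic ball $D$, with the $C_{w'}$-action linearized by $\exp_{w'}^{-1}$ and $c\bar D\cap\bar D=\emptyset$ for $c\notin C_{w'}$. Lemma~\ref{lem:translations} gives the model $(D_1\times\T^k)/C_{w'}\to D_1/C_{w'}$ over a slightly smaller concentric ball, with the fibre action by free translations. To obtain the statement on the full ball $D$, I will first apply the lemma to a slightly larger ball $D^+\Supset D$ and then take $D_1:=D$. Faithfulness of the linear action on $D$ follows from the effectiveness of $C$ on $W'$ (Corollary~\ref{cor:Ccover}(b)) together with Proposition~\ref{prop:unfold}, which gives that $C_{w'}$ is conjugate to $\Gamma_x$. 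Compactness of $W'$ lets me cover it by finitely many such balls, so a single $i_0$ works for all of them, and $\bar F_i$ is a Seifert fibration. The GH-approximation property descends to $\bar F_i$ by the quotient convention.

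The main obstacle is the first step: verifying that the regularized map is simultaneously exactly $C$-equivariant and, in local coordinates, close enough to a splitting map to feed Lemma~\ref{lem:submersion}. Exact equivariance must survive the regularization near points with nontrivial $C_{w'}$, where the $C$-action has fixed points on the base. My first approach is to use the equivariance clause of \cite[Thm.~1.11]{HHWZ26} directly. If that is insufficient, I will average in $C_{w'}$-equivariant normal charts, using the fact that normal coordinates linearize the isotropy so that averaging preserves the Gram estimates.
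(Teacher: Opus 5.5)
Your proposal follows the paper's proof essentially step for step: equivariant regularization via \cite[Thm.~1.11]{HHWZ26}, Lemmas~\ref{lem:submersion} and~\ref{lem:bundlepromotion} for the bundle structure, Lemmas~\ref{lem:fibreid} and~\ref{lem:torusrigidity} to identify the fibre as $\T^k$, and Lemma~\ref{lem:translations} applied over a slightly larger ball to obtain the Seifert model on $D$ itself. Add two small points. First, the cited theorem only gives $F_i\circ c=\alpha_i(c)\circ F_i$ for some homomorphism $\alpha_i\colon C\to C$; the paper shows $\alpha_i=\mathrm{id}_C$ for large $i$ by comparing $F_i$ with the almost equivariant $f_i'$ and using that distinct elements of the finite, effectively acting group $C$ are uniformly separated on $W'$. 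Second, the splitting estimates hold at the shrinking scale $\sqrt{\delta_i}$ rather than at a fixed scale; this is harmless, because Lemma~\ref{lem:rewinding}(a) gives Reifenberg control at every scale below $r$.
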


\begin{proof}
\emph{Step 1: equivariant regularization.} Rescale $X_i'$ and $W'$ once by a factor $\lambda\ge\max\{1,\sqrt{|K|}\}$ and normalize the measures. Then $(X_i',d,\mathcal H^N)$ is a compact $\RCD(-1,N)$ space, as required by \cite[Thm.~1.11]{HHWZ26}; the rescaled $W'$ remains a closed Riemannian manifold with an isometric $C$-action. We retain these metrics through the proof, which does not affect the topological conclusions. Let $f:=f_i'$ be the approximations from Corollary~\ref{cor:Ccover} used in \S\ref{sec:local}. Before rescaling they are $\epsilon_i$-GH approximations, with $\epsilon_i\to0$ and $d(f(cx),cf(x))\le\epsilon_i$. After rescaling, they are $2\lambda\epsilon_i$-almost submetries at scale $1$ in the sense of \cite{HHWZ26}, with equivariance error $\lambda\epsilon_i$.

Apply \cite[Thm.~1.11]{HHWZ26} with $G=C$ acting freely on $X_i'$, $H=C\le\Isom(W')$, and $\phi=\mathrm{id}_C$. The theorem requires only that $H$ be closed; its action on $W'$ need not be free. It produces a homomorphism $\alpha\colon C\to C$ and a map $F$ satisfying $F\circ c=\alpha(c)\circ F$. We will deduce directly from the theorem's conclusion that $\alpha=\mathrm{id}_C$.

For $0<\epsilon\le1$, let $\delta_{\mathrm H}(\epsilon)=\delta(N,W',C,\epsilon)>0$ be the threshold in \cite[Thm.~1.11]{HHWZ26}; replacing it by the smaller threshold $\min\{\delta_{\mathrm H}(\epsilon),8\epsilon\}$, we assume that $\delta_{\mathrm H}(\epsilon)\le8\epsilon$. Since $2\lambda\epsilon_i\to0$, choose $\epsilon_i''\downarrow0$ sufficiently slowly that
\[
2\lambda\epsilon_i\le\delta_i:=\delta_{\mathrm H}(\epsilon_i'').
\]
The theorem gives a homomorphism $\alpha_i\colon C\to C$ and a Lipschitz map $F_i$, $\epsilon_i''$-close to $f$, with $F_i\circ c=\alpha_i(c)\circ F_i$. Thus $F_i$ is an $o(1)$-GH approximation. For large $i$, $\alpha_i=\mathrm{id}_C$: for $x\in X_i'$ and $c\in C$,
\begin{align*}
d\bigl(\alpha_i(c)F_i(x),cF_i(x)\bigr)&=d\bigl(F_i(cx),cF_i(x)\bigr)\\
&\le d\bigl(F_i(cx),f(cx)\bigr)+d\bigl(f(cx),cf(x)\bigr)+d\bigl(cf(x),cF_i(x)\bigr)\le2\epsilon_i''+\lambda\epsilon_i .
\end{align*}
Since $F_i(X_i')$ is $(\epsilon_i''+\lambda\epsilon_i)$-dense and both actions are isometric, this bounds $\max_{W'}d(\alpha_i(c)y,cy)$ by $4\epsilon_i''+3\lambda\epsilon_i$. Effectivity and finiteness give a positive separation between distinct elements of $C$, hence $\alpha_i=\mathrm{id}_C$ for large $i$. Moreover, for every $x\in X_i'$ and normal coordinates $\Theta$ at $F_i(x)$, the map $F_i^\Theta:=\Theta\circ F_i$ satisfies \cite[Thm.~1.11(c)]{HHWZ26}
\[
\begin{aligned}
\operatorname{Lip}F_i^\Theta&\le C(N),
&|\Delta F_i^\Theta|&\le\frac{C(N)}{\sqrt{\delta_i}},\\
\fint_{B(x,5\sqrt{\delta_i})}
 |\langle\nabla(F_i^\Theta)_a,\nabla(F_i^\Theta)_b\rangle-\delta_{ab}|
 &\le\epsilon_i''
 &&\text{on }B(x,5\sqrt{\delta_i}).
\end{aligned}
\]
Since $\delta_i\le8\epsilon_i''$, we have $\delta_i\to0$. Put $L_0:=C(N)/2$, and let $\delta_0=\delta_0(N,L_0)$ be the constant of Lemma~\ref{lem:submersion}. Choose $\eta>0$ so small that $\Psi(\eta|N)\le\delta_0$. By (R) and Lemma~\ref{lem:rewinding}(a), there is $r>0$ such that, for large $i$, every ball of radius at most $10r$ in $\hat X_i$ is $\delta_0$-close, after rescaling, to the corresponding Euclidean ball. For large $i$, put
\[
\rho_i:=\sqrt{\delta_i}/2\le\min\{r,\sqrt{\delta_0}\}.
\]
The lower Ricci bound is $-1$ after the rescaling at the beginning of the proof, so $\rho_i^2\le\delta_0$, and
\[
|\Delta F_i^\Theta|\le \frac{C(N)}{\sqrt{\delta_i}}\le\frac{L_0}{\rho_i}.
\]
Moreover $B(x,10\rho_i)=B(x,5\sqrt{\delta_i})$. Consequently, for every $a,b$,
\[
\fint_{B(x,10\rho_i)}|\langle\nabla(F_i^\Theta)_a,\nabla(F_i^\Theta)_b\rangle-\delta_{ab}|
 \le\epsilon_i''\longrightarrow0.
\]
Thus, with $\rho=\rho_i$ and $L=L_0$, the hypotheses of Lemma~\ref{lem:submersion} hold uniformly at every $x\in X_i'$ for all large $i$. The map $F_i$ is therefore a topological submersion, and $X_i'$ is a topological $N$-manifold. Since $X_i'$ is compact, Lemma~\ref{lem:bundlepromotion} makes $F_i\colon X_i'\to W'$ a fibre bundle with closed $k$-manifold fibre.

\emph{Step 2: identify the fibre as $\T^k$.} By Step~1 the fibres of $F_i$ are closed $k$-manifolds, and $F_i$ is $o(1)$-close to $f_i'$. Lemma~\ref{lem:fibreid} therefore makes each fibre homotopy equivalent to $\T^k$, and Lemma~\ref{lem:torusrigidity} makes it homeomorphic to $\T^k$.

\emph{Step 3: descend and identify the local models.} The induced map
$\bar F_i$ is an $o(1)$-GH approximation. For the ball $D$ in the
statement, choose a slightly larger $C_{w'}$-invariant normal ball
$D'\supset\bar D$ whose translates by $C\setminus C_{w'}$ are still
disjoint. Lemma~\ref{lem:translations}, with $V=D'$, with $\chi$ given by normal coordinates on $D'$, and
$D_1:=\chi(D)$, gives
\[
 \bar F_i^{-1}(D/C_{w'})\cong(D\times\T^k)/C_{w'},
\]
with the stated actions and projection. Since $w'$ was arbitrary, these
are Seifert models at every point of $X$.
\end{proof}

The final classification step is independent of the collapsing geometry, so
we isolate it for both applications below.

\begin{lemma}[Affine reduction at maximal first Betti number]
\label{lem:affinereduction}
Let $p\colon E\to B$ be an affine $\T^k$-bundle of closed connected
topological manifolds, with $B$ smooth, and suppose
\[
 b_1(E)=b_1(B)+k.
\]
Then the linear monodromy is trivial and $p$ is a topological principal
$\T^k$-bundle. Moreover, there are finite normal covers
$\widetilde E\to E$ and $\widetilde B\to B$, of the same index and with
abelian deck groups, such that
$\widetilde E$ is homeomorphic to $\widetilde B\times\T^k$.
\end{lemma}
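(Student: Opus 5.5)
The plan is to reduce the statement to the classification of \cite[Thm.~1.1]{PWW26}, using \cite[App.~A]{PWW26} to pass between topological and smooth principal bundles. The only genuinely new step is a rank count showing that the linear monodromy is trivial. Write the structure group of the affine bundle as $\Aff(\T^k)=\T^k\rtimes GL_k(\Z)$, and let $\mu\colon\pi_1(B)\to GL_k(\Z)$ be its linear part, the monodromy of $H_1(F;\Z)\cong\Z^k$ for a fibre $F$.

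The first step is the Serre spectral sequence, or equivalently the low-degree exact sequence of the fibration, with rational coefficients:
\[
H_2(B;\mathbb Q)\longrightarrow H_1(F;\mathbb Q)_{\pi_1(B)}\longrightarrow H_1(E;\mathbb Q)\longrightarrow H_1(B;\mathbb Q)\longrightarrow0 .
\]
Here $H_1(F;\mathbb Q)_{\pi_1(B)}$ denotes the coinvariants of $\mu$. Counting dimensions gives $b_1(E)\le b_1(B)+\dim_{\mathbb Q}(\mathbb Q^k)_{\mu}$. Equality with $b_1(B)+k$ therefore forces the coinvariants to have dimension $k$, so $\mu(g)-I$ vanishes rationally for every $g$. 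Since $\mu$ takes values in $GL_k(\Z)$, it follows that $\mu$ is trivial.

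With trivial linear monodromy, the transition functions take values in the translation subgroup $\T^k$, so $p$ is a topological principal $\T^k$-bundle. The equality $b_1(E)=b_1(B)+k$ also makes the differential $d_2\colon H_2(B)\to H_1(F)$ vanish rationally. For a principal bundle this differential is evaluation against the rational Euler class in $H^2(B;\Z^k)$, so the Euler class is torsion. Since $B$ is smooth and closed, \cite[App.~A]{PWW26} identifies $p$ with a smooth principal $\T^k$-bundle. That bundle is affine and satisfies the Betti equality, so \cite[Thm.~1.1]{PWW26} applies. It supplies finite normal covers $\widetilde E\to E$ and $\widetilde B\to B$ of the same index, with abelian deck groups, and a diffeomorphism, hence a homeomorphism, $\widetilde E\cong\widetilde B\times\T^k$.

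If one prefers a direct argument to \cite[Thm.~1.1]{PWW26}, the torsion Euler class lets us pull back along a finite abelian cover of $B$ that kills the relevant torsion in $H^2$. This cover is obtained from the torsion of $H_1$ via the universal coefficient theorem, but that reduction is not always possible, which is exactly why I would cite the classification instead.

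I expect the main obstacle to be matching hypotheses. The total space $E$ is only a topological manifold. The affine structure must give transition maps in $\Aff(\T^k)$ that are continuous into the Lie group, and \cite[App.~A]{PWW26} has to be applied to the resulting topological principal bundle before the smooth classification can be used. The spectral-sequence computation itself is routine; the care needed is in checking that both imported results apply to topological total spaces over a smooth base.
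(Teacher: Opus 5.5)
Your proposal is correct and is essentially the paper's proof: the rank count on the coinvariants $H_1(F;\mathbb Q)_{\pi_1(B)}$ in the low-degree homology sequence forces trivial linear monodromy, the structure group reduces to translations, and \cite[App.~A]{PWW26} followed by \cite[Thm.~1.1]{PWW26} yields the covers. The Euler-class paragraph is not needed, and its closing caveat is wrong: a torsion class in $H^2(B;\Z^k)$ is the Bockstein of some $\phi\in H^1(B;(\mathbb Q/\Z)^k)$, which has finite image on $\pi_1(B)$, so pulling back along the finite abelian cover defined by $\ker\phi$ trivializes the principal bundle and gives the required covers directly.
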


\begin{proof}
Let $\rho\colon\pi_1(B)\to GL(H_1(\T^k;\Z))=GL(k,\Z)$ be the linear
monodromy and put $V:=H_1(\T^k;\R)$. The homology sequence with local
coefficients contains
\[
 V_\rho\longrightarrow H_1(E;\R)\longrightarrow H_1(B;\R)
 \longrightarrow0,
 \qquad
 V_\rho:=V/\langle\rho(\gamma)v-v\rangle .
\]
Hence $b_1(E)\le b_1(B)+\dim V_\rho\le b_1(B)+k$. Equality makes
$V\to V_\rho$ an isomorphism, so $\rho$ is trivial. The structure group
therefore reduces from $\Aff(\T^k)$ to its translation subgroup, and $p$ is
principal. By \cite[Appendix~A]{PWW26}, it is topologically
bundle-isomorphic to a smooth principal torus bundle. Applying
\cite[Thm.~1.1]{PWW26} to that representative and transporting the result
back to $E$ gives the asserted covers and product homeomorphism.
\end{proof}

We turn to the second projection.

\begin{theorem}[Affine replacement and virtual product]\label{thm:virtualproduct}
Assume the hypotheses of this section, and retain the finite covers
$X_i'\to X_i$ and $W'\to X$. For all large $i$ there are $o(1)$-GH
approximations
\[
A_i\colon X_i'\longrightarrow W'
\]
which are $\T^k$-bundles with structure group contained in
\[
\Aff(\T^k)=\T^k\rtimes GL(k,\Z).
\]
They satisfy the Betti equality \eqref{eq:betti-cover}.
Consequently, there are finite covers
\[
\widetilde X_i\longrightarrow X_i,
\qquad \widetilde W_i\longrightarrow W'
\]
such that $\widetilde X_i\cong\widetilde W_i\times\T^k$
homeomorphically. If $X$ is a closed Riemannian manifold, the affine bundle may instead be
constructed directly on $X_i\to X$, and $\widetilde W_i$ may be taken to be
a finite cover of $X$.
\end{theorem}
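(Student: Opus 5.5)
The plan is to obtain $A_i$ from the affine infranil fibration theorem under local bounded covering geometry, \cite[Thm.~B]{Wan24a}, applied to the approximations $f_i'\colon X_i'\to W'$. Then I would identify its fibre as a torus using the local charts, and finish with Lemma~\ref{lem:affinereduction}.

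\textbf{Bounded covering geometry.} By (R) and Lemma~\ref{lem:rewinding}(a), there is $r>0$ such that every ball of radius $10r$ in $\hat X_i$ is $\Psi(\eta|N)$-close to Euclidean, uniformly in $i$. Since $\hat X_i\to X_i'$ is a normal cover, the local universal (or local $K_i$-) covers of $r$-balls in $X_i'$ are uniformly non-collapsed and almost Euclidean. This is exactly the local bounded covering geometry hypothesis. By Proposition~\ref{prop:unfold}, the limit $W'$ is a closed smooth Riemannian manifold. \cite[Thm.~B]{Wan24a} then gives, for large $i$, $o(1)$-GH approximations $A_i\colon X_i'\to W'$ that are fibre bundles with infranil fibre $F_i$ and affine structure group.

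\textbf{The fibre is a torus.} Since $A_i$ is $o(1)$-close to $f_i'$, Lemma~\ref{lem:fibreid} applies to $\sigma_i=A_i$. So $F_i\simeq\T^k$, the marking $\theta_i$ maps $\pi_1(F_i)$ isomorphically onto $H_i'\cong\Z^k$, and the Betti equality \eqref{eq:betti-cover} holds. An infranil manifold with fundamental group $\Z^k$ is a flat torus: an almost-Bieberbach group that is free abelian is a lattice in $\R^k$. Hence the affine structure group is contained in $\Aff(\T^k)=\T^k\rtimes GL(k,\Z)$.

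\textbf{Virtual product.} Lemma~\ref{lem:affinereduction} applied to $A_i$ gives normal covers $\widetilde E\to X_i'$ and $\widetilde W_i\to W'$ with $\widetilde E\cong\widetilde W_i\times\T^k$. Composing with the finite cover $X_i'\to X_i$ gives $\widetilde X_i:=\widetilde E\to X_i$.

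\textbf{Manifold base.} If $X$ is a closed Riemannian manifold, every $C_x$ is trivial by Proposition~\ref{prop:unfold} and Theorem~\ref{thm:master}(v). So one can run the same argument with $X_i\to X$ directly, replacing $f_i'$ by $f_i$. Lemma~\ref{lem:fibreid} has the same proof with $K_i$ replaced by $H_i$, and yields $b_1(X_i)-b_1(X)=k$, which is $(\ast)$ itself. In this case $\widetilde W_i$ covers $X$.

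\textbf{Main obstacle.} The delicate step is verifying the precise hypotheses of \cite[Thm.~B]{Wan24a}, since it is stated in a smooth or Ricci-bounded setting while $X_i'$ is only an $\RCD$ topological manifold. I would first check that its proof uses only the local almost-Euclidean covering condition together with Reifenberg charts, both supplied by Lemma~\ref{lem:rewinding}(a) and canonical Reifenberg. If smoothness is genuinely needed, the fallback is to build the affine structure by hand from the exact lattice coordinates $\ell_i$ and the charts of Theorem~\ref{thm:chart}. In that approach the transition maps between orbit coordinates $U_i$ over overlapping disks would be translations composed with elements of $GL(k,\Z)$ induced by changes of basis of $L_i$.
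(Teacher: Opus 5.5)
Your proposal is correct. Steps~1 and~3 are essentially the paper's: non-collapsed maximal abelian covers give bounded covering geometry, \cite[Thm.~B]{Wan24a} gives the affine infranil bundle $A_i$, and Lemma~\ref{lem:affinereduction} finishes. In Step~1 the operative point is that small balls in the universal cover of a ball in $X_i'$ map $1$-Lipschitz onto balls of $\hat X_i$, which transfers the volume lower bound. Almost-Euclideanity of the local universal cover is neither needed nor immediate.

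The genuine difference is how the fibre $P_i$ is identified.
\begin{itemize}
\item The paper applies Lemma~\ref{lem:fibreid} only to the equivariant bundles $F_i$ of Theorem~\ref{thm:seifert}. It extracts just the Betti equality \eqref{eq:betti-cover}, which is a property of $X_i'$ and $W'$ rather than of any particular map. The homology sequence of $A_i$ then gives $k\le b_1(P_i)\le\dim P_i=k$. Nomizu's theorem \cite{Nom54}, applied on the nilmanifold cover, turns this into ``$\mathfrak n_i$ abelian and holonomy trivial''.
\item You apply Lemma~\ref{lem:fibreid} to $A_i$ itself, obtaining $\pi_1(P_i)\cong\Z^k$ together with the marking onto $H_i'$. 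You then conclude algebraically. The abelian lattice $\Delta_i\cap\mathcal N_i$ forces $\mathcal N_i\cong\R^k$. An abelian Bieberbach group equals its translation lattice, because that lattice is its own centralizer.
\end{itemize}

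Your route avoids Nomizu's theorem but costs two things.

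First, Lemma~\ref{lem:fibreid} requires $\sup d(A_i,f_i')\to0$, whereas Wang's theorem only supplies some $o(1)$-GH approximation. You need a normalization step. Take a quasi-inverse $\bar f_i'$ of $f_i'$; then $A_i\circ\bar f_i'$ is an $o(1)$-isometry of the compact manifold $W'$, hence $o(1)$-close to an isometry $\iota_i$. The map $\iota_i^{-1}\circ A_i$ is still an affine bundle and is $o(1)$-close to $f_i'$.

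Second, in the manifold-base case you must re-prove Lemma~\ref{lem:fibreid} over $X_i\to X$. This does go through, because every $C_{w'}$ is trivial. Theorem~\ref{thm:chart}(c) then embeds the chart domains $E_i(D)$ in $X_i$, and Lemma~\ref{lem:tubecover}(c) gives stabilizer $\Lambda_i=H_i'$ in $H_i$. The paper's Betti-number route needs nothing there beyond $(\ast)$ and the exact sequence $H_1(F)\to H_1(X_i)\to H_1(X)\to0$.

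Finally, your fallback is unnecessary, since the paper uses \cite[Thm.~B and \S6.4]{Wan24a} as a black box for $\RCD$ spaces with bounded covering geometry. It would also not work as stated: exact $\Lambda_i$-equivariance of the orbit coordinates $U_i$ does not make the chart transitions affine on the fibres.
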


\begin{proof}
\emph{Step 1: bounded covering geometry and the affine bundle.}
Choose a fixed $\lambda\ge1$ such that, after replacing $d$ by
$d^\lambda:=\lambda d$, the $X_i'$ are compact non-collapsed
$\RCD(-(N-1),N)$ spaces. Proposition~\ref{prop:chain} gives a uniform
lower bound for the old unit balls in $\hat X_i$. In the rescaled metric,
$B_{d^\lambda}(\hat x,1)=B_d(\hat x,1/\lambda)$ and
$\mathcal H^N_{d^\lambda}=\lambda^N\mathcal H^N_d$. Bishop--Gromov
therefore transfers the old lower bound to the rescaled radius-$1/3$
balls, with a factor depending only on $N,K,$ and $\lambda$. Suppressing
the superscript $\lambda$, we obtain $v>0$ such that
\begin{equation}\label{eq:cover-volume}
\mathcal H^N(B_{1/3}(\hat x))\ge v
\qquad(\hat x\in\hat X_i)
\end{equation}
for all large $i$. Fix $x_i'\in X_i'$ and a lift $\hat x_i\in\hat X_i$.
The universal cover of $B_1(x_i')$ maps by a covering local isometry onto
the component through $\hat x_i$ of its preimage in $\hat X_i$. Path lifting
shows that, in this universal cover, the radius-$1/3$ ball about a point over $\hat x_i$ maps onto $B_{1/3}(\hat x_i)$. Since this map
is $1$-Lipschitz, \eqref{eq:cover-volume} gives the same lower volume bound
on the local universal cover. Thus $X_i'$ has uniformly bounded covering
geometry in the sense of \cite[\S1]{Wan24a}.

The affine fibration theorem \cite[Thm.~B and \S6.4]{Wan24a} now gives an
$o(1)$-GH approximation $A_i\colon X_i'\to W'$ whose fibre is an
infranilmanifold of dimension $N-m=k$ and whose structure group preserves its affine
structure.\footnote{This invocation includes the affine gluing conclusion
of \cite[\S6.4]{Wan24a}: that section proves base-point independence of the
nilpotent structure and refers the final reduction of the structure group
to the gluing arguments of Cheeger--Fukaya--Gromov and Rong. We use that
affine structure-group conclusion as part of the cited input.}

\emph{Step 2: identify the affine fibre.}
Lemma~\ref{lem:fibreid}, applied to the equivariant bundles $F_i$ of
Theorem~\ref{thm:seifert}, gives \eqref{eq:betti-cover}. Let $P_i$ be the
infranil fibre of $A_i$. The homology sequence of the bundle gives
\[
 k=b_1(X_i')-b_1(W')\le b_1(P_i)\le\dim P_i=k.
\]
Write $P_i=\Delta_i\backslash\mathcal N_i$, let $\mathfrak n_i$ be the Lie
algebra of $\mathcal N_i$, and let
$\mathcal H_i:=\Delta_i/(\Delta_i\cap\mathcal N_i)$ be the finite holonomy
group. Transfer to the nilmanifold cover and Nomizu's theorem
\cite[Thm.~1]{Nom54} give
\[
 b_1(P_i)=\dim\Bigl(
 ((\mathfrak n_i/[\mathfrak n_i,\mathfrak n_i])^*)^{\mathcal H_i}
 \Bigr).
\]
Equality with $\dim P_i=k$ forces $\mathfrak n_i$ to be abelian and the
holonomy representation to be trivial. Thus $P_i$ is an affine torus, and
the structure group of $A_i$ lies in $\Aff(\T^k)$.

\emph{Step 3: reduce to a principal bundle and pass to a product.}
Lemma~\ref{lem:affinereduction}, applied using \eqref{eq:betti-cover}, gives
finite normal covers
\[
\widetilde X_i'\longrightarrow X_i',
\qquad \widetilde W_i\longrightarrow W'
\]
of the same index and with abelian deck groups,
and a homeomorphism
$\widetilde X_i'\cong\widetilde W_i\times\T^k$. Composing the first cover
with $X_i'\to X_i$, and writing $\widetilde X_i:=\widetilde X_i'$, proves the
asserted finite-product-cover conclusion. The composite cover of $X_i$ need
not be normal.

If $X$ is a closed Riemannian manifold, the same covering-geometry argument
applies directly to $X_i\to X$. Wang's theorem gives an affine
$k$-dimensional infranil bundle over $X$; write $F$ for its fibre. Exactness of
$H_1(F)\to H_1(X_i)\to H_1(X)\to0$ and $(\ast)$ gives
$k\le b_1(F)\le k$. The preceding transfer argument therefore makes $F$
an affine torus. Thus the direct bundle is an affine $\T^k$-bundle with
$b_1(X_i)-b_1(X)=k$; in particular, its total space is a closed topological
manifold.
Lemma~\ref{lem:affinereduction} gives the asserted finite product cover
directly over $X$.
\end{proof}

\subsection{Proof of Theorem~\ref{thm:master}(v)}
The two preceding theorems supply the two global projections required in
part~\textup{(v)}. Under (R), we have $P(\mathcal R(Y))=X$ and therefore
$G=X$. If $X$ is a smooth closed orbifold, Theorem~\ref{thm:seifert} gives
the asserted global fibration; the local fibrations in part~\textup{(iv)}
may therefore be chosen as its restrictions. Theorem~\ref{thm:virtualproduct}
gives the separate affine replacement and finite product cover. If $X$ is a
closed Riemannian manifold, every point of $X$ is regular. Lemma~\ref{lem:blowupreg} then shows
that every $y\in Y$ is regular and that $C_y=1$. Thus (R) holds and every
local group is trivial; the direct manifold-base assertion is the last part
of Theorem~\ref{thm:virtualproduct}. \qed

\section{Manifold limits, examples, and the singular set}\label{sec:consequences}

This section collects three consequences of the main construction: the manifold-limit corollary, examples showing the sharpness of the hypotheses, and a description of the singular set of $X$.

\begin{corollary}[Manifold limits]\label{cor:conj}
Let $(X_i,d_i,\mathfrak m_i)$ be compact $\RCD(K,N)$ spaces converging in the measured Gromov--Hausdorff sense to a closed $m$-dimensional Riemannian manifold $X$, with $b_1(X_i)-b_1(X)=N-m$ for all $i$. Then for large $i$ there are continuous $\epsilon_i$-GH approximations $X_i\to X$, $\epsilon_i\to0$, that are fibre bundles with fibre $\T^{N-m}$. Moreover, for large $i$ there are finite covers $\widetilde X_i\to X_i$ and $B_i\to X$ such that
\[
\widetilde X_i\cong B_i\times\T^{N-m}
\]
homeomorphically. This confirms \cite[Conj.~2.9]{ZZ26} and gives its virtual-product refinement.
\end{corollary}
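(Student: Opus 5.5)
The plan is to reduce Corollary~\ref{cor:conj} to Theorem~\ref{thm:master}(v), splitting into three cases: $k:=N-m\ge1$ with $N\ge2$, which is the setting $(\ast)$; $k=0$, the non-collapsed case; and $N<2$.

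\emph{Main case $k\ge1$, $N\ge2$.} Here $(\ast)$ holds, with the diameter bound $D$ supplied by GH convergence to the compact space $X$. The argument runs along an arbitrary subsequence and then invokes the sub-subsequence principle stated after Theorem~\ref{thm:master}. Every point of a closed Riemannian manifold is regular, so the last clause of Theorem~\ref{thm:master}(v) applies. Lemma~\ref{lem:blowupreg} gives (R) and trivial $C_y$; hence $C$ acts freely, $W'\to X$ is a genuine covering, and Theorem~\ref{thm:seifert} yields Seifert fibrations $\bar F_i\colon X_i\to X$ all of whose local groups are trivial. These are therefore $\T^k$-bundles that are $o(1)$-GH approximations. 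The last part of Theorem~\ref{thm:virtualproduct} gives finite covers $\widetilde X_i\cong B_i\times\T^k$ with $B_i\to X$ finite. Both conclusions involve only $X_i$ and $X$, so they pass from subsequences to the full sequence. Concretely, suppose they failed for infinitely many $i$. That subsequence still satisfies $(\ast)$, so it has a further subsequence along which the construction works, which is a contradiction.

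\emph{Case $k=0$.} The limit is non-collapsed, with $\dim X=N$. The claim is that $X_i$ is homeomorphic to $X$ for large $i$. Since $X$ is a smooth closed manifold, every point is regular, and volume convergence gives a uniform almost-Euclidean ratio at a fixed small scale on $X$, hence on $X_i$ for large $i$. I would then apply the volume--Reifenberg package together with equivariant regularization: \cite[Thm.~1.11]{HHWZ26} with trivial groups, followed by Lemma~\ref{lem:submersion} and Lemma~\ref{lem:bundlepromotion}. This makes the regularized map $X_i\to X$ a bundle with $0$-dimensional closed fibre, that is, a finite covering that is an $o(1)$-GH approximation. Such a covering must have one sheet. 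Indeed, distinct sheets over a small ball would be far apart in $X_i$ yet land at the same point of $X$, contradicting the small distortion. The bundle fibre is $\T^0$, a point, and the product statement holds trivially with $\widetilde X_i=X_i$ and $B_i=X$.

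\emph{Case $N<2$.} Here $\RCD(K,N)$ spaces are points, circles or intervals, by the classification in dimension one. So $m\in\{0,1\}$ and $X$ is a point or a circle. If $m=N=1$, this is the previous case. If $m=0$, then $N=1$ and $b_1(X_i)=1$, so $X_i$ is a circle, and the statement is immediate with $X$ a point. The $m=0$ subcase of the main case is likewise covered by the remark at the start of \S\ref{sec:local}: $X_i\cong\T^N$ by \cite{MMP22,ZZ26}.

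The main obstacle is the $k=0$ case. Theorem~\ref{thm:master} excludes it, so one has to check that the regularization and submersion lemmas run without the collapsing group. The step I would verify most carefully is that a degree-one finite covering arises, using the GH-approximation property as sketched above.
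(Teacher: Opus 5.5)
Your main case ($k\ge1$, $N\ge2$) and your $(N,m)=(1,0)$ case follow the paper's proof, including the sub-subsequence step and the $m=0$ remark. The genuine difference is the non-collapsed case $k=0$.

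\emph{The paper's route.} It first checks non-collapse. By (P1), $\dim X_i\ge N$ for large $i$, and \cite{BGHZ23} then gives $\mathfrak m_i=c_i\mathcal H^N$. The collapsing alternative of \cite{DPG18} is excluded because $\dim_{\mathcal H}X=N$, so volume convergence holds. The paper then cites the compact case of topological stability \cite[Thm.~3.3]{KM21}, which directly gives bi-H\"older homeomorphisms $X_i\to X$ that are $o(1)$-GH approximations. The case $N=m=1$ is handled by rescaling arc length.

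\emph{Your route.} You re-derive stability with the paper's own tools: regularization by \cite[Thm.~1.11]{HHWZ26} with trivial groups and target dimension $m=N$, then Lemma~\ref{lem:submersion} (with the identity cover), Lemma~\ref{lem:bundlepromotion}, and a one-sheet argument. This works. Your one-sheet step is sound: if $p\ne q$ have the same image, the small distortion forces $d(p,q)=o(1)$. A geodesic joining them then maps into a small evenly covered ball, so it stays in one sheet, which is a contradiction.

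\emph{What each route costs.} Yours needs \cite[Thm.~1.11]{HHWZ26} in the equidimensional case, which the paper never uses and which you would have to check. It also needs the non-collapse and volume-convergence justification above, which your sketch asserts without proof. Sending $N=m=1$ through this machinery is unnecessary, since (P1)--(P4) are stated for $N>1$. Your one-dimensional classification already makes the $X_i$ circles, so rescaling arc length finishes that case. Citing \cite{KM21} disposes of what you called the main obstacle in one line; your route's only gain is that it stays inside the machinery the paper has already built.
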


\begin{proof}
Let $k:=N-m$, which is an integer.
\begin{itemize}
\item If $k\ge1$ and $N\ge2$, apply Theorem~\ref{thm:master}(v). Its subsequential formulation implies both full-sequence conclusions by the usual contradiction and sub-subsequence argument.
\item If $k\ge1$ and $N<2$, then $(N,m)=(1,0)$ and $b_1(X_i)=1$. The classification of $\RCD^*(K,1)$ spaces in \cite[Prop.~2.5]{BS10} and \cite{KL16} makes every compact such $X_i$ a circle; the constant map to $X$ is the required $\T^1$-bundle, already a product.
\item Let $k=0$. For $N=m=1$, rescaling arc length gives the claim. If $N=m\ge2$, then (P1) and \cite{BGHZ23} make $(X_i,d_i,\mathcal H^N)$ non-collapsed for large $i$. The collapsing alternative in \cite{DPG18} would force $\dim_{\mathcal H}X\le N-1$, so volume convergence holds. The compact case of topological stability \cite[Thm.~3.3]{KM21} then directly gives bi-H\"older homeomorphisms $X_i\to X$ that are $o(1)$-GH approximations. In either case take the identity covers and use $\T^0=\{*\}$.\qedhere
\end{itemize}
\end{proof}

The next three items locate the sharp points of the Main Theorem. The first shows that maximal drop, rather than collapse alone, is what detects a torus direction, and that exceptional fibres do occur. The second shows that maximal drop is also what forces $\dim_{\mathcal H}X=m$. The third collects the remaining sharpness phenomena: bubbling, a non-open regular set, boundary points, sharpness of the codimension bound, and dependence on the subsequence.

\begin{example}[The Klein bottle, collapsed in two ways]\label{ex:klein}
For $a,b>0$ let $K_{a,b}:=\R^2/\langle t,\tau\rangle$, where $t(x,y)=(x+a,y)$ and $\tau(x,y)=(-x,y+b/2)$. Since $\tau t\tau^{-1}=t^{-1}$,
\[
H_1(K_{a,b};\Z)=\Z[\tau]\oplus\Z_2[t].
\]
There are two fibrations:
\begin{itemize}
\item $(x,y)\mapsto y\bmod b/2$ is a circle bundle over a circle, with fibre class $[t]$ and reflection monodromy;
\item $(x,y)\mapsto d(x,a\Z)$ is a Seifert fibration over $[0,a/2]$, with generic fibre class $2[\tau]$ and two exceptional endpoint fibres.
\end{itemize}
Both collapses below have $N=2$ and $m=1$:
\begin{enumerate}[label=\textup{(\Alph*)}]
\item If $a\to0$ with $b$ fixed, the limit is a circle but the collapsing class $[t]$ is torsion. Thus $b_1(K_{a,b})-b_1(S^1)=0<N-m$, and even the maximal abelian covers $\R/2a\Z\times\R$ collapse.
\item If $b\to0$ with $a$ fixed, the free class $[\tau]$ dies and the limit is $X=[0,a/2]$, so the drop is maximal. Here (R) holds and $G=X$. The endpoints are mirror points with local group $\Z_2$, and the exceptional fibres have M\"obius-band neighbourhoods, locally modelled by $((-1,1)\times\T^1)/\Z_2$.
In the notation of the construction, this case is completely explicit: $Y=(\R/2a\Z)\times\R$, $H_i'=\langle\tau^2\rangle$, $C\cong\Z_2^2$, and $X_i'=\hat X_i/H_i'\cong(\R/2a\Z)\times(\R/b\Z)=W'\times\T^1$, the fourfold cover of $K_{a,b}$.
\end{enumerate}
Thus maximal Betti drop detects which homology class collapses. For the Klein bottle, maximal drop can occur only in the interval collapse: maximality forces $m=1$ and $b_1(X)=0$, while a compact one-dimensional $\RCD$ space with vanishing first Betti number is an interval \cite{KL16}.
\end{example}

\begin{example}[The compact Pan--Wei double]\label{ex:panwei}
Fix $\alpha>\frac12$, and choose the integer $p\ge2$ sufficiently large as in Pan--Wei \cite[Thm.~A and Rem.~1.8]{PW22}. Their quotient--collar--double construction produces closed $(p+1)$-manifolds $\widehat M_i$ with uniform lower Ricci and upper diameter bounds, collapsing to a compact metric-measure space $X_\alpha$. Dai--Honda--Pan--Wei give an explicit model for the limit \cite[\S4.3 and Rem.~4.4]{DHPW23}. Begin with the finite Grushin cylinder
\[
 \widetilde Y_\alpha=[0,3]\times S^1,
 \qquad ds^2=dr^2+\widetilde h(r)^2\,dv^2,
 \qquad \widetilde h(r)=r^{-2\alpha}\quad(0<r\le1),
\]
make $\widetilde h$ constant near $r=3$, and glue two copies along their smooth $r=3$ ends. With its normalized limit measure, $X_\alpha$ is an $\RCD(K,p+1)$ space for some $K<0$. Its rectifiable dimension is $2$, whereas
\[
 \dim_{\mathcal H}X_\alpha=1+2\alpha>2.
\]
In particular, its Hausdorff dimension can be non-integral.

The topology makes the Betti-number comparison equally transparent.  Before
truncation, the cyclic quotient is diffeomorphic to $\R^p\times S^1$.  The
bounded radial core used in the construction is therefore $D^p\times S^1$,
and the collar modification that makes its boundary totally geodesic does not
change its diffeomorphism type.  Doubling along
$S^{p-1}\times S^1$ gives
\[
 \widehat M_i\cong
 (D^p\cup_{S^{p-1}}D^p)\times S^1\cong S^p\times S^1,
 \qquad b_1(\widehat M_i)=1.
\]
Likewise, gluing two copies of $[0,3]\times S^1$ along their $r=3$
boundary circles gives a space homeomorphic to the annulus
$[0,1]\times S^1$; its two boundary circles are the singular $r=0$ ends.
Therefore
\[
 b_1(X_\alpha)=1,
 \qquad b_1(\widehat M_i)-b_1(X_\alpha)=0.
\]
Here ``boundary'' means topological boundary. Since $N=p+1$ and the rectifiable dimension is $m=2$, maximal drop would require $N-m=p-1>0$. The example therefore lies outside the equality case of the Main Theorem. It shows that maximal Betti drop is essential for the conclusion $\dim_{\mathcal H}X=m$, even when every approximating space is a closed smooth manifold.
\end{example}

\begin{remark}[Examples and sharpness]\label{rem:sharp}
\begin{itemize}
\item \emph{Bubbling.} For $X_i=\T^k_{1/i}\times\mathrm{K3}_i$, where the Kummer metrics on $\mathrm{K3}_i$ converge without collapsing to $\T^4/\{\pm1\}$, we have $Y=\R^k\times\T^4/\{\pm1\}$, while $G$ omits the $16$ singular points. Part~(iv) makes no assertion there.
\item \emph{$\mathcal R(Y)$ need not be open.} If $\Sigma\subset\R^3$ is a convex surface whose vertices accumulate at a smooth point and $X_i=\T^k_{1/i}\times\Sigma$, then $Y=\R^k\times\Sigma$ and its regular set is not open.
\item \emph{Boundary and mirrors.} The flat cylinders $X_i=[0,1]\times S^1_{1/i}$ have the same limit as collapse~(B) of Example~\ref{ex:klein}, but for the cylinders $G=(0,1)$, whereas for the Klein bottles $G=X$ and the endpoints are mirror points. Thus $G$ is not determined by $X$, and the boundary hypothesis in part~(iii) is necessary: a model $(D\times\T^k)/C_1$ has no boundary, while the endpoint fibres of the cylinders lie in $\partial X_i$.
\item \emph{Codimension two.} For $m\ge2$, let $\Sigma$ be the boundary of a regular tetrahedron and $X_i:=\T^k_{1/i}\times\Sigma\times\T^{m-2}$. Then $X\setminus G$ is the four cone points times $\T^{m-2}$, so the estimate in part~(iii) is sharp. Yet $X_i$ is a trivial torus bundle over $X$: the complement of $G$ records where this method stops, not necessarily where every fibration fails.
\item \emph{Dependence on the subsequence.} Use the bubbling sequence in the first bullet, with index $j$, as the even terms, and let
\[
X_{2j+1}:=(\T^4\times\T^k_{1/j})/\Z_2,
\qquad (x,y)\longmapsto(-x,y+e_1/2j),
\]
where $\T^k_{1/j}:=\R^k/j^{-1}\Z^k$. The sequence satisfies $(\ast)$ with $Q:=\T^4/\{\pm1\}$. Along the odd terms (R) holds and $G=Q$; along the even terms $G\ne Q$, and no even term is a Seifert $\T^k$-fibration over the effective orbifold $Q$ with the prescribed orbifold local groups in the sense of (v). Indeed, if $Q_{\rm reg}$ denotes the complement of its $16$ orbifold points, then
\[
 \R^4\setminus\tfrac12\Z^4\longrightarrow Q_{\rm reg}
\]
is the universal cover and
$\pi_1(Q_{\rm reg})=\Z^4\rtimes_{-1}\Z_2$, which is non-abelian. Removing
the corresponding codimension-$4$ fibre preimages from an even total space
does not change its fundamental group, by general position. A bundle over
$Q_{\rm reg}$ would therefore make the non-abelian group
$\pi_1(Q_{\rm reg})$ a quotient of the abelian group
$\pi_1(X_{2j})=\Z^k$, a contradiction.
\end{itemize}
\end{remark}

Part~(iii) of the Main Theorem bounds $X\setminus G$, the set not reached by our charts. This is not the same as the singular set of $X$, because $G$ may itself contain orbifold points.

\begin{proposition}[The singular set of $X$]\label{prop:singX}
Assume $(\ast)$.
\begin{enumerate}[label=\textup{(\alph*)}]
\item The singular set of $X$ is the disjoint union of $P(\mathcal S(Y))$ and the set $O$ of points $x\in P(\mathcal R(Y))$ with $C_x\ne1$. Moreover $O=\bigcup_{c\in C\setminus1}P(\mathrm{Fix}_Y(c)\cap\mathcal R(Y))$.
\item $\dim_{\mathcal H}O\le m-1$, and this is sharp in the presence of mirror points.
\end{enumerate}
\end{proposition}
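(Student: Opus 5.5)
Part~(a) is a tangent-cone computation for $x=P(y)$, done separately according to whether $y$ is regular. If $y\in\mathcal S(Y)$, I will show that $x$ is singular. Suppose instead that $x\in\mathcal R(X)$. Then Lemma~\ref{lem:blowupreg} gives $y\in\mathcal R(Y)$, which is a contradiction. Since $\mathcal S(Y)$ is $H$-invariant, all lifts of $x$ are then singular, and the sets $P(\mathcal S(Y))$ and $P(\mathcal R(Y))$ are disjoint.

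If $y\in\mathcal R(Y)$, Proposition~\ref{prop:blowup} identifies every tangent cone of $W$ at $P_W(y)$ with $\R^m$, on which $C_y$ acts through $\rho_y$. The local isometry $B_W(w,r)/\Gamma_w\cong$ neighbourhood of $x$ from Theorem~\ref{thm:master}(i), together with the fact that blow-up commutes with finite quotients \cite[Lemma~3.1]{Wan23}, shows that every tangent cone of $X$ at $x$ is $\R^m/\rho_y(C_y)$. This cone is $\R^m$ exactly when $\rho_y(C_y)$ is trivial, since a nontrivial finite subgroup of $O(m)$ gives a quotient that is not isometric to $\R^m$. For example, the volume of the unit ball drops by the factor $|C_y|$. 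Faithfulness of $\rho_y$ (Proposition~\ref{prop:blowup}(b)) then shows that $x$ is regular if and only if $C_x\cong C_y=1$. This proves the decomposition. For the last formula in (a), note that $C_x\ne1$ exactly when some $c\ne1$ in $C=\Tor H$ fixes a lift $y$, because $\Stab_H(y)\subset C$. So $O$ is the union over $c\in C\setminus1$ of $P(\mathrm{Fix}_Y(c)\cap\mathcal R(Y))$.

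For the dimension bound in (b), I will use Lemma~\ref{lem:saturated}(b). This requires $H_0$-invariant sets. Each $c\in C$ commutes with $H_0$, so $\mathrm{Fix}_Y(c)$ is $H_0$-invariant, and so is $A_c:=\mathrm{Fix}_Y(c)\cap\mathcal R(Y)$. It therefore suffices to show $\dim_{\mathcal H}A_c\le N-1$ for $c\ne1$, since then $\dim_{\mathcal H}P(A_c)\le N-1-k=m-1$ and $C$ is finite.

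The main step is this local bound. Near $y\in A_c$, I will use the equivariant chart $\Upsilon=(\zeta',\hat\psi)$ of Proposition~\ref{prop:U2}(c), in which $c$ acts on $\hat\psi$ through $\rho(c)\ne1$. Then $A_c\cap B(y,r)$ lies in $\hat\psi^{-1}(\mathrm{Fix}\,\rho(c))$. Under $\Upsilon$ this set sits inside $\R^k\times\mathrm{Fix}\,\rho(c)$, which has dimension at most $N-1$. Because $\Upsilon^{-1}$ is only bi-Hölder with exponent close to $1$, the Hausdorff-dimension bound degrades slightly; I need to control this loss. I will handle it through the descended picture. The map $h$ in Proposition~\ref{prop:U2} is a $\Gamma_w$-equivariant open embedding, so the $O$-points near $x$ correspond to $h^{-1}(\bigcup\mathrm{Fix}\rho(c))/\Gamma_w$. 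The bi-Hölder constant $\Phi(\epsilon|N)$ can be taken arbitrarily small by shrinking $r$. This gives $\dim\le(m-1)(1+\Phi)$, and letting $\Phi\to0$ yields $m-1$. Working directly in $W$ with $\hat\psi$ restricted to slices through the chart, the lower Hölder bound gives $\mathcal H^{s}$-control via the inverse map with exponent $1/(1+\Phi)$. Since countably many such charts cover $P(\mathcal R(Y))$, the bound passes to all of $O$. I expect this bi-Hölder bookkeeping to be the main obstacle.

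Sharpness is shown by the Klein-bottle collapse (B) of Example~\ref{ex:klein}, taken times $\T^{m-1}$. There the mirror points form a set of dimension $m-1$ on which $C_x=\Z_2$.
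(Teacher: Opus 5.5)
Your proposal is correct and is essentially the paper's proof. Part~(a) goes through Lemma~\ref{lem:blowupreg}, the identification $C_x\cong C_y=\Stab_H(y)\subset C$, and the tangent cone $\R^m/\rho_y(C_y)$ at regular orbits; part~(b) goes through the chart $\Upsilon=(\zeta',\hat\psi)$ of Proposition~\ref{prop:U2}(c), a countable cover with $\epsilon\downarrow0$ to absorb the factor $1+\Phi$, and Lemma~\ref{lem:saturated}(b).

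The detour through $h$ on $W$ is unnecessary, since Proposition~\ref{prop:U2} states no H\"older bound for $h^{-1}$ and that route would need your slice argument. The bound $\dim_{\mathcal H}(A_c\cap B(y,r))\le(1+\Phi)(N-1)$ in $Y$, for each $\epsilon$, already gives $\dim_{\mathcal H}A_c\le N-1$. Your Klein bottle $\times\,\T^{m-1}$ example usefully gives sharpness for every $m\ge1$, where the paper cites only $m=1$.
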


\begin{proof}
(a) The regular set $\mathcal R(Y)$ is $H$-invariant. Hence $P(\mathcal R(Y))$ and $P(\mathcal S(Y))$ are disjoint and together cover $X$.
\begin{itemize}
\item If $x$ is regular, the points over it are regular (Lemma~\ref{lem:blowupreg}), and $C_x=1$ (Theorem~\ref{thm:master}(ii)).
\item Conversely, let $x\in P(\mathcal R(Y))$ with $C_x=1$, and let $w\in W$ lie over $x$. Then $\Gamma_w\cong C_x$ is trivial, so $X$ is isometric to $W$ near $x$ (Theorem~\ref{thm:master}(i)). Its tangent cones at $x$ are therefore $\R^m$ (Proposition~\ref{prop:blowup}(a)), so $x$ is regular.
\item For $y\in\mathcal R(Y)$ over $x$, $C_x\cong C_y=\Stab_H(y)$. This is a finite subgroup of $H\cong\R^k\times\Z^b\times C$, so it lies in $C$. Hence $C_x\ne1$ if and only if $y\in\mathrm{Fix}_Y(c)$ for some $c\in C\setminus1$.
\end{itemize}

(b) Fix $c\in C\setminus1$, and put
$S:=\mathrm{Fix}_Y(c)\cap\mathcal R(Y)$. This set is $H_0$-invariant,
since $H$ is abelian. If $S$ is non-empty, then $m\ge1$: at a point
$z\in S$, Proposition~\ref{prop:blowup}(b) gives the faithful transverse
representation $\rho_z$, so $\rho_z(c)\ne1$.

Fix $\epsilon>0$ and apply Proposition~\ref{prop:U2} at $z\in S$. In its
full chart $\Upsilon_z=(\zeta',\hat\psi)$, equivariance of $\hat\psi$ gives
\[
 \Upsilon_z(S\cap B(z,r))
 \subset \R^k\times\mathrm{Fix}\,\rho_z(c).
\]
Faithfulness implies
$\dim\mathrm{Fix}\,\rho_z(c)\le m-1$, so the space on the right has
dimension at most $N-1$. The lower estimate in
Proposition~\ref{prop:U2}(c) says that $\Upsilon_z^{-1}$ is locally
$1/(1+\Phi)$-H\"older. Therefore
\[
 \dim_{\mathcal H}(S\cap B(z,r))
 \le (1+\Phi(\epsilon|N))(N-1).
\]
Separability gives a countable cover of $S$ by these chart balls. Letting
$\epsilon\downarrow0$ yields $\dim_{\mathcal H}S\le N-1$. Hence
$\mathcal H^{N-1+\eta}(S)=0$ for every $\eta>0$, and
Lemma~\ref{lem:saturated}(b) gives
$\dim_{\mathcal H}P(S)\le N-1-k=m-1$. The finite union over
$c\in C\setminus1$ proves the assertion for $O$. Mirror points in
Example~\ref{ex:klein} show sharpness.
\end{proof}
\textbf{AI-use disclosure.} The authors used Anthropic Claude Code and OpenAI Codex as interactive research and writing tools. They assisted with exploratory discussion, testing and refinement of ideas, literature and source organization, code development and verification, mathematical error checking, and editorial revision.

\appendix
\section{Lifting homeomorphisms of abelian orbifold quotients}\label{app:lift}

This appendix proves the lifting lemma used in Step~3 of Proposition~\ref{prop:unfold}. The proof separates the two kinds of isotropy. We first pass to a chamber of the reflection subgroup; there, all remaining branching has codimension at least two and can be handled by covering-space theory. We then use the basic construction to restore the reflections.

\begin{lemma}[Lifting]\label{lem:lift}
Let $G\subset O(m)$ be finite abelian and $\pi\colon\R^m\to\mathcal O=\R^m/G$. Let $\sigma\colon N_1\to N_2$ be a homeomorphism fixing $\pi(0)$, where $N_1=\pi(S_1)$ and $S_1\subset\R^m$ is open, $G$-invariant, and star-shaped about $0$, while $N_2$ is open. Assume that $\sigma$ preserves isotropy type: corresponding stabilizers have equal orders and equal-dimensional fixed spaces. Then $\sigma$ lifts to a homeomorphism $\tilde\sigma\colon\pi^{-1}N_1\to\pi^{-1}N_2$ satisfying $\pi\tilde\sigma=\sigma\pi$.
\end{lemma}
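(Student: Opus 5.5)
The plan follows the strategy announced at the start of the appendix: split off the reflections, lift by covering-space theory on a chamber, then restore the reflections by the basic construction.

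\emph{Reflection subgroup and chamber.} Let $R\le G$ be the subgroup generated by the reflections in $G$. It is normal, since $G$ is abelian, and it is generated by commuting reflections. Hence $R\cong\Z_2^r$ acts on an orthonormal frame of coordinate hyperplanes. A closed chamber $\bar Q=\{x_1,\dots,x_r\ge0\}$ is a strict fundamental domain for $R$. The quotient $\R^m/R\cong\bar Q$ is a quadrant $\R^r_{\ge0}\times\R^{m-r}$, and $G':=G/R$ acts on it. The map $\pi$ factors as $\R^m\to\bar Q\to\mathcal O=\bar Q/G'$. Every element of $G'$ with a codimension-one fixed set in the interior $Q$ would come from a reflection, so all branching of $Q\to Q/G'$ has codimension at least two.

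\emph{Lifting on the chamber.} The isotropy-type hypothesis gives the following. A point of $\mathcal O$ has an order-two stabilizer with an $(m-1)$-dimensional fixed space exactly when it lies on a mirror; more generally, the image of the reflection walls is detected by the stabilizer data. So $\sigma$ preserves the image $M$ of $\partial\bar Q$, and it preserves the branch locus $\Sigma$ of $Q\to Q/G'$, which is the set of points with nontrivial stabilizer away from $M$. On the principal parts $Q^\circ:=(S_1\cap Q)\setminus\pi^{-1}\Sigma$ and its analogue for $N_2$, the map $Q^\circ\to N_j\setminus(M\cup\Sigma)$ is a regular $G'$-covering. Since $S_1$ is star-shaped and $\Sigma$ has codimension $\ge2$, $S_1\cap Q$ minus the branch set is connected. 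The lifting criterion asks that $\sigma_*$ carry $\ker(\pi_1\to G')$ into the corresponding kernel. I would verify this using small loops around the codimension-two strata. These are the generators of the local stabilizers, and their images are determined by the isotropy type, because cone points and corners are distinguished as listed in Step 3. One still needs surjectivity-type control to get that the two coverings are isomorphic, and there are two routes to it. The first uses that $\pi_1$ of the complement of the star-shaped set is normally generated by meridians together with the local-group data. The second checks that $\sigma$ carries the orbifold fundamental group $G'$, realized as the local group at $\pi(0)$, onto itself. Having lifted on the principal part, I extend continuously across $\pi^{-1}\Sigma$ by using that $\pi$ is proper and the fibres there are finite. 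The extension is well defined because nearby lifts converge to the unique preimage in each small neighbourhood. The same construction applied to $\sigma^{-1}$ gives the inverse.

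\emph{Restoring reflections.} The lift $\sigma_Q$ is a homeomorphism between the relevant parts of $\bar Q$, and it preserves each boundary face. Faces are labelled by their stabilizer subgroups of $R$, and these labels are preserved because $\sigma$ preserves isotropy orders and fixed dimensions and $G$ is abelian. By the basic construction, $\pi^{-1}N_j=R\times\bar Q_j/\!\sim$, so $\tilde\sigma([\rho,q]):=[\rho,\sigma_Q(q)]$ is well defined and is a homeomorphism. The main obstacle is the middle step. Making the covering isomorphism exact, and matching the face labels of $R$ under $\sigma_Q$, requires care when several reflection walls meet. There I would argue inductively on strata, starting from the maximal-isotropy stratum through $\pi(0)$.
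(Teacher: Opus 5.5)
Your architecture matches the paper's: a chamber of the reflection subgroup, a covering-space lift on the complement of the codimension-two branch strata, extension across the removed strata, and then the basic construction. The gap is in the last step. You claim that $\sigma_Q$ preserves each boundary face because faces are labelled by their stabilizers in $R$. But the hypothesis records only the \emph{order} of a stabilizer and the \emph{dimension} of its fixed space, not which subgroup of $R$ it is. So $\sigma_Q$ can permute the walls, and then $\tilde\sigma[\rho,q]:=[\rho,\sigma_Q(q)]$ is not well defined. Concretely, let $G=R=\{\mathrm{diag}(\pm1,\pm1)\}$ act on $\R^2$, take $S_1=\R^2$, and let $\sigma(x,y)=(y,x)$ on $\mathcal O=\bar Q$. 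This $\sigma$ preserves isotropy type, and $\sigma_Q=\sigma$. With $s_2=\mathrm{diag}(1,-1)$ we have $[1,(1,0)]=[s_2,(1,0)]$, yet $[1,(0,1)]=(0,1)\ne(0,-1)=[s_2,(0,1)]$. The actual lift $(x,y)\mapsto(y,x)$ twists the $R$-coordinate. No induction on strata starting at $\pi(0)$ can repair a formula that keeps $\rho$ fixed.

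What is missing is the wall permutation. Write $H_j=\mathrm{Fix}(s_j)$, and let $M_j\subset\bar Q$ be the set of points whose stabilizer is exactly $\langle s_j\rangle$.
\begin{itemize}
\item Star-shapedness of $S_1$ makes each $M_j\cap\bar Q_1$ connected and accumulating at $0$. Without it, one wall could meet $\pi^{-1}N_1$ in several pieces that are sent to different walls.
\item Hence $\sigma_Q(M_j\cap\bar Q_1)$ lies in a single $M_{\tau(j)}$, and the germs of the mirror branches at $0$ make $\tau$ a permutation.
\item Density then gives $x\in H_j\iff\sigma_Q(x)\in H_{\tau(j)}$.
\item Since $R_x=\langle s_j:x\in H_j\rangle$, the automorphism $\alpha(s_j):=s_{\tau(j)}$ of $R\cong\Z_2^r$ satisfies $\alpha(R_x)=R_{\sigma_Q(x)}$.
\end{itemize}
So $[\rho,q]\mapsto[\alpha(\rho),\sigma_Q(q)]$ is well defined.

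There are also two smaller points. First, normal generation of $\pi_1(Q_1^\circ)$ by meridians is what yields the kernel \emph{inclusion}; it is not a surjectivity statement. Equality then follows by an index count, which needs the $N_2$-side cover to be connected. This holds because every component of the preimage of $N_2$ in $\bar Q$ contains the unique point over $\pi(0)$, and passing to the interior and deleting codimension-$\ge2$ subspaces keeps it connected. Second, since you lift only over the open chamber $Q$, you must still extend $\sigma_Q$ to $\partial\bar Q$. The same nested-ball argument works there.
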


\begin{remark}[Non-abelian groups]
For general finite groups, preserving only the codimension-$\le2$ labels is insufficient. Let the binary icosahedral group $I^*\subset SU(2)$ act on $\R^4\oplus\R^2$. The double suspension theorem gives $\R^6/I^*\cong\R^6$. Consequently, some homeomorphism of the quotient sends a codimension-$4$ singular point to a regular point; such a homeomorphism cannot lift.
\end{remark}

\begin{proof}
Let $W_G\lhd G$ be the reflection subgroup and $\mathcal C$ a closed chamber. Its commuting reflections have orthogonal normal lines, so $W_G\cong\Z_2^r$, the chambers are orthants, and $W_G$ acts simply transitively on them \cite[Thm.~6.6.3]{Dav08}. Hence
\[
G=W_G\times G_{\mathcal C},\qquad G_{\mathcal C}:=\Stab_G(\mathcal C).
\]
If $s_j$ is a wall reflection and $n_j$ is the inward unit normal of its wall, every $h\in G_{\mathcal C}$ commutes with $s_j$ and preserves the chamber, so $hn_j=n_j$. Consequently no non-trivial element of $G_{\mathcal C}$ fixes a wall pointwise. The strict fundamental-domain property of $\mathcal C$, together with the product decomposition above, gives
\[
\mathcal O=\R^m/G\cong\mathcal C/G_{\mathcal C}.
\]
In particular, for $x\in\mathcal C$,
\begin{equation}\label{eq:stab-split}
G_x=(W_G)_x\times(G_{\mathcal C})_x .
\end{equation}
All branching of $\mathcal C\to\mathcal O$ therefore has codimension at least $2$.

\emph{Away from the higher strata.} Remove the cone strata and all strata of codimension at least $3$ from $\mathcal O$, obtaining $\mathcal O''$, and let $\mathcal C''$ be its preimage. For $a=1,2$, put $\mathcal C_a:=\pi_{\mathcal C}^{-1}(N_a)$, $N_a'':=N_a\cap\mathcal O''$, and $\mathcal C_a'':=\mathcal C''\cap\mathcal C_a$. Both $\mathcal C_a''$ are connected, although $N_2$ need not be star-shaped.
\begin{itemize}
\item $\mathcal C_a$ is connected: $\pi_{\mathcal C}$ is open and closed, $\mathcal C_a$ is locally connected, and every component maps onto the connected $N_a$ and therefore contains the unique point over $\pi(0)$.
\item The removed set is the trace of finitely many linear subspaces of codimension at least $2$. A path between points of $\mathcal C_a''$ can be pushed into the chamber interior and put in general position, so its complement remains connected.
\end{itemize}
The hypotheses imply $\sigma(N_1'')=N_2''$. In particular, the mirror locus is preserved because an order-$2$ element with codimension-one fixed space is a reflection; only the mirror closure distinguishes an order-$4$ cone point from a $\Z_2^2$ corner. Points of $\mathcal C''$ have reflection-generated stabilizers, so \eqref{eq:stab-split} makes $\mathcal C_a''\to N_a''$ a connected regular $G_{\mathcal C}$-cover.

To identify its kernel, note that $\mathcal C_1=S_1\cap\mathcal C$ is star-shaped and removing codimension-$\ge3$ strata does not change its $\pi_1$. We claim that
\begin{equation}\label{eq:orb-kernel}
\ker\bigl(\pi_1(N_1'')\longrightarrow G_{\mathcal C}\bigr)
=\left\langle\!\left\langle\gamma_S^{m_S}:S\text{ a cone stratum}\right\rangle\!\right\rangle .
\end{equation}
Indeed, $\pi_1(\mathcal C_1'')$ is normally generated by meridians of the removed codimension-two branch components; their projections are conjugates of $\gamma_S^{m_S}$. This proves that the covering kernel is contained in the right side of \eqref{eq:orb-kernel}. Conversely, each $\gamma_S^{m_S}$ lifts to a closed meridian in $\mathcal C_1''$, and the covering kernel is normal, so the displayed normal closure is contained in the kernel. Since $\sigma$ preserves cone strata and their orders, it takes each local meridian $\gamma_S$ to a conjugate of $\gamma_{\sigma(S)}^{\pm1}$. Thus $\sigma_*$ maps the kernel in \eqref{eq:orb-kernel} into the corresponding kernel over $N_2''$. Both have index $|G_{\mathcal C}|$, so they are equal. The lifting criterion gives $\sigma_{\mathcal C}\colon\mathcal C_1''\to\mathcal C_2''$ with $\pi_{\mathcal C}\sigma_{\mathcal C}=\sigma\pi_{\mathcal C}$, and an automorphism $\beta\in\operatorname{Aut}(G_{\mathcal C})$ such that $\sigma_{\mathcal C}(gx)=\beta(g)\sigma_{\mathcal C}(x)$.

\emph{Extension and local monodromy.} Fix $x\in\mathcal C_1\setminus\mathcal C_1''$, and choose nested $(G_{\mathcal C})_x$-invariant balls $B_\nu\downarrow\{x\}$ whose other translates are disjoint. The complement of the removed codimension-$\ge2$ strata in $B_\nu\cap\mathcal C$ is path connected. Hence $\sigma_{\mathcal C}(B_\nu\cap\mathcal C_1'')$ lies in one component of $\pi_{\mathcal C}^{-1}(\sigma\pi_{\mathcal C}(B_\nu\cap\mathcal C))$. The closures of these nested components shrink to one point $x'$; define $\sigma_{\mathcal C}(x):=x'$. This description shows that $x'$ does not depend on the chosen balls, and that the resulting extension is continuous and unique.

The construction also supplies the required compatibility of the isotropy groups. If $h\in(G_{\mathcal C})_x$ and $x_j\in\mathcal C_1''$ tends to $x$, equivariance gives
\[
\beta(h)\sigma_{\mathcal C}(x_j)=\sigma_{\mathcal C}(hx_j)\longrightarrow x',
\]
so $\beta((G_{\mathcal C})_x)\le(G_{\mathcal C})_{x'}$. The same construction extends the inverse lift, with deck automorphism $\beta^{-1}$. Uniqueness makes the extensions inverse everywhere, and the reverse limit argument gives the opposite inclusion. Thus $\beta((G_{\mathcal C})_x)=(G_{\mathcal C})_{x'}$ at every removed stratum, and $\sigma_{\mathcal C}\colon\mathcal C_1\to\mathcal C_2$ is a homeomorphism.

\emph{Lift to $\R^m$.} Let $s_1,\dots,s_r$ be the reflections of $G$, $H_j:=\mathrm{Fix}(s_j)$, $n_j$ the inner unit normal of $\mathcal C$ along $H_j$, and $M_j\subset\mathcal C$ the set of points with stabilizer exactly $\langle s_j\rangle$. The mirror locus of $\mathcal O$ is the image of $M:=\bigsqcup_jM_j$.
If $1\ne h\in G_{\mathcal C}$, then $hn_j=n_j$, so $\mathrm{Fix}(h)=\R n_j\oplus(\mathrm{Fix}(h)\cap H_j)$ and the second factor has codimension at least $2$ in $H_j$. By \eqref{eq:stab-split}, $M_j\cap\mathcal C_1$ is a relative face interior minus finitely many codimension-$\ge2$ subspaces; star-shapedness makes it connected and accumulating at $0$. Since $\sigma$ preserves mirrors, connectedness gives an index $\tau(j)$ such that $\sigma_{\mathcal C}(M_j\cap\mathcal C_1)\subset M_{\tau(j)}\cap\mathcal C_2$. Near $0$, the sets $M_j$ are precisely the $r$ local branches of the mirror locus away from their codimension-two intersections. The homeomorphism $\sigma_{\mathcal C}$ fixes $0$ and therefore induces a bijection of these germs. Thus $j\mapsto\tau(j)$ is onto, hence is a permutation. Surjectivity of $\sigma_{\mathcal C}$ and preservation of the exact mirror strata then upgrade the inclusion to
$\sigma_{\mathcal C}(M_j\cap\mathcal C_1)=M_{\tau(j)}\cap\mathcal C_2$. Density in the face gives $x\in H_j$ if and only if $\sigma_{\mathcal C}(x)\in H_{\tau(j)}$ for $x\in\mathcal C_1$.

Let $\alpha$ be the automorphism of $W_G\cong\Z_2^r$ defined by $\alpha(s_j)=s_{\tau(j)}$. For $x\in\mathcal C$, we have $(W_G)_x=\langle s_j:x\in H_j\rangle$ \cite[Lemma~6.6.8]{Dav08}. Therefore $\alpha((W_G)_x)=(W_G)_{\sigma_{\mathcal C}x}$ for all $x\in\mathcal C_1$. This is the second place at which star-shapedness of $N_1$ is essential, after the kernel calculation above: in a general neighbourhood, one wall of $\mathcal C_1$ could meet $\partial\mathcal C$ in several pieces, and $\sigma_{\mathcal C}$ could send those pieces to different walls. Since the chambers are orthants, $\R^m$ is the basic construction $W_G\times\mathcal C/\!\sim$ (cf.\ \cite[Thm.~6.6.3]{Dav08}). The formula
\[
\tilde\sigma[w,x]:=[\alpha(w),\sigma_{\mathcal C}(x)]
\]
therefore defines a well-defined homeomorphism covering $\sigma$.
\end{proof}


\end{document}